\newcommand{%
    \def\svgwidth{\columnwidth}
    \import{./}{.pdf_tex}
}[1]{%
    \def\svgwidth{\columnwidth}
    \import{./}{#1.pdf_tex}
}
\tikzset{
    state/.style={
           rectangle,
           rounded corners,
           draw=black, very thick,
           minimum height=2em,
           inner sep=2pt,
           text centered,
           },
}
\numberwithin{equation}{section}
\theoremstyle{plain}
\newtheorem{theorem}{Theorem}[section]
\theoremstyle{theorem}
\newtheorem{prop}[theorem]{Proposition}
\newtheorem{lem}[theorem]{Lemma}
\newtheorem{cor}[theorem]{Corollary}
\newtheorem*{question*}{Question}
\theoremstyle{plain}
\newtheorem{ih}{IH}
\theoremstyle{definition}
\newtheorem{defn}[theorem]{Definition}
\newtheorem{rmk}[theorem]{Remark}
\newcommand{\R}{\mathbb{R}}
\newcommand{\C}{\mathbb{C}}
\newcommand{\Proj}{\mathbb{P}}
\newcommand{\Z}{\mathbb{Z}}
\newcommand{\Hyp}{\mathbb{H}}
\newcommand{\D}{\mathbb{D}}
\DeclareMathOperator{\Rat}{Rat}
\DeclareMathOperator{\QC}{QC}
\DeclareMathOperator{\val}{val}
\DeclareMathOperator{\BP}{\mathcal{B}}
\DeclareMathOperator{\Mod}{Mod}
\DeclareMathOperator{\PSL}{PSL}
\DeclareMathOperator{\MCG}{Mod}
\DeclareMathOperator{\Isom}{Isom}
\DeclareMathOperator{\Res}{Res}
\DeclareMathOperator{\Aut}{Aut}
\DeclareMathOperator{\Int}{Int}
\DeclareMathOperator{\chull}{Cvx\, Hull}
\DeclareMathOperator{\hull}{hull}
\DeclareMathOperator{\diam}{diam}
\numberwithin{figure}{section}
\title[Kleinian Reflection Groups and Anti-rational Maps]{On Deformation Space Analogies between Kleinian Reflection Groups and Antiholomorphic Rational Maps}
\begin{document}

\begin{author}[R.~Lodge]{Russell Lodge}
\address{Department of Mathematics and Computer Science, Indiana State University, Terre Haute, IN 47809, USA}
\email{russell.lodge@indstate.edu}
\end{author}

\begin{author}[Y.~Luo]{Yusheng Luo}
\address{Institute for Mathematical Sciences, Stony Brook University, 100 Nicolls Rd, Stony Brook, NY 11794-3660, USA}
\email{yusheng.s.luo@gmail.com}
\end{author}

\begin{author}[S.~Mukherjee]{Sabyasachi Mukherjee}
\address{School of Mathematics, Tata Institute of Fundamental Research, 1 Homi Bhabha Road, Mumbai 400005, India}
\email{sabya@math.tifr.res.in}
\thanks{S. M. was supported by the Department of Atomic Energy, Government of India, under project no.12-R\&D-TFR-5.01-0500, an endowment of the Infosys Foundation, and SERB research project grant SRG/2020/000018.}
\end{author}

\begin{abstract}
In a previous paper \cite{LLM22}, we constructed an explicit dynamical correspondence between certain Kleinian reflection groups and certain anti-holomorphic rational maps on the Riemann sphere.
In this paper, we show that their deformation spaces share many striking similarities.
We establish an analogue of Thurston's compactness theorem for critically fixed anti-rational maps.
We also characterize how deformation spaces interact with each other and study the monodromy representations of the union of all deformation spaces.
\end{abstract}

\date{\today}

\thanks{2020 \emph{Mathematics Subject Classification.} 37F10, 37F20, 37F32, 37F34 , 37F46 (primary), 30F40, 37F15, 51F15 (secondary).}

\maketitle

\setcounter{tocdepth}{1}
\tableofcontents

\section{Introduction}
Hyperbolic maps play an essential role in complex dynamics.
They form an open and conjecturally dense subset in the moduli spaces.
In the closely related field of hyperbolic $3$-manifolds, quasiconformal deformation spaces of Kleinian groups play a role analogous to that of hyperbolic components. The celebrated Sullivan dictionary between Kleinian groups and rational dynamics, which grew out Sullivan's introduction of quasiconformal methods in complex dynamics \cite{Sullivan85}, raises several important questions about possible parallels and mismatch between these two classes of parameter spaces.

In the recent work \cite{LLM22}, the authors constructed an explicit dynamically meaningful correspondence between \emph{critically fixed anti-holomorphic rational maps} and \emph{kissing Kleinian reflection groups} acting on the Riemann sphere.  The aim of the current paper is to investigate some fundamental parameter space implications of the above correspondence; specifically, to compare boundedness properties and mutual interaction structure of critically fixed hyperbolic components and deformation spaces of kissing reflection groups. Our study reveals a striking similarity between the two parameter spaces. In particular, we will show
\begin{itemize}
\item (Boundedness of deformation spaces.) The {\em pared deformation space} of a critically fixed anti-rational map is bounded if and only if the pared deformation space of the corresponding kissing reflection group is bounded (see Theorem \ref{thm:ab}).

\item (Interaction between deformation spaces.) The pared deformation space of a critically fixed anti-rational map bifurcates to another if and only if the pared deformation space of the corresponding kissing reflection group degenerates to the other (see Theorem \ref{thm:main}).

\item (Global topology.) There exists a surjective monodromy representation on the closure of the union of pared deformation spaces of all critically fixed anti-rational maps of degree $d$ to the mapping class group of the $d+1$-punctured sphere, which is analogous to a result of Hatcher and Thurston for kissing reflection groups (see Theorem \ref{thm:mono}).
\end{itemize}

Our investigation also yields some new phenomenon on how different deformation spaces can interact (see Appendices \ref{sec:pb} and \ref{sec:ve}).

Before we proceed to give the precise statements of our principal results, let us summarize the dynamical correspondence established in \cite{LLM22}.

\subsection*{Dictionary between kissing reflection groups and critically fixed anti-rational maps}

Let $\Rat_d^-(\C)$ be the space of all degree $d$ anti-holomorphic rational maps (anti-rational maps for short) on $\hat \C$, and let 
$$
\mathcal{M}_d^- = \Rat_d^-(\C) /\PSL_2(\C)
$$ 
be the moduli space of degree $d$ anti-rational maps. We will, when there is no possibility of confusion, refer to the elements of $\mathcal{M}_d^-$ (which are, formally speaking, M{\"o}bius conjugacy classes) as maps.
An anti-rational map $R$ is {\em hyperbolic} if all of its critical points converge to attracting cycles under iteration.
The set of all conjugacy classes of hyperbolic maps form an open subset of $\mathcal{M}_d^-$, and a connected component is called a {\em hyperbolic component}.

An anti-rational map is called critically fixed if it fixes each of its critical points. Let $\mathcal{R} \in \Rat_d^-(\C)$ be a degree $d$ critically fixed anti-rational map.
The union $\mathscr{T}$ of all {\em fixed internal rays} in the invariant Fatou components (each of which necessarily contains a fixed critical point of $\mathcal{R}$) is called the {\em Tischler graph} (see also \S \ref{subsec:pds}).

A graph is called {\em $k$-connected} if it contains more than $k$ vertices and remains connected with any $k-1$ vertices removed.
In \cite{LLM22}, we showed that the planar dual $\Gamma$ of $\mathscr{T}$ is {\em $2$-connected} and {\em simple} with $d+1$ vertices.
Conversely, any $2$-connected, simple, plane graph $\Gamma$ with $d+1$ vertices is isomorphic to the planar dual of the Tischler graph of some degree $d$ critically fixed anti-rational map $\mathcal{R}_\Gamma$, which is unique up to M{\"o}bius conjugation (cf. \cite{Geyer}).
We use $\mathcal{H}_{\Gamma}$ to denote the hyperbolic component containing $[\mathcal{R}_\Gamma]$.

A \emph{circle packing} is a connected collection of (oriented) circles in $\widehat{\C}$ with disjoint interiors. Note that the collection of circles may be infinite in this definition. Let $\mathcal{P}$ be a finite circle packing in $\widehat{\C}$.
A kissing reflection group $G$ is a group generated by reflections along the circles of $\mathcal{P}$.
The {\em contact graph} $\Gamma$ of $\mathcal{P}$ associates a vertex to each circle in $\mathcal{P}$, and two vertices are connected by an edge if the corresponding two circles touch.

In consistence with the convention adopted in \cite{LLM22}, a plane graph will mean a planar graph together with an embedding in $\widehat{\C}$ throughout this paper. We say that two plane graphs are isomorphic if the underlying graph isomorphism is induced by an orientation preserving homeomorphism of $\widehat{\C}$.
Two kissing reflection groups with isomorphic contact graphs are quasiconformally conjugate.
Thus, by the circle packing theorem, the quasiconformal conjugacy classes of kissing reflection groups are in correspondence with isomorphism classes of simple plane graphs.
It can be verified that the limit set of $G = G_\Gamma$ is connected if and only if the corresponding contact graph $\Gamma$ is $2$-connected (see \cite[\S 3]{LLM22}).

According to \cite[Theorem~1.1]{LLM22}, there exists a bijective correspondence between kissing reflection groups with connected limit set and critically fixed anti-rational maps such that the limit set of $G_\Gamma$ and the Julia set of $\mathcal{R}_\Gamma$ are homeomorphic via a dynamically natural map (see Figure \ref{fig:DynamicalCorrespondence}). This is the same equivariant homeomorphism that was used in \cite{LLMM19} to classify Julia set quasisymmetries in the context of critically fixed anti-rational maps whose Tischler graph is a triangulation.

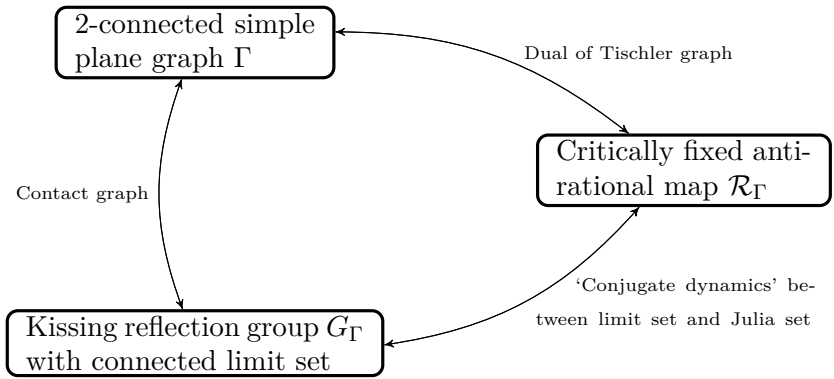
\begin{figure}
\centering
\begin{tikzpicture}[->,>=stealth']
 
 \node[state] (Graph) 
 {\begin{tabular}{l}
 \parbox{3.2cm}{
  {2-connected simple plane graph $\Gamma$}}\\
 \end{tabular}};

 \node[state,    
   yshift=-4cm, 		
  anchor=center] (RG) 	
 {
 \begin{tabular}{l} 	
  \parbox{4.5cm}{
  {Kissing reflection group $G_{\Gamma}$ with connected limit set}}\\
 \end{tabular}
 };

 \node[state,    	
  yshift=-1.7cm,
  right of=Graph, 	
  node distance=6.5cm, 	
  anchor=center] (AR) 	
 {
 \begin{tabular}{l} 
  \parbox{3.4cm}{
  {Critically fixed anti-rational map $\mathcal{R}_\Gamma$}
  }
 \end{tabular}
 };
 
  \path (Graph) edge[bend left=19] node[anchor=north,right]{\tiny \hspace{2mm} Dual of Tischler graph } (AR)
  (AR) edge[bend right=19] (Graph)
  (Graph) edge[bend right=20] node[anchor=south,left]{\tiny Contact graph } (RG)
  (RG) edge[bend left=20] (Graph)
  (RG) edge[bend right=20] node[anchor=north,right, text width=4cm]{\tiny \hspace{4mm} `Conjugate dynamics' between limit set and Julia set} (AR)
  (AR) edge[bend left=20] (RG);
  
  \end{tikzpicture}
\caption{Dynamical correspondence between kissing reflection groups and critically fixed anti-rational maps.}
\label{fig:DynamicalCorrespondence}
\end{figure}

As consequences of the above dictionary, it was shown in the same paper that various additional combinatorial structures of the simple plane graph $\Gamma$ translate into special dynamical/geometric properties of the corresponding kissing reflection group $G_\Gamma$ and the associated $3$-manifold (respectively, of the corresponding critically fixed anti-rational map $\mathcal{R}_\Gamma$). 
Of particular importance from the perspective of the current paper is the combinatorial property of being \emph{polyhedral}: a graph is $\Gamma$ is said to be polyhedral if $\Gamma$ is the $1$-skeleton of a convex polyhedron. Note that this is equivalent to $\Gamma$ being $3$-connected. We say that a closed set $\Lambda$ is a {\em round gasket} if
\begin{itemize}
\item $\Lambda$ is the closure of some infinite circle packing; and
\item the complement of $\Lambda$ is a union of round disks which is dense in $\widehat{\C}$.
\end{itemize}
We will call a homeomorphic copy of a round gasket a {\em gasket} (following the terminology which is used in \cite{LLM22} but different from that in \cite{LLMM19}).
By \cite[Theorems~1.4,1.5]{LLM22} and \cite{Thurston86}, 
\begin{align}\label{eqn:aclindrical}
\text{$\Gamma$ is polyhedral} &\iff \text{the limit set of $G_\Gamma$ is a gasket} \nonumber\\
&\iff \text{the Julia set of $\mathcal{R}_\Gamma$ is a gasket}\\
&\iff \text{the $3$-manifold for $G_\Gamma$ is \emph{acylindrical}}. \nonumber \\
&\iff \text{the \emph{pared deformation space} of $G_\Gamma$ is bounded} \nonumber.
\end{align}

\subsection*{Thurston's compactness theorem and boundedness of deformation spaces}\label{subsec:tct}
Acylindrical manifolds are of central importance in three dimension geometry and topology.
Relevant to our discussion, Thurston proved that the deformation space of an acylindrical $3$-manifold is bounded \cite{Thurston86}, which is a key step in Thurston's hyperbolization theorem for Haken manifolds.

In the convex cocompact case, it is well known that a hyperbolic $3$-manifold with non-empty boundary is acylindrical if and only if its limit is a Sierpinski carpet (see \cite{McM95} or \cite[Theorem ~1.8]{Pilgrim03}).
Based on this analogy, McMullen conjectured that the hyperbolic component of a rational map with Sierpinski carpet Julia set is bounded (see \cite[Question~5.3]{McM95}).
This conjecture, which stimulated a lot of work towards understanding the topology of hyperbolic components, still remains wide open. 

For Kleinian groups uniformizing acylindrical $3$-manifolds with cusps, 
Thurston's compactness theorem holds only for deformation spaces of \emph{pared manifolds} \cite{Thurston86}.
In such pared deformation spaces, the parabolic elements are preserved.

In light of \cite[Theorems~1.4,1.5]{LLM22} (see Characterization~\ref{eqn:aclindrical}) and the above discussion, it is natural to ask whether an analogue of Thurston's compactness theorem holds for the corresponding critically fixed anti-rational maps.

In our setting, the cusps of the $3$-manifold associated with a kissing reflection group $G_\Gamma$ are in bijective correspondence with repelling fixed points of $\mathcal{R}_\Gamma$ (cf. \cite[\S 4.3]{LLM22}).
Thus, preserving parabolics of $G_\Gamma$ in the group setting is analogous to controlling the multipliers of all repelling fixed points of $\mathcal{R}_\Gamma$.

This motivates the definition of the {\em pared deformation space} for $\mathcal{R}_\Gamma$.
We first note that the dynamics on any critically fixed Fatou component is conjugated to the dynamics of an anti-Blaschke product on the unit disk $\D$, which we call the {\em uniformizing model}.

Let $K > 0$. We define the {\em pared deformation space} $\mathcal{H}_\Gamma(K) \subseteq \mathcal{H}_\Gamma$ of $\mathcal{R}_\Gamma$ to be the connected component containing $\mathcal{R}_\Gamma$ of the set of anti-rational maps $R\in \mathcal{H}_\Gamma$ where the multiplier of any repelling fixed point in any uniformizing model is bounded by $K$ (see \S \ref{subsec:pds} and the discussion in \S \ref{subsec:dsthc} for further motivations and necessities of this definition).

We remark that by definition, $\mathcal{H}_\Gamma(K) \subseteq \mathcal{H}_\Gamma(K')$ if $K \leq K'$ and 
$$
\mathcal{H}_\Gamma = \bigcup_K \mathcal{H}_\Gamma(K).
$$
We also remark that in most situations, $\mathcal{H}_\Gamma(K)$ is not compact in $\mathcal{H}_\Gamma$ for all sufficiently large $K$ (see Theorem \ref{thm:cparm}).

We prove the following analogue of Thurston's compactness theorem and McMullen's conjecture in our setting.
\begin{theorem}\label{thm:ab}
Let $\Gamma$ be a simple, $2$-connected, plane graph, and $\mathcal{R}_\Gamma$ be a critically fixed anti-rational map with Tischler graph isomorphic (as plane graphs) to the planar dual of $\Gamma$.
Then the pared deformation space $\mathcal{H}_\Gamma(K)$ is bounded if and only if $\Gamma$ is polyhedral.

More precisely, if $\Gamma$ is polyhedral, then for any $K$, the space $\mathcal{H}_\Gamma(K)$ is bounded in $\mathcal{M}^-_d$.
Conversely, if $\Gamma$ is not polyhedral, then there exists some $K$ such that $\mathcal{H}_\Gamma(K)$ is unbounded in $\mathcal{M}^-_d$.
\end{theorem}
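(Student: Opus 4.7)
The plan is to handle the two directions of the equivalence separately, using the polyhedral/gasket correspondence of Characterization~\ref{eqn:aclindrical} as the key structural input.

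\emph{Polyhedral implies bounded.} Suppose $\Gamma$ is polyhedral, fix $K>0$, and assume for contradiction that a sequence $R_n \in \mathcal{H}_\Gamma(K)$ satisfies $[R_n]\to \infty$ in $\mathcal{M}^-_d$. After choosing normalizations, divergence in $\mathcal{M}^-_d$ forces either critical points to collide (so the algebraic limit drops degree) or some periodic multiplier to blow up. Polyhedrality of $\Gamma$ gives, via Characterization~\ref{eqn:aclindrical}, that $J(\mathcal{R}_\Gamma)$ is a gasket: every repelling fixed point lies on the common boundary of exactly two critical Fatou components, with uniformizing anti-Blaschke models on both sides whose derivatives at that fixed point are bounded by $K$. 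I would then exploit this two-sided uniform expansion, together with the combinatorial rigidity of the Tischler graph, to show that each subsequential algebraic limit $R_\infty$ of the $R_n$ is again a critically fixed anti-rational map of degree $d$ with Tischler graph isomorphic to that of $\mathcal{R}_\Gamma$. Thurston-type rigidity for post-critically finite anti-rational maps (cf.\ \cite{Geyer}) then forces $R_\infty$ to be M{\"o}bius-conjugate to $\mathcal{R}_\Gamma$, contradicting the divergence.

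\emph{Non-polyhedral implies unbounded.} Since $\Gamma$ is $2$-connected but not $3$-connected, fix a $2$-vertex cut $\{v_1,v_2\}$ and write $\Gamma = \Gamma_1 \cup_{\{v_1,v_2\}}\Gamma_2$. Dually, there is a simple closed curve $\gamma \subset \widehat{\mathbb{C}}$ that meets $J(\mathcal{R}_\Gamma)$ only at one repelling fixed point on the boundary of each critical Fatou component $U_j$ indexed by $v_j$ ($j=1,2$), and that separates the Fatou components indexed by $\Gamma_1$-vertices from those indexed by $\Gamma_2$-vertices. I would construct a one-parameter family of quasiconformal wring/stretch deformations $R_t$ of $\mathcal{R}_\Gamma$, supported in an annular neighborhood of $\gamma$ disjoint from the critical Fatou components, for $t\in (0,\infty)$. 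This is the rational-map analog of the twist deformations of quasi-Fuchsian groups along essential annuli in the convex core. Because the deformation acts on every critical Fatou component by a conformal reparametrization, each uniformizing model and each fixed-point multiplier is preserved, so a single $K$ works for the entire family: $R_t \in \mathcal{H}_\Gamma(K)$ for all $t$. As $t\to \infty$, the conformal modulus of the annular support diverges, forcing the critical points on the two sides of $\gamma$ to separate to disjoint limits under any M{\"o}bius normalization, so $[R_t]\to \infty$ in $\mathcal{M}^-_d$.

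\emph{Main obstacle.} The harder direction is Part~1, where a uniform expansion bound coming only from inside the critical Fatou components must be promoted to a compactness statement on the entire Julia set. A promising approach is to construct equicontinuous quasiconformal conjugacies between $R_n$ and $\mathcal{R}_\Gamma$ whose dilatations are uniformly bounded thanks to the gasket structure, and then identify the limiting conjugacy with a M{\"o}bius map via Thurston rigidity. Ruling out partial algebraic limits, where only some of the critical Fatou components survive, is the crux: this is exactly where the gasket hypothesis is essential, because in the gasket case the critical points are ``rigidly linked'' via their shared repelling fixed points, so any collision of critical points would force at least one repelling fixed point to develop a degenerate multiplier, contradicting the $K$-bound.
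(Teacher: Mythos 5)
Both directions of your proposal have genuine gaps. For the direction ``polyhedral $\implies$ bounded,'' the mechanism you propose for ruling out degree drop does not work as stated. The $K$-bound in the definition of $\mathcal{H}_\Gamma(K)$ constrains multipliers of repelling fixed points \emph{in the uniformizing anti-Blaschke models}, not on the sphere; a collision of critical points in $\widehat{\C}$ does not directly force any model multiplier to blow up, so the claim that critical points are ``rigidly linked'' needs an actual argument. Moreover, your intended conclusion is false: subsequential limits of degenerating sequences in $\mathcal{H}_\Gamma(K)$ are generically \emph{not} critically fixed maps M{\"o}bius-conjugate to $\mathcal{R}_\Gamma$, but parabolic roots of dominating hyperbolic components $\mathcal{H}_{\Gamma^{En}}$ with $\Gamma^{En}>\Gamma$ (Theorem \ref{thm:cparm}), so the appeal to Thurston rigidity cannot close the argument. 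The paper instead builds quasi-fixed trees in each invariant Fatou component (Theorem \ref{thm:qit}), assembles them into an enriched Tischler graph, shows that polyhedrality forces every enrichment to be admissible (Lemma \ref{lem:pg}), and then proves that admissibility prevents degree drop via the ``every essential curve cuts at least $3$ edges'' property together with the fact that an anti-holomorphic fixed point admits at most two invariant directions (Lemmas \ref{lem:3e} and \ref{lem:degd}). None of this is captured by a two-sided expansion estimate at repelling fixed points.

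For the direction ``non-polyhedral $\implies$ unbounded,'' the deformation you describe does not exist. The marked hyperbolic component is parametrized by the product of the uniformizing anti-Blaschke product spaces ($\widetilde{\mathcal{H}}_\Gamma\cong \BP^-_{d_1}\times\cdots\times\BP^-_{d_m}$), so a deformation that ``acts on every critical Fatou component by a conformal reparametrization'' and ``preserves each uniformizing model'' is the trivial deformation: the point in moduli space does not move. (Indeed $\mathcal{R}_\Gamma$ is postcritically finite, hence quasiconformally rigid, and any invariant Beltrami differential must live on the grand orbit of the critical Fatou components.) Also note that vertices of $\Gamma$ correspond to faces of the Tischler graph, not to critical Fatou components, so the dual curve to a $2$-cut is not the object you describe. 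The standard pinching along fixed internal rays (Proposition \ref{prop:ub}) does produce divergence but precisely by blowing up model multipliers, so it exits every $\mathcal{H}_\Gamma(K)$. The actual construction must degenerate the anti-Blaschke products nontrivially (critical clusters separating inside a fixed Fatou component) while keeping all model fixed-point multipliers bounded; this is the content of the realization theorem for $(d+1)$-ended ribbon trees (Theorems \ref{thm:rdtr} and \ref{thm:cvr}), a nontrivial induction, combined with the existence of a non-admissible enrichment when $\Gamma$ has a $2$-cut (Lemma \ref{lem:pg}) and the divergence criterion of Theorem \ref{thm:cparm}.
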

We remark that the hyperbolic component of a critically fixed anti-rational map is never bounded (see Proposition \ref{prop:ub}).

We also remark that in light of Characterization~\ref{eqn:aclindrical}, Theorem \ref{thm:ab} can be rephrased as follows: the pared deformation space of $\mathcal{R}_\Gamma$ is bounded if and only if the pared deformation space of the corresponding kissing reflection group $G_\Gamma$ is bounded.
This is one of the many instances of the similarities shared by the two deformation spaces.

\subsection*{Interactions between deformation spaces}
While individual hyperbolic components as well as individual deformation spaces of Kleinian groups are well understood in various settings, the closure/boundary of such open sets are far more complicated.

Let $\Gamma, \Gamma'$ be a pair of $2$-connected, simple, plane graphs with the same number of vertices.
We say that $\Gamma'$ {\em dominates} $\Gamma$, denoted by $\Gamma' \geq \Gamma$, if there exists an embedding $i: \Gamma \xhookrightarrow{} \Gamma'$ as plane graphs sending vertices to vertices.
We use $\Gamma' > \Gamma$ to mean $\Gamma' \geq \Gamma$ and $\Gamma' \neq \Gamma$.

Thanks to Thurston's hyperbolization theorem, the deformation space $\mathcal{QC}_\Gamma$ for $G_\Gamma$ satisfies
\begin{align}\label{eqn:ic}
\mathcal{QC}_{\Gamma'} \subsetneqq \overline{\mathcal{QC}_\Gamma} \text{  if and only if } \Gamma' > \Gamma. 
\end{align}
We refer the readers to \cite[\S 3]{LLM22} for a more precise statement.

An anti-rational map $R$ is said to be {\em parabolic} if every critical point of $R$ lies in the Fatou set and $R$ contains at least one parabolic cycle. 
A parabolic map $[R] \in \partial \mathcal{H}_{\Gamma}$ is called a {\em root} of $\mathcal{H}_{\Gamma}$ if its Julia dynamics is topologically conjugate to that of maps in $\mathcal{H}_{\Gamma}$.
We say $\mathcal{H}_\Gamma(K)$ {\em parabolic bifurcates} to $\mathcal{H}_{\Gamma'}(K)$ if $\Gamma \neq \Gamma'$ and the intersection $\partial \mathcal{H}_{\Gamma}(K) \cap \partial \mathcal{H}_{\Gamma'}(K)$ contains a root of $\mathcal{H}_{\Gamma'}$.
We show that the pared deformation spaces for critically fixed anti-rational maps have a similar intersecting structure.

\begin{theorem}\label{thm:main}
Let $\Gamma, \Gamma'$ be a pair of $2$-connected, simple, plane graphs with the same number of vertices. For all large $K$, the pared deformation space $\mathcal{H}_\Gamma(K)$ parabolic bifurcates to $\mathcal{H}_{\Gamma'}(K)$ if and only if $\Gamma' > \Gamma$.
\end{theorem}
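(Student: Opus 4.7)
The overall strategy is to interpret parabolic bifurcations as pinching operations on the fixed internal ray structure of $\mathcal{R}_\Gamma$ which, by planar duality, correspond to edge additions in the graph $\Gamma$. This parallels the Kleinian-group picture recorded in \eqref{eqn:ic} and is driven by the dynamical correspondence of \cite{LLM20}. In the kissing-group setting, an added edge of the contact graph represents a new tangency (a rank-$1$ cusp); we seek the dynamical analog, namely a new parabolic cycle arising where two Fatou components of $\mathcal{R}_\Gamma$ become tangent along their boundaries.

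For the ``$\Leftarrow$'' direction, given an embedding $i : \Gamma \hookrightarrow \Gamma'$ of plane graphs sending vertices to vertices, I would first identify the set of extra edges $\mathcal{E} = E(\Gamma') \setminus i(E(\Gamma))$. Each such edge prescribes a pair of prefixed boundary points on Fatou components of $\mathcal{R}_\Gamma$ whose identification yields a new parabolic cycle. I would then carry out a standard pinching deformation in complex dynamics, producing a path $\{R_t\}_{t \in [0,1)} \subset \mathcal{H}_\Gamma$ whose limit $R_1$ identifies precisely those boundary pairs prescribed by $\mathcal{E}$. One then verifies: (i) the critical points of $R_1$ remain fixed, so $R_1$ is post-critically finite with parabolics; (ii) the only new parabolic cycles are those prescribed by $\mathcal{E}$; (iii) a comparison of Markov partitions arising from the fixed internal ray structure produces a topological conjugacy on Julia sets between $R_1$ and $\mathcal{R}_{\Gamma'}$, identifying $R_1$ as a root of $\mathcal{H}_{\Gamma'}$; and (iv) multipliers of non-pinched repelling fixed points stay uniformly bounded along the path, so that $R_1 \in \overline{\mathcal{H}_\Gamma(K)}$ for all $K$ larger than this uniform bound.

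For the ``$\Rightarrow$'' direction, suppose a root $R_0$ of $\mathcal{H}_{\Gamma'}$ lies on $\partial \mathcal{H}_\Gamma(K)$ and approach it by $R_n \in \mathcal{H}_\Gamma(K)$. Each $R_n$ has Tischler graph isomorphic to $\Gamma^\ast$, so the combinatorics of $R_n$ is governed by $\Gamma$. Passing to the limit, the Tischler structure degenerates by collapsing arcs precisely at the parabolic cycles of $R_0$; the quotient graph structure $\widetilde{\Gamma}$ so obtained satisfies $\Gamma \hookrightarrow \widetilde{\Gamma}$ as plane graphs sending vertices to vertices. On the other hand, since the Julia dynamics of $R_0$ is topologically conjugate to that of $\mathcal{R}_{\Gamma'}$, the uniqueness statement of \cite{LLM20}, extended to the parabolic setting, identifies $\widetilde{\Gamma}$ with $\Gamma'$. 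Since $R_0$ is genuinely parabolic, the embedding adds at least one edge, yielding $\Gamma' > \Gamma$.

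The main obstacle is executing the simultaneous pinching in the ``$\Leftarrow$'' direction: coordinating pinches across (possibly many) Fatou components so that exactly the prescribed parabolic cycles appear (with no extra ones being produced as collateral damage), while controlling the multipliers of the remaining repelling fixed points. This is where the pared structure (the bound $K$) becomes delicate, and where one must exploit the uniformizing anti-Blaschke models on each Fatou component in a careful and essentially equivariant way. A secondary subtlety in the ``$\Rightarrow$'' direction is the extension of the planar duality of \cite{LLM20} to parabolic limits, needed to conclude $\widetilde{\Gamma} = \Gamma'$ rather than merely $\widetilde{\Gamma} \supseteq \Gamma'$.
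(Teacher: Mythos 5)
Your overall strategy — realize $\Gamma' > \Gamma$ by a pinching degeneration inside $\mathcal{H}_\Gamma(K)$ whose limit is a root of $\mathcal{H}_{\Gamma'}$, and for the converse identify any boundary root's graph with some $\Gamma'' \geq \Gamma$ — is exactly the paper's. The ``$\Rightarrow$'' direction is essentially complete modulo the paper's Corollary \ref{cor:br} (every point of $\partial\mathcal{H}_\Gamma(K)$ is a root of some $\mathcal{H}_{\Gamma''}$ with $\Gamma''\geq\Gamma$) together with Thurston rigidity, which settles your worry about $\widetilde{\Gamma}=\Gamma'$ versus mere containment: a parabolic map cannot be a root of two distinct critically fixed hyperbolic components.

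The genuine gap is precisely the ``main obstacle'' you name and then defer: the existence and control of the simultaneous pinching is not a ``standard pinching deformation,'' and two things can fail without further work. First, the limit need not exist in $\mathcal{M}_d^-$ at all — the algebraic limit can drop degree. The paper rules this out (Lemma \ref{lem:degd}) using \emph{admissibility} of the enriched Tischler graph, i.e.\ that its planar dual is simple, which forces every essential curve to cut at least three edges; this is automatic here only because $\Gamma'$ is assumed simple and $2$-connected, and your proposal never engages with this dichotomy (Theorem \ref{thm:cparm} is exactly the convergence/divergence criterion). Second, membership in $\mathcal{H}_\Gamma(K)$ requires bounding the multipliers of \emph{all} repelling fixed points in each uniformizing anti-Blaschke model along the whole path, and your step (iv) asserts rather than proves this. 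The paper's substitute for ``standard pinching'' is the realization machinery of Sections \ref{sec:bab}--\ref{sec:rert}: degenerations in $\BP^-_d(K)$ are encoded by $(d+1)$-ended ribbon trees (Theorem \ref{thm:qit}), every such tree is realized by an explicit inductive construction adding one zero at a time with quantitative hyperbolic-geometry estimates (Theorem \ref{thm:cvr}), and these realizations are assembled across Fatou components via Lemma \ref{lem:rdetg} and Proposition \ref{prop:enriched_tischler_realized}. Without some version of that construction, the forward implication is a plausible plan rather than a proof.
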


It should be mentioned that in the group setting, the deformation spaces $\mathcal{QC}_\Gamma$ may have different dimensions for different graphs $\Gamma$ with the same number of vertices.
In the case $\Gamma' > \Gamma$, the whole deformation space $\mathcal{QC}_{\Gamma'}$ is contained in the closure $\overline{\mathcal{QC}_\Gamma}$, giving a `strata' structure to the space of all kissing reflection groups of a given rank. 

This is in stark contrast with the pared deformation spaces $\mathcal{H}_\Gamma(K)$, where all hyperbolic components corresponding to graphs $\Gamma$ with $d+1$ vertices have the same complex dimension $2d-2$.

On the other hand, the boundary $\partial \mathcal{H}_{\Gamma}(K)$ consists of roots of $\mathcal{H}_{\Gamma'}$ for $\Gamma' \geq \Gamma$ (see Corollary \ref{cor:br}).
Conjecturally, these roots form different dimensional subsets of $\partial \mathcal{H}_{\Gamma}(K)$, and a strata structure emerges.
The precise intersection structure between the closure of roots of different hyperbolic components remains mysterious, and is worth further investigations.

Note that $\geq$ gives a partial order on the space of all $2$-connected simple plane graphs.
It is easy to check that the space is connected under this partial order $\geq$.
Thus, we have
\begin{cor}
For all large $K$, the union of the closures of pared deformation spaces of degree $d$ critically fixed anti-rational maps 
$$
\displaystyle\bigcup_{\Gamma} \overline{\mathcal{H}_\Gamma(K)}  \subset \mathcal{M}_d^- \text{ is connected.}
$$

Similarly, the union of the closures of hyperbolic components of degree $d$ critically fixed anti-rational maps $\displaystyle\bigcup_{\Gamma} \overline{\mathcal{H}_\Gamma}$ is connected.
\end{cor}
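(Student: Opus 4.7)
The plan is to bootstrap from Theorem \ref{thm:main} after first verifying the combinatorial assertion stated just before the corollary: the comparability graph of the poset $\mathcal{P}_d$ of $2$-connected simple plane graphs on $d+1$ vertices is connected under $\geq$. With that in hand, each comparability relation translates into a nonempty intersection of closures, and the corollary will follow by chaining.

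For the combinatorial step, given any $\Gamma \in \mathcal{P}_d$ I would add non-crossing chords face by face to produce a triangulation $T(\Gamma)$ of $S^2$ on the same vertex set; since adding edges only raises connectivity, this places $\Gamma$ under some element $T(\Gamma)$ of $\mathcal{P}_d$. Simple triangulations of $S^2$ on at least four vertices are $3$-connected, so removing any single edge $e$ from such a triangulation $T$ leaves a graph still in $\mathcal{P}_d$; in particular a diagonal flip $T \rightsquigarrow T'$ factors through the zigzag $T \geq T \setminus e \leq T'$ inside $\mathcal{P}_d$. Since Wagner's theorem guarantees that any two triangulations of $S^2$ on the same vertex set are connected by a finite sequence of diagonal flips, concatenation yields a finite comparability chain in $\mathcal{P}_d$ from any $\Gamma$ to any $\Gamma'$.

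Theorem \ref{thm:main} applied to each strict step $\Gamma < \Gamma'$ in such a chain produces, for $K$ sufficiently large, a root of $\mathcal{H}_{\Gamma'}$ in $\partial \mathcal{H}_\Gamma(K) \cap \partial \mathcal{H}_{\Gamma'}(K)$, so in particular $\overline{\mathcal{H}_\Gamma(K)} \cap \overline{\mathcal{H}_{\Gamma'}(K)} \neq \emptyset$. Each $\overline{\mathcal{H}_\Gamma(K)}$ is itself connected, being the closure of the connected component $\mathcal{H}_\Gamma(K)$, so the comparability chain exhibits any two prescribed closures $\overline{\mathcal{H}_\Gamma(K)}$ and $\overline{\mathcal{H}_{\Gamma'}(K)}$ as lying in a common connected subset of $\bigcup_\Gamma \overline{\mathcal{H}_\Gamma(K)}$; this establishes the first claim. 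The second assertion then follows immediately from the inclusions $\overline{\mathcal{H}_\Gamma(K)} \subseteq \overline{\mathcal{H}_\Gamma}$ and the connectedness of each $\overline{\mathcal{H}_\Gamma}$. I do not anticipate a genuine obstacle; the only substantive content is the comparability-chain lemma in $\mathcal{P}_d$, which reduces cleanly to Wagner's theorem once one notes that every element of $\mathcal{P}_d$ completes to a triangulation.
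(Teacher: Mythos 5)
Your proof is correct and follows the same route as the paper: the paper simply remarks that the poset of $2$-connected simple plane graphs on $d+1$ vertices is connected under $\geq$ and then invokes Theorem \ref{thm:main} together with the chaining of connected closures. The only difference is that you supply the combinatorial details (completion to a triangulation, Wagner's diagonal-flip theorem, and the zigzag $T \geq T\setminus e \leq T'$) that the paper dismisses as ``easy to check,'' and these details are sound.
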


We remark that using our techniques, one can show that $\displaystyle\bigcup_{\Gamma} \overline{\mathcal{H}_\Gamma(K)}$ and $\displaystyle\bigcup_{\Gamma} \overline{\mathcal{H}_\Gamma}$ are both path connected.

\subsection*{Monodromy representations}

Let $\mathfrak{M}_n$ be the moduli space of marked circle packings with $n$ circles in $\C$.
This space is closely related to the union of all deformation spaces of kissing reflection groups of rank $n$.
Let $\mathfrak{S}_n$ be the configuration space of $n$ marked points in $\C$.
There exists a natural map $\Psi: \mathfrak{M}_n \longrightarrow \mathfrak{S}_n$ which associates the centers of the circles to a marked circle packing.
In \cite{HT}, Hatcher and Thurston proved that the map $\Psi$ induces a homotopy equivalence between $\mathfrak{M}_n$ and $\mathfrak{S}_n$.

In the setting of anti-rational maps, the circles in the circle packings correspond to the pieces of suitable {\em Markov partitions} of Julia sets determined by Tischler graphs (see Section~\ref{sec:mr}). The braiding in the parameter space is manifested in the monodromy representations of these Markov partition pieces.

Let $\mathcal{H}_{\Gamma, Rat}(K) \subset \mathcal{H}_{\Gamma, Rat} \subset \Rat_d^-(\C)$ be the lifts (i.e., preimages under the projection map $\Rat_d^-(\C) \longrightarrow \mathcal{M}_d^-$) of $\mathcal{H}_\Gamma(K) \subseteq \mathcal{H}_\Gamma \subset \mathcal{M}_d^-$. 
Define
$$
\mathcal{X}_d(K) := \displaystyle\bigcup_{\Gamma} \overline{\mathcal{H}_{\Gamma, Rat}(K)} \subset \Rat_d^-(\C).
$$

In the case of {\em stable} families of rational maps, one can construct a monodromy representation by tracing periodic points (see \cite{McM88b}).
Note that $\mathcal{X}_d(K)$ is not stable. 
This forces us to adopt a different route to construct a monodromy representation. 

The Julia set of any map $R\in \mathcal{X}_d(K)$ admits a Markov partition consisting of $d+1$ pieces.
We prove that the Markov pieces move `continuously' away from the grand orbits of fixed points or $2$-cycles.
This allows us to construct a monodromy representation of $\mathcal{X}_d(K)$ into $\Mod(S_{0,d+1})$, the mapping class group of the $d+1$-punctured sphere. We show that

\begin{theorem}\label{thm:mono}
Let $d\geq 3$. For all large $K$, the monodromy representation
$$
\rho: \pi_1(\mathcal{X}_d(K))  \twoheadrightarrow \Mod(S_{0,d+1}) \text{ is surjective}.
$$
\end{theorem}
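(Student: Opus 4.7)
The plan is to first set up $\rho$ via the motion of Markov pieces, then realize a generating set of $\Mod(S_{0,d+1})$ as the images of explicit loops encircling parabolic bifurcation hypersurfaces, exploiting the interaction structure of Theorem~\ref{thm:main}.

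I would fix a polyhedral base graph $\Gamma_0$ and take the base point $R_0 = \mathcal{R}_{\Gamma_0}$; its $d+1$ Markov pieces form a distinguished decomposition of $\widehat{\C}$ into topological disks. According to the continuity statement announced just before the theorem, away from a codimension-one set of parabolic parameters these pieces move continuously with $R$, and the analysis at the grand orbits of fixed points or $2$-cycles allows one to extend this continuity across parabolic loci (at the cost of specifying how two touching pieces split apart). Consequently, every loop $\gamma$ in $\mathcal{X}_d(K)$ based at $R_0$ returns the Markov partition of $R_0$ to a configuration isotopic to the original as an unordered family of disks; collapsing each piece to a puncture turns this isotopy class of self-homeomorphisms of $\widehat{\C}$ into an element $\rho(\gamma) \in \Mod(S_{0,d+1})$.

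For surjectivity I would recall that, for $d \geq 3$, the group $\Mod(S_{0,d+1})$ is generated by the half-twists $\tau_{ij}$ about embedded arcs between pairs of punctures; in fact any set of standard braid generators $\sigma_1,\dots,\sigma_d$ associated with a fixed cyclic arrangement already suffices. The goal is therefore to realize each $\tau_{ij}$ as $\rho(\gamma_{ij})$ for some loop $\gamma_{ij}$. Given a $2$-connected simple plane graph $\Gamma$ with $d+1$ vertices and an edge $e \in \Gamma$ joining vertices $i$ and $j$ such that $\Gamma \smallsetminus e$ is still $2$-connected, Theorem~\ref{thm:main} supplies a codimension-one parabolic bifurcation locus $\mathcal{S}_e \subset \partial \mathcal{H}_\Gamma(K) \cap \partial \mathcal{H}_{\Gamma \smallsetminus e}(K)$ along which the Fatou components labelled $i$ and $j$ kiss at a parabolic fixed point. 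I would take $\gamma_{ij}$ to be a small loop in $\mathcal{X}_d(K)$ linking $\mathcal{S}_e$; a parabolic implosion computation should identify $\rho(\gamma_{ij})$ with the half-twist that rotates the two touching pieces by $180^\circ$ about the parabolic cusp. Since for any pair $(i,j)$ one can always find such a $\Gamma$ (using that $d \geq 3$ leaves enough room to keep $\Gamma \smallsetminus e$ $2$-connected), the image of $\rho$ contains every $\tau_{ij}$ and is therefore all of $\Mod(S_{0,d+1})$.

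The hardest step, I expect, is the final local identification: showing that the monodromy about $\mathcal{S}_e$ is exactly a half-twist and not, say, the identity or a full Dehn twist. This reduces to analyzing a one-parameter parabolic bifurcation, and can be attacked either directly via Ecalle-Voronin / parabolic implosion arguments on the anti-rational side, or by transporting the computation to the Kleinian side through the dictionary of \cite{LLM20} and matching it with the analogous Hatcher-Thurston surjection $\pi_1(\mathfrak{M}_{d+1}) \twoheadrightarrow \Mod(S_{0,d+1})$. Making this identification of loops precise would not only settle the half-twist claim but also essentially reduce Theorem~\ref{thm:mono} to the Hatcher-Thurston theorem, giving a clean structural proof.
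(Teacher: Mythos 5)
Your setup of $\rho$ is essentially the paper's (the paper marks each Markov piece by an interior periodic point of period $\geq 3$ rather than tracking the pieces as unordered disks, precisely because adjacent pieces always touch at fixed points and $2$-cycles; but this is a cosmetic difference), and reducing surjectivity to realizing half-twists is also the right target. The genuine gap is in how you propose to realize them. A ``small loop in $\mathcal{X}_d(K)$ linking $\mathcal{S}_e$'' does not obviously exist: $\mathcal{X}_d(K)$ is a union of closures of pared deformation spaces, i.e.\ closed regions with boundary inside $\Rat_d^-(\C)$, not an open set minus a divisor, so a small meridian of the parabolic locus leaves $\mathcal{X}_d(K)$. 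To close up a loop inside $\mathcal{X}_d(K)$ you must enter $\overline{\mathcal{H}_{\Gamma}(K)}$ through one root, traverse that component, and exit through a \emph{different} access -- and the existence of two distinct accesses near a given parabolic point is exactly the self-bump question, which the paper does not get for free (it is established in Appendix B only in special cases, using rigidity of real unicritical parabolic rescaling limits; in other configurations the paper explicitly says it cannot show two accesses share an accumulation point). Your proposal assumes the local structure needed for the loop before proving it.

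The paper's route avoids both this and the parabolic implosion computation you flag as the hardest step. It builds each generator as a concatenation of explicit path segments: from $\bar z^d$ to a rigid real-parabolic root of an adjacent component (via the realization theorems for pointed metric ribbon trees, Theorem~\ref{thm:cvre} and Proposition~\ref{prop:rabl}), from that root to the center of the adjacent component (simple pinching), back out through a second root, and back to $\bar z^d$; rigidity of the real parabolic boundary maps is what glues consecutive segments at the same boundary point. The half-twist is then read off combinatorially from the two degeneration diagrams (the two embeddings of the polygon graph into the enriched graph), with no Ecalle--Voronin analysis. Your suggested alternative of transporting the computation to the Kleinian side is not available either: the dictionary of \cite{LLM20} matches dynamical systems, not parameter spaces, and the theorem is an analogue of Hatcher--Thurston rather than a consequence of it. To repair your argument you would need to (i) replace the ``small linking loop'' by an explicit closed path through the center of $\mathcal{H}_\Gamma$ with controlled entry and exit accesses, and (ii) compute the resulting permutation/braiding of the marked periodic points along that path.
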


The above theorem indicates that the union of the closures of hyperbolic components of critically fixed anti-rational maps has a non-trivial global topology.
We also remark that we do not know the kernel of this monodromy representation (cf. \cite{BDK91}).

\subsection*{Outline of the paper}
In \S \ref{sec:bab} and \ref{sec:rert}, we study pared deformation spaces of degree $d$ anti-Blaschke products.
In \S \ref{sec:cad}, we use anti-Blaschke products to study pared deformation spaces for critically fixed anti-rational maps, and give a characterization for convergence of degenerations.
This allows us to prove Theorem \ref{thm:ab} and Theorem \ref{thm:main}.
The monodromy representation is studied in Section \ref{sec:mr}, where Theorem \ref{thm:mono} is proved.
Finally, in Appendices \ref{sec:pb} and \ref{sec:ve}, we study how the topology of the Julia sets of critically fixed anti-rational maps are related, and illustrate some consequences of our theory to the shared mating and self-bump phenomena.

\subsection*{Notes and references}

Early results on unboundedness of hyperbolic components were obtained in \cite{Makienko00,Tan02}. On the other hand, various hyperbolic components of quadratic rational maps were shown to be bounded in \cite{Epstein00}.

Our paper uses the technique of rescaling limits developed in \cite{Kiwi15}. Similar ideas were already used implicitly in an earlier work of Shishikura \cite{Shi}. Related studies can also be found in \cite{Arf1,Arf2}. Using the notion of rescaling limits, existence of unbounded hyperbolic components in a family of cubic rational maps was proved in \cite{BBM}, and some boundedness results in higher degrees were obtained by Nie and Pilgrim in \cite{NP18, NP19}. 

The pared deformation space we consider can be generalized to the case of periodic cycles \cite{L21a, L21b}. 
Many techniques used in this paper can be adapted to prove boundedness/unboundedness of such more general pared deformation spaces.

\noindent\textbf{Acknowledgements.} The authors are grateful to the anonymous referee for their valuable input.

\section{Degeneration of anti-Blaschke products}\label{sec:bab}
Let $\mathcal{R}_\Gamma$ be a critically fixed anti-rational map and let $\mathcal{H}_\Gamma \subset \mathcal{M}^-_d$ be the associated hyperbolic component.
Let $U_i, i=1,\cdots, m$ be a list of invariant critical Fatou components of $R \in \mathcal{H}_\Gamma$.
Then $U_i$ is a simply connected domain such that $R|_{U_i}$ is conformally conjugate to a degree $d_i$ proper anti-holomorphic map on $\D$, i.e., a degree $d_i$ anti-Blaschke product.
In fact, with appropriate marking, $\mathcal{H}_\Gamma$ is parametrized by a product of marked anti-Blaschke products (see \S \ref{subsec:pds}).

In this and the following section, we study degenerations of anti-Blaschke products.
We shall use it to understand deformations in $\mathcal{H}_\Gamma$ in \S \ref{sec:cad} and \S \ref{sec:mr}.
The key objectives of this section are
\begin{itemize}
\item to define {\em pared deformation spaces} for anti-Blaschke products (Definition \ref{defn:pdfb}), and to relate them to {\em quasi critically fixed deformations} (Definition \ref{defn:qfd} and Proposition \ref{prop:mc});
\item to construct {\em quasi-fixed trees} as combinatorial models to describe degenerations in pared deformation spaces (Theorem \ref{thm:qit}).
\end{itemize}
\
In \S \ref{sec:rert}, we will prove that every such model is realized (Theorem \ref{thm:rdtr}).
This will give a complete combinatorial description of degenerations in pared deformation spaces.

\subsection{Pared deformation space of anti-Blaschke products}
Let 
$$
\BP^{-}_d:= \{f(z) = \bar{z}\prod_{i=1}^{d-1} \frac{\bar{z}-a_i}{1-\overline{a_iz}}: |a_i| < 1\}
$$
be the space of normalized marked hyperbolic anti-Blaschke products.
Each map $f\in \BP^{-}_d$ has an attracting fixed point at $0$.
Let $m_{-d} (z):= \overline{z}^d$.
We shall identify $\mathbb{S}^1$ with $\R/\Z$. Under this identification, we have that $m_{-d}(t) = -d\cdot t$.
Note that for each $f\in \BP^-_d$, there exists a unique quasisymmetric homeomorphism $\eta_f:\mathbb{S}^1 \longrightarrow \mathbb{S}^1$ with
\begin{enumerate}
\item $\eta_f \circ m_{-d} = f \circ \eta_f$, and
\item $f\mapsto \eta_f$ is continuous on $\BP^-_d$ with $\eta_{m_{-d}} = \mathrm{id}$.
\end{enumerate}
The conjugacy $\eta_f$ provides a marking of the repelling periodic points for $f \in \mathcal{B}^-_d$.
We will call $\eta_f(1)$ the {\em marked repelling fixed point} for $f \in \mathcal{B}^-_d$.

Let $\mathcal{F}$ be the set of repelling fixed points for $m_{- d}$.
Then (the logarithm of the absolute value of) the multiplier of the repelling fixed point of $f$ corresponding to $x\in \mathcal{F}$ is 
$$
L_f(x) := \log |f'(\eta_f(x))|.
$$
Note that $f'$ is understood as $\frac{\partial}{\partial \overline{z}} f$.

As discussed in \S \ref{subsec:tct}, to define the analog of Thurston's deformation space of pared manifolds in the rational map setting, we need to control the multipliers of the repelling fixed points.

Thus, we define the following \emph{pared deformation space}.
\begin{defn}\label{defn:pdfb}
Let $K >0$. We define
$$
\BP^-_d(K):=\{f\in \BP^-_d: L_f(x) \leq K \text{ for all } x\in \mathcal{F}\}.
$$
We denote by $\mathring{\BP}^-_d(K)$ the component of $\BP^-_d(K)$ that contains $\bar z^d$.
\end{defn}

Let $\D$ be the unit disk equipped with the hyperbolic metric $d_\D$. 
For $f\in \BP_d^-$, we denote the set of all critical points of $f$ in $\D$ by $C(f)$.
Constraining the multipliers of repelling fixed points of $f$ is closely related to controlling the displacements of its critical points. 
To make this connection precise, we define
\begin{defn}\label{defn:qfd}
A degree $d$ anti-Blaschke product $f \in \BP^-_d$ is said to be {\em $M$-quasi critically fixed}, for $M>0$, if for all $c \in C(f)$, 
$$
d_{\D}(c, f(c)) \leq M.
$$
We denote the space of all $M$-quasi critically fixed anti-Blaschke products by $\BP^-_{d,qf}(M)$, and the component containing $\bar{z}^d$ by $\mathring{\BP}^-_{d,qf}(M)$.
\end{defn}
We first show that the above two notions are quasi equivalent.
\begin{prop}\label{prop:mc}
For each $K$, there exists $M = M(d, K)$ such that 
$$
\BP^-_d(K) \subseteq \BP^-_{d, qf}(M).
$$

Conversely, for each $M$, there exists $K' = K'(d, M)$ such that 
$$
\BP^-_{d, qf}(M)\subseteq \BP^-_d(K').
$$
\end{prop}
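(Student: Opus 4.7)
The plan is to analyze how sequences $f_n \in \BP^-_d$ leave compact subsets and to show that the two continuous functions $\Phi(f) := \max_{x \in \mathcal{F}} L_f(x)$ and $\Psi(f) := \max_{c \in C(f)} d_\D(c, f(c))$ blow up together. Since $\BP^-_d$ is naturally identified with the polydisc $\D^{d-1}$ via $(a_1,\ldots,a_{d-1})$, any divergent sequence has some $a_i^{(n)} \to e^{i\theta_i} \in \mathbb{S}^1$ (after extracting a subsequence). Writing $f_n(z) = F_n(\bar z)$ for the associated holomorphic Blaschke product $F_n$, the degenerating factor $(w - a_i^{(n)})/(1 - \overline{a_i^{(n)}} w)$ converges to the constant $-e^{i\theta_i}$ locally uniformly on $\D \setminus \{e^{i\theta_i}\}$, so $F_n \to F^*$ of strictly lower degree $d^* < d$ off the set $W$ of degenerating boundary points.

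The heart of the argument is a dichotomy for the behavior of the escaping critical points. By a Riemann--Hurwitz count applied to $F^*$, each critical point of $f_n$ that leaves every compact subset of $\D$ must accumulate on $\{e^{-i\theta_i}\}$; fix such a limit $c_n \to c^*$. Either (Type I) $c^*$ is a fixed point of $f^*$, in which case $f_n(c_n) \to c^* = f^*(c^*)$ and the dynamics of $f_n$ in a horodisk at $c^*$ is asymptotically parabolic, yielding simultaneously that $d_\D(c_n, f_n(c_n))$ stays bounded and that the multiplier at the associated fixed point of $f_n$ tends to $1$; or (Type II) $c^*$ is not a fixed point of $f^*$, in which case $f_n(c_n) \to f^*(c^*) \neq c^* \in \mathbb{S}^1$ forces $d_\D(c_n, f_n(c_n)) \to \infty$. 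In Type II, since $f_n|_{\mathbb{S}^1}$ has $d+1$ fixed points while $f^*|_{\mathbb{S}^1}$ has only $d^* + 1 < d + 1$, a pigeonhole applied to the marked fixed points $\eta_{f_n}(\mathcal{F})$ forces at least one fixed point of $f_n$ to migrate into the concentration region around some $e^{i\theta_j}$, where the degenerating Blaschke factor has derivative of order $1/(1 - |a_j^{(n)}|) \to \infty$ on $\mathbb{S}^1$; evaluating $|f_n'|$ at this fixed point via the chain rule then yields $L_{f_n}(x_n) \to \infty$.

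Both inclusions of Proposition~\ref{prop:mc} follow from this dichotomy by contradiction. For the first, a hypothetical sequence $f_n \in \BP^-_d(K)$ with $\Psi(f_n) \to \infty$ must escape every compact subset of $\BP^-_d$ by continuity of $\Psi$; its degeneration cannot be purely of Type I (which keeps $\Psi$ bounded), hence it realizes Type II, forcing $\Phi(f_n) \to \infty$ and contradicting $L_f \leq K$. The second inclusion is symmetric, using that only Type II produces unbounded multipliers. I expect the main obstacle to be the Type II multiplier blow-up: one must track fixed points of $f_n$ under the marking $\eta_{f_n}$ (which itself degenerates), invoke a degree-counting/pigeonhole step to guarantee that at least one marked fixed point lies in the concentration region near a wild point, and extract a quantitative lower bound on $|f_n'|$ there directly from the explicit form of the degenerating Blaschke factor. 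The remaining ingredients -- horodisk estimates in Type I and the continuity of $\Phi$, $\Psi$ -- are comparatively standard.
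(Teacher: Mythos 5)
Your overall strategy---showing that $\max_{c}d_{\D}(c,f(c))$ and $\max_{x}L_f(x)$ blow up together along degenerating sequences---matches the paper's, but the mechanism you propose for detecting the blow-up does not work, because you only use the algebraic limit of $f_n$ based at the origin. That limit converges locally uniformly only away from its holes (Lemma \ref{lem:ac}), and the escaping critical points accumulate precisely at the holes, so the evaluations underlying your dichotomy are not available. In Type I you infer from $c_n\to c^*$ and $f_n(c_n)\to c^*$ that $d_{\D}(c_n,f_n(c_n))$ stays bounded, but two sequences tending to the same Euclidean boundary point can have hyperbolic distance tending to infinity (e.g.\ $|c_n|=1-1/n$, $|f_n(c_n)|=1-1/n^2$); ruling out exactly this configuration---a critical point drifting to the boundary with unbounded hyperbolic displacement while both it and its image converge to the same boundary fixed point of the naive limit---is the entire content of the first inclusion, and such configurations do occur (they are the ones where the multiplier is forced to blow up). In Type II the identification $f_n(c_n)\to f^*(c^*)$ is likewise unjustified since $c^*$ is a hole. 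Moreover the single limit at the origin cannot see the nested structure of critical clusters (several critical points escaping toward the same boundary point at different rates), which is what actually governs both quantities.

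The paper repairs this by taking rescaling limits based at the critical points themselves: for a \emph{furthest active} critical cluster one conjugates by $L_n,M_n\in\Isom(\D)$ with $L_n(0)=c_{1,n}$ and $M_n(0)=f_n(c_{1,n})$, obtains a limit of degree $\geq 2$, and then either produces an approximate fundamental annulus of unbounded modulus around a repelling fixed point (forcing $L_{f_n}\to\infty$) or contradicts the extremality of the chosen cluster via Lemma \ref{lem:ch}. Your pigeonhole step for the multiplier blow-up would also need quantification: knowing that fixed points of $f_n$ collide near a hole does not force $|f_n'|$ to be large there unless the collision happens at scale comparable to $1-|a_j^{(n)}|$, which your argument does not establish. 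The converse inclusion is not ``symmetric'' in your framework either; in the paper it is a separate rescaling argument at a critical cluster chosen so that the limits of the tracked fixed points avoid the holes, whence the multipliers converge.
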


The proof will be given after some qualitative and quantitative bounds for anti-Blaschke products are established.

\subsection*{Algebraic limit.}
Let $\Rat_d^-$ denote the space of anti-rational maps of degree $d$.
By fixing a coordinate system, an anti-rational map can be expressed as a ratio of two homogeneous anti-polynomials $f(z:w) = (P(z,w):Q(z,w))$, where $P$ and $Q$ have degree $d$ with no common divisors.
Thus, using the coefficients of $P$ and $Q$ as parameters, we have
$$
\Rat_d^- = \Proj^{2d+1} \setminus V(\Res),
$$ 
where $\Res$ is the resultant of the two polynomials $P$ and $Q$, and $V(\mathrm{Res})$ is the hypersurface for which the resultant vanishes.
This embedding gives a natural compactification $\overline{\Rat_d^-} = \Proj^{2d+1}$, which will be called the {\em algebraic compactification}.
Maps $f\in \overline{\Rat_d^-}$ can be written as
$$
f = (P:Q) = (Hp: Hq),
$$
where $H = \gcd(P, Q)$.
We set $$\varphi_f := (p:q),$$ which is an anti-rational map of degree at most $d$.
The zeroes of $H$ in $\hat\C$ are called the {\em holes} of $f$, and the set of holes of $f$ is denoted by $\mathcal{H}(f)$.

If $\{f_n\}\subset \Rat_d^-$ converges to $f \in \overline{\Rat_d^-}$, we say that $f$ is the {\em algebraic limit} of the sequence $\{f_n\}$. 
It is said to have degree $k$ if $\varphi_f$ has degree $k$.
Abusing notation, sometimes we shall refer to $\varphi_f$ as the algebraic limit of $\{f_n\}$. 
A straightforward modification of \cite[ Lemma~4.2]{DeM05} to the anti-holomorphic case yields the following result.

\begin{lem}\label{lem:ac}
If $f_n$ converges to $f$ algebraically, then $f_n$ converges to $\varphi_f$ uniformly on compact subsets of $\widehat{\C}\setminus\mathcal{H}(f)$.
\end{lem}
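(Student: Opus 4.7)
The plan is to mimic the standard proof of DeMarco's lemma in homogeneous coordinates; the antiholomorphic nature of the maps enters only as complex conjugation of coefficients and plays no role in the norm estimates that drive the argument. First I would lift each $f_n$ to a pair $(P_n, Q_n)$ of homogeneous degree-$d$ antipolynomials, with the full coefficient vector in $\C^{2d+2}$ normalized to have unit length. Algebraic convergence $(P_n : Q_n) \to (P : Q) = (Hp : Hq)$ in $\Proj^{2d+1}$ then means, after a uniform rescaling, that $(P_n, Q_n) \to (Hp, Hq)$ coefficientwise, and hence uniformly on the unit sphere $S^3 = \{(z,w) \in \C^2 : |z|^2 + |w|^2 = 1\}$.

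Next I would compare $f_n(z) = (P_n(z,w) : Q_n(z,w))$ with $\varphi_f(z) = (p(z,w) : q(z,w))$ via the Fubini--Study formula
\[
d_{\mathrm{sph}}\bigl((a:b),(c:d)\bigr) \;=\; \frac{|ad - bc|}{\sqrt{|a|^2+|b|^2}\,\sqrt{|c|^2+|d|^2}},
\]
which induces the spherical metric on $\widehat{\C} = \Proj^1$ up to a constant. The numerator becomes $|P_n q - Q_n p|$; writing $P_n = Hp + \varepsilon_n^{(1)}$ and $Q_n = Hq + \varepsilon_n^{(2)}$ with $\varepsilon_n^{(i)} \to 0$ uniformly on $S^3$, it collapses to $|\varepsilon_n^{(1)} q - \varepsilon_n^{(2)} p|$, which tends to zero uniformly on all of $\widehat{\C}$.

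For the denominator, $|p|^2 + |q|^2$ is bounded below uniformly on the compact $\widehat{\C}$ because $p$ and $q$ are coprime (as $H = \gcd(P,Q)$). On any compact $K \subset \widehat{\C} \setminus \mathcal{H}(f)$ the factor $|H|$ is bounded below by some $\delta > 0$, so $|P|^2 + |Q|^2 = |H|^2(|p|^2 + |q|^2)$ is bounded below on $K$, and the uniform coefficient convergence $(P_n, Q_n) \to (P, Q)$ transfers this lower bound to $|P_n|^2 + |Q_n|^2$ for all sufficiently large $n$. Combining the two estimates gives $d_{\mathrm{sph}}(f_n(z), \varphi_f(z)) \to 0$ uniformly on $K$, which is the required conclusion.

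The main, and rather minor, obstacle is keeping track of the distinction between $z$ and $\bar z$ in the Fubini--Study computation, but since only moduli of polynomial values enter the formula this is purely cosmetic; no new analytic difficulty arises compared to DeMarco's original holomorphic argument.
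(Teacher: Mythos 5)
Your argument is correct and is essentially the paper's own approach: the paper simply cites \cite[Lemma~4.2]{DeM05} and notes that the anti-holomorphic case is a straightforward modification, and your homogeneous-coordinate estimate (numerator $|P_nq-Q_np|=|\varepsilon_n^{(1)}q-\varepsilon_n^{(2)}p|\to 0$ on $S^3$, denominator bounded below away from the holes since $p,q$ are coprime and $|H|\geq\delta$ on the lift of $K$) is precisely DeMarco's proof, with conjugation of the variables being harmless as you observe.
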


A similar proof as in \cite[Lemma 4.5]{DeM05} gives
\begin{lem}\label{lem:ch}
If $f_n$ converges algebraically to $f$ of degree $\geq 1$ and $a\in \mathcal{H}(f)$ is a hole, then there exists a sequence of critical points $c_n$ of $f_n$ converging to $a$.
\end{lem}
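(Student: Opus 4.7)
The plan is to mimic the holomorphic argument in \cite[Lemma~4.5]{DeM05} after replacing the usual Wronskian with its anti-holomorphic analogue. Write $f_n = (P_n : Q_n)$ with $P_n, Q_n$ anti-homogeneous of degree $d$, and let $f = (P : Q) = (Hp : Hq)$ with $H = \gcd(P, Q)$ and $\varphi_f = (p:q)$ of degree $k = \deg \varphi_f \geq 1$. Algebraic convergence $f_n \to f$ means that (after a choice of normalization) $P_n \to P$ and $Q_n \to Q$ coefficient-wise.

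First, I would define the anti-Wronskian
\[
W(P, Q) := (\partial_{\bar z} P)(\partial_{\bar w} Q) - (\partial_{\bar w} P)(\partial_{\bar z} Q),
\]
which is anti-homogeneous of degree $2d-2$, and note the standard fact that $c \in \widehat{\C}$ is a critical point of $g = (P:Q) \in \Rat_d^-$ if and only if $W(P, Q)(c) = 0$ (this is the usual Wronskian criterion applied after conjugation in the target, together with the chain rule for $\partial_{\bar z}$). In particular, the $2d-2$ critical points of $f_n$, counted with multiplicity, are precisely the zeros of $W_n := W(P_n, Q_n)$ in $\widehat{\C}$.

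Next, I would carry out the product-rule computation
\[
W(Hp, Hq) = H \cdot \Bigl[ H (p_{\bar z} q_{\bar w} - p_{\bar w} q_{\bar z}) + H_{\bar z} (p q_{\bar w} - p_{\bar w} q) + H_{\bar w} (p_{\bar z} q - p q_{\bar z}) \Bigr],
\]
which shows that $W(P, Q)$ is divisible by $H$. Consequently every zero of $H$ (that is, every hole of $f$) is a zero of $W(P, Q)$ with multiplicity at least its multiplicity as a zero of $H$.

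Finally, I would run a Hurwitz argument: coefficient-wise convergence $P_n \to P$, $Q_n \to Q$ gives $W_n \to W$ uniformly on $\widehat{\C}$. Since $W$ is an anti-polynomial, passing to $z \mapsto \overline{W(\bar z)}$ reduces the standard Hurwitz theorem to the holomorphic setting. Fix a hole $a \in \mathcal{H}(f)$, and choose a neighborhood $U$ of $a$ so small that $\overline{U}$ contains no zero of $W$ other than $a$. Hurwitz's theorem then guarantees that for all sufficiently large $n$, $W_n$ has at least one zero $c_n \in U$. Shrinking $U$ and passing to a diagonal subsequence produces critical points $c_n$ of $f_n$ with $c_n \to a$, as required.

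The only nontrivial step is the Wronskian divisibility computation, but this is a mechanical application of the product rule; all other ingredients (convergence of coefficients, Hurwitz) are standard and transport immediately from the holomorphic to the anti-holomorphic setting via complex conjugation.
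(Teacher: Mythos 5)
Your proof is correct and is exactly the argument the paper relies on: the paper's ``proof'' of this lemma is a one-line citation of \cite[Lemma 4.5]{DeM05}, whose proof is precisely this anti-Wronskian divisibility computation ($H \mid W(P,Q)$) followed by Hurwitz, transported to the anti-holomorphic setting via conjugation. The only point worth making explicit is where the hypothesis $\deg \varphi_f \geq 1$ enters: it guarantees that $p$ and $q$ are not proportional, hence $W(P,Q) \not\equiv 0$, which is what allows you to isolate the zero of $W$ at $a$ before applying Hurwitz (if $\varphi_f$ were constant, $W$ would vanish identically and the argument would collapse).
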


\subsection*{Qualitative bounds for anti-Blaschke products.}
Note that by Schwarz reflection, any proper anti-holomorphic map on $\D$ extends uniquely to an anti-rational map on $\hat\C$.
This perspective is useful in many settings.
In this paper, we shall freely view them as anti-rational maps by an abuse of notation.

The normalization $f(0) = 0$ imposed on our anti-Blaschke products is useful essentially because of the following lemma (cf. \cite[Proposition~10.2]{McM09}).

\begin{lem}\label{deg_at_least_1_lem}
Any sequence $\{f_n\} \subset \BP^-_d$ has a subsequence converging algebraically to $f\in\overline{\Rat_d^-}$ where $e^{i\theta}\varphi_f \in \BP^-_k$ for some $\theta\in\R$ and $1\leq k \leq d$.

More generally, let $K \geq 0$. Any sequence of degree $d$ proper anti-holomorphic maps $f_n: \D\longrightarrow \D$ with $d_{\D}(0, f_n(0)) \leq K$ has a subsequence converging algebraically to $f\in\overline{\Rat_d^-}$ with degree $\geq 1$.
\end{lem}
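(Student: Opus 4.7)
The plan is to use compactness of the algebraic compactification $\overline{\Rat_d^-} \cong \Proj^{2d+1}$ to extract a convergent subsequence, and then identify the algebraic limit by tracking the zeros of $f_n$ as they approach the boundary of the disk.

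For the first assertion, I parametrize $f_n \in \BP^-_d$ by its auxiliary zeros $a_{1,n},\ldots,a_{d-1,n} \in \D$ (with $0$ the forced zero). Passing to a subsequence, $a_{i,n}\to a_i^* \in \overline{\D}$ for each $i$, so that the numerator $\bar z\prod(\bar z - a_{i,n})$ and the denominator $\prod(1 - \overline{a_{i,n}}\bar z)$ converge coefficient-wise to polynomials in $\bar z$; hence $f_n$ converges algebraically to some $f \in \overline{\Rat_d^-}$. The key observation will be the identity
\[
1 - \overline{a_i^*}\bar z = -\overline{a_i^*}\,(\bar z - a_i^*) \quad \text{whenever } |a_i^*|=1,
\]
so each boundary limit $a_i^*$ produces a common factor $\bar z - a_i^*$ of numerator and denominator; cancelling it contributes the unimodular constant $-a_i^*$ to the reduced map. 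Clearing all such factors yields
\[
\varphi_f(z) = c \cdot \bar z \prod_{|a_i^*|<1}\frac{\bar z - a_i^*}{1 - \overline{a_i^*}\bar z}, \qquad c = \prod_{|a_i^*|=1}(-a_i^*) \in \mathbb{S}^1,
\]
and choosing $\theta \in \R$ with $e^{i\theta} = c^{-1}$ gives $e^{i\theta}\varphi_f \in \BP^-_k$ with $k = 1 + \#\{|a_i^*|<1\} \in [1,d]$.

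For the second assertion, I will write a generic proper degree $d$ anti-holomorphic self-map of $\D$ as
\[
f_n(z) = e^{i\theta_n}\prod_{i=1}^d \frac{\bar z - a_{i,n}}{1 - \overline{a_{i,n}}\bar z},
\]
so that $|f_n(0)| = \prod_{i=1}^d |a_{i,n}|$. The hyperbolic bound $d_\D(0, f_n(0)) \leq K$ translates to $\prod |a_{i,n}| \leq \tanh(K/2) < 1$. Passing to a subsequence with $a_{i,n}\to a_i^* \in \overline{\D}$ and $\theta_n\to\theta^*$, and repeating the cancellation analysis above, I find that the reduced degree is $d - \#\{i : |a_i^*|=1\}$. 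Since the product $\prod|a_i^*|$ is bounded strictly below $1$, at least one index satisfies $|a_i^*|<1$, so that factor survives cancellation and $\deg\varphi_f \geq 1$.

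The only real obstacle is the bookkeeping around the cancellation step: I must confirm that the degree drop comes exactly from indices with $|a_i^*|=1$, and that no spurious collapse of the numerator or denominator occurs. This is immediate, however, since each denominator factor $1 - \overline{a_i^*}\bar z$ is a nonzero polynomial (reducing to the constant $1$ when $a_i^* = 0$), and the surviving numerator always has at least one nontrivial factor (the distinguished $\bar z$ in part one, or a factor $\bar z - a_{i_0}^*$ with $|a_{i_0}^*|<1$ in part two), ensuring the reduced map has degree at least one in both settings.
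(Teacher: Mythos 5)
Your proof is correct, and the first part is essentially identical to the paper's argument: parametrize by the zeros, pass to a subsequence so each $a_{i,n}$ converges in $\overline{\D}$, and observe that factors whose zeros escape to $\mathbb{S}^1$ degenerate to the unimodular constants $-a_{i,\infty}$ while the distinguished factor $\bar z$ survives. Your explicit identification of $\gcd(P,Q)$ via $1-\overline{a_i^*}\bar z=-\overline{a_i^*}(\bar z-a_i^*)$ is just a more detailed version of the same cancellation.

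For the second part you take a genuinely different (though equally valid) route. The paper reduces to the first part by post-composing $f_n$ with a bounded sequence $M_n\in\Isom^-(\D)$ chosen so that $M_n\circ f_n$ fixes the origin, then passing to a convergent subsequence of the $M_n$; since the $M_n$ stay in a compact subset of $\PSL_2(\C)$, the limiting degree is unaffected. You instead write $f_n$ directly as a product of $d$ anti-Blaschke factors times a rotation and convert the hypothesis $d_\D(0,f_n(0))\leq K$ into the quantitative bound $\prod_i|a_{i,n}|\leq\tanh(K/2)<1$, which forces at least one zero to stay in the open disk and hence at least one factor to survive the cancellation. Your version is more self-contained and makes the mechanism explicit; the paper's version is shorter and generalizes more readily to situations where an explicit factorization is unavailable. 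Both are complete.
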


\begin{proof}
By definition, any sequence $\{f_n\} \subset \BP^-_d$ can be written as $f_n(z)=  \overline{z}\prod_{i=1}^{d-1} \frac{\overline{z}-a_{i,n}}{1-\overline{a_{i,n}}\overline{z}}$. Possibly after passing to a subsequence, we may assume that each $\{a_{i,n}\}_n$ has a limit $a_{i,\infty}\in\overline{\D}$. A direct computation shows that if $a_{i,\infty}\in\mathbb{S}^1$ (respectively, if $a_{i,\infty}\in\D$), then the M{\"o}bius maps $\frac{\overline{z}-a_{i,n}}{1-\overline{a_{i,n}}\overline{z}}$ converge algebraically to the constant map $-a_{i,\infty}$ (respectively, to the M{\"o}bius map $\frac{\overline{z}-a_{i,\infty}}{1-\overline{a_{i,\infty}}\overline{z}}$). The first part of the lemma now follows from the above assertion and the fact that any subsequential algebraic limit of $\{f_n\}$ has a zero at the origin.

The more general statement follows by post-composing $f_n$ with a bounded sequence of $M_n \in \Isom^-(\D)$ so that $M_n \circ f_n \in \BP^-_d$ (such a bounded sequence $M_n$ exists because $f_n$ quasi fixes $0$).
\end{proof}

\subsection{Quasi equivalence of the definitions}\label{subsec:qe}
Let $\{f_n\}\subset \BP^-_d$.
Let 
$$
c_{1,n},\cdots, c_{d-1,n} \in \D
$$ 
be an enumeration of critical points of $f_n$.
It is understood that a critical point with multiplicity $k$ appears in the list $k$ times.
After possibly passing to a subsequence, we may assume that for any pair $(i,j)$, 
$$
\lim_{n\to\infty}d_{\D}(c_{i,n}, c_{j,n})
$$
exists (which can possibly be $\infty$).
We can thus construct a partition
$$
\{1,\cdots, d-1\} = \mathcal{I}_1\sqcup \mathcal{I}_2 \sqcup \cdots \sqcup \mathcal{I}_m
$$
so that 
\begin{itemize}
\item $\lim_{n\to\infty}d_{\D}(c_{i,n}, c_{j,n}) < \infty$ if $i$ and $j$ are in the same partition set; and
\item $\lim_{n\to\infty}d_{\D}(c_{i,n}, c_{j,n}) = \infty$ if $i$ and $j$ are in different partition sets.
\end{itemize}

We call $\mathcal{C}_{k,n}:=\{(c_{i,n}): i \in \mathcal{I}_k\}$ a {\em critical cluster} for $f_n$.
We say that the critical cluster $\mathcal{C}_{i,n}$ has degree $d_{i,n}=\sum m_{j,n} +1$ where $m_{j,n}$ is the order of the critical point and the sum is taken over all critical points in $\mathcal{C}_{i,n}$.
After passing to a subsequence, we can assume that $d_{i,n}$ and $m_{j,n}$ are all constant.
Therefore, we will simply denote the degree as $d_i$.

After passing to a subsequence, we can assume 
$$
\lim_{n\to\infty} d_{\D}(c_{i,n}, f_n(c_{i,n}))
$$ 
exists (which can possibly be $\infty$).
We say that the critical point $c_{i,n}$ is {\em active} if $\lim_{n\to\infty} d_{\D}(c_{i,n}, f_n(c_{i,n}))= \infty$, and {\em inactive} otherwise.
By the Schwarz lemma, if $c_{i,n}$ is active (respectively, inactive), then all of the critical points in the critical cluster containing $c_{i,n}$ are active (respectively, inactive).
Thus, we say that a critical cluster $\mathcal{C}_{k,n}$ is {\em active} if any of the critical points contained in $\mathcal{C}_{k,n}$ is active, and {\em inactive} otherwise.

We are ready to prove Proposition \ref{prop:mc}.
\begin{proof}[Proof of Proposition \ref{prop:mc}]
Let us fix $d\geq 2$, and $K>0$. By way of contradiction, suppose that there does not exist any $M>0$ with the desired property; i.e., there exists a sequence $\{f_n\} \subset \BP^-_d(K)$ and $c_n\in C(f_n)$ with
$$
d_{\D}(c_n, f_n(c_n)) \to \infty.
$$

After passing to a subsequence, let $\mathcal{C}_{1,n},\cdots, \mathcal{C}_{m,n}$ be the critical clusters of $f_n$.
As we will be using different coordinates later, we will use ${\bf 0}$ to denote the common fixed point of each $f_n$ in $\D$.
Let $d_{\D}({\bf 0}, \mathcal{C}_{k,n})$ be the sequence of distances between ${\bf 0}$ and the critical points in $\mathcal{C}_{k,n}$.
Our assumption implies that the sequence $\{f_n\}$ has at least one active critical cluster. After passing to a subsequence and reindexing, we assume that $\mathcal{C}_{1,n}$ is a furthest active critical cluster: $d_{\D}({\bf 0}, \mathcal{C}_{1,n}) \geq d_{\D}({\bf 0}, \mathcal{C}_{k,n})$ for all active $k$ and all $n$.
After reindexing, assume $c_{1,n} \in \mathcal{C}_{1,n}$; note that by the Schwarz lemma, $d_{\D}({\bf 0}, c_{1,n}) \to \infty$ as ${\bf 0}$ is fixed.

We are going to use {\em rescaling limits} at $c_{1,n}$ to conclude either there is a sequence of repelling fixed point of $f_n$ with multipliers $\to \infty$ or an active critical cluster that is further away. This will produce a contradiction and thus prove the first inclusion of the proposition.

\noindent\textbf{The set up:}
Let $L_n, M_n \in \Isom(\D)$ with $L_n(0) = c_{1,n}$ and $M_n(0) = f_n(c_{1,n})$.
Note that $M_n^{-1}\circ f_n \circ L_n(0) = 0$. 
So by Lemma \ref{deg_at_least_1_lem}, after passing to a subsequence, we may assume that $M_n^{-1}\circ f_n \circ L_n$ converges algebraically to $f \in \overline{\Rat^-_d}$.
We remark that this algebraic limit $f$ is referred to as a {\em rescaling limit} in the literature (see \cite{Kiwi15}).
By counting the critical points, we know the degree of $\varphi_f$ is $d_1$, in particular $\deg(\varphi_f) \geq 2$.

\begin{figure}[ht!]
\centering
\hspace{1cm}\ {\includegraphics[width=.72\linewidth]{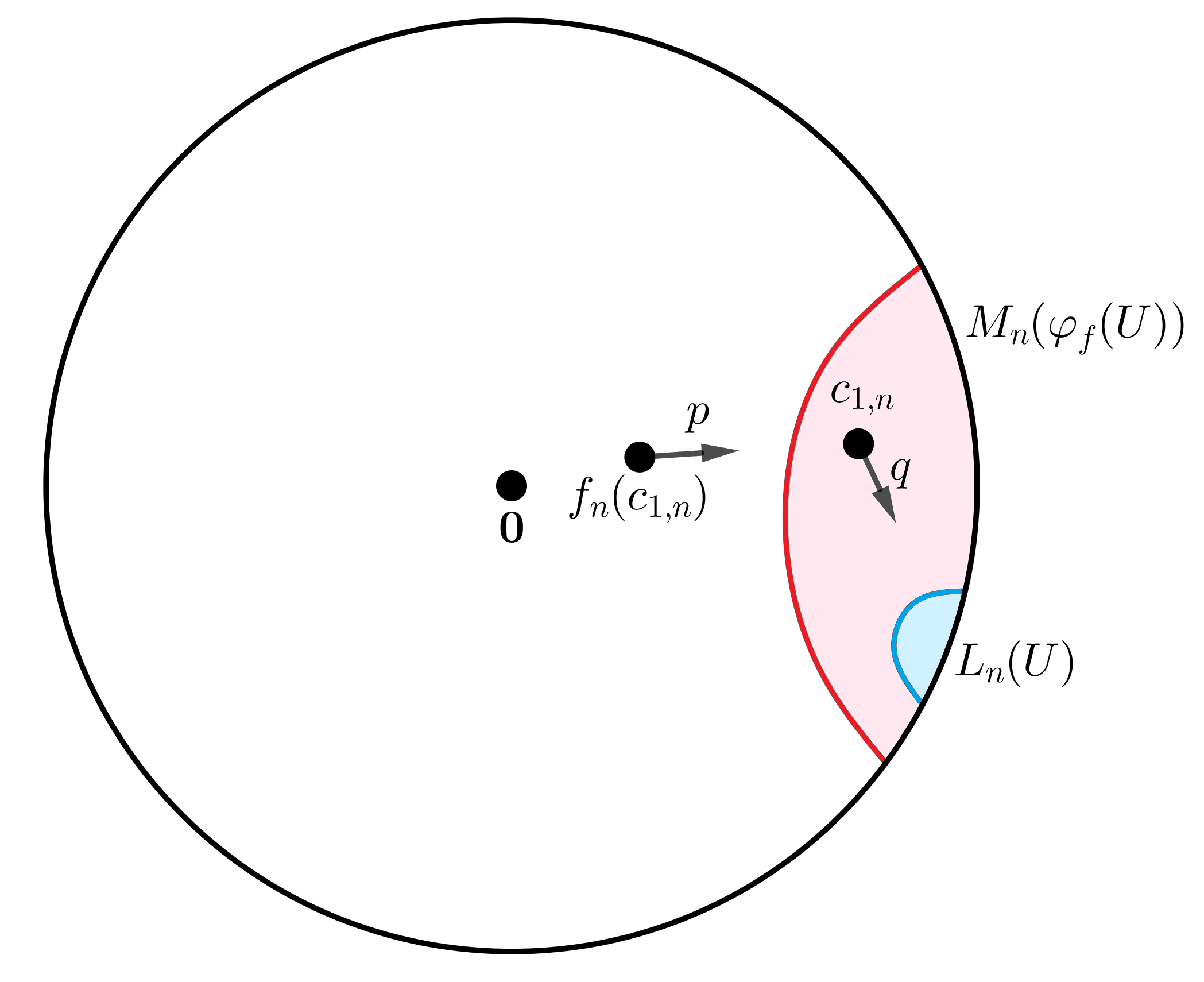}}
{\includegraphics[width=1\linewidth]{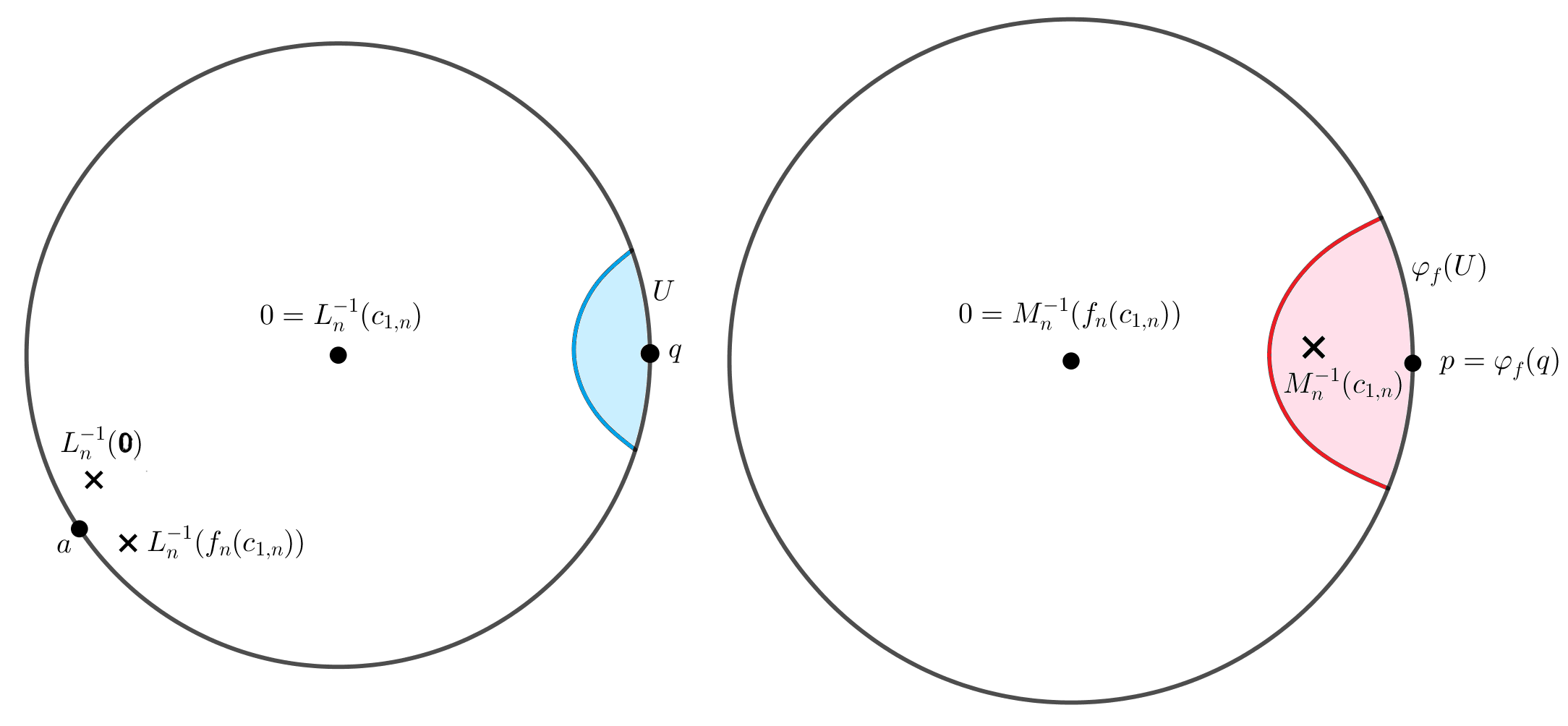}}
\caption{The top disk indicates the dynamics in the usual coordinate system.
The bottom two disks give a schematic picture in $M_n$ and $L_n$ coordinates.
Asymptotically, $c_{1,n}$ is in the direction $p\in \mathbb{S}^1$ when viewed from $f_n(c_{1,n})$. The point $q\in \mathbb{S}^1$, which is different from $\lim_n L_n^{-1}(\bf 0)$, is mapped to $p$ by the rescaling limit $\varphi_f$. 
Since $\varphi_f(U)$ 
is disjoint from $a$} in the $L_n$ coordinates, $L_n(U) \subseteq M_n(\varphi_f(U))$.
  
\label{blaschke_fig}
\end{figure}

As $d_{\D}(c_{1,n}, f_n(c_{1,n}))\to\infty$, we assume after passing to a subsequence that $\lim_{n} M_n^{-1}(c_{1,n}) = p\in \mathbb{S}^1$.
Similarly, we assume that $L_n^{-1}({\bf 0})$ and $L_n^{-1}(f_n(c_{1,n}))$ both converge in $\mathbb{S}^1$ (see the bottom left disk in Figure \ref{blaschke_fig} where both sequences are depicted converging to a single point as a consequence of the following claim).
\smallskip

\noindent\textbf{Claim 1:} $\lim_{n}  L_n^{-1}(f_n(c_{1,n})) = \lim_{n}  L_n^{-1}({\bf 0})$. 
\begin{proof}[Proof of Claim 1]
Suppose not. 
By standard hyperbolic geometry, this means that 
\begin{align*}
d_{\D}({\bf 0}, f_n(c_{1,n})) - d_{\D}({\bf 0}, c_{1,n}) &= d_{\D}(L_n^{-1}({\bf 0}), L_n^{-1}(f_n(c_{1,n}))) - d_{\D}(L_n^{-1}({\bf 0}), L_n^{-1}(c_{1,n})) \\
&= d_{\D}(L_n^{-1}({\bf 0}), L_n^{-1}(f_n(c_{1,n}))) - d_{\D}(L_n^{-1}({\bf 0}), 0) \\
&\to +\infty
\end{align*}
where the divergence follows from the estimate that
$$
d_{\D}(x_n,y_n) \approx d_{\D}(x_n,0) + d_{\D}(y_n,0),
$$
when $x_n, y_n$ converges to two different points on $\mathbb{S}^1$.
In particular $d_{\D}({\bf 0}, c_{1,n}) < d_{\D}({\bf 0}, f_n(c_{1,n}))$ for sufficiently large $n$, which is a contradiction to the Schwarz lemma.
\end{proof}

Let $a:= \lim_{n}  L_n^{-1}(f_n(c_{1,n})) = \lim_{n}  L_n^{-1}({\bf 0})$.
Since $\varphi_f$ had degree $\geq 2$, there are at least two preimages $q, q'\in \varphi_f^{-1}(p)$. Possibly after renaming $q$ and $q'$, we can assume that $q \neq a$ (as depicted in the lower left disk of Figure \ref{blaschke_fig}).
The first inclusion follows immediately from the following claim.
\smallskip

\noindent\textbf{Claim 2:} If $q$ is not a hole, then there is a sequence of repelling fixed point of $f_n$ with multipliers $\to \infty$.
Otherwise, there is an active critical cluster that is further than $\mathcal{C}_{1,n}$.

\begin{proof}[Proof of Claim 2]
If $q$ is not a hole, then $M_n^{-1}\circ f_n \circ L_n$ converges uniformly to $\varphi_f$ on a disk neighborhood $U$ of $q$.
Since $\varphi_f$ is a proper anti-holomorphic map on $\D$, $q \in \mathbb{S}^1$ is not a critical point.
After shrinking $U$ if necessary, we may also assume that $\varphi_f: U \longrightarrow \varphi_f(U)$ is univalent.
Note that as $L_n^{-1} \circ M_n$ converges compactly to the constant map $a$ away from $p$, one can verify that $L_n^{-1} (M_n(\varphi_f(U)))$ converges to the disk, and $U \subseteq L_n^{-1} (M_n(\varphi_f(U)))$ for all sufficiently large $n$. Thus
$$
L_n(U) \subseteq M_n(\varphi_f(U))
$$
for sufficiently large $n$ (see Figure \ref{blaschke_fig}). 
Since $\lim_{n}  d_{\D}(c_{1,n}, f_n(c_{1,n}))= \infty$, the modulus of the annulus $M_n(\varphi_f(U)) \setminus L_n(U) \to\infty$.
This means that there exists a repelling fixed point of $f_n$ in $L_n(U)$, and since $M_n(\varphi_f(U)) \setminus L_n(U)$ is an `approximate fundamental annulus' of this repelling fixed point, the multipliers of these fixed points tend to $\infty$. 

Otherwise, $q$ is a hole. Then by Lemma \ref{lem:ch}, there exists a sequence, say $\{L_n^{-1}(c_{2,n})\}$ converging to $q$ (see Figure \ref{blaschke_fig}).
Since $\lim_{n}  L_n^{-1}({\bf 0}) \in \mathbb{S}^1\setminus\{q\}$, we have $d_{\D}( {\bf 0}, c_{2,n})-d_{\D}({\bf 0}, c_{1,n}) \to\infty$.
Again, as $\lim_{n}  L_n^{-1}(f_n(c_{1,n})) \in \mathbb{S}^1\setminus\{q\}$, we have 
$$
d_{\D}(c_{2,n}, f_n(c_{1,n}))-d_{\D}(c_{2,n}, c_{1,n}) \to \infty.
$$
Then by the Schwarz lemma, we have $$d_{\D}(c_{2,n}, f_n(c_{1,n}))-d_{\D}(f_n(c_{2,n}), f_n(c_{1,n})) \to \infty,$$ so
$d_{\D}(c_{2,n}, f_n(c_{2,n})) \to \infty$.
Thus, $c_{2,n}$ is an active critical point of $f_n$ that is further away.
\end{proof}

We now proceed to prove the other inclusion. By way of contradiction, suppose that there exists a sequence $\{f_n\} \subset \BP^-_{d, qf}(M)$, and a sequence of repelling fixed points $p_n$ of $f_n$ with multipliers going to infinity.

Let $\mathcal{C}_{i,n}$ be a critical cluster and $c_{i,n}$ be a critical point in $\mathcal{C}_{i,n}$.
Choose $L_{i, n}\in \Isom(\D)$ such that $L_{i, n}(0) = c_{i, n}$.
After passing to a subsequence, we can assume that $L_{i, n}^{-1}\circ f_n \circ L_{i, n}$ converges algebraically to some $f_{i, \infty}\in \overline{\Rat^-_d}$ of degree at least $1$ by Lemma \ref{deg_at_least_1_lem}.
After passing to a subsequence, we assume $\lim_n L_{i, n}^{-1}(p_n)$ and $\lim_n L_{i, n}^{-1}(\mathcal{C}_{k,n})$ exist for $k \neq i$.
Let $p_{i, \infty} := \lim_n L_{i, n}^{-1}(p_n)$.
\smallskip

\noindent\textbf{Claim 3:}
There exists a critical cluster $\mathcal{C}_{i,n}$ so that $p_{i, \infty}$ is not a hole.
\begin{proof}[Proof of Claim 3]
Intuitively, the desired critical cluster is the `closest' one to the repelling fixed point $p_n$.
By Lemma \ref{lem:ch}, it suffices to find a critical cluster $\mathcal{C}_{i,n}$ so that $\lim_n L_{i, n}^{-1}(\mathcal{C}_{k,n}) \neq p_{i, \infty}$ for any $k \neq i$.

To find this cluster, we can start with $\mathcal{C}_{1,n}$.
Suppose that there are $l$ critical clusters with $L_{1,n}^{-1}(\mathcal{C}_{k,n}) \to p_{1, \infty}$.
After reindexing, we can assume $L_{1,n}^{-1}(\mathcal{C}_{2,n}) \to p_{1, \infty}$.
It can be verified that the number of critical clusters with $L_{2, n}^{-1}(\mathcal{C}_{k,n}) \to p_{2, \infty}$ is at most $l-1$.
Continue this process finitely many times, and we have the claim.
\end{proof}

By Claim 3, the log-multiplier of $f_n$ at $p_n$, which equals to the log-multiplier of $L_{i, n}^{-1}\circ f_n\circ L_{i, n}$ at $L_{i, n}^{-1}(p_n)$, converges to $\log |\varphi_{f_{i, \infty}}'(p)|$. This is a contradiction.
\end{proof}

Let $P \subseteq \D\cong \Hyp^2$ and $Q\subseteq \mathbb{S}^1$, we say that $\mathcal{K}\subseteq \D$ is the convex hull of $P \cup Q$ if $\mathcal{K}$ is the smallest convex set containing $P$ and having $Q$ as the ideal points.
Let $a,b\in \overline{\D}$, we use $[a,b]$ to denote the hyperbolic geodesic connecting $a$ and $b$ (including the end-points $a,b$).
Note that if $a$ or $b$ is in $\mathbb{S}^1$, then $[a,b]$ is the infinite geodesic with the points at infinity included.
We shall use $(a,b]$, $[a,b)$ and $(a,b)$ to denote the geodesics with partial or no end-points included.

The critical displacement bound from Proposition~\ref{prop:mc} yields a similar displacement bound throughout a certain convex hull.

\begin{prop}\label{cor:ch}
Let $f \in \BP_d^-(K)$ and $\mathcal{K}:= \chull (C(f) \cup F(f))$ where $C(f)$ is the set of critical points of $f$ in the unit disk $\D$, and $F(f)$ is the set of repelling fixed points of $f$ on $\mathbb{S}^1$.
There exists a constant $M=M(d,K)$ such that
$$
d_{\D}(x, f(x)) < M
$$
for all $x\in \mathcal{K}$.
\end{prop}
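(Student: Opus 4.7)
The plan is to argue by contradiction via a rescaling argument that extends the proof of Proposition~\ref{prop:mc}. Suppose no such $M$ exists: then there are sequences $f_n \in \BP_d^-(K)$ and $x_n \in \mathcal{K}_n := \chull(C(f_n) \cup F(f_n))$ with $D_n := d_\D(x_n, f_n(x_n)) \to \infty$. By Proposition~\ref{prop:mc}, $d_\D(c, f_n(c)) \leq M_0(d, K)$ for every $c \in C(f_n)$. Choose $L_n, M_n \in \Aut(\D)$ with $L_n(0) = x_n$ and $M_n(0) = f_n(x_n)$, and set $g_n := M_n^{-1} \circ f_n \circ L_n$, so that $g_n(0) = 0$. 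The general form of Lemma~\ref{deg_at_least_1_lem} provides, after passing to a subsequence, an algebraic limit $g_n \to g \in \overline{\Rat_d^-}$ with $\deg \varphi_g \geq 1$. Setting $T_n := M_n^{-1} \circ L_n$, the divergence $d_\D(0, T_n(0)) = D_n \to \infty$ forces, after a further subsequence, $T_n \to q \in \mathbb{S}^1$ uniformly on compact subsets of $\D \setminus \{q'\}$ for some $q' \in \mathbb{S}^1$.

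The crux of the argument is to upgrade the degree to $\deg \varphi_g \geq 2$. By Carath\'eodory's theorem applied to the (hyperbolic) convex hull, $x_n$ lies in a possibly ideal triangle with vertices $v_n^{(1)}, v_n^{(2)}, v_n^{(3)} \in C(f_n) \cup F(f_n)$. Pass to a subsequence so that each $L_n^{-1}(v_n^{(i)}) \to w^{(i)} \in \overline{\D}$; then $0 = L_n^{-1}(x_n)$ lies in the hyperbolic convex hull of $\{w^{(i)}\}$. If some $w^{(i)} \in \D$, the corresponding $v_n^{(i)}$ must lie in $C(f_n)$ (since fixed points lie on $\partial \D$), so $w^{(i)}$ is a critical point of $g$ and $\deg \varphi_g \geq 2$. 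Otherwise every $w^{(i)} \in \mathbb{S}^1$: rescaled critical vertices produce holes of $g$ by Lemma~\ref{lem:ch}, while rescaled fixed-point vertices satisfy the rescaled identity $g_n(L_n^{-1}(v_n^{(i)})) = T_n(L_n^{-1}(v_n^{(i)}))$, which in the limit forces $w^{(i)} \in g^{-1}(q) \cup \{q'\}$. If $\deg \varphi_g = 1$, then $g$ is anti-M\"obius with $g^{-1}(q)$ a single point, so the $w^{(i)}$ are confined to at most $d+1$ prescribed locations on $\mathbb{S}^1$; a bookkeeping of these possibilities combined with the requirement that their hyperbolic convex hull contains $0$ yields a contradiction.

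With $\deg \varphi_g \geq 2$ established, we run the endgame of Proposition~\ref{prop:mc}'s proof with only cosmetic changes. Choose a point $p \in \mathbb{S}^1$ with multiple preimages under $\varphi_g$, and select a preimage $q^*$ distinct from the limit of $L_n^{-1}$ applied to the attracting fixed point $\mathbf{0}$ of $f_n$. If $q^*$ is not a hole of $g$, the modulus estimate on the annulus between a rescaled neighborhood of $q^*$ and its image forces a repelling fixed point of $f_n$ to acquire arbitrarily large multiplier, contradicting $f_n \in \BP_d^-(K)$. If $q^*$ is a hole, Lemma~\ref{lem:ch} supplies a critical cluster of $f_n$ becoming active, contradicting the critical displacement bound of Proposition~\ref{prop:mc}. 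Either outcome closes the contradiction.

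The main obstacle is the degree upgrade in the boundary-escape case when every rescaled vertex $w^{(i)}$ lands on $\mathbb{S}^1$: one must carefully disentangle how critical-type vertices (which become holes of $g$) and fixed-type vertices (which land in $g^{-1}(q) \cup \{q'\}$) rescale, and then exclude the anti-M\"obius possibility using the hyperbolic hull constraint on $0$. This is precisely the step where the assumption $x_n \in \mathcal{K}_n$ is used in an essential manner, upgrading Proposition~\ref{prop:mc}'s critical displacement bound to the full convex hull.
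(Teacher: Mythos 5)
Your reduction to a rescaling argument stalls exactly at the step you flag as the main obstacle, and the resolution you sketch does not close it. In the case where all three rescaled triangle vertices $w^{(i)}$ land on $\mathbb{S}^1$, the constraints you actually derive — critical-type vertices accumulate somewhere on $\mathbb{S}^1$ (note Lemma \ref{lem:ch} says holes attract critical points, not conversely, so these limits need not be holes), and fixed-type vertices lie in $\varphi_g^{-1}(q)\cup\{q'\}\cup\mathcal{H}(g)$ — only confine the $w^{(i)}$ to a finite subset of $\mathbb{S}^1$. Three distinct points of a finite set can perfectly well span an ideal triangle containing $0$, so no contradiction with $\deg\varphi_g=1$ follows from this "bookkeeping," and you give no further mechanism. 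The information you are missing is precisely the displacement bound \emph{at the vertices of the triangle}: each vertex $v_n^{(i)}$ is quasi-fixed by $f_n$ (for critical vertices this is Proposition \ref{prop:mc}; for the ideal vertices in $F(f_n)$ one must first truncate to a nearby interior point using the multiplier bound $K$ — a step your proposal never performs, and the only place the hypothesis $f\in\BP_d^-(K)$ enters beyond Proposition \ref{prop:mc}). Transported to the rescaled picture, quasi-fixedness of a vertex $u_{i,n}=L_n^{-1}(v_n^{(i)})$ reads $d_{\D}(0,T_n(u_{i,n}))\leq d_{\D}(0,u_{i,n})+C$ with $d_{\D}(0,T_n(0))\to\infty$; by thinness of triangles this forces the angle at $0$ between $u_{i,n}$ and $T_n^{-1}(0)=L_n^{-1}(f_n(x_n))$ to tend to $0$, hence \emph{all} the $w^{(i)}$ collapse to the single point $\lim L_n^{-1}(f_n(x_n))$, whose convex hull cannot contain $0$. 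That is the contradiction — and it requires no degree upgrade and no endgame at all.

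This collapsed argument is just a compactness-flavored restatement of the paper's proof, which is direct and much shorter: by thin triangles $x$ lies near a geodesic $[p_1,p_2]$ with $p_1,p_2\in C(f)\cup F(f)$; truncate ideal endpoints using the multiplier bound, bound the displacement of critical endpoints by Proposition \ref{prop:mc}, and then for $y$ on the truncated geodesic the Schwarz--Pick inequality $d_{\D}(f(y),q_i)\leq d_{\D}(y,q_i)+\max\{M_1,M_2\}$ traps $f(y)$ in the intersection of two horodisks about $y$, giving $d_{\D}(y,f(y))\leq M_3$. I would recommend abandoning the rescaling machinery here (your endgame adaptation of Proposition \ref{prop:mc} is salvageable but superfluous once the vertex displacement bounds are in hand) and arguing directly along these lines.
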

\noindent The result is a straightforward consequence of the following facts.\\
\noindent$\bullet$ The bound on the repelling fixed point multipliers implies that points in $C(f)$ and points in $\D$ close to $F(f)$ are moved by a uniformly bounded hyperbolic distance by $f$.\\
\noindent$\bullet$ If two points are moved by a bounded hyperbolic distance by $f$, then any point on the geodesic connecting them is also moved by a bounded hyperbolic distance.
\smallskip

\noindent We work out the details for completeness.
\begin{proof}
Let $x\in \mathcal{K}$, then $x$ lies in some (ideal) triangle with vertices in $C(f) \cup F(f)$.
Since hyperbolic triangles are thin, $x$ is within a uniformly bounded distance from some geodesic $\gamma:=[p_1,p_2]$ where $p_1, p_2\in C(f) \cup F(f)$.

We associate a truncated geodesic to $\gamma$. For each $i\in\{1,2\}$ so that $p_i \in C(f)$, let $q_i:=p_i$. For each $i$ so that $p_i \in F(f)$,  the fact that fixed point multipliers have bounded modulus permits us to choose $q_i\in \gamma$ sufficiently close to $p_i$ in the Euclidean metric so that $d_{\D}(q_i, f(q_i)) < M_1$ for some constant $M_1$ depending only on $K$.

Let $\gamma' = (q_1, q_2)$ be any truncated geodesic segment with $q_i$ as above. 
\smallskip

\noindent\textbf{Claim:} $d_{\D}(y, f(y)) \leq M_3$ for some $M_3=M_3(d,K)>0$, for all $y\in (q_1, q_2)$.
\begin{proof}[Proof of Claim]
By Proposition \ref{prop:mc}, $q_i$ is moved at most distance $M_2=M_2(d,K)$ by $f$.
Thus if $y\in (q_1, q_2)$, by the Schwarz lemma, we have $d_{\D}(f(y), q_i) \leq d_{\D}(y, q_i) +  \mathrm{max}\{M_1, M_2\}$.
We normalize so that $y= 0$, and $(q_1, q_2) \subseteq \R$.
Let $H_\pm$ be the horocycles based at $\pm 1$ that intersect the interval $(-1,1)$ at $\mp a$ where $a>0$ and $d_{\D}(0,a) = \mathrm{max}\{M_1, M_2\}$, and let $D_\pm$ be the associated horo-disks.
Then by standard hyperbolic geometry, we conclude that $f(y) \in D_+ \cap D_-$.
Therefore, $d_{\D}(y, f(y)) \leq M_3$ for some constant $M_3$ depending only on $d$ and $K$.
\end{proof}
Since $d_{\D}(x,\gamma)$ is uniformly bounded, once again by the Schwarz lemma, we have that $d_{\D}(x, f(x)) < M$ for all $x\in \mathcal{K}$, where $M=M(d,K)$.
\end{proof}

\subsection{Quasi-fixed trees}
Let $\mathcal{T}$ be a finite tree with vertex set $\mathcal{V}$.
A vertex $v\in \mathcal{V}$ is called 
\begin{itemize}
\item an {\em end} if $\mathcal{T}-\{v\}$ is connected; and 
\item a {\em branch point} if $\mathcal{T}- \{v\}$ has three or more components.
\end{itemize}
We denote the set of ends by $\epsilon(\mathcal{T})$ and the set of branch points by $\beta(\mathcal{T})$.

A {\em ribbon structure} on a finite tree $\mathcal{T}$ is the choice of a planar embedding $\mathcal{T} \xhookrightarrow{} \R^2$ up to isotopy.
The ribbon structure can be specified by a cyclic ordering on the edges incident to each vertex.
A finite tree with a ribbon structure will be called a finite ribbon tree.

\begin{defn}
We say that a finite ribbon tree $\mathcal{T}$ is a {\em (marked) $(d+1)$-ended ribbon tree} if
\begin{itemize}
\item $\mathcal{V} = \beta(\mathcal{T}) \cup \epsilon(\mathcal{T})$; and
\item $|\epsilon(\mathcal{T})| = d+1$, with a marked end-point $x\in \epsilon(\mathcal{T})$.
\end{itemize}
We denote the valence of a vertex $v\in\beta(\mathcal{T})$ by $\textrm{val}(v)$.
\end{defn}
\begin{rmk}
In this paper, we will only consider marked trees with a preferred isotopy class of planar embedding. For simplicity of notation, we will often drop the word `marked' and `ribbon'.
\end{rmk}

For our purposes, it is useful to define the {\em interior} of a finite tree $\mathcal{T}$ as
$$
\Int(\mathcal{T}) = \mathcal{T}\setminus \epsilon(\mathcal{T}).
$$
We shall prove that

\begin{theorem}\label{thm:qit}
Let $\{f_n\}\subset \BP^-_d(K)$. After passing to a subsequence, there exists a constant $K' = K'(d, K)> 0$, a $(d+1)$-ended ribbon tree $\mathcal{T}$, and a sequence of proper injective maps
$$
\phi_n: \Int(\mathcal{T}) \longrightarrow \D
$$
such that
\begin{itemize}
\item (Ends approximating.) 
The map $\phi_n$ extends to a continuous map $\phi_n: \mathcal{T} \longrightarrow \overline{\D}$.
Moreover, $\phi_n$ gives a bijection between $\epsilon(\mathcal{T})$ and $F(f_n)$ respecting the markings. 

\item (Degenerating vertices.) If $v \neq w \in \beta(\mathcal{T})$, then 
$$
d_{\D}(\phi_n(v), \phi_n(w)) \to \infty.
$$

\item (Geodesic edges.) If $E = [v, w] \subseteq \mathcal{T}$ is an edge, then $\phi_n(E)$ is the hyperbolic geodesic connecting $\phi_n(v)$ and $\phi_n(w)$\footnote{If $\phi_n(v)$ or $\phi_n(w)$ is in $\mathbb{S}^1$, then $\phi_n(E)$ is a hyperbolic geodesic ray.}.

\item (Critically approximating.) If $v\in \beta(\mathcal{T})$, then exactly $\val(v) - 2$ critical points of $f_n$ (counted with multiplicity) lie within a uniformly bounded distance from $\phi_n(v)$.

\item (Quasi-fixed.)
If $x\in \Int(\mathcal{T})$, then
$$
d_{\D}(f_n(\phi_n(x)), \phi_n(x)) \leq K' \text{ for all } n. 
$$

\end{itemize}
\end{theorem}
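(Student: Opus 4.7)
The plan is to construct $\mathcal{T}$ so that its branch points track the critical clusters and its ends track the marked repelling fixed points, using rescaling limits at each cluster to determine the local combinatorics. After passing to a subsequence, partition $C(f_n)$ into critical clusters $\mathcal{C}_{1,n},\dots,\mathcal{C}_{m,n}$ as in \S\ref{subsec:qe}, pick a representative $b_{k,n}\in\mathcal{C}_{k,n}$, and choose $L_{k,n}\in\Isom(\D)$ with $L_{k,n}(0)=b_{k,n}$. Proposition \ref{prop:mc} gives $d_\D(0,g_{k,n}(0))\leq M$ for the rescaled family $g_{k,n}:=L_{k,n}^{-1}\circ f_n\circ L_{k,n}$, so by Lemma \ref{deg_at_least_1_lem} (and a further subsequence) it converges algebraically to a limit whose $\D$-restriction is an anti-Blaschke product $\varphi_k$ of degree $d_k\geq 2$. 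Each $\varphi_k$ has $d_k+1$ repelling fixed points $q_1^{(k)},\dots,q_{d_k+1}^{(k)}\in\mathbb{S}^1$, which I regard as distinguished \emph{directions} leaving the cluster.

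For each such direction the sequence $L_{k,n}(q_i^{(k)})$ converges (after a further subsequence) to a point $p_i^{(k)}\in\mathbb{S}^1$, because $b_{k,n}\to\partial\D$ in the hyperbolic metric. I claim that each $p_i^{(k)}$ falls into exactly one of two cases: either (i) it equals $\eta_{f_n}(x)$ in the limit for a unique $x\in\mathcal{F}$, or (ii) it coincides with the asymptotic position of another cluster $\mathcal{C}_{k',n}$, in which case there is a unique matching direction $q_{i'}^{(k')}$ at $\mathcal{C}_{k',n}$ with $L_{k',n}(q_{i'}^{(k')})\to p_i^{(k)}$. Case (i) is established by a local argument near $q_i^{(k)}$ in the $L_{k,n}$-coordinates parallel to the ``$q$ is not a hole'' step of Proposition \ref{prop:mc}: the convergence $g_{k,n}\to\varphi_k$ on a neighborhood of $q_i^{(k)}$ produces a genuine repelling fixed point of $f_n$ tending to $p_i^{(k)}$. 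Cases (i) and (ii) together are forced by the count $\sum_k(d_k+1)=(d-1)+2m$: exactly $d+1$ directions must land on the $d+1$ repelling fixed points of $f_n$, and the remaining $2m$ must pair up into $m$ inter-cluster edges.

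Now build $\mathcal{T}$ abstractly: take branch points $\{b_k\}_{k=1}^m$ of valence $d_k+1$, one end $e_x$ for each $x\in\mathcal{F}$ (with $e_1$ as the marked end), and glue the half-edge at $b_k$ in direction $q_i^{(k)}$ to the appropriate end or partner half-edge as dictated by (i) and (ii). The Euler characteristic $V-E=(m+d+1)-(d+m)=1$ together with connectedness (inherited from connectedness of $\chull(C(f_n)\cup F(f_n))$ and the exhaustiveness of cases (i)/(ii)) forces $\mathcal{T}$ to be a tree with exactly $d+1$ ends. The cyclic ordering of the $q_i^{(k)}$ on $\mathbb{S}^1$, transported by the orientation-preserving isometries $L_{k,n}$, furnishes a compatible ribbon structure on $\mathcal{T}$.

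Finally, set $\phi_n(b_k):=b_{k,n}$ and $\phi_n(e_x):=\eta_{f_n}(x)$ and extend over edges by hyperbolic geodesics. The ends-approximating, degenerating-vertices, and geodesic-edges bullets are immediate from the construction; the critically-approximating bullet uses $\val(b_k)-2=d_k-1=\sum_{j\in\mathcal{C}_k}m_j$, the total critical multiplicity within bounded hyperbolic distance of $b_{k,n}$ by definition of cluster. The quasi-fixed bullet follows from Proposition \ref{cor:ch}, since $\phi_n(\Int(\mathcal{T}))\subseteq\chull(C(f_n)\cup F(f_n))$ by construction, so some $K'=K'(d,K)$ bounds the displacement uniformly. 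The main obstacle is the matching analysis of the second paragraph — showing that every rescaled direction is accounted for by either a boundary fixed point or a partnered direction at another cluster, with no lost or duplicated directions; this is the hyperbolic-geometry bookkeeping already foreshadowed in the proof of Proposition \ref{prop:mc}.
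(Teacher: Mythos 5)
Your overall strategy differs from the paper's: you build the tree \emph{bottom-up}, taking a rescaling limit at every critical cluster and gluing the resulting local models along matching boundary directions, whereas the paper builds it \emph{top-down}, inductively adding one cluster at a time via nearest-point projection onto the convex hull of the clusters already placed (Lemma \ref{lem:projbranch}), and only then attaching the ends the same way. Your local analysis (rescaling limits of degree $d_k\geq 2$ at each cluster, the valence count $\val(v)=d_k+1$, and the use of Proposition \ref{cor:ch} to get the quasi-fixed property on $\chull(C(f_n)\cup F(f_n))$) is consistent with what the paper does in Lemma \ref{lem:rdt} and Corollary \ref{lem:vd}. However, the step you yourself flag as ``the main obstacle'' is a genuine gap, not bookkeeping, and the criterion you propose for it does not work.

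Concretely: your case (ii) pairs a hole direction $q_i^{(k)}$ of the rescaling limit at cluster $k$ with ``the unique direction $q_{i'}^{(k')}$ at $\mathcal{C}_{k',n}$ with $L_{k',n}(q_{i'}^{(k')})\to p_i^{(k)}$.'' This is not well defined. First, \emph{every} cluster lying in the subtree beyond the direction $q_i^{(k)}$ accumulates (in the $L_{k,n}$-coordinates) at that same hole, so the partner cluster $k'$ is not determined by accumulation alone; you need the \emph{nearest} cluster in that direction, i.e.\ an adjacency relation asserting that no third cluster lies within bounded hyperbolic distance of the geodesic $[b_{k,n},b_{k',n}]$. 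Second, matching by equality of Euclidean limits $p_i^{(k)}\in\mathbb{S}^1$ collapses entirely when the whole configuration degenerates toward a single boundary point, in which case all rescaled directions of all clusters can share the same Euclidean limit. Establishing the adjacency relation and proving that it (together with the attachment of ends) produces a connected acyclic graph is exactly the content of the paper's inductive convex-hull construction; without it, your Euler-characteristic argument has nothing well-defined to apply to. Relatedly, the counting step is off ($\sum_k(d_k+1)=d-1+2m$ leaves $2m-2$ directions pairing into $m-1$ internal edges, not $2m$ into $m$) and, more importantly, the count is merely \emph{consistent} with the desired structure rather than forcing it: a hole direction at one cluster can simultaneously ``contain'' ends of the tree, so the non-hole directions do not automatically biject with the $d+1$ repelling fixed points of $f_n$ without the same separation/projection argument.
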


\begin{rmk}
We shall call the pair $(\mathcal{T}, \phi_n)$, or its image $\mathcal{T}_n:=\phi_n(\mathcal{T})$ the {\em quasi-fixed trees} for the sequence $f_n$. 
We remark that technically, this pair is associated to the particular subsequence chosen. We will be a little sloppy here and assume such a subsequence is already chosen for $f_n$.
We also remark that such ambiguity can be resolved by introducing an ultrafilter as in \cite{L19,L21c}.
\end{rmk}

A similar construction works in a more general orientation preserving setting (see \cite{L21a, L21b}).
The general construction works in the orientation reversing case as well.
For completeness, we briefly sketch the construction here.

The construction is by induction, and has two steps.
First, we construct the {\em core} $\mathcal{T}^c = \hull(\beta(\mathcal{T}))$ by taking the `spines' of a degenerating sequence of hyperbolic polygons $\chull(\{c_{1,n}, \cdots, c_{d-1,n}\})$ given by the convex hull of the critical points of $f_n$.
Next, we construct $\mathcal{T}$ by `attaching' ends to appropriate vertices of $\mathcal{T}^c$.

\subsection*{Construction of the core $\mathcal{T}^c$}
Let $c_{1,n},\cdots, c_{d-1,n} \in \D$ be an enumeration of critical points of $f_n$.
With the notation in \S \ref{subsec:qe}, 
we choose a representative $b_{k,n} \in \mathcal{C}_{k,n}$, $k = 1,\cdots, m$ for each critical cluster.
Let 
$$
\mathcal{P}:= \{(b_{k,n})_n: k=1,\cdots, m\}
$$
be the set of sequences (indexed by $n$) of representatives.
Note that each element in $\mathcal{P}$ is a sequence of points in $\D$, and there are $m$ elements in $\mathcal{P}$.
We also set
$$
\mathcal{P}_n:= \{b_{k,n}: k=1,\cdots, m\} \subset \D
$$
as the set of $n$-th terms in the representative sequences.
Note that by construction, for $i\neq j$,
$$
\lim_{n\to\infty} d_{\D}(b_{i,n}, b_{j,n})  = \infty.
$$

We construct a sequence of finite trees $\mathcal{T}^c_n$ with vertex set $\mathcal{P}_n$ inductively as follows (induction on the number of vertices).

As the base case, we let $\mathcal{T}^{c,1}_n = \{b_{1,n}\}$ be the degenerate tree with a single vertex $b_{1,n}$.

Assume that $\mathcal{T}^{c,k}_n$ is constructed with vertex set $\mathcal{V}^{c,k}_n= \{b_{1,n}, \cdots, b_{k,n}\}$.
Assume as an induction hypothesis that
\begin{align}\label{Eqn:H1}
\min_{j=k+1,\cdots, m} d_{\D}(b_{j,n}, \chull(\mathcal{V}^{c,k}_n)) \to \infty,
\end{align}
which is trivially satisfied for $k=1$.
We also assume that the edges of $\mathcal{T}^{c,k}_n$ are hyperbolic geodesic segments.

Roughly speaking, we will construct $\mathcal{T}^{c,k+1}_n$ by suitably adding the point $b_{j,n}$ (to $\mathcal{T}^{c,k}_n$) that is closest to the convex hull of the vertices of $\mathcal{T}^{c,k}_n$. To formalize this idea, note that after passing to a subsequence and changing indices, we may assume for all $n$,
$$
d_{\D}(b_{k+1,n}, \chull(\mathcal{V}^{c, k}_n)) = \min_{j=k+1,\cdots, m} d_{\D}(b_{j,n}, \chull(\mathcal{V}^{c, k}_n)).
$$
We now proceed to describe the vertex of $\mathcal{T}^{c,k}_n$ to which $b_{k+1,n}$ will be connected by an edge. Let $a_n \in \chull(\mathcal{V}^{c, k}_n))$ be the projection of $b_{k+1, n}$ onto $\chull(\mathcal{V}^{c, k}_n))$.
After passing to a subsequence, we may assume
$\lim_{n\to\infty} d_{\D}(a_n, b_{i,n})$
exists, which can possibly be $\infty$, for all $i = 1,\cdots, k$.

\begin{lem}\label{lem:projbranch}
With the above notation, there exists a unique $l \in \{1,\cdots, k\}$ with 
$$
\lim_{n\to\infty} d_{\D}(a_n, b_{l,n}) < \infty.
$$
\end{lem}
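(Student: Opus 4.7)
The lemma has two halves: existence of some $l$ with $\lim_n d_{\mathbb{D}}(a_n, b_{l,n}) < \infty$, and uniqueness of such $l$, and I would treat them separately.

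Uniqueness is the easier half: if two distinct indices $l \neq l'$ in $\{1, \ldots, k\}$ both satisfied $\lim_n d_{\mathbb{D}}(a_n, b_{l,n}) < \infty$ and $\lim_n d_{\mathbb{D}}(a_n, b_{l',n}) < \infty$, then by the triangle inequality $d_{\mathbb{D}}(b_{l,n}, b_{l',n}) \leq d_{\mathbb{D}}(a_n, b_{l,n}) + d_{\mathbb{D}}(a_n, b_{l',n})$ would remain bounded; this contradicts the fact that $b_{l,n}$ and $b_{l',n}$ are representatives of distinct critical clusters, so by construction $d_{\mathbb{D}}(b_{l,n}, b_{l',n}) \to \infty$.

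For existence, I would argue by contradiction, supposing $d_{\mathbb{D}}(a_n, b_{l,n}) \to \infty$ for every $l \in \{1, \ldots, k\}$. The induction hypothesis (\ref{Eqn:H1}) gives $d_{\mathbb{D}}(b_{k+1,n}, a_n) \to \infty$, so $b_{k+1,n}$ lies outside $\chull(\mathcal{V}^{c,k}_n)$ and $a_n$ lies on its topological boundary, which decomposes as a cycle of hyperbolic geodesic edges between consecutive extreme vertices. Since $a_n$ is far from every vertex by assumption, it must lie in the relative interior of some edge $E = [b_{l,n}, b_{l',n}]$, with $d_{\mathbb{D}}(a_n, b_{l,n})$ and $d_{\mathbb{D}}(a_n, b_{l',n})$ both diverging. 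The perpendicularity of the segment $[b_{k+1,n}, a_n]$ with $E$ at the foot of the projection, combined with the hyperbolic Pythagorean identity
\[ \cosh d_{\mathbb{D}}(b_{k+1,n}, b_{l,n}) = \cosh d_{\mathbb{D}}(b_{k+1,n}, a_n) \cdot \cosh d_{\mathbb{D}}(a_n, b_{l,n}), \]
then gives sharp control of the geometry near $E$.

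The key contradiction, and the main obstacle, is to use the minimality in the inductive step---$b_{k+1,n}$ was chosen so that $d_{\mathbb{D}}(b_{k+1,n}, \chull(\mathcal{V}^{c,k}_n)) = \min_{j > k} d_{\mathbb{D}}(b_{j,n}, \chull(\mathcal{V}^{c,k}_n))$---to exhibit another remaining critical cluster representative closer to the convex hull than $b_{k+1,n}$. My plan is to combine the edge geometry above with the displacement control provided by Propositions \ref{prop:mc} and \ref{cor:ch}, which force critical behavior of $f_n$ to appear near $a_n$ and prevent the situation where the projection sits deep inside a long edge; producing the critical cluster representative (rather than just an individual critical point) whose projection distance beats that of $b_{k+1,n}$ is the delicate step that will require the most careful hyperbolic-geometric bookkeeping.
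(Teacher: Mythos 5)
Your uniqueness argument is exactly the paper's (triangle inequality plus divergence of distances between distinct cluster representatives), and is fine. The existence half, however, has a genuine gap: the step you yourself flag as "the delicate step that will require the most careful hyperbolic-geometric bookkeeping" is precisely the content of the lemma, and it is not supplied. Moreover, the mechanism you propose for closing it --- contradicting the minimality of the choice of $b_{k+1,n}$ by exhibiting another cluster representative closer to $\chull(\mathcal{V}^{c,k}_n)$ --- is misdirected: the paper's proof never uses that minimality, and purely metric facts such as the hyperbolic Pythagorean identity (which just says $d_{\D}(b_{k+1,n},b_{l,n}) \approx d_{\D}(b_{k+1,n},a_n)+d_{\D}(a_n,b_{l,n})$, a consistent, contradiction-free configuration) cannot force a critical point to appear near $a_n$. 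The obstruction is dynamical, not geometric.

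The missing idea is a rescaling limit at $a_n$. Since $a_n \in \chull(\mathcal{V}^{c,k}_n)$, Proposition \ref{cor:ch} gives $d_{\D}(a_n, f_n(a_n)) \leq K'$, so for $M_n \in \Isom(\D)$ with $M_n(0)=a_n$, a subsequence of $M_n^{-1}\circ f_n \circ M_n$ converges to a proper anti-holomorphic $g:\D \to \D$ of degree $\geq 1$ by Lemma \ref{deg_at_least_1_lem}. If every $d_{\D}(a_n, b_{i,n}) \to \infty$ ($i \leq k$), then (using the induction hypothesis to rule out nearby clusters of index $>k$ as well) no critical points of $f_n$ stay within bounded distance of $a_n$, so $\deg g = 1$. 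Writing $a_n \in [b_{j_1,n}, b_{j_2,n}]$, the three points $M_n^{-1}(b_{j_1,n})$, $M_n^{-1}(b_{j_2,n})$, $M_n^{-1}(b_{k+1,n})$ converge to three distinct points of $\mathbb{S}^1$, and since the three geodesic segments from $a_n$ to these points are uniformly quasi-fixed (Proposition \ref{cor:ch} again), all three limit points are fixed by $g$. But a degree-one proper anti-holomorphic self-map of $\D$ has exactly two fixed points on $\mathbb{S}^1$ --- contradiction. Without this (or an equivalent) dynamical input, your outline does not yield the lemma.
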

\begin{proof}
The uniqueness follows from the fact that 
$$
\lim_{n\to\infty} d_{\D}(b_{i,n}, b_{j,n}) = \infty
$$
for all $i,j$.

Since $a_n \in \chull(\mathcal{V}^{c, k}_n))$, there exists $K'$ with $d_{\D}(a_n, f_n(a_n)) \leq K'$ by Proposition \ref{cor:ch}.
Let $M_n \in \Isom(\D)$ with $M_n(0) = a_n$.
Then after passing to a subsequence, $M_n^{-1} \circ f_n \circ M_n$ converges compactly to a proper map $g: \D \longrightarrow \D$ of degree $\geq 1$ by Lemma \ref{deg_at_least_1_lem}.

Suppose that $\lim_{n\to\infty} d_{\D}(a_n, b_{i,n}) = \infty$ for all $i\in \{1,\cdots, k\}$.
Then $\deg g = 1$ as there are no critical points of $f_n$ within bounded distance from $a_n$.

Let $a_n \in [b_{j_1,n}, b_{j_2,n}]$ for some $j_1, j_2 \in \{1,\cdots, k\}$.
Then $M_n^{-1}(b_{j_1,n})$, $M_n^{-1}(b_{j_2,n})$, $M_n^{-1}(b_{k+1,n})$ converge to $3$ distinct points on $\mathbb{S}^1$.
By Proposition \ref{cor:ch}, the geodesic segments $[a_n, b_{j_1,n}], [a_n, b_{j_2,n}], [a_n, b_{k+1,n}]$ are $K'$-quasi-fixed. 
Thus, the $3$ limit points are fixed by $g$.
But this is not possible as any degree $1$ proper anti-holomorphic map of $\D$ has exactly $2$ fixed points on $\mathbb{S}^1$.
\end{proof}

Using Lemma \ref{lem:projbranch}, we define
$$
\mathcal{T}^{c,k+1}_n := \mathcal{T}^{c,k}_n \cup [b_{l,n}, b_{k+1,n}],
$$
where $ [b_{l,n}, b_{k+1,n}]$ is a hyperbolic geodesic connecting $b_{l, n}$ to $b_{k+1, n}$.
It is easy to verify that the induction hypothesis (see equation (\ref{Eqn:H1})) is satisfied, and each edge is a hyperbolic geodesic segment.

Applying the above inductive construction $m-1$ times, we obtain the tree 
$$
\mathcal{T}^c_n := \mathcal{T}^{c,m}_n
$$ 
containing all $m$ vertices of $\mathcal{P}_n$.

\subsection*{Attaching ends to the core}
Index the set $F(f_n) = \{p_{1,n}, \cdots, p_{d+1,n}\}$. 
For each $j = 1, \cdots, d+1$, let $a_{j,n}$ be the projection of $p_{j,n}$ onto $\chull(\mathcal{V}^c_n)$, where $\mathcal{V}^c_n$ is the vertex set of $\mathcal{T}^c_n$.

We fix $j\in\{1,\cdots, d+1\}$. After passing to a subsequence, we may assume
$\lim_{n\to\infty} d_{\D}(a_{j,n}, b_{i,n})$
exist, which can possibly be $\infty$, for all $i = 1, \cdots, m$.
The proof of Lemma \ref{lem:projbranch} applies verbatim to the current setting to show that there exists a unique $l_j \in \{1,\cdots, m\}$ with 
$$
\lim_{n\to\infty} d_{\D}(a_{j,n}, b_{l_j,n}) < \infty.
$$
Thus, we construct 
$$
\Int(\mathcal{T}_n) = \mathcal{T}^c_n \cup \bigcup_{j=1}^{d+1} [b_{l_j,n}, p_{j,n}) \subset \D,
$$
where $[b_{l_j,n}, p_{j,n})$ is a hyperbolic geodesic ray connecting $b_{l_j,n}$ to $p_{j,n}$.

We also define 
$$
\mathcal{T}_n = \Int(\mathcal{T}_n)  \cup \bigcup_{j=1}^{d+1} p_{j,n} \subset \D\cup \mathbb{S}^1,
$$
and declare the marked repelling fixed point of $f_n$ on $\mathbb{S}^1$ to be the marked endpoint of $\mathcal{T}_n$.

After passing to a subsequence, we may assume that the underlying finite ribbon trees $\mathcal{T}_n$ are all isomorphic (respecting the marking of endpoints). 
We denote the isomorphism type of this finite ribbon tree by $\mathcal{T}$, and the sequence of planar embeddings by
$$
\phi_n: \mathcal{T} \longrightarrow \mathcal{T}_n.
$$

\begin{lem}\label{lem:rdt}
$\mathcal{T}$ is a marked $(d+1)$-ended ribbon tree.
\end{lem}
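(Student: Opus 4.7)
The plan is to verify the two defining conditions of a marked $(d+1)$-ended ribbon tree: (i) $\mathcal{V}(\mathcal{T}) = \beta(\mathcal{T}) \cup \epsilon(\mathcal{T})$, and (ii) $|\epsilon(\mathcal{T})| = d+1$ with a distinguished marked end. The ribbon structure is automatic from the planar embeddings $\phi_n:\mathcal{T}\to \mathcal{T}_n\subset \D$. For the ends: each $p_{j,n}\in F(f_n)$ is attached to the core by a single hyperbolic geodesic ray, hence has valence $1$ and lies in $\epsilon(\mathcal{T})$. Since $f_n$ is quasisymmetrically conjugate on $\mathbb{S}^1$ to $\bar z^d$, which has exactly $d+1$ fixed points on $\mathbb{S}^1$, one gets $|F(f_n)|=d+1$ and hence $|\epsilon(\mathcal{T})|\geq d+1$. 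The marked endpoint is declared to correspond to $\eta_{f_n}(1)$.

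The remaining task is to show each cluster representative $b_{k,n}$ is a branch point, i.e., has valence $\geq 3$ in $\mathcal{T}_n$; this will simultaneously pin $|\epsilon(\mathcal{T})|$ to be $d+1$. I would rescale around $b_{k,n}$: choose $M_n\in\Isom(\D)$ with $M_n(0)=b_{k,n}$. Since representatives of different clusters diverge in hyperbolic distance while the critical points of $\mathcal{C}_{k,n}$ stay bounded near $b_{k,n}$, Lemma \ref{deg_at_least_1_lem} produces (after passing to a subsequence and post-composing by a bounded isometry) an algebraic limit $g$ of $M_n^{-1}\circ f_n\circ M_n$ with $\varphi_g\in \BP^-_{d_k}$, where the cluster degree satisfies $d_k\geq 2$. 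Such a $\varphi_g$ has exactly $d_k+1\geq 3$ fixed points on $\mathbb{S}^1$.

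By Proposition \ref{cor:ch}, every edge of $\mathcal{T}_n$ at $b_{k,n}$ is $K'$-quasi-fixed, so in $M_n$-coordinates it is a geodesic starting at $0$ whose far endpoint $q_n$ lies on $\mathbb{S}^1$ in the limit; the limit $\xi=\lim q_n\in\mathbb{S}^1$ must then be a fixed point of $\varphi_g$, and distinct edges give distinct tangent directions at $b_{k,n}$, hence distinct ideal limits. Conversely, every fixed point $\xi$ of $\varphi_g$ on $\mathbb{S}^1$ is realized by some vertex of $\mathcal{T}_n$ whose $M_n^{-1}$-image tends to $\xi$: if $\xi$ is not a hole of $g$, Lemma \ref{lem:ac} yields a nearby fixed point of $M_n^{-1}\circ f_n\circ M_n$, hence some $p_{j,n}\in F(f_n)$ with $M_n^{-1}(p_{j,n})\to\xi$; if $\xi$ is a hole, Lemma \ref{lem:ch} yields a nearby critical point, whose cluster representative $b_{j,n}$ (with $j\neq k$) satisfies $M_n^{-1}(b_{j,n})\to\xi$.

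The main obstacle I foresee is upgrading ``vertex in $\mathcal{T}_n$ with $M_n^{-1}$-image tending to $\xi$'' to an actual \emph{edge at $b_{k,n}$} with ideal limit $\xi$, since a priori the tree path to such a vertex could begin in some other direction and only turn toward $\xi$ at a later intermediate representative. To rule this out, I would exploit the greedy nearest-projection procedure used to build $\mathcal{T}^c_n$ together with Lemma \ref{lem:projbranch}: if the path from $b_{k,n}$ to the vertex realizing $\xi$ first went to an intermediate neighbor $u_n$ with $M_n^{-1}(u_n)\to\eta\neq\xi$, then at the stage when the $\xi$-realizing representative was added to the core, its projection onto the convex hull of the current vertex set would lie closer (in the limit) to $b_{k,n}$ than to $u_n$, contradicting the uniqueness clause of Lemma \ref{lem:projbranch} and forcing a direct edge to $b_{k,n}$. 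Once this bijection between edges at $b_{k,n}$ and fixed points of $\varphi_g$ is in place, $\val(b_{k,n})=d_k+1\geq 3$, so $b_{k,n}\in\beta(\mathcal{T})$; combining with the endpoint analysis yields $\mathcal{V}(\mathcal{T})=\beta(\mathcal{T})\cup\epsilon(\mathcal{T})$ and $|\epsilon(\mathcal{T})|=d+1$, as required.
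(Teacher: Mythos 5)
Your proposal is correct and follows essentially the same route as the paper: rescale at each cluster representative $b_{k,n}$, obtain a degree $\geq 2$ limit $g$ via Lemma \ref{deg_at_least_1_lem}, note that $g$ has at least $3$ fixed points on $\mathbb{S}^1$, and match these fixed points bijectively to the edges of $\mathcal{T}_n$ incident at $b_{k,n}$ using Lemmas \ref{lem:ac} and \ref{lem:ch}. The only difference is that you spell out, via the greedy projection construction and Lemma \ref{lem:projbranch}, the adjacency step that the paper compresses into ``the construction of $\mathcal{T}_n$ now implies\dots''; this is a faithful elaboration rather than a different argument.
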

\begin{proof}
It suffices to show every vertex $b_{i,n}\in \mathcal{P}_n$ is a branch point.
By Proposition \ref{cor:ch}, there exists $K'>0$ with $d_{\D}(b_{i,n}, f_n(b_{i,n})) \leq K'$.
Let $M_n \in \Isom(\D)$ with $M_n(0) = b_{i,n}$.
Then after passing to a subsequence, $M_n^{-1} \circ f_n \circ M_n$ converges compactly to a proper map $g: \D \longrightarrow \D$ of degree $\geq 1$ by Lemma \ref{deg_at_least_1_lem}.

The map $g$ has degree $\geq 2$ as some critical point of $f_n$ is within a uniformly bounded distance from $b_{i,n}$.
Thus $g$ has at least $3$ fixed points on $\mathbb{S}^1$.

Let $a\in \mathbb{S}^1$ be a fixed point of $g$, then either there exists a sequence of fixed points $p_n \in F(f_n)$ or a sequence of critical points $c_n \in C(f_n)$ converging to $a$ by Lemma \ref{lem:ch}.
The construction of $\mathcal{T}_n$ now implies that the fixed point $p_n$ or some critical point $c_n'$ of $f_n$ (converging to $a$) is adjacent to $b_{i,n}$ in the tree $\mathcal{T}_n$. In fact, this argument shows that the valence of the vertex $b_{i,n}$ of $\mathcal{T}_n$ is equal to the number of fixed points of $g$ on $\mathbb{S}^1$. The result now follows from our observation that $g$ has at least $3$ fixed points on $\mathbb{S}^1$.
\end{proof}

Let $b\in \mathcal{T}$ be a branch point. Let $\mathcal{C}_{i,n}$ be the corresponding critical cluster for $b$.
Recall that we have assumed (by passing to a subsequence) that the degree of each critical cluster $\deg(\mathcal{C}_{i,n})$ is independent of $n$.
We define $\deg(b) := \deg(\mathcal{C}_{i,n})$.
\begin{cor}\label{lem:vd}
The valence of a branch point $b$ of $\mathcal{T}$ is equal to $\deg(b)+1$. 
\end{cor}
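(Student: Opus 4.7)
The plan is to read off the valence of $b$ from the rescaling limit $g$ already introduced in the proof of Lemma~\ref{lem:rdt}: I will show $\deg g = \deg(b)$ and then count the fixed points of $g$ on $\mathbb{S}^1$, which we have seen equal the valence.

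Let $\mathcal{C}_{i,n}$ be the critical cluster corresponding to $b$ and choose $M_n \in \Isom(\D)$ with $M_n(0) = b_{i,n}$. Since $b_{i,n}$ is quasi-fixed by Proposition~\ref{cor:ch}, Lemma~\ref{deg_at_least_1_lem} produces a subsequential algebraic limit $g$ of the maps $g_n := M_n^{-1} \circ f_n \circ M_n$ whose associated anti-rational map restricts to a proper anti-holomorphic self-map of $\D$ of some degree $k \geq 1$. As explicitly noted in the proof of Lemma~\ref{lem:rdt}, the valence of $b_{i,n}$ in $\mathcal{T}_n$, which equals the valence of $b$ in $\mathcal{T}$, coincides with the number of fixed points of $g$ on $\mathbb{S}^1$.

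For the identification $\deg g = \deg(b)$: the critical points of $g_n$ in $\D$ are $M_n^{-1}$ applied to those of $f_n$. By the defining property of critical clusters together with our choice of $M_n$, exactly the critical points in $\mathcal{C}_{i,n}$ remain at bounded hyperbolic distance from $0$, while those in every other cluster escape to $\mathbb{S}^1$. Combining Lemmas~\ref{lem:ac} and~\ref{lem:ch} with Hurwitz's theorem, the critical points of $g$ inside $\D$ have total multiplicity exactly $\deg(\mathcal{C}_{i,n}) - 1 = \deg(b) - 1$. Since $g|_{\mathbb{S}^1}\colon \mathbb{S}^1 \to \mathbb{S}^1$ is an unbranched covering, Riemann--Hurwitz applied to $g\colon \D \to \D$ gives $\deg g - 1 = \deg(b) - 1$, hence $\deg g = \deg(b)$.

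For the count of boundary fixed points: because $\deg g = \deg(b) \geq 2$, the map $g$ admits a unique attracting fixed point in $\D$; conjugating by a Möbius transformation sending that point to $0$ places $g$ in the normalized form $\tilde g \in \BP^-_{\deg(b)}$. By the marking $\eta_{\tilde g}$ from Section~\ref{sec:bab}, the boundary dynamics of $\tilde g$ is quasisymmetrically conjugate to $m_{-\deg(b)}(z) = \bar z^{\deg(b)}$, whose fixed-point equation $z^{\deg(b)+1} = 1$ on $\mathbb{S}^1$ has exactly $\deg(b) + 1$ solutions. Transporting back yields $\deg(b) + 1$ fixed points of $g$ on $\mathbb{S}^1$, which matches the valence of $b$. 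The only item warranting care is that the number of critical points of $g$ inside $\D$ is exactly, and not merely at least, $\deg(b) - 1$; this is immediate from the defining maximality property of critical clusters, so no substantial obstacle remains.
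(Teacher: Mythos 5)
Your argument follows the paper's proof essentially step for step: both work with the rescaling limit $g$ from the proof of Lemma~\ref{lem:rdt}, identify the valence of $b$ with the number of fixed points of $g$ on $\mathbb{S}^1$, and deduce the count from the fact that exactly the critical points of the cluster $\mathcal{C}_{i,n}$ survive in the limit, giving $\deg g=\deg(b)$. The identification $\deg g = \deg(b)$ via Hurwitz and Riemann--Hurwitz is fine, as is the appeal to the maximality property of critical clusters to see that no extra critical points persist.

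There is, however, one step that fails as written: the claim that ``because $\deg g=\deg(b)\geq 2$, the map $g$ admits a unique attracting fixed point in $\D$,'' which you then use to normalize $g$ into $\BP^-_{\deg(b)}$ and invoke the marking $\eta_{\tilde g}$. A proper anti-holomorphic self-map of $\D$ of degree $\geq 2$ need not have an interior fixed point: its Denjoy--Wolff point can lie on $\mathbb{S}^1$, and indeed the paper explicitly notes (in \S\ref{sec:d1} and Proposition~\ref{prop:rabl}) that the rescaling limit at a vertex $v\neq p$ typically has an attracting or parabolic fixed point on $\mathbb{S}^1$, since the attracting fixed point ${\bf 0}$ of $f_n$ escapes to the boundary in the $M_n$-coordinates. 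So the quasisymmetric conjugacy to $m_{-\deg(b)}$ is not available in general. The count you want is still correct, but for a different reason: since all $\deg(b)-1$ critical points of $g$ lie in $\D$ (and their reflections lie outside $\overline{\D}$), the restriction $g|_{\mathbb{S}^1}$ is an unbranched orientation-reversing covering of degree $-\deg(b)$; lifting to $\R$, the displacement function $t\mapsto \tilde g(t)-t$ is strictly decreasing and drops by $\deg(b)+1$ over one period, so $g$ has exactly $\deg(b)+1$ fixed points on $\mathbb{S}^1$ regardless of where its Denjoy--Wolff point sits. This is the robust form of the statement the paper uses when it asserts that the number of boundary fixed points of $g$ is two more than its number of critical points. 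With that substitution your proof is complete.
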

\begin{proof}
We will continue to use the notation introduced in the proof of Lemma~\ref{lem:rdt}. By the proof of Lemma~\ref{lem:rdt}, the valence of the vertex $b_{i,n}$ of $\mathcal{T}_n$ is equal to the number of fixed points of $g$, which is two more than the number of critical points of $g$. But the number of critical points of $g$ is equal to the number of critical points of $f_n$ that lie within a uniformly bounded distance from $b_{i,n}$. We conclude (using the definition of $\deg(b)$) that the valence of the vertex $b_{i,n}$ of $\mathcal{T}_n$ is equal to $\deg(b)+1$. 
\end{proof}

\begin{proof}[Proof of Theorem \ref{thm:qit}]
By Lemma \ref{lem:rdt}, $\mathcal{T}$ is a $(d+1)$-ended ribbon tree.
It is easy to check that the first three conditions are satisfied.

The fourth (critically approximating) property follows from the proof of Corollary~\ref{lem:vd}. Since $\Int(\mathcal{T}_n) \subset \chull C(f_n) \cup F(f_n)$, the last quasi-fixed condition is satisfied by Proposition \ref{cor:ch}.
\end{proof}

\subsection{A special point on the quasi-fixed tree}\label{subsec:sqft}
Given $\{f_n\}\subset \BP_d^-(K)$ with quasi-fixed tree $\mathcal{T}$, we remark that there is a special point $p$ on $\mathcal{T}$ which corresponds to the attracting fixed point $0$ of $f_n$.
More precisely, 
\begin{itemize}
\item either there exists a (unique) vertex $v \in \mathcal{T}$ so that $d_{\D}(0, \phi_n(v))$ is bounded;
\item or there exists a (unique) edge $E \subseteq \mathcal{T}$ and $x_n \in \phi_n(E)$ so that
$d_{\D}(0, x_n)$ is bounded, and $d_{\D}(\phi_n(\partial E), x_n) \to \infty$.
\end{itemize}
In the first case, we take $p = v$ and in the second case, we can introduce a new vertex on the edge $E$ and let $p$ be that point.
This makes $(\mathcal{T}, p)$ a pointed tree (see \S \ref{subsec:pmrt} for more discussions).
We remark that in the second case, $\mathcal{T}$ is no longer a $d+1$-ended ribbon tree as $p$ has valence $2$.
Although this does not affect our argument, it makes some theorems slightly more cumbersome to state for pointed quasi-fixed trees.

We also remark that 
while the quasi-fixed trees are used to determine {\em whether a pared deformation space bifurcates to another} (see \S \ref{sec:cad}),
the special point gives additional information on {\em how a pared deformation space bifurcates to the other} (see Appendix \ref{sec:ve}).
To keep the statements simple, in many cases, we will only consider quasi-fixed trees without the special point.

\section{Realization of $(d+1)$-ended ribbon trees}\label{sec:rert}

Let $\mathcal{T}$ be a $(d+1)$-ended ribbon tree. We say it is {\em realizable} if there exist a sequence of quasi critically fixed maps $\{f_n\}\subset \BP^-_d(K)$ (for some $K>0$) and a sequence of planar embeddings $\phi_n$ such that $\phi_n(\mathcal{T})$ is the sequence of quasi-fixed trees for $f_n$.
In this section, we shall prove
\begin{theorem}\label{thm:rdtr}
For $d\geq 2$, every $(d+1)$-ended ribbon tree $\mathcal{T}$ is realizable.
\end{theorem}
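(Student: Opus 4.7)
The plan is to proceed by induction on the number $m = |\beta(\mathcal{T})|$ of branch points of $\mathcal{T}$. The base case $m = 1$ is immediate: then $\mathcal{T}$ is a star with one central vertex of valence $d+1$, and the constant sequence $f_n = \bar z^d$ realizes it. Indeed, all $d - 1$ critical points collapse at $0$, giving a single critical cluster of degree $d$; the $d+1$ fixed points of $\bar z^d$ on $\mathbb{S}^1$ serve as the endpoints; and $\phi_n$ can be taken to send the central vertex to $0$ and each edge to the hyperbolic geodesic ray from $0$ to the corresponding fixed point.

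For the inductive step with $m \geq 2$, I would peel off a ``leaf branch point'' $v$ of the core $\mathcal{T}^c = \hull(\beta(\mathcal{T}))$, i.e., a branch point adjacent to a unique other branch point $w$ in $\mathcal{T}^c$; such a $v$ exists because any finite tree with $\geq 2$ vertices has at least two leaves. Set $k = \val(v) - 1$, so that $v$ is joined in $\mathcal{T}$ to $w$ and to $k$ endpoints of $\mathcal{T}$. Form a reduced tree $\mathcal{T}'$ by collapsing $\{v\}$ together with its $k$ incident ends into a single new end $p$ attached at $w$. Then $\mathcal{T}'$ is a $(d - k + 2)$-ended ribbon tree with $m - 1$ branch points, and the inductive hypothesis supplies a sequence $g_n \in \BP^-_{d-k+1}(K')$ realizing $\mathcal{T}'$ via planar embeddings $\psi_n \colon \mathcal{T}' \hookrightarrow \overline{\D}$. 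I would then construct $f_n \in \BP^-_d$ from $g_n$ by a pinching-type surgery that inserts $k-1$ additional critical points clustered at a point $q_n$ chosen deep in the thin part along the geodesic ray $\psi_n([w, p])$, arranged in the cyclic order around $q_n$ dictated by the ribbon structure of $\mathcal{T}$ at $v$. Concretely, writing $g_n(z) = \bar z \prod_{i} \frac{\bar z - a_i^{(n)}}{1 - \overline{a_i^{(n)}}\, \bar z}$, I would take
\[
f_n(z) = g_n(z) \cdot \prod_{j=1}^{k-1} \frac{\bar z - b_j^{(n)}}{1 - \overline{b_j^{(n)}}\, \bar z},
\]
where the zeros $b_j^{(n)}$ lie in a small hyperbolic disk about $q_n$, chosen so that in rescaled coordinates centered at $q_n$ the local model of $f_n$ is a M\"obius conjugate of $\bar z^{k}$.

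Three claims then need to be verified: \emph{(i)} $f_n \in \BP^-_d(K)$ for some uniform $K$; \emph{(ii)} the $k-1$ new critical points form a single critical cluster of degree $k$ near $q_n$, quasi-fixed by $f_n$; and \emph{(iii)} the $k$ endpoints of $\mathcal{T}$ attached at $v$ correspond to $k$ new repelling fixed points of $f_n$ on $\mathbb{S}^1$, appearing on the arc around $\psi_n(p)$ in the prescribed cyclic order. Claims \emph{(ii)} and \emph{(iii)} follow from local analysis: in rescaled coordinates at $q_n$, Lemmas~\ref{deg_at_least_1_lem} and~\ref{lem:ac} give an algebraic limit of type $\bar z^{k}$ (up to M\"obius conjugation), which has $k-1$ critical points at $0$ and $k+1$ fixed points on $\mathbb{S}^1$, one of which is absorbed by the old $g_n$-fixed point at $\psi_n(p)$.

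The main obstacle is \emph{(i)}, preservation of the uniform multiplier bound. I would reduce it to the quasi-equivalence in Proposition~\ref{prop:mc}: it suffices to bound critical displacements $d_{\D}(c, f_n(c))$ uniformly for every critical point $c$ of $f_n$. The old critical points of $g_n$ inherit displacement bounds from the inductive hypothesis, provided the inserted factors $\prod_j \frac{\bar z - b_j^{(n)}}{1 - \overline{b_j^{(n)}}\, \bar z}$ act nearly as the identity on $\chull(\beta(\mathcal{T}'))$ in the $\psi_n$-picture; this can be arranged by choosing $q_n$ at hyperbolic distance tending to $\infty$ from $\chull(\beta(\mathcal{T}'))$, so that each M\"obius factor has its zero and pole arbitrarily close together as seen from the old core. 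The new critical points near $q_n$ are quasi-fixed by the local-model computation. Once \emph{(i)} is established, applying Theorem~\ref{thm:qit} to $\{f_n\}$ produces a quasi-fixed tree, and by tracking the combinatorics of the surgery one checks that this tree is isomorphic to $\mathcal{T}$ as a marked $(d+1)$-ended ribbon tree.
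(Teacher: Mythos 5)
Your overall strategy (induct, prune the tree, reinsert zeros, reduce the multiplier bound to critical displacement via Proposition~\ref{prop:mc}) is the same as the paper's, but the surgery step contains a fatal geometric error. You place the $k-1$ new zeros $b_j^{(n)}$ in a bounded hyperbolic neighborhood of the point $q_n$ where the new branch point (and hence the new critical cluster) is supposed to sit. But then $f_n(b_j^{(n)})=0$ exactly, and more generally in rescaled coordinates $L_n$ with $L_n(0)=q_n$ the factor $g_n\circ L_n$ converges to a unimodular constant while $\prod_j M_{b_j^{(n)}}\circ L_n$ converges to a degree $k-1$ proper self-map $B_\infty$ of $\D$; hence $f_n(q_n)\to cB_\infty(0)\in\D$, a point at \emph{bounded} hyperbolic distance from the origin. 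So $d_{\D}(q_n,f_n(q_n))\to\infty$: nothing near $q_n$ is quasi-fixed, the local model has degree $k-1$ rather than $k$ and is not a conjugate of $\bar z^{k}$ with fixed points on $\mathbb{S}^1$, and your claims (i)--(iii) all fail. The point you are missing is that the new critical points of a Blaschke product do not form where the new zeros are placed; they form roughly at the hyperbolic ``median'' between the zero cluster at the origin and the new zeros (compare Lemma~\ref{lem:cl}). To create a quasi-fixed critical cluster of degree $k$ \emph{at} $q_n$, each new zero must be placed at hyperbolic distance roughly $d_{\D}(0,q_n)$ \emph{beyond} $q_n$ (so at distance $\approx 2\,d_{\D}(0,q_n)$ from the origin), and the several new zeros must be angularly separated as seen from $q_n$ so that Lemma~\ref{lem:cl} produces $k-1$ critical points there. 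This is exactly how the paper's proof works, and it is why the auxiliary induction hypothesis IH~\ref{ih:m} (recording angularly separated zeros associated to each branch point) has to be carried along.

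Two further points would still need repair even after fixing the placement. First, the paper only ever reattaches at a \emph{farthest} vertex $w$: this guarantees that the new zero, sitting at distance $\approx 2sr(w)$ from the origin, satisfies the hypothesis of Lemma~\ref{lem:em} on the entire existing tree (which lies in $B(0,1-\delta_s^{1/2})$), so the multiplicative perturbation is a near-constant there and the old tree stays quasi-fixed. Your leaf branch point $v$ of the core need not be farthest, so zeros placed at distance $\approx 2\,d_{\D}(0,q_n)$ may lie \emph{closer} to the origin than other existing vertices, and the ``acts nearly as the identity on the old core'' claim breaks down precisely at those farther vertices. Second, minor but real: if the marked end happens to be attached to the leaf branch point you prune, your reduction destroys the marking, and your induction (on branch points, jumping the degree by $k-1$ at each step) needs a strengthened hypothesis, analogous to Theorem~\ref{thm:cvr} plus IH~\ref{ih:m}, to know where the zeros of $g_n$ sit relative to $\psi_n(w)$ before the angular-separation argument can be run.
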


\subsection{Pointed metric $(d+1)$-ended ribbon trees}\label{subsec:pmrt}
A $(d+1)$-ended ribbon tree $\mathcal{T}$ 
together with a special vertex $p\in \beta(\mathcal{T})$ is called a {\em pointed $(d+1)$-ended ribbon tree}. It denoted by $(\mathcal{T}, p)$.

For an anti-Blaschke product $f\in \BP^-_d$, $0 \in \D$ is the unique attracting fixed point of $f$.
It is thus natural to keep track of this attracting fixed point for quasi critically fixed degenerations.

We say that a pointed $(d+1)$-ended ribbon tree $(\mathcal{T}, p)$ is realizable if there exists a quasi critically fixed sequence $\{f_n\}\subset \BP^-_d(K)$ (for some $K>0$) realizing $\mathcal{T}$ such that $d_{\D}(0, \phi_n(p))$ is bounded.

\begin{rmk}
We remark that it is possible that the rescaling limit of $f_n$ based at the origin has degree one, in which case $d_{\D}(0, \phi_n(v)) \to \infty$ for all vertices $v\in \mathcal{T}$ (see \S \ref{subsec:sqft}). 
In this situation, we can introduce a new vertex $p \in \mathcal{T}$ so that $d_{\D}(0, \phi_n(p))$ is bounded.
The new vertex $p$ has valence $2$, so $\mathcal{T}$ is no longer a $d+1$-ended ribbon tree by our definition.
To simplify the notation, we shall first discuss the situation when the rescaling limit based at the origin has degree at least two.
In \S \ref{sec:d1}, we shall discuss some minor modifications required to handle the case when the rescaling limit at the origin has degree one.
\end{rmk}
 
It is also useful to introduce the following modified edge metric $d_\mathcal{T}$ on a $(d+1)$-ended ribbon tree $\mathcal{T}$:
\begin{itemize}
\item $d_\mathcal{T}(v,w)$ is the number of edges in $[v,w]$ if $v, w\in \beta(\mathcal{T})$;
\item $d_\mathcal{T}(v,w) = \infty$ if $v$ or $w$ is in $\epsilon(\mathcal{T})$.
\end{itemize}
In other words, $d_\mathcal{T}$ is the usual edge metric restricted to the vertices in the core $\mathcal{T}^c$, while the ends $\epsilon(\mathcal{T})$ are considered infinitely far away in this metric.

To define the distance between points on edges, we can extend the metric linearly and call
$$
(\mathcal{T}, p, d_\mathcal{T})
$$
a {\em pointed metric $(d+1)$-ended ribbon tree}. 

Let $s \in (0, \infty)$, we shall denote
$$
(\mathcal{T}, p, d_\mathcal{T}^s)
$$
as the dilation of the pointed metric $(d+1)$-ended ribbon tree by $s$.
More precisely, for any two points $v, w \in \mathcal{T}$, 
$$
d_\mathcal{T}^s(v,w) = sd_\mathcal{T}(v,w).
$$

We need to generalize the notion of realizable pointed $(d+1)$-ended ribbon trees to the context of metric trees. This is done in the following definition, where we further add a normalization to the effect that in the realization, the `critical cluster' corresponding to the branch point $p$ is a single critical point at the origin.
We state the definition for a continuous family $f_s\in \mathcal{B}^-_d(K)$.

\begin{defn}\label{defn:rpmt}
A pointed metric $(d+1)$-ended ribbon tree $(\mathcal{T}, p, d_\mathcal{T})$ is said to be {\em realizable} if 
there exist $K, M, R > 0$, a family $f_s \in \BP^-_d(K)$ with $s \in (0, \infty)$, and a family of $M$-quasi-isometric embeddings
$$
\phi_s: (\Int(\mathcal{T}), p, d_\mathcal{T}^s) \longrightarrow (\D, 0, d_{\D})
$$
so that
\begin{itemize}
\item (Superattracting.) $0$ is a critical point of $f_s$ of multiplicity $\val(p)-2$.

\item (Ends approximating.) The map $\phi_s$ extends to a continuous map $\phi_s: \mathcal{T} \longrightarrow \D \cup \mathbb{S}^1$ which gives a bijection between $\epsilon (\mathcal{T})$ and $F(f_s)$ respecting the markings.

\item (Geodesic edges.) If $E = [v, w] \subset \mathcal{T}$ is an edge, then $\phi_s(E)$ is the hyperbolic geodesic connecting $\phi_s(v)$ and $\phi_s(w)$.

\item (Critically approximating.)  If $v\in \beta(\mathcal{T})$, then there are exactly $\val(v) - 2$ critical points counted with multiplicities in the hyperbolic ball $B_{\D}(\phi_s(v), R)$.

\item (Quasi-fixed.) If $x\in \Int(\mathcal{T})$, then 
$$
d_{\D}(f_s(\phi_s(x)), \phi_s(x)) \leq M \text{ for all } s.
$$
\end{itemize}
\end{defn}

\begin{rmk}
The definition of the metric $d_{\mathcal{T}}^s$ and the requirement that $\phi_s$ is an $M$-quasi-isometry together imply the `degenerating vertices' condition for $\phi_s(\mathcal{T})$; i.e., if $v \neq w \in \beta(\mathcal{T})$, then $d_{\D}(\phi_s(v), \phi_s(w)) \to \infty,$ as $s\to+\infty$.
\end{rmk}

We shall prove the following more general theorem which immediately implies Theorem \ref{thm:rdtr} (see \cite[Theorem 4.1]{L21a} for the even more general realization theorem of quasi post-critically finite degenerations in the orientation preserving setting).
\begin{theorem}\label{thm:cvr}
For $d\geq 2$, every pointed metric $(d+1)$-ended ribbon tree $(\mathcal{T}, p, d_\mathcal{T})$ is realizable.
\end{theorem}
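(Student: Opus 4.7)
My plan is a direct construction, parametrizing $\BP^-_d$ by the positions of its $d-1$ critical points (the anti-holomorphic analog of Heins' theorem, obtained from the classical statement via $f \mapsto \overline{f(\bar z)}$), followed by a rescaling argument that exploits the confinement of the limit map to $\D$.

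\emph{Construction.} For each $s>0$ choose a piecewise-geodesic planar embedding $\psi_s \colon \mathcal{T} \to \overline{\D}$ realizing $(\mathcal{T}, p, d_\mathcal{T}^s)$: send $p$ to $0$, send each edge $[v,w]$ to a hyperbolic geodesic of length $s \cdot d_\mathcal{T}(v, w)$ respecting the ribbon cyclic order at each vertex and the marked end, and send the ends to the resulting ideal points on $\mathbb{S}^1$. At each branch point $v$ place $\val(v)-2$ critical points (counted with multiplicity) at $\psi_s(v)$; the Euler characteristic gives a total of $\sum_{v \in \beta(\mathcal{T})}(\val(v)-2) = d-1$. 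The Heins parametrization supplies an anti-Blaschke product with this critical set, unique up to a post-composition by $\Isom(\D)$; normalizing the attracting fixed point at $0$ and pinning down the rotational ambiguity by demanding that $\eta_{f_s}(1)$ be the marked repelling fixed point yields a well-defined $f_s \in \BP^-_d$. Set $\phi_s := \psi_s|_{\Int(\mathcal{T})}$. The superattracting, geodesic-edges, and critically-approximating conditions of Definition~\ref{defn:rpmt} hold by construction. The quasi-isometric embedding property reduces to a uniform lower bound on the exterior angles at each branch point of $\psi_s(\mathcal{T})$, which follows from the fact that the rescaled dynamics at each branch point (described below) is Mobius-conjugate to $\bar w^{\val(v)-1}$, whose $\val(v)$ repelling fixed directions are uniformly separated.

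\emph{Quasi-fixed property.} Suppose for contradiction there exist $s_n \to \infty$ and $c_n \in C(f_{s_n})$ with $d_\D(c_n, f_{s_n}(c_n)) \to \infty$. By construction $c_n = \psi_{s_n}(v_n)$ for some branch point $v_n$, constant after a subsequence at $v_n \equiv v$. Choose $L_n \in \Isom(\D)$ with $L_n(0) = c_n$ and form the conjugate
\[
 g_n := L_n^{-1} \circ f_{s_n} \circ L_n,
\]
a proper anti-holomorphic self-map of $\D$ of degree $d$. Extract a subsequential algebraic limit $g \in \overline{\Rat^-_d}$. The cluster-separation $d_\D(\psi_{s_n}(v), \psi_{s_n}(v')) = s_n \cdot d_\mathcal{T}(v, v') \to \infty$ for $v' \neq v$ forces all critical points of $g_n$ other than those of the $v$-cluster to leave every compact subset of $\D$; hence only the $v$-cluster persists in a bounded region, yielding a critical point of $\varphi_g$ at $0$ of multiplicity $\val(v) - 2$. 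By the critical-point count, $\deg \varphi_g = \val(v)-1 \geq 2$. Applying the maximum modulus principle to the rational function $z \mapsto \overline{\varphi_g(\bar z)}$ forces $\varphi_g(\D) \subseteq \D$, so $\varphi_g|_\D$ is a degree-$(\val(v)-1)$ proper anti-holomorphic self-map of $\D$ with its unique critical cluster at $0$; hence $\varphi_g(w) = T(\bar w^{\val(v)-1})$ for some $T \in \Isom(\D)$, giving $\varphi_g(0) = T(0) \in \D$. But
\[
 \varphi_g(0) = \lim_{n \to \infty} g_n(0) = \lim_{n \to \infty} L_n^{-1}(f_{s_n}(c_n)),
\]
and $d_\D(0, L_n^{-1}(f_{s_n}(c_n))) = d_\D(c_n, f_{s_n}(c_n)) \to \infty$ forces this limit onto $\mathbb{S}^1$, a contradiction. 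Hence $f_s \in \BP^-_{d, qf}(M)$ uniformly in $s$, and Proposition~\ref{prop:mc} yields $f_s \in \BP^-_d(K)$ for a uniform $K$. The ends-approximating condition then follows because the $\val(v)$ repelling fixed points of $T(\bar w^{\val(v)-1})$ on $\mathbb{S}^1$ match, in the limit, the $\val(v)$ directions of edges at $v$ in $\psi_s(\mathcal{T})$.

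\emph{Main obstacle.} The crux is the rescaling analysis: one must (i) rule out degree drop below $\val(v)-1$ for $\varphi_g$, which holds because the $v$-cluster critical points are placed at $\psi_{s_n}(v) = c_n$, hence at $0$ after conjugation, and so cannot escape to become holes; and (ii) pin down $\varphi_g|_{\D}$ as a self-map of $\D$ via the maximum modulus principle, which is what traps $\varphi_g(0)$ inside $\D$ and produces the contradiction. Using conjugation here (rather than the post-composed normalization $M_n^{-1} \circ f_{s_n} \circ L_n$ used in the proof of Proposition~\ref{prop:mc}) is essential, since conjugation preserves the dynamical disk and prevents the rescaled map from being a more general anti-rational map of the sphere for which the maximum modulus argument fails.
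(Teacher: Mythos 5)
Your construction fails at its first step, before the rescaling analysis begins: prescribing the \emph{critical points} of $f_s$ does not control its \emph{critical values}, and the Heins map whose critical set sits at the branch points of a geodesic embedding of $\mathcal{T}$ is in general not quasi critically fixed. Here is an explicit degree-$3$ counterexample, for the four-ended tree with two trivalent branch points $p,v$. The elements of $\BP^-_3$ with a simple critical point at $0$ are exactly $f_a(z)=\bar z^{2}\,\frac{\bar z-a}{1-\bar a\bar z}$, and requiring the second critical point to lie at a real $r\in(0,1)$ forces $a$ to be real with $1-a\asymp(1-r)^2$. Then $f_a(r)=r^2\frac{r-a}{1-ar}\approx -\bigl(1-4(1-r)\bigr)$: the critical value sits near $-1$ while the critical point sits near $+1$, so $d_{\D}(r,f_a(r))\to\infty$ as $r\to1$ (equivalently, the multiplier at the boundary fixed point $1$ blows up). Thus the unique map in $\BP^-_3$ with critical set $\{0,r\}$ does \emph{not} realize the tree, even though its critical points are exactly where your $\psi_s$ puts them. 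A realizing family does have the form $f_{a_s}$, but with $\arg(a_s)$ chosen as a delicate function of $s$ — a choice your construction neither makes nor detects, and whose existence is essentially the content of the theorem. This is precisely why the paper's induction adds one \emph{zero} at a time, placing the new zero $z_s$ by the distance condition $d_{\D}(z_s,\widetilde\phi_s(w))=d_{\D}(0,\widetilde f_s(\widetilde\phi_s(w)))$ together with the angle separation of IH~\ref{ih:m}: it is the zeros, not the critical points, that govern where the critical values land.

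Correspondingly, your rescaling argument for the quasi-fixed property is circular and cannot rescue the construction. The failing step is the claim that the critical point of $g_n=L_n^{-1}\circ f_{s_n}\circ L_n$ at $0$ persists in the algebraic limit, so that $\deg\varphi_g\geq 2$ and $0\notin\mathcal{H}(g)$. A critical point sitting at $0$ for every $n$ can perfectly well be absorbed into a hole at $0$, or the limit can be constant: Lemma~\ref{lem:ch} says holes attract critical points, not that convergent critical points avoid becoming holes. In the example above, $g_n$ has a critical point at $0$ and $g_n(0)\to-1\in\mathbb{S}^1$ while $g_n(\D)\subseteq\D$, so the algebraic limit \emph{must} degenerate at $0$; the ``contradiction'' you derive is exactly a proof of that degeneration, not a contradiction. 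The non-degeneracy statement you need (Lemma~\ref{deg_at_least_1_lem}) requires $d_{\D}(0,g_n(0))=d_{\D}(c_n,f_{s_n}(c_n))$ to be bounded — which is the quasi-fixed property you are trying to prove. Your closing remark that conjugation is ``essential'' here is backwards: the paper uses the post-composed normalization $M_n^{-1}\circ f_n\circ L_n$ in Proposition~\ref{prop:mc} precisely because it forces the rescaling limit to be a nonconstant proper self-map of $\D$, which conjugation does not.
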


We first observe $\mathcal{T}$ has exactly one branch point (i.e., if $\mathcal{T}^c=\{p\}$) if and only if $\mathcal{T}$ is a star-tree with $(d+1)$ ends. In this case, the constant family $f_s(z)=\overline{z}^d$ is easily seen to realize the pointed metric $(d+1)$-ended ribbon tree $(\mathcal{T}, p, d_\mathcal{T})$. Therefore, in the remainder of this section, we will assume that $\beta(\mathcal{T})$ contains at least two distinct points.

The proof is by induction on the degree $d$ of the anti-Blaschke products (or equivalently, on the number of ends $(d+1)$ of the tree).

In the base case $d=2$, there is only one $3$-ended ribbon tree $\mathcal{T}$, which is a `tripod'.
Therefore, there is only one pointed metric $3$-ended ribbon tree $(\mathcal{T}, p, d_\mathcal{T})$, which is realized by the constant family $f_s(z) = \bar{z}^2$.

Assume as induction hypothesis that any pointed metric $(d+1)$-ended ribbon tree is realizable.
Let $(\mathcal{T}, p, d_\mathcal{T})$ be a pointed metric $(d+2)$-ended ribbon tree.
The induction consists of two steps.

First, we shall construct a subtree $\widetilde{\mathcal{T}}  \subset \mathcal{T}$ which is a pointed metric $(d+1)$-ended ribbon tree.
By induction hypothesis, it is realizable by $\widetilde f_s \in \BP^-_d$.

Second, we prove realizability of $\mathcal{T}$ by carefully adding a zero of the anti-Blaschke product.
More precisely, we construct a sequence 
$$
f_s = e^{i\theta_s} \frac{\bar{z}-a_s}{1-\bar{a_s}\bar{z}} \cdot \widetilde f_s
$$ 
where $\theta_s$ and $a_s$ are chosen appropriately, {and show that M{\"o}bius conjugates of $f_s$ in $\BP^-_{d+1}$ yield a family realizing $\mathcal{T}$}.
In this second step, we establish some estimates that allow us to control the critical orbits of $f_s$.

We also remark that instead of adding zeros of anti-Blaschke products inductively, one can inductively add critical points by applying Heins theorem  (see \cite{Heins62} and \cite{Zakeri98}). 
When the tree has only 2 branch points, the construction using Heins theorem is easier as no induction step is needed in this case (see Proposition \ref{prop:rabl}).
In general, similar estimates as in the second step are needed to control the orbits of the critical points. 

\subsection{Construction of the reduction}
Let $(\mathcal{T}, p, d_\mathcal{T})$ be a pointed metric $(d+2)$-ended ribbon tree.
It is convenient to introduce a radius function 
$$
r: \beta(\mathcal{T}) \longrightarrow \R_{\geq 0}
$$
by setting $r(v) = d_\mathcal{T}(p, v)$.
We call a vertex $w\in \beta(\mathcal{T})$ a {\em farthest} vertex if
$$
r(w) = \max_{v\in \beta(\mathcal{T})} r(v).
$$
By construction, $w$ is an end-point of the core $\mathcal{T}^c$ of $\mathcal{T}$. Since $w$ is a branch point of $\mathcal{T}$, it follows that at least two vertices in $\epsilon(\mathcal{T})$ are adjacent to $w$. Denoting the marked end-point of $\mathcal{T}$ by $x$ ($\in \epsilon(\mathcal{T})$), we see that at least one end of $\mathcal{T}$ adjacent to $w$ must be different from $x$. We pick such an end, and call it $y$.

We now define 
$$
\widetilde{\mathcal{T}} = \mathcal{T} \setminus (w,y] \subset \mathcal{T}.
$$
Note that $\widetilde{\mathcal{T}}$ inherits a ribbon structure from $\mathcal{T}$. We declare $x\in\epsilon(\widetilde{\mathcal{T}})$ to be the marked end-point of $\widetilde{\mathcal{T}}$. Clearly, $\widetilde{\mathcal{T}}$ has $(d+1)$ ends.

If $w$ has valence $\geq 4$ in $\mathcal{T}$, then $w$ remains a branch point in $\widetilde{\mathcal{T}}$. Thus in this case, $\widetilde{\mathcal{T}}$ is a $(d+1)$-ended ribbon tree. On the other hand, if $w$ has valence $3$ in $\mathcal{T}$, then $w$ has valence $2$ in $\widetilde{\mathcal{T}}$. In this case, we throw away $w$ from the vertex set of $\widetilde{\mathcal{T}}$, thus producing a $(d+1)$-ended ribbon tree $\widetilde{\mathcal{T}}$.

Since $(d+2)\geq 4$, the special branch point $p$ of $\mathcal{T}$ lies in $\widetilde{\mathcal{T}}$ and is a branch point thereof. We declare $p$ to be the special branch point of $\widetilde{\mathcal{T}}$. Moreover, the metric $d_\mathcal{T}$ induces a metric on $\widetilde{\mathcal{T}}$, which agrees with the modified edge metric $d_{\widetilde{\mathcal{T}}}$ on $\widetilde{\mathcal{T}}$. Hence, $(\widetilde{\mathcal{T}} , p, d_{\widetilde{\mathcal{T}}})$ is a pointed metric $(d+1)$-ended ribbon tree.

In order to reconstruct $\mathcal{T}$ from $\widetilde{\mathcal{T}}$, we need to keep track of the point $w$ at which the tree $\mathcal{T}$ was pruned. With this in mind,  we call $(\widetilde{\mathcal{T}} , p, d_{\widetilde{\mathcal{T}} })$ the {\em reduction} of $(\mathcal{T}, p, d_\mathcal{T})$ with the \emph{regluing point} $w$.

\subsection{Geometric bounds for proper anti-holomorphic maps}
Before giving the proof of the induction step, we need some geometric bounds to study anti-Blaschke products.

We start with some useful estimates that will be used frequently in the proof.
The following lemma follows from the Schwarz lemma and Koebe's theorem.
\begin{lem}\label{lem:hme}
Let $U$ be a simply connected domain in $\C$ with hyperbolic metric $\rho_U |dz|$, then for $z\in U$,
$$
\frac{1}{2d_{\R^2}(z, \partial U)} \leq \rho_U(z) \leq \frac{2}{d_{\R^2}(z, \partial U)}.
$$
\end{lem}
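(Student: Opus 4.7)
The plan is to prove the two inequalities independently, reducing each to a classical one-variable statement on the unit disk. I work with the normalization $\rho_{\D}(z) = 2/(1-|z|^2)$, which is consistent with the constants $1/2$ and $2$ appearing in the lemma, so that $\rho_{\D}(0) = 2$ and, by rescaling $w \mapsto (w-z_0)/r$, $\rho_{B(z_0,r)}(z_0) = 2/r$ for any Euclidean disk $B(z_0, r) \subset \C$.

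For the upper estimate, I would fix $z \in U$, set $d = d_{\R^2}(z, \partial U)$, and note that $B(z, d) \subseteq U$ by the definition of $d$. The Schwarz--Pick principle, in the form of monotonicity of hyperbolic metrics under inclusion of hyperbolic domains, then yields
$$\rho_U(z) \leq \rho_{B(z,d)}(z) = \frac{2}{d},$$
which is the desired upper bound.

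For the lower estimate, I would invoke the Riemann mapping theorem to select a conformal isomorphism $f: \D \to U$ with $f(0) = z$. Conformal invariance of the hyperbolic metric at the origin gives $\rho_U(z) \cdot |f'(0)| = \rho_{\D}(0) = 2$, so $|f'(0)| = 2/\rho_U(z)$. Applying Koebe's one-quarter theorem to the normalized univalent map $w \mapsto (f(w)-z)/f'(0)$ shows that $B(z, |f'(0)|/4) \subseteq U$, and hence
$$d_{\R^2}(z, \partial U) \;\geq\; \frac{|f'(0)|}{4} \;=\; \frac{1}{2\,\rho_U(z)},$$
which rearranges to the claimed lower bound. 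Neither step presents any real obstacle, and no heavier machinery is needed; the only thing demanding care is tracking the factor of two from the chosen normalization and invoking Koebe's theorem with the correct constant $1/4$. As a sanity check, both inequalities become equalities on round Euclidean disks centered at $z$, which confirms that the numerical constants cannot be improved within this argument.
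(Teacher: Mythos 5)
Your proof is correct and is exactly the argument the paper has in mind: the paper offers no proof of this lemma, remarking only that it ``follows from the Schwarz lemma and Koebe's theorem,'' which is precisely your two steps (monotonicity of the hyperbolic metric applied to $B(z,d)\subseteq U$ for the upper bound, and the Koebe one-quarter theorem applied to a Riemann map for the lower bound), with the normalization $\rho_{\D}(0)=2$ correctly identified. One small inaccuracy in your closing sanity check: only the upper inequality is an equality on round disks centered at $z$; the lower constant $\tfrac{1}{2}$ is attained not on disks but on slit domains (images of the Koebe function), though this does not affect the proof.
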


The next lemma is an equivalent formulation of the fact that $(\D, d_{\D})$ is a hyperbolic metric space, and hence the angle between two sides of a geodesic triangle in $\D$ controls the corresponding Gromov inner product.

\begin{lem}\label{lem:alb}
For each positive angle $\theta$, there exists a constant $C(\theta)\in (0,\infty)$ such that for a (non-degenerate) hyperbolic triangle $\Delta ABC$ with $\angle ABC\geq \theta$, we have 
$$
0\leq d_{\D}(A,B)+d_{\D}(B,C)-d_{\D}(A,C)\leq C(\theta).
$$ 
\end{lem}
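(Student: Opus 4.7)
The plan is to reduce the lemma to a direct calculation with the hyperbolic law of cosines. The lower bound $d_{\D}(A,B) + d_{\D}(B,C) - d_{\D}(A,C) \geq 0$ is just the triangle inequality, so the real content is the upper bound.

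Setting $a = d_{\D}(B,C)$, $b = d_{\D}(A,B)$, $c = d_{\D}(A,C)$, and $\beta = \angle ABC$, I would start from the hyperbolic law of cosines
$$
\cosh c \;=\; \cosh a \cosh b - \sinh a \sinh b \cos\beta,
$$
and rewrite the right-hand side using the identities $\cosh a \cosh b = \tfrac{1}{2}(\cosh(a+b) + \cosh(a-b))$ and $\sinh a \sinh b = \tfrac{1}{2}(\cosh(a+b) - \cosh(a-b))$. This gives the clean identity
$$
\cosh c \;=\; \frac{1-\cos\beta}{2}\cosh(a+b) + \frac{1+\cos\beta}{2}\cosh(a-b).
$$
Since $\beta \in [\theta,\pi)$, both coefficients are nonnegative, and discarding the second term yields $\cosh c \geq \tfrac{1-\cos\theta}{2}\cosh(a+b)$.

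Combining this with the elementary estimates $\cosh x \leq e^x$ and $\cosh x \geq \tfrac{1}{2}e^{x}$ (valid for $x\geq 0$), I would obtain $e^c \geq \tfrac{1-\cos\theta}{4}e^{a+b}$, and take logarithms to conclude that
$$
a + b - c \;\leq\; \log\!\left(\frac{4}{1-\cos\theta}\right) \;=:\; C(\theta).
$$

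There is no serious obstacle here; the whole argument collapses to a one-line computation once the law of cosines is invoked. The only subtle point is that the coefficient $\tfrac{1-\cos\beta}{2}$ must be bounded away from zero, which is precisely why the hypothesis $\beta \geq \theta > 0$ enters the bound and why the constant $C(\theta)$ necessarily blows up as $\theta \to 0$. Indeed, if $\beta$ were allowed to tend to $0$, the triangle would degenerate onto a single geodesic and $a+b-c$ would be unbounded, showing that some lower bound on the angle is essential and that the form of $C(\theta)$ obtained above is of the correct order.
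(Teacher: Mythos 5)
Your computation is correct, and the paper in fact gives no proof at all: it simply remarks that the lemma is a reformulation of the Gromov hyperbolicity of $(\D,d_\D)$, i.e., that the Gromov product $\tfrac{1}{2}\bigl(d_{\D}(A,B)+d_{\D}(B,C)-d_{\D}(A,C)\bigr)$ is comparable to the distance from $B$ to the side $[A,C]$, which in turn is controlled by the angle at $B$ together with $\delta$-thinness of triangles. Your route replaces this soft argument with an explicit identity: the rewriting of the law of cosines as
$$
\cosh c = \frac{1-\cos\beta}{2}\cosh(a+b)+\frac{1+\cos\beta}{2}\cosh(a-b)
$$
is valid, the coefficient $\tfrac{1-\cos\beta}{2}$ is indeed bounded below by $\tfrac{1-\cos\theta}{2}$ when $\beta\in[\theta,\pi)$, and the elementary bounds $\tfrac12 e^x\le\cosh x\le e^x$ for $x\ge 0$ give $a+b-c\le \log\bigl(4/(1-\cos\theta)\bigr)$, with the lower bound being the triangle inequality. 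What your approach buys is a completely self-contained proof with an explicit constant $C(\theta)$ of the correct order $2\log(1/\theta)+O(1)$ as $\theta\to 0$; what the paper's approach buys is brevity and the observation that only coarse hyperbolicity of the space is being used. (The only point worth noting is the normalization of the hyperbolic metric on $\D$: if one uses curvature $-4$ rather than $-1$, the law of cosines acquires factors of $2$ in the arguments, but this only shifts $C(\theta)$ by a bounded amount and does not affect the statement.)
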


The following fact in hyperbolic geometry, although quite standard, turns out to be very useful (see \cite[Lemma 4.5]{L21a}).
\begin{lem}\label{lem:he}
Let $z, z'\in \D$ with $|z| = 1-\delta$ and $|z'| = 1- \delta^\alpha$, where $\alpha\in (0,1)$.
Then
$$
\alpha d_{\D}(0,z) \leq d_{\D}(0, z') \leq \alpha d_{\D}(0,z) + \log 2.
$$
\end{lem}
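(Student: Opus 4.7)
The plan is to reduce the lemma to an elementary one-variable inequality by using the explicit closed-form formula for the hyperbolic distance from the origin in $\D$.

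First, I would recall that $d_{\D}(0,w) = \log\frac{1+|w|}{1-|w|}$ for every $w\in\D$. Applying this with $|z|=1-\delta$ and $|z'|=1-\delta^\alpha$ (both of which lie in $(0,1]$ since $\delta\in(0,1]$ and $\alpha\in(0,1)$) gives
\[
d_{\D}(0,z) = \log\frac{2-\delta}{\delta}, \qquad d_{\D}(0,z') = \log\frac{2-\delta^\alpha}{\delta^\alpha}.
\]
Subtracting $\alpha\, d_{\D}(0,z) = \log\bigl((2-\delta)^\alpha/\delta^\alpha\bigr)$ from $d_{\D}(0,z')$, the $\delta^\alpha$ in the denominator cancels and we obtain
\[
d_{\D}(0,z') - \alpha\, d_{\D}(0,z) = \log\frac{2-\delta^\alpha}{(2-\delta)^\alpha}.
\]
Thus the claimed double inequality is equivalent to the purely numerical statement
\[
(2-\delta)^\alpha \;\leq\; 2-\delta^\alpha \;\leq\; 2(2-\delta)^\alpha
\]
for $\delta\in(0,1]$ and $\alpha\in(0,1)$.

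Next, I would verify both inequalities directly. The upper bound is immediate: $\delta\leq 1$ gives $2-\delta\geq 1$, so $(2-\delta)^\alpha\geq 1$ and hence $2(2-\delta)^\alpha\geq 2\geq 2-\delta^\alpha$. For the lower bound I would set $g(\delta) := 2 - \delta^\alpha - (2-\delta)^\alpha$ and compute
\[
g'(\delta) = \alpha\bigl((2-\delta)^{\alpha-1} - \delta^{\alpha-1}\bigr).
\]
Since $\alpha-1<0$ and $\delta\leq 2-\delta$ for $\delta\in(0,1]$, the quantity $(2-\delta)^{\alpha-1}-\delta^{\alpha-1}$ is nonpositive, so $g$ is nonincreasing on $(0,1]$; combined with $g(1)=0$, this yields $g(\delta)\geq 0$, as required.

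I do not expect any significant obstacle: once the problem is translated into its numerical form, both inequalities follow from elementary calculus. The only point needing a small amount of care is verifying that $\delta^\alpha\leq 1$, so that the formula $\log\frac{2-\delta^\alpha}{\delta^\alpha}$ really does equal the hyperbolic distance to a point in $\D$.
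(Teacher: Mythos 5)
Your proof is correct. The paper does not actually prove this lemma — it cites it from \cite{L21a} as a standard fact — so there is nothing to compare against, but your reduction via the exact formula $d_{\D}(0,w)=\log\frac{1+|w|}{1-|w|}$ (which is the normalization consistent with Lemma \ref{lem:hme}) to the elementary inequality $(2-\delta)^\alpha \leq 2-\delta^\alpha \leq 2(2-\delta)^\alpha$, and the calculus verification of both sides, are complete and correct.
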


Given $a\in \D\setminus\{0\}$, we denote $\hat{a} = \frac{a}{|a|} \in \mathbb{S}^1$, $\delta_a = 1-|a|$, and $\rho_a= d_{\D}(0, a)$.
The following lemma is an analogue of \cite[Lemma 4.6]{L21a} in the anti-holomorphic setting.
\begin{lem}\label{lem:em}
Let $M_a(z) = \frac{\bar{z}-a}{1-\bar{a}\bar{z}}$, then for $0<\alpha<1$,
$$
|M_a(z) + \hat{a}| \leq 2\delta_a^{1-\alpha}
$$
for all $z\in B_{\mathbb{R}^2}(0, 1-\delta_a^\alpha)$.\footnote{Note that $B_{\D}(0, \alpha\rho_a) \subset B_{\mathbb{R}^2}(0, 1-\delta_a^\alpha)$ by Lemma \ref{lem:he}.}
\end{lem}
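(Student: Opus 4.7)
The plan is to derive a closed-form expression for $M_a(z) + \hat{a}$ by combining over a common denominator, and then estimate numerator and denominator separately. Writing $\hat a = a/|a|$ and putting the two fractions together, the numerator becomes
$$|a|(\bar z - a) + a(1 - \bar a \bar z) = (|a| - |a|^2)\bar z + a(1-|a|) = \delta_a\bigl(|a|\bar z + a\bigr) = \delta_a|a|\,(\bar z + \hat a),$$
so after canceling the $|a|$ in the denominator one obtains
$$M_a(z) + \hat a \;=\; \frac{\delta_a(\bar z + \hat a)}{1 - \bar a \bar z}.$$
This is the key identity: the factor of $\delta_a$ that governs the required bound pops out for free.

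Next I would bound the numerator and denominator separately. For the numerator, the triangle inequality together with $|\hat a| = 1$ and $|z| \leq 1 - \delta_a^\alpha < 1$ gives $|\bar z + \hat a| \leq 2$, hence $|\delta_a(\bar z + \hat a)| \leq 2\delta_a$. For the denominator, I would use
$$|1 - \bar a \bar z| \;\geq\; 1 - |a||z| \;\geq\; 1 - (1-\delta_a)(1-\delta_a^\alpha) \;=\; \delta_a^\alpha + \delta_a - \delta_a^{1+\alpha} \;\geq\; \delta_a^\alpha,$$
where the final inequality uses $\delta_a \geq \delta_a^{1+\alpha}$ since $\delta_a \in (0,1)$ and $\alpha \in (0,1)$. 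Combining the two bounds yields
$$|M_a(z) + \hat a| \;\leq\; \frac{2\delta_a}{\delta_a^\alpha} \;=\; 2\delta_a^{1-\alpha},$$
as required.

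There is no real obstacle here; the entire argument is a short computation. The only place where one has to be a bit careful is the lower bound on $|1 - \bar a\bar z|$, since naively estimating each factor in $(1-|a|)(1-|a||z|)$ or using the wrong version of the triangle inequality could lose the sharp exponent. Once the identity $M_a(z) + \hat a = \delta_a(\bar z + \hat a)/(1 - \bar a \bar z)$ is in hand, the rest is essentially forced by the hypothesis $|z| \leq 1 - \delta_a^\alpha$.
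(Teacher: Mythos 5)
Your proof is correct and follows essentially the same route as the paper: both establish the identity $M_a(z)+\hat a = \delta_a(\bar z+\hat a)/(1-\bar a\bar z)$, bound the numerator by $2\delta_a$ via the triangle inequality, and bound the denominator below by $\delta_a^\alpha$ using $|z|\leq 1-\delta_a^\alpha$ (the paper gets this slightly more directly from $|1-\bar a\bar z|\geq 1-|z|$). No gaps.
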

\begin{proof}
\begin{align*}
|M_a(z) + \hat{a}|
&=|\frac{\bar{z}-a+\hat{a}-\hat{a}\bar{a}\bar{z}}{1-\bar{a}{\bar{z}}}|\\
&=|\frac{\delta_a\bar{z}+\hat{a}\delta_a}{1-\bar{a}{\bar{z}}}|
\end{align*}
Since $|z|<1$ and $|\hat{a}|=1$, $|\delta_az+\hat{a}\delta_a| \leq 2\delta_a$.
Since $|z| \leq 1-\delta_a^\alpha$, $|1-\bar{a}{z}| \geq \delta_a^\alpha$.
Thus, we have
$|M_a(z) + \hat{a}| \leq \frac{2\delta_a}{\delta_a^\alpha} = 2\delta_a^{1-\alpha}$.
\end{proof}

We will frequently use the following anti-holomorphic analogue of \cite[Theorem 10.11]{McM09}.
It follows immediately from the holomorphic setting by post-composing $f$ with $\bar z$.

\begin{theorem}\cite[Theorem 10.11]{McM09}\label{thm:almostisometry}
There is a constant $R>0$ such that for any proper anti-holomorphic map $f: \D \longrightarrow \D$ of degree $d$:
\begin{enumerate}
\item if $d_{\D}([a,b], C(f)) > R$, then $d_{\D}(f(a), f(b)) = d_{\D}(a,b) + O(1)$; and
\item if $d_{\D}([a,b], f(C(f))) > R$, then $d_{\D}(f^{-1}(a), f^{-1}(b)) = d_{\D}(a,b) + O(1)$.
\end{enumerate}
Here $C(f)$ denotes the critical set of $f$, and $f^{-1}$ is any inverse branch of $f$ that is continuous along $[a,b]$.
\end{theorem}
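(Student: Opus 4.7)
The plan is to reduce the anti-holomorphic statement to the holomorphic version of Theorem~10.11 in McMullen's book by post-composing with complex conjugation. Given a proper anti-holomorphic map $f: \D \longrightarrow \D$ of degree $d$, I would define
$$g(z) := \overline{f(z)}.$$
Since $f$ is anti-holomorphic, $g$ is holomorphic, and it is a proper self-map of $\D$ of degree $d$ because complex conjugation fixes $\D$, maps $\partial \D$ to itself, and preserves degrees. Moreover, writing $f(z)=h(\bar z)$ for some holomorphic $h$, one checks that $g(z)=\overline{h}(z)$ (the power series with conjugated coefficients), so $g'(z)=\overline{h'(\bar z)}$. Hence the critical set satisfies $C(g)=C(f)$, and $g(C(g)) = \overline{f(C(f))}$.

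Next, I would translate the two hypotheses of the theorem into the holomorphic setting. Complex conjugation $\sigma: \D \to \D$, $\sigma(z)=\bar z$, is an orientation-reversing isometry of $(\D,d_\D)$ that sends hyperbolic geodesics to hyperbolic geodesics. Therefore, $d_\D([a,b], C(f)) = d_\D([a,b], C(g))$, and $d_\D([a,b], f(C(f))) = d_\D(\sigma[a,b], g(C(g))) = d_\D([\bar a, \bar b], g(C(g)))$. Taking $R$ to be the constant provided by the holomorphic Theorem~10.11 (and noting that the conditions are invariant under $\sigma$ in the appropriate sense), I can directly apply the holomorphic result to $g$.

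For part~(1): under the hypothesis $d_\D([a,b], C(f))>R$, the holomorphic version gives $d_\D(g(a), g(b)) = d_\D(a,b)+O(1)$. Since $g(a)=\overline{f(a)}$ and $g(b)=\overline{f(b)}$, and $\sigma$ is an isometry, we conclude $d_\D(f(a), f(b)) = d_\D(g(a), g(b)) = d_\D(a,b)+O(1)$.

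For part~(2), the main subtlety is keeping track of inverse branches. If $f^{-1}$ is an inverse branch of $f$ continuous along $[a,b]$, then $G(w) := f^{-1}(\bar w)$ is an inverse branch of $g$ continuous along $\sigma[a,b]=[\bar a, \bar b]$, because $g(G(w)) = \overline{f(f^{-1}(\bar w))} = \overline{\bar w} = w$. The hypothesis $d_\D([a,b], f(C(f)))>R$ becomes $d_\D([\bar a,\bar b], g(C(g)))>R$, so the holomorphic version applied to $g$ along $[\bar a,\bar b]$ yields $d_\D(G(\bar a), G(\bar b)) = d_\D(\bar a, \bar b)+O(1)$. Since $G(\bar a)=f^{-1}(a)$, $G(\bar b)=f^{-1}(b)$, and $d_\D(\bar a,\bar b)=d_\D(a,b)$, we obtain the desired estimate. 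The only point requiring any care in the whole argument is this bookkeeping for inverse branches, but it is purely formal once the conjugation isometry is in place; no new analytic input beyond McMullen's holomorphic theorem is needed.
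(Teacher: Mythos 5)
Your proposal is correct and is exactly the reduction the paper uses: the paper dispatches this statement with the single remark that it ``follows immediately from the holomorphic setting by post-composing $f$ with $\bar z$,'' which is precisely your construction $g=\sigma\circ f$ together with the observation that $\sigma$ is an isometry preserving geodesics, critical sets, and inverse branches. Your more careful bookkeeping for part~(2) is a correct elaboration of the same one-line argument.
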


We shall also use the following estimate on locations of critical points, which is an analogue of \cite[Lemma 4.8]{L21a} in the anti-holomorphic setting.
\begin{lem}\label{lem:cl}
Let $C, \theta > 0$, and $f: \D\longrightarrow \D$ be a proper anti-holomorphic map of degree $d$.
Assume that there exist $k+1$ points $x_1,\cdots, x_k, z\in \D$ with
\begin{itemize}
\item $\angle x_i z x_j \geq \theta$ for any pairs $i \neq j$; and
\item $f(z) = a$, $f(x_i) = b$ for all $i$, with
$$
d_{\D}(z, x_i) \leq d_{\D}(a,b) + C.
$$
\end{itemize}
Then there exists a constant $R = R(C, \theta, d)$ such that there are $k-1$ critical points of $f$ counting multiplicities in $B_{\D}(z, R)$.
\end{lem}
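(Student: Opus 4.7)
I plan to prove Lemma \ref{lem:cl} by adapting the holomorphic argument of \cite[Lemma 4.8]{L21a} to the anti-holomorphic setting, the core tool being Theorem \ref{thm:almostisometry} for tracking how $f$ distorts geodesic segments away from its critical set. The governing idea is that the hypothesis $d_{\D}(z, x_i) \leq d_{\D}(a, b) + C$, combined with the Schwarz--Pick inequality, forces each segment $[z, x_i]$ to be mapped by $f$ to a path from $a$ to $b$ of length nearly equal to $d_{\D}(a, b)$, and hence to a path staying within a uniformly bounded Hausdorff distance of the geodesic $[a, b]$. This in turn restricts the initial tangent direction $\phi_i$ of $[z, x_i]$ at $z$ to lie near one of the $m$ preferred preimage directions determined by the local model of $f$ at $z$.

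In detail, the plan is as follows. Let $R_0 = R_0(d)$ be the constant from Theorem \ref{thm:almostisometry}, and let $m$ be the local degree of $f$ at $z$, so that in suitable coordinates $f$ has the form $\zeta \mapsto \bar\zeta^m + O(|\zeta|^{m+1})$ near $z$; the differential sends the $m$ equally-spaced directions $\psi^*_1, \ldots, \psi^*_m$ to the direction from $a$ towards $b$. First, if the $R_0$-neighborhood of $[z, x_i]$ avoids $C(f) \setminus \{z\}$, then $f$ is a $(1, O_d(1))$-quasi-isometry on $[z, x_i]$, so its image is a quasi-geodesic from $a$ to $b$ whose excess length is bounded by $C + O_d(1)$; by hyperbolic thinness of geodesic triangles and Lemma \ref{lem:alb}, this quasi-geodesic stays within bounded Hausdorff distance of $[a, b]$. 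Second, by evaluating the image at an intermediate distance $R_1$ from $a$ and invoking Lemma \ref{lem:alb} once more, for $R_1$ large enough in terms of $C, \theta, d$, the initial direction $\phi_i$ must lie within angle $\theta/(2m)$ of one of the $\psi^*_j$.

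Third, the pigeonhole principle applied to the angular spread $\angle x_i z x_j \geq \theta$ implies that at most $m$ of the directions $\phi_i$ can fall within the $\theta/(2m)$-neighborhoods of the $\psi^*_j$; so the remaining $k - m$ segments each contain a critical point of $f$ distinct from $z$ within hyperbolic distance $R_0$. Since a critical point at hyperbolic distance $\rho$ from $z$ subtends at $z$ an angle of order $\sinh(R_0)/\sinh(\rho)$, which tends to zero as $\rho \to \infty$, any single critical point sufficiently far from $z$ can be $R_0$-close to at most one of the $\theta$-spread segments. Adding the $k - m$ distinct additional critical points so produced to the critical multiplicity $m-1$ contributed by $z$ itself yields the required $k - 1$ critical multiplicities.

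The chief obstacle is ensuring that these additional critical points all lie in a ball $B_{\D}(z, R)$ whose radius $R$ is bounded in terms of $C, \theta, d$ only, rather than in terms of $d_{\D}(a, b)$, which a priori is arbitrary. I expect to handle this by a rescaling argument in the spirit of the proof of Proposition \ref{prop:mc}: if a counterexample sequence forced the offending critical points to drift to hyperbolic infinity relative to $z$, one would renormalize by M{\"o}bius isometries sending both $z$ and $a$ to $0$, extract an algebraic subsequential limit via Lemma \ref{deg_at_least_1_lem}, and use Lemma \ref{lem:ch} to trace critical points through the limit, arriving at a contradiction with the preserved angular-spread condition.
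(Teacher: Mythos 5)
Your overall strategy --- tracking the initial directions of the segments $[z,x_i]$ against the $m$ preferred preimage directions of the local model of $f$ at $z$ --- has a genuine gap that I do not see how to close without essentially rebuilding the proof. The expansion $\zeta\mapsto \bar\zeta^m+O(|\zeta|^{m+1})$ is a Euclidean Taylor model valid only up to the scale of the nearest other critical point of $f$, and that scale is not controlled by $C,\theta,d$: a second critical point may sit at arbitrarily small hyperbolic distance from $z$. At such a scale your second step (forcing $\phi_i$ to lie within $\theta/(2m)$ of some $\psi_j^*$ by evaluating the image at a definite distance $R_1$ from $z$) is unjustified, and your distinctness argument breaks down as well: a single critical point close to $z$ can a priori be $R_0$-close to several of the $\theta$-separated segments, so your argument certifies only one critical point for all of them and the multiplicity it establishes can fall short of $k-1$ (e.g.\ $m=1$, $k=3$, one nearby simple critical point serving two segments gives a count of $1<2$). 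Repairing this would force you to pass to critical clusters and redo the direction count at the cluster scale, which is precisely the rescaling machinery you defer to a vague compactness argument at the end; note also that Theorem \ref{thm:almostisometry}(1) cannot be applied to $[z,x_i]$ as stated when $z$ itself is critical, since the segment then meets $C(f)$ at its endpoint.

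The paper's proof avoids directions entirely and is worth comparing. Since $d_{\D}(z_{i,t},x_i)\le L+C-t$, the Schwarz--Pick lemma puts $f(z_{i,t})$ in $B_{\D}(a,t)\cap B_{\D}(b,L+C-t)$, hence within distance $R_1(C)$ of the point $a_t\in[a,b]$; the anti-holomorphic version of \cite[Corollary 10.3]{McM09} then produces an actual $f$-preimage of $a_t$ within $R_2(R_1,d)$ of $z_{i,t}$. The angular separation makes the balls $B_{\D}(z_{i,R_3},R_2)$ pairwise disjoint for a suitable $R_3(\theta)$, so $a_{R_3}$ has $k$ distinct preimages in $B_{\D}(z,R_2+R_3)$. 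The component $U$ of $f^{-1}(B_{\D}(a,R_2+R_3))$ containing $z$ therefore maps onto a disk with degree at least $k$ and, being simply connected, contains at least $k-1$ critical points by Riemann--Hurwitz; Theorem \ref{thm:almostisometry} enters only at the very end, to bound $\diam U$ in terms of $C,\theta,d$. Counting preimages of a single point and invoking Riemann--Hurwitz is what makes all the constants depend only on $C,\theta,d$ and sidesteps the delicate direction-tracking altogether.
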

\begin{proof}
Let $L = d_{\D}(a,b)$, and $z_{i,t}$ and $a_t$ be the points on the geodesic segments $[z, x_i]$ and $[a,b]$ with $d_{\D}(z, z_{i,t}) = d_{\D}(a, a_t) = t$.

\begin{figure}[ht!]
\begin{tikzpicture}[thick]
\node[anchor=south west,inner sep=0] at (0,0) {\includegraphics[width=.46\linewidth]{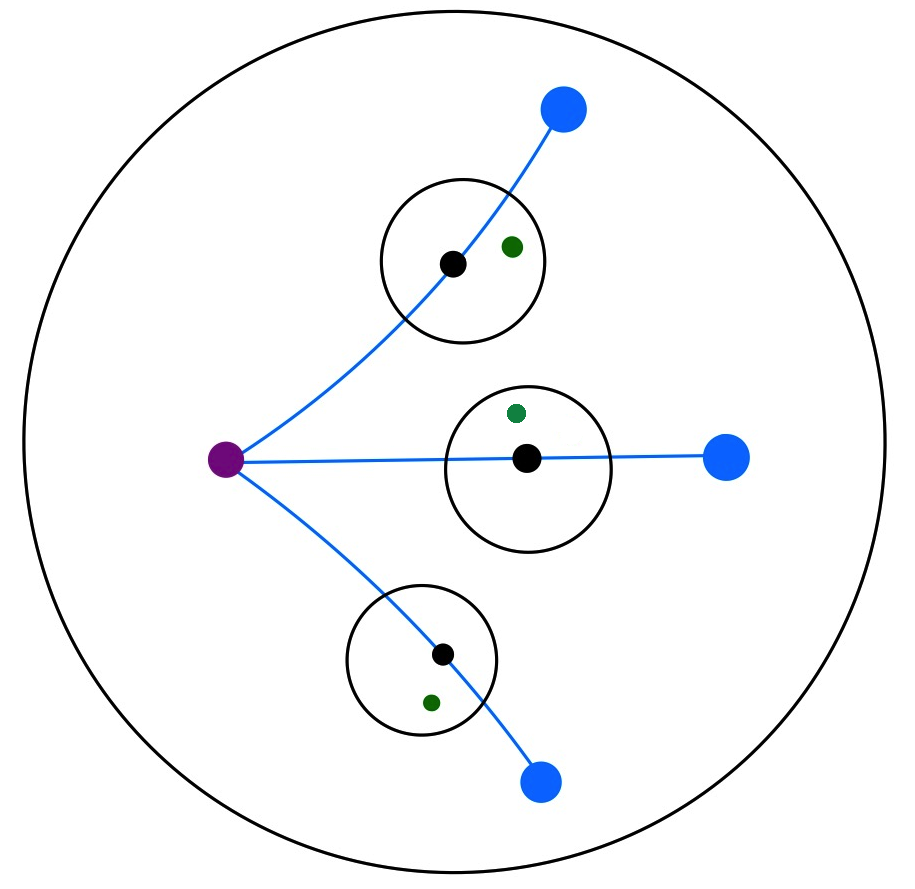}};
\node[anchor=south west,inner sep=0] at (6.6,0) {\includegraphics[width=.46\linewidth]{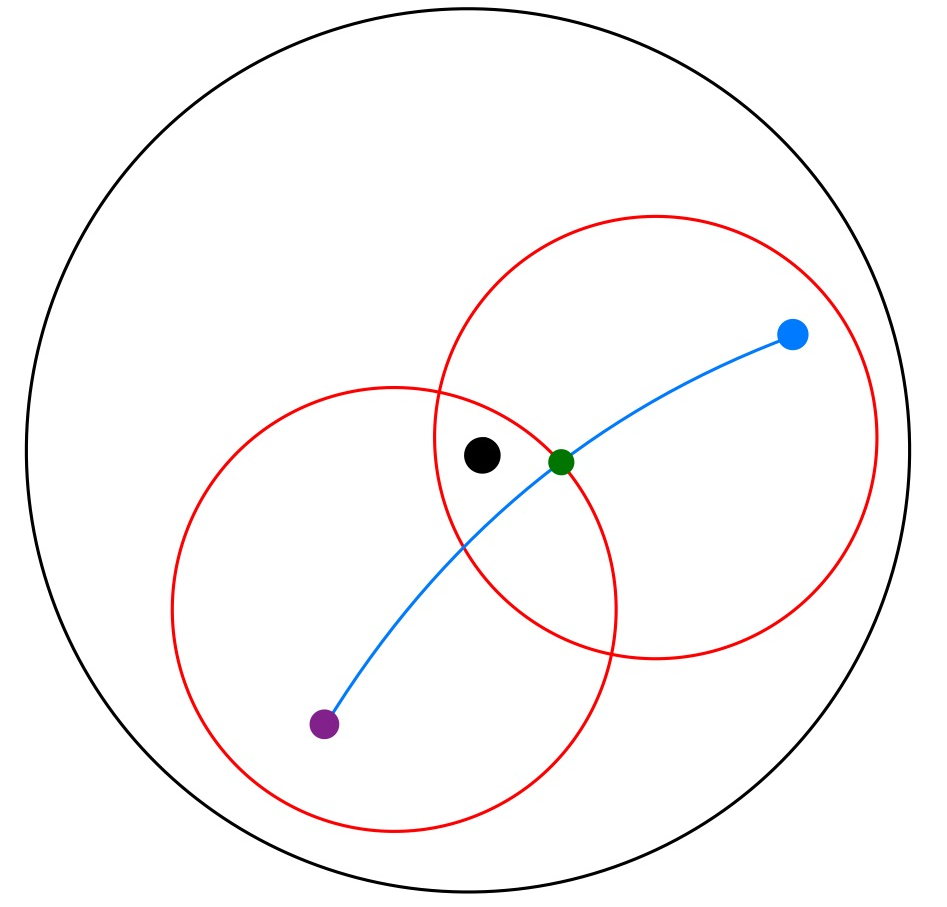}};
\node at (0.6,0.6) {$\D$};
\node at (12,0.6) {$\D$};
\node at (8.6,0.8)  {$a$};
\node at (11.5,3.2)  {$b$};
\node at (10.6,2.64)  {$a_{R_3}$};
\node at (8.7,2.25)  {\begin{tiny}$f(z_{i,R_3})$\end{tiny}};
\draw[->,line width=0.6pt] (8.9,2.36)->(9.44,2.74);
\node at (7.7,3.2)  {\begin{tiny}$B_{\D}(a,R_3)$\end{tiny}};
\node at (10.36,4.45)  {\begin{tiny}$B_{\D}(b,L+C-R_3)$\end{tiny}};
\node at (1.14,2.7) {$z$};
\node at (4.04,4.94) {$x_1$};
\node at (5.04,2.74) {$x_i$};
\node at (3.94,0.6) {$x_k$};
\node at (3.4,2.5) {\begin{tiny}$z_{i,R_3}$\end{tiny}};
\node at (4.3,2) {\begin{tiny}$B_{\D}(z_{i,R_3},R_2)$\end{tiny}};
\node at (4.3,3.5) {\begin{tiny}$f^{-1}(a_{R_3})$\end{tiny}};
\draw[->,line width=0.6pt] (4,3.3)->(3.42,3.05);
\end{tikzpicture}
\caption{The hyperbolic disks $B_{\D}(z_{i,R_3},R_2)$ are disjoint, and each of them contains an $f$-preimage of $a_{R_3}$.}
\label{crit_loc_fig}
\end{figure}

Since $d_{\D}(z_{i,t}, x_i) \leq L+C-t$, by Schwarz lemma, the image 
$$
f(z_{i,t})\in B_{\D}(a, t) \cap B_{\D}(b, L+C-t).
$$
Thus, there exists a constant $R_1 = R_1(C)$ so that $d_{\D}(f(z_{i,t}), a_t) \leq R_1$.

Therefore, by \cite[Corollary 10.3]{McM09}\footnote{The corollary in \cite{McM09} is stated for holomorphic maps, but the anti-holomorphic analogue follows immediately by post-composing with $\bar z$.}, there exists a constant $R_2 = R_2(R_1,d)$ so that 
$$
d_{\D}(z_{i,t}, f^{-1}(a_t)) \leq R_2.
$$

Since the angles $\angle x_i z x_j$ are bounded from below, there exists a constant $R_3 = R_3(\theta)$ so that the balls $B_{\D}(z_{i,t}, R_2)$ are disjoint for all $t\geq R_3$.
Thus there are $k$ different preimages of $a_{R_3}$ (i.e., the point on $[a,b]$ at hyperbolic distance $R_3$ from $a$) in the ball $B_{\D}(z, R_2+R_3)$ (see Figure~\ref{crit_loc_fig}).

Let $U$ be the component of $f^{-1}(B_{\D}(a, R_2+R_3))$ that contains $z$.
By the Schwarz Lemma, $B_{\D}(z, R_2+R_3) \subseteq U$.
Thus, the degree of the map $f: U \longrightarrow B_{\D}(a, R_2+R_3)$ is at least $k$.

We claim that there exists a constant $R = R(R_2, R_3, d)=R(C, \theta, d)$ so that $U \subset B_{\D}(z, R)$. 
Indeed, let $\gamma \subset U$ be a simple curve with $\diam (\gamma) = \zeta$. After removing $O(1)$ neighborhoods of the $d-1$ critical points, we can find  a simple curve $\gamma' \subset U$ with $\diam (\gamma') \geq \zeta/d - O(1)$ so that there are two points $a,b \in\gamma'$ with $d_\D(a,b) = \diam (\gamma')$ and the geodesic $[a,b]$ connecting $a,b$ is away from the critical points.
Then,
Theorem \ref{thm:almostisometry} guarantees that its image satisfies $\diam(f(\gamma')) \geq  \zeta/d - O(1)$.
Since $f(\gamma') \subset B_{\D}(a, R_2+R_3)$, the claim follows.

Since $f:\D \longrightarrow \D$ is proper, $U$ is simply connected. 
Therefore, there are at least $k-1$ critical points in $U \subset B_{\D}(z, R)$, proving the claim.
\end{proof}

\subsection{Induction step for realization}

Let $(\mathcal{T}, p, d_\mathcal{T})$ be a pointed metric $(d+2)$-ended ribbon tree.
Let $(\widetilde{\mathcal{T}}, p, d_{\widetilde{\mathcal{T}}})$ be a reduction of $(\mathcal{T}, p, d_\mathcal{T})$ with the  regluing point $w \in \widetilde{\mathcal{T}}$.

Since $(\widetilde{\mathcal{T}}, p, d_{\widetilde{\mathcal{T}}})$ is a pointed metric $(d+1)$-ended ribbon tree, by induction hypothesis, it is realizable by some family $\widetilde f_s\in \BP^-_d(\widetilde K)$.
Let 
$$
\widetilde{\phi}_s: (\Int(\widetilde{\mathcal{T}}), p, d_{\widetilde{\mathcal{T}}}^s) \longrightarrow (\D, 0, d_{\D})
$$
be the corresponding $\widetilde M$-quasi-isometries.

We also assume the following technical and auxiliary hypothesis.
We remark that the hypothesis gives a stronger version of Theorem \ref{thm:cvr}, but we drop it from the statement as it is too technical.

\begin{ih}\label{ih:m}
Let $v \in \beta(\widetilde{\mathcal{T}})\setminus\{p\}$ be a branch point.
Then there exist $\widetilde C, \widetilde \theta > 0$, and $\widetilde{\tau}(v):=\val(v)-1$ zeros $z_{1,s} = 0, z_{2,s}, \cdots, z_{\widetilde{\tau}(v), s}$ of $\widetilde f_s$ such that for all $s$,
\begin{itemize}
\item $d_{\D}(z_{i,s}, \widetilde\phi_s(v)) \leq d_{\D}(0, \widetilde f_s(\widetilde\phi_s(v)))+ \widetilde C$ for all $i$; and
\item $\angle z_{i,s} \widetilde\phi_s(v) z_{j, s} \geq \widetilde \theta$ for all $i \neq j$.
\end{itemize}
\end{ih}

\begin{rmk}
Note that by Schwarz lemma, $d_{\D}(0, \widetilde{f}_s(\widetilde{\phi}_s(v))) \leq d_{\D}(z_{i,s}, \widetilde{\phi}_s(v))$, and the base case $f_s(z) = \bar z^2$ trivially satisfies this auxiliary hypothesis. 
\end{rmk}

We consider two cases.

\noindent {\bf Case 1: $w$ is a vertex of $\widetilde{\mathcal{T}}$.}
Let $z_{1,s} = 0, z_{2,s}, \cdots, z_{\widetilde{\tau}(w), s}$ be the zeros of $\widetilde f_s$ associated with the branch point $\widetilde{\phi}_s(w)$ of the quasi-fixed tree $\widetilde{\phi}_s(\widetilde{\mathcal{T}})$ as in IH \ref{ih:m}.
We continuously choose a point $z_s$ so that
\begin{itemize}
\item $d_{\D}(z_{s}, \widetilde\phi_s(w)) = d_{\D}(0, \widetilde f_s(\widetilde\phi_s(w)))$; and
\item $\angle z_{s} \widetilde\phi_s(w) z_{i, s} \geq \theta$ for all $i$.
\end{itemize}
Here we can take $\theta = \widetilde \theta/3$ for example. 

Let $\hat{z}_s = \frac{z_s}{|z_s|} \in \mathbb{S}^1$, and $A_s(z) = \frac{\bar z-z_s}{1-\overline{z_s}\bar z}$.
We consider the family
$$
f_s(z) = \frac{-1}{\hat{z}_s} A_s(z) \widetilde f_s(z),
$$
which is conjugate to a map in $\BP^-_{d+1}$ by some rotation. 
Abusing notation, we shall denote by $f_s$ its representative in $\BP^-_{d+1}$ in the following.

\begin{lem}
The family $f_s$ realizes $(\mathcal{T}, p, d_\mathcal{T})$.
\end{lem}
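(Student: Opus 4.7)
The proof amounts to verifying the five conditions of Definition~\ref{defn:rpmt} for $f_s$ together with the auxiliary hypothesis IH~\ref{ih:m}. I would first extend $\phi_s$ to all of $\mathcal{T}$ by setting $\phi_s|_{\widetilde{\mathcal{T}}} = \widetilde{\phi}_s$ and embedding the pruned edge $(w,y]$ as the hyperbolic geodesic ray issuing from $\widetilde{\phi}_s(w)$ in the direction of $z_s$, extended to an ideal endpoint $p_{y,s} \in \mathbb{S}^1$ (to be identified below as a new repelling fixed point of $f_s$). The angular separation $\angle z_s \widetilde{\phi}_s(w) z_{i,s} \geq \theta$ guarantees that this ray lands in the correct cyclic slot for the ribbon structure of $\mathcal{T}$ at $w$.

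The key quantitative estimate is that $-\hat z_s^{-1} A_s(z)$ is uniformly close to $1$ whenever $z$ stays Euclidean-far from $\hat z_s$: by Lemma~\ref{lem:em}, on $\C \setminus B_{\R^2}(\hat z_s, \delta_{z_s}^\alpha)$ we have $|{-\hat z_s^{-1} A_s(z) - 1}| = O(\delta_{z_s}^{1-\alpha})$. Combined with Lemma~\ref{lem:hme}, this yields $d_\D(f_s(z), \widetilde f_s(z)) = o(1)$ uniformly on the complement of a fixed hyperbolic ball $N_s$ around $z_s$. Consequently the quasi-fixed, ends-approximating, and geodesic-edges conditions for $\widetilde f_s$ on $\widetilde{\phi}_s(\widetilde{\mathcal{T}})$ transfer directly to $f_s$ on the corresponding part of $\phi_s(\mathcal{T})$, with only an additive adjustment of the quasi-isometry constant $\widetilde M$.

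Inside $N_s$ two new features must be established. For the new critical point: apply Lemma~\ref{lem:cl} at $z = \widetilde{\phi}_s(w)$ to the collection $\{z_{1,s}, \ldots, z_{\widetilde{\tau}(w),s}, z_s\}$. These $\widetilde\tau(w)+1$ points all lie at hyperbolic distance at most $d_\D(0, \widetilde f_s(\widetilde\phi_s(w))) + \widetilde C$ from $\widetilde\phi_s(w)$, are all mapped by $f_s$ to within bounded distance of $0 \in \D$, and are pairwise angularly separated by at least $\theta$. Lemma~\ref{lem:cl} then produces $\widetilde\tau(w)$ critical points in a bounded hyperbolic ball about $\widetilde\phi_s(w)$, exactly one more than $\widetilde f_s$ contributes, matching the target count $\val_\mathcal{T}(w) - 2$. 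For the new repelling fixed point on $\mathbb{S}^1$: along the ray from $\widetilde\phi_s(w)$ through $z_s$ extended outward, the new Blaschke factor $A_s$ contributes an extra winding that, combined with the rotation $-\hat z_s^{-1}$, creates a boundary fixed point of $f_s$ whose multiplier is bounded by the algebraic structure of the product; this is the endpoint $p_{y,s}$. The superattracting condition at $0$ is automatic: $A_s(0) = -z_s \neq 0$, so $f_s$ has the same order of vanishing at $0$ as $\widetilde f_s$, namely $\val(p)-1$, giving critical multiplicity $\val(p) - 2$.

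Finally, IH~\ref{ih:m} is re-established for $f_s$ by retaining, for each branch point $v \neq p, w$ of $\mathcal{T}$, the zeros supplied by IH~\ref{ih:m} for $\widetilde f_s$ (these persist as zeros of $f_s$ with essentially unchanged geometry), and, for $v = w$, augmenting the list $\{0 = z_{1,s}, z_{2,s}, \ldots, z_{\widetilde\tau(w),s}\}$ with $z_s$ itself to reach $\val_\mathcal{T}(w)-1$ zeros whose angular and distance requirements are built into the choice of $z_s$. I expect the main obstacle to be the rigorous location of $p_{y,s}$ and control of its multiplier, because $z_s$ itself may drift toward $\mathbb{S}^1$ as $s \to \infty$ (its hyperbolic distance from $0$ is only bounded by $d_\D(0, \widetilde\phi_s(w))$, which can diverge), so one must balance the rate $\delta_{z_s} \to 0$ against the size of the exceptional neighborhood $N_s$. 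A secondary bookkeeping issue is tracking how the constants $M, K, R$ and the IH parameters $\widetilde C, \widetilde\theta$ evolve across the induction without blowing up.
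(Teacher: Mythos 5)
Your overall skeleton matches the paper's: define $\phi_s$ on the core via $\widetilde{\phi}_s$, compare $f_s$ with $\widetilde f_s$ using Lemma \ref{lem:em}, produce the extra critical point near $w$ via Lemma \ref{lem:cl}, and re-establish IH \ref{ih:m}. But your central perturbation estimate is false as stated, and the fix is precisely the ingredient you never invoke: that $w$ is a \emph{farthest} vertex. You claim $d_{\D}(f_s(z),\widetilde f_s(z))=o(1)$ uniformly on the complement of a hyperbolic ball around $z_s$. Being hyperbolically far from $z_s$ gives no control here: if $|\widetilde f_s(z)|$ is very close to $1$ (e.g.\ for $z$ far out on an end of the tree), the Euclidean error $|f_s(z)-\widetilde f_s(z)|=O(\delta_{z_s}^{1-\alpha})$ gets multiplied by a hyperbolic density of order $(1-|\widetilde f_s(z)|)^{-1}$, which can dwarf $\delta_{z_s}^{-(1-\alpha)}$, so the hyperbolic displacement can blow up. The paper's comparison is made only at the branch points $v\in\beta(\mathcal{T})$ and is only a \emph{bounded} (not $o(1)$) estimate: since $w$ is farthest, $r(v)\le r(w)$, hence $d_{\D}(0,\widetilde f_s(\phi_s(v)))\le \tfrac{1}{2}\rho_s+O(1)$, i.e.\ $1-|\widetilde f_s(\phi_s(v))|\gtrsim\delta_s^{1/2}$, which exactly cancels the Euclidean error $O(\delta_s^{1/2})$ coming from Lemma \ref{lem:em} with $\alpha=1/2$ and yields $d_{\D}(f_s(\phi_s(v)),\widetilde f_s(\phi_s(v)))\le K_1$. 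Without the farthest-vertex normalization this cancellation fails, which is why the reduction prunes a farthest vertex in the first place.

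Consequently the quasi-fixed, ends-approximating and geodesic-edges conditions cannot be ``transferred directly'' from $\widetilde f_s$. The paper's route is: (i) the bounded perturbation at branch points makes all branch points quasi-fixed for $f_s$; (ii) Lemma \ref{lem:cl} together with the counting identity $\sum_v(\val_{\mathcal{T}}(v)-2)=d$ shows every critical point of $f_s$ lies within bounded distance of some $\phi_s(v)$, hence is quasi-fixed by the Schwarz lemma, so $f_s$ lies in $\BP^-_{d+1,qf}(M)\subseteq\BP^-_{d+1}(K)$ by Proposition \ref{prop:mc}; (iii) the ends are then attached by the projection construction of Theorem \ref{thm:qit} (not by declaring the ray toward $z_s$ to be an end a priori), and quasi-fixedness of the entire tree, ends included, follows from Proposition \ref{cor:ch} because $\phi_s(\mathcal{T})\subset\chull(C(f_s)\cup F(f_s))$. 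This also disposes of what you flag as the ``main obstacle'': one never needs to locate the new boundary fixed point or control its multiplier by hand, since the multiplier bounds come for free from Proposition \ref{prop:mc} once quasi-critical-fixedness is established. Your treatment of the new critical point near $w$ and of IH \ref{ih:m} agrees with the paper.
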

\begin{proof}
Since $w$ is a vertex of $\widetilde{\mathcal{T}}$, we have $\mathcal{T}^c = \widetilde{\mathcal{T}}^c$.
We define 
$$
\phi_s := \widetilde \phi_s\ \textrm{on}\ \mathcal{T}^c.
$$
Let $v\in \beta(\mathcal{T})$.
\vspace{1mm}

\noindent \textbf{Claim.} There exists a constant $K_1$ with
$$
d_{\D}(f_s( \phi_s(v)), \widetilde f_s(\phi_s(v))) \leq K_1.
$$
\begin{proof}[Proof of claim]
Denote $\delta_s:= 1-|z_s|$ and $\rho_s:= d_{\D}(0, z_s)$.
Then, 

\begin{align*}
\rho_s &= d_{\D}(0,\widetilde{\phi}_s(w)) + d_{\D}(\widetilde{\phi}_s(w),z_s) + O(1)\hspace{2.54cm}  \textrm{(by Lemma~\ref{lem:alb})}\\
&= d_{\D}(0,\widetilde{\phi}_s(w)) +  d_{\D}(0, \widetilde f_s(\widetilde\phi_s(w))) + O(1) \hspace{2.04cm} \textrm{(by choice of $z_s$)}\\
&=2 d_{\D}(0, \widetilde\phi_s(w)) + O(1)\hspace{1.72cm} \textrm{(since $\widetilde{\phi}_s(w)$ is quasi-fixed under $\widetilde{f}_s$)} \\
&= 2 s r(w) + O(1)\hspace{2.64cm} \textrm{(since each $\widetilde{\phi}_s$ is an $M$-quasi-isometry)}.
\end{align*}
Since $w$ is a farthest vertex, $r(v)\leq r(w)$.
Since $\phi_s$ is a quasi-isometry, 
$$
d_{\D}(0, \phi_s(v))  =s r(v)+O(1) \leq \frac{1}{2}\rho_s + O(1).
$$
Therefore, $\phi_s(v) \in B(0, 1-K_2\delta_s^{\frac{1}{2}})$ for some constant $K_2$ by Lemma \ref{lem:he}.

By Lemma \ref{lem:em}, there exists some constant $K_3$ such that the error term satisfies
\begin{align}\label{eqn:del}
|f_s(\phi_s(v)) - \widetilde f_s(\phi_s(v))| = |\widetilde f_s(\phi_s(v))||A_s(\phi_s(v)) + \hat{z}_s| \leq K_3 \delta_s^{1/2}.
\end{align}
Since $\phi_s(v)$ is quasi-fixed by $\widetilde f_s$,
$$
d_{\D}(0, \widetilde f_s(\phi_s(v))) = s r(v) + O(1) \leq \frac{\rho_s}{2} + O(1).
$$
Since $|\frac{-1}{\hat{z}_s} A_s(z)| < 1$ for $z\in \D$,
$$
d_{\D}(0, f_s (\phi_s(v))) \leq d_{\D}(0, \widetilde f_s(\phi_s(v))) \leq \frac{\rho_s}{2} + O(1).
$$
Thus, by Lemma \ref{lem:hme}, there exists some constant $K_4$ so that the hyperbolic metric at $z\in[f_s (\phi_s(v)), \widetilde f_s (\phi_s(v))]$ satisfies
\begin{align}\label{eqn:hypme}
\rho_{\D}(z) |dz| \leq \frac{K_4}{\delta_s^{1/2}}|dz|.
\end{align}
Therefore, by equation \ref{eqn:del} and equation \ref{eqn:hypme}, we conclude that
$$
d_{\D}(f_s( \phi_s(v)), \widetilde f_s(\phi_s(v))) \leq \frac{K_4}{\delta_s^{1/2}} \cdot K_3 \delta_s^{1/2} \leq K_1,
$$
for some constant $K_1$.
\end{proof}

Since $\phi_s(v)$ is quasi-fixed by $\widetilde f_s$ for any vertex $v\in \beta(\mathcal{T})$, $\phi_s(v)$ is quasi-fixed by $f_s$ by the claim.

By IH \ref{ih:m} and our construction, there are $\tau(v):=\val_{\mathcal{T}}(v)-1$ zeros $z_{1,s}, \cdots, z_{\tau(v), s}$ of $f_s$ (here $\val_{\mathcal{T}}(v)$ stands for the valence of $v$ in $\mathcal{T}$) such that for all $s$,
\begin{itemize}
\item $d_{\D}(z_{i,s}, \phi_s(v)) \leq d_{\D}(0, f_s(\phi_s(v)))+ C$ for all $i$; and
\item $\angle z_{i,s} \phi_s(v) z_{j, s} \geq \theta$ for all $i \neq j$.
\end{itemize}
Indeed, for $v\neq w$, since $\val_{\widetilde{\mathcal{T}}}(v)=\val_{\mathcal{T}}(v)$, the required zeros of $f_s$ are simply the zeros of $\widetilde{f}_s$ associated with $\widetilde{\phi}_s(v)=\phi_s(v)$. For $v=w$, first note that $\val_{\widetilde{\mathcal{T}}}(v)+1=\val_{\mathcal{T}}(v)$. Hence, all but one required zeros of $f_s$ are given by the zeros of $\widetilde{f}_s$ associated with $\widetilde{\phi}_s(w)=\phi_s(w)$, while the remaining one is $z_s$.
Thus, by Lemma \ref{lem:cl}, there exists a constant $R$ so that $B_{\D}(\phi_s(v), R)$ contains at least $\tau(v) - 1=\val_{\mathcal{T}}(v)-2$ critical points of $f_s$.

Since $\mathcal{T}$ is a $(d+2)$-ended ribbon tree, we have 
$$
\sum_{v\in \beta(\mathcal{T})} (\val_{\mathcal{T}}(v) - 2) = d.
$$
Therefore, by counting, every critical point of $f_s$ is within hyperbolic distance $R$ from $\phi_s(\beta(\mathcal{T}))$.

Hence, if $c_s$ is a critical point of $f_s$ with $d_{\D}(c_s, \phi_s(v)) \leq R$, then by Schwarz lemma,
$$
d_{\D}(f_s(c_s), f_s(\phi_s(v))) \leq d_{\D}(c_s, \phi_s(v)) \leq R.
$$
Since $\phi_s(v)$ is quasi-fixed, so is $c_s$.
Therefore, there exists a constant $K$ so that $f_s\in \BP^-_{d+1}(K)$.

By the same argument as in Theorem \ref{thm:qit}, we can attach the $d+1$ ends to $\phi_s(\mathcal{T}^c)$.
We can extend the map $\phi_s$ to $\mathcal{T}$ by isometry on each end.
Conjugating $f_s$ with a rotation, we may assume that $\phi_s$ respects the markings.
Since $\phi_s(\mathcal{T}) \subset \chull C(f_s) \cup F(f_s)$, we conclude that $\phi_s(\mathcal{T})$ is quasi-fixed by Proposition \ref{cor:ch}.

Since the fixed points of the rescaling limit $g:= \lim M_{v,s}^{-1}\circ f_s\circ M_{v,s}$ (where $M_{v,s}\in\textrm{Aut}(\D)$ sends $0$ to $\phi_s(v)$) correspond to the edges incident at $v$,
the angles between different edges of $\phi_s(\mathcal{T})$ are bounded from below.
Thus $\phi_s$ is a quasi-isometry.
It can also be easily checked that $\phi_s$ satisfies all the other properties and IH \ref{ih:m}.
\end{proof}

\noindent \textbf{Case 2: $w$ is not a vertex of $\widetilde{\mathcal{T}}$.}
Recall that in this case, $w$ is the regluing point in $\widetilde{\mathcal{T}}$ with valence $2$ and $\widetilde{\phi}_s(w)$ is quasi-fixed by $\tilde f_s$.
We continuously choose a point $z_s$ so that
\begin{itemize}
\item $z_s$ is on the geodesic ray from $0$ to $\tilde \phi_s(w)$; and
\item $d_\D(z_s, \tilde \phi_s(w)) = d_\D(0, \tilde f_s(\tilde \phi_s(w)))$.
\end{itemize}
The proof now proceeds almost identically to the previous case. 
Therefore, by induction, we conclude the proof of Theorem \ref{thm:cvr}.

\subsection{Degree one rescaling limits}\label{sec:d1}
In this subsection, we shall briefly discuss the necessary minor modifications when the rescaling limit at the origin has degree $1$.

We say that $(\mathcal{T}, p, d_\mathcal{T})$ is an \emph{extended pointed metric $d+1$-ended ribbon tree} if
\begin{itemize}
\item $p$ has valence $2$;
\item the tree $\widetilde{\mathcal{T}}$ obtained by disregarding $p$ as a vertex is a $d+1$-ended ribbon tree equipped with the linear extension of the modified edge metric.
\end{itemize}
With the same notion of scaling metrics $d_\mathcal{T}^s$ as before, we define realization of extended pointed metric $(d+1)$-ended ribbon trees as follows
\begin{defn}\label{defn:rpmte}
An extended pointed metric $(d+1)$-ended ribbon tree $(\mathcal{T}, p, d_\mathcal{T})$ is said to be {\em realizable} if 
there exist $K, M, R > 0$, a family $f_s \in \BP^-_d(K)$ with $s \in (0, \infty)$, and a family of $M$-quasi-isometric embeddings
$$
\phi_s: (\Int(\mathcal{T}), p, d_\mathcal{T}^s) \longrightarrow (\D, 0, d_{\D})
$$
so that
\begin{itemize}
\item (Ends approximating.) The map $\phi_s$ extends to a continuous map $\phi_s: \mathcal{T} \longrightarrow \D \cup \mathbb{S}^1$ which gives a bijection between $\epsilon (\mathcal{T})$ and $F(f_s)$ respecting the markings.

\item (Geodesic edges.) If $E = [v, w] \subset \mathcal{T}$ is an edge, then $\phi_s(E)$ is the hyperbolic geodesic connecting $\phi_s(v)$ and $\phi_s(w)$.

\item (Critically approximating.)  If $v\in \beta(\mathcal{T})$, then there are exactly $\val(v) - 2$ critical points counted with multiplicities in $B_{\D}(\phi_s(v), R)$.

\item (Quasi-fixed.) If $x\in \Int(\mathcal{T})$, then 
$$
d_{\D}(f_s(\phi_s(x)), \phi_s(x)) \leq M \text{ for all } s.
$$
\end{itemize}
\end{defn}
We remark that the only deviation here from Definition \ref{defn:rpmt} is that the superattracting condition is omitted.
With this definition, we also have
\begin{theorem}\label{thm:cvre}
For $d\geq 2$, every extended pointed metric $(d+1)$-ended ribbon tree $(\mathcal{T}, p, d_\mathcal{T})$ is realizable.
\end{theorem}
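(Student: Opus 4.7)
The plan is to induct on the degree $d$, following the proof of Theorem \ref{thm:cvr}, with two modifications dictated by the valence-two basepoint $p$: the base case must exhibit a family whose rescaling limit at the origin has degree one (so that $0$ sits on an edge rather than at a branch point), and the induction hypothesis IH \ref{ih:m} should be restated so that the zero $z_{1,s}=0$ is no longer insisted to lie in the critical cluster at any branch point.

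For the base case, $\widetilde{\mathcal{T}}$ is a star with central vertex $v$ of valence $d+1$ and $d+1$ leaves, with $p$ lying on one edge at prescribed distance $t=d_\mathcal{T}(v,p)$ from $v$. I would realize this by choosing $\xi\in\mathbb{S}^1$ in the ribbon direction of the edge containing $p$, setting $a(s)\in\D$ with $\arg a(s)=\arg\xi$ and $|a(s)|=\tanh(st)$, and defining
$$
f_s(z) := A_s\!\left(\left(\frac{\bar z-\bar a(s)}{1-a(s)\bar z}\right)^{\!d}\right),
$$
where $A_s\in\Aut(\D)$ is the Möbius automorphism sending $(-\bar a(s))^d$ to $0$. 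Because $A_s$ is a holomorphic diffeomorphism, the critical set of $f_s$ is the critical set of the inner factor, which consists of a single point at $z=a(s)$ of multiplicity $d-1$. An explicit expansion (pairing the numerator $h_s(z)-c_s$ and denominator $1-\bar c_s h_s(z)$ of $A_s\circ h_s$ via $u^d-v^d=\prod_{k}(u-v\omega^k)$) shows that the $(1-|a(s)|^2)$ factors cancel; consequently $f_s\in\BP^-_d$ and the algebraic limit of $f_s$ as $|a(s)|\to 1$ has degree one (it becomes $c\,\bar z$ for some $c\in \mathbb{S}^1$ on compact subsets of $\D$). Thus the rescaling limit at $0$ has degree one and $0$ sits on an edge of the quasi-fixed tree. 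By construction the unique branch point $v_s$ sits at $z=a(s)$, at hyperbolic distance $st+O(1)$ from $0$ along the correct ribbon direction, realizing $(\mathcal{T},p,d_\mathcal{T})$.

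For the induction step, I mimic the reduction in the proof of Theorem \ref{thm:cvr}: choose a branch point $w$ of $\widetilde{\mathcal{T}}$ farthest from $p$ in the tree metric, prune an end $y$ adjacent to $w$ and distinct from the marked end to form the reduced extended tree $(\widetilde{\mathcal{T}}',p,d_{\widetilde{\mathcal{T}}'})$, apply the induction hypothesis to obtain a realizing family $\tilde f_s\in\BP^-_{d-1}(\tilde K)$, and then set
$$
f_s(z)=-\hat z_s^{-1} A_s(z)\,\tilde f_s(z)\in\BP^-_d
$$
with $z_s$ placed near $\tilde\phi_s(w)$ exactly as in Case 1 of the proof of Theorem \ref{thm:cvr}. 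The estimates of Lemmas \ref{lem:he}, \ref{lem:em} and Theorem \ref{thm:almostisometry} carry over verbatim, and Proposition \ref{cor:ch} together with Lemma \ref{lem:cl} supply the quasi-fixed and critically-approximating properties. Since $z_s$ lies far from the origin, multiplication by $A_s$ perturbs the dynamics near $0$ only by a bounded hyperbolic amount, so the position of the origin on the new quasi-fixed tree remains the valence-two vertex $p$.

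The main obstacle is the base case: one must verify that the explicit family $f_s=A_s\circ h_s$ really lies in $\BP^-_d$ (the $(1-|a(s)|^2)$-cancellation is what makes the algebraic limit degree one rather than degree zero) and that its quasi-fixed tree is indeed the star with branch point at $a(s)$ with valence $d+1$. Once the base case is in hand, the induction step is a routine adaptation of the proof of Theorem \ref{thm:cvr}, and Theorem \ref{thm:cvre} (and hence Theorem \ref{thm:rdtr} in the degree-one rescaling case) follows.
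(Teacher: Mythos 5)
Your overall frame (induct on $d$, reuse the reduction and the regluing step of Theorem \ref{thm:cvr}, relax IH \ref{ih:m} at the valence-two point) is the right one, but you diverge from the paper at the base case, and that is exactly where your argument has a gap. The paper's entire proof of Theorem \ref{thm:cvre} is the observation that one should start the induction one degree lower: the base case is the degree-one family $\widetilde f_s=\bar z$, realizing the trivial extended tree consisting of a single bi-infinite edge with $p=0$ on it; the star-with-$p$ trees you treat as base cases are then produced by the induction step itself (each application of Case 2 inserts a zero $z_s$ on the ray through $\widetilde\phi_s(w)$ and creates the branch point). This sidesteps any explicit unicritical computation.

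Your explicit base-case family $f_s=A_s\circ h_s$ does not, as written, lie in $\BP^-_{d,qf}(M)$ for any $M$, so it cannot realize anything. The unique critical point is $a(s)$, and $f_s(a(s))=A_s(h_s(a(s)))=A_s(0)$, which satisfies $d_{\D}\bigl(0,A_s(0)\bigr)=d_{\D}\bigl(0,(-\bar a(s))^d\bigr)=d_{\D}(0,a(s))-\log d+o(1)$; for the standard choice $A_s(w)=\frac{w-c_s}{1-\bar c_s w}$ with $c_s=(-\bar a(s))^d$, the point $A_s(0)=(-1)^{d+1}\overline{a(s)}^{\,d}$ lies in the direction $(-1)^{d+1}\bar\xi^{\,d}$, whereas $a(s)$ lies in the direction $\xi$. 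Unless these directions coincide, i.e.\ unless $\xi^{d+1}=(-1)^{d+1}$, the two points diverge along different radii and $d_{\D}(a(s),f_s(a(s)))\sim 2\,d_{\D}(0,a(s))\to\infty$, so the critical point is not quasi-fixed. The fix is to either restrict $\xi$ to the $d+1$ admissible roots (which, reassuringly, correspond to the $d+1$ choices of edge of the star on which $p$ can sit) or to build a compensating rotation $e^{i\theta(s)}$ into $A_s$ so that $f_s(a(s))$ lands on the radius through $a(s)$; you specify neither, and "the" Möbius automorphism sending $(-\bar a(s))^d$ to $0$ is not unique. By contrast, the issue you single out as the main obstacle --- membership in $\BP^-_d$ and the $(1-|a(s)|^2)$ cancellation --- is not a real obstacle: any degree-$d$ proper anti-holomorphic self-map of $\D$ fixing $0$ agrees with an element of $\BP^-_d$ up to the rotation normalization that the paper already absorbs, and the degree-one algebraic limit follows from Lemma \ref{deg_at_least_1_lem}. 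Once the direction is corrected, your base case does work and the induction step carries over as you describe, but as submitted the construction fails at its first step.
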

We remark that the proof by induction is almost identical as in Theorem \ref{thm:cvr}.
The only modification is that the base case starts with $\bar z$ instead of $\bar z^2$.

In most situations, the rescaling limit at a vertex $v \neq p$ is an anti-Blaschke product with an attracting fixed point on $\mathbb{S}^1$.
However, if the extended pointed $(d+1)$-ended ribbon tree $(\mathcal{T}, p)$ has three {\em core vertices}, i.e, three vertices that are not endpoints, we can ensure that the rescaling limits are parabolic anti-Blaschke products.  Moreover, we can guarantee real-symmetry of these rescaling limits, a fact that will be essential for some of the applications in Appendix \ref{sec:ve}.

\begin{prop}\label{prop:rabl}
Let $(\mathcal{T}, p, d_\mathcal{T})$ be an extended pointed metric $(d+1)$-ended ribbon tree with three core vertices $v_1, v_2, p$ and $p \in (v_1, v_2)$.
Then there exists a family $f_t$ realizing $(\mathcal{T}, p, d_\mathcal{T})$ so that the rescaling limits at $v_1$ and at $v_2$ are conjugate to parabolic unicritical real anti-Blaschke products.
\end{prop}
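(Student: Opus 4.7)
The plan is to construct $f_t$ by directly prescribing its critical points. Set $k_i := \val(v_i) - 1$; then $k_1, k_2 \geq 2$ and $k_1 + k_2 = d + 1$. Choose real numbers $a(t) \in (0, 1)$ and $b(t) \in (-1, 0)$ with
\[
d_{\D}(0, a(t)) = t \cdot d_\mathcal{T}(p, v_1), \qquad d_{\D}(0, b(t)) = t \cdot d_\mathcal{T}(p, v_2),
\]
which forces $a(t) \to 1$ and $b(t) \to -1$ as $t \to \infty$. By Heins' realization theorem for Blaschke products with prescribed critical points, there is a unique (up to $\Aut(\D)$) degree-$d$ holomorphic Blaschke product with critical set $\{a(t), b(t)\}$ of multiplicities $k_1 - 1$ and $k_2 - 1$ in $\D$. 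After normalizing so that $0$ is the attracting fixed point and adjusting the residual rotational freedom so that the resulting map $g_t$ is real, I set $f_t(z) := g_t(\bar z) \in \BP_d^-$; this is a real-symmetric anti-Blaschke product whose critical points in $\D$ are precisely the single point $a(t)$ of multiplicity $k_1 - 1$ and the single point $b(t)$ of multiplicity $k_2 - 1$.

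Define $\phi_t\colon \mathcal{T} \to \overline{\D}$ by sending $v_1, p, v_2$ to $a(t), 0, b(t)$, extending via hyperbolic geodesics on core edges and via geodesic rays from $v_i$ to the $k_i$ repelling fixed points of $f_t$ on $\mathbb{S}^1$ adjacent to $v_i$ in the prescribed ribbon structure. To identify the rescaling limit at $v_1$, conjugate by $M_t^+(z) = (z + a(t))/(1 + a(t)z)$. The algebraic limit $\varphi^+$ has only one critical point in $\D$, namely $0$ of multiplicity $k_1 - 1$, since the cluster at $b(t)$ escapes to $\partial\D$ (because $d_{\D}(a(t), b(t)) \to \infty$). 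By Lemma~\ref{lem:rdt} and Corollary~\ref{lem:vd}, $\varphi^+$ has degree $k_1$ and is therefore unicritical. Realness of $\varphi^+$ follows from the real-symmetry of $f_t$. Parabolicity will follow from the fact that the boundary fixed point $M_t^{+,-1}(0) = -a(t) \to -1 \in \mathbb{S}^1$ of $\varphi^+$ has multiplier $|f_t'(0)| = \prod_i |a_i(t)|$, with $a_i(t)$ the nonzero zeros of $g_t$, which I claim tends to $1$. The symmetric analysis at $v_2$ produces $\varphi^-$ of degree $k_2$.

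The key claim $\prod_i |a_i(t)| \to 1$ reduces to the assertion that every zero $a_i(t)$ of $g_t$ approaches $\partial\D$. I will prove this by a compactness argument: if some subsequence $|a_i(t_n)|$ stayed bounded away from $1$, then, passing to a further subsequence, $\{g_{t_n}\}$ would have an algebraic limit $g^\infty$ with at most $d - 2$ holes, hence $\deg g^\infty \geq 2$. On every compact subset of $\D$ the convergence $g_{t_n} \to g^\infty$ is uniform (the holes lie on $\partial\D$), so the critical set of $g^\infty$ in $\D$ is the limit of critical sets of $g_{t_n}$; but these concentrate at $\{1, -1\} \subset \partial\D$, leaving $g^\infty$ with no critical points in $\D$, contradicting $\deg g^\infty \geq 2$.

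Once $|f_t'(0)| \to 1$ is established, the existence of nondegenerate rescaling limits at both branch points together with Proposition~\ref{prop:mc} places $f_t$ in $\BP_d^-(K)$ for some uniform $K > 0$, after which Proposition~\ref{cor:ch} yields the critically-approximating and quasi-fixed conditions of Definition~\ref{defn:rpmte}. The ends-approximating bijection is satisfied because each $\varphi^\pm$ has exactly $k_i + 1$ boundary fixed points, namely the parabolic one corresponding to $0$ in the original coordinates and $k_i$ repelling ones corresponding to the ends of $\mathcal{T}$ at $v_i$. The main remaining technical step, and the place where the three-core-vertex hypothesis enters essentially, is the uniform quasi-isometry of $\phi_t$ along the core segment $[v_1, p, v_2]$: this segment stays uniformly far from both critical clusters, so Theorem~\ref{thm:almostisometry} supplies an almost-isometry estimate, and Lemma~\ref{lem:alb} then controls the angular spreading at $v_1, v_2$ via the $k_i + 1$ distinct boundary fixed points of each rescaling limit.
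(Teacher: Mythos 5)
Your construction (Heins' theorem to prescribe the two real critical clusters, followed by the realness normalization) is the same as the paper's, and your verification of the realizability conditions, the degree/unicriticality of the rescaling limits, and the claim that all zeros of $g_t$ escape to $\partial \D$ (hence $|f_t'(0)|\to 1$) are all fine. The genuine gap is in the parabolicity step, which is the entire point of the proposition. You assert that the boundary fixed point $-1$ of $\varphi^+$ has multiplier $\lim_t |f_t'(0)|$, by tracing the fixed point $0$ of $f_t$ to $(M_t^+)^{-1}(0)=-a(t)\to -1$. But $-1$ is precisely a \emph{hole} of the algebraic limit of $(M_t^+)^{-1}\circ f_t\circ M_t^+$: the entire second critical cluster at $b(t)$ (together with the origin and the $k_2$ remaining repelling fixed points) collapses to $-1$ in the rescaled coordinate, so Lemma \ref{lem:ac} gives locally uniform convergence only away from $-1$, and neither fixed points nor multipliers can be passed to the limit there. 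The two quantities are genuinely different: the angular derivative of $\varphi^+=\epsilon\,\frac{\bar z^{k_1}-c}{1-c\bar z^{k_1}}$ at $-1$ equals $k_1 e^{-\delta_1}$, where $\delta_1=\lim_t d_{\D}\bigl(a(t),f_t(a(t))\bigr)$ is the asymptotic critical displacement (it is determined by the position of the critical value $\varphi^+(0)$, which \emph{is} a legitimate limit since $0$ is not a hole), whereas $|f_t'(0)|=\prod_i|a_i(t)|\to 1$ holds for \emph{any} degeneration in which all critical points escape, including the generic realizations of Theorem \ref{thm:cvre} whose rescaling limits have attracting (non-parabolic) boundary fixed points. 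So your argument, if valid, would prove too much.

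What is actually needed is the estimate $\delta_1=\log k_1$, i.e. $\frac{1-|f_t(a(t))|}{1-|a(t)|}\to k_1$ radially, and symmetrically at $v_2$; the horodisk/Schwarz argument you allude to only yields the one-sided bound $|(\varphi^+)'(-1)|\le 1$ (Denjoy--Wolff), i.e. $\delta_1\ge\log k_1$, and the reverse inequality requires an honest computation with the limiting configuration of the zeros of $g_t$ relative to the critical cluster (for $k_1\ge 3$ the $k_1-1$ zeros associated with $v_1$ must spread out angularly at hyperbolic distance $\approx d_{\D}(0,a(t))$ from $a(t)$, and the identity $\sum_j \frac{1-|a_j|^2}{|1-\overline{a_j}\,a(t)|^2}\to k_1-1$ is exactly what must be extracted from the multiple-critical-point equations). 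To be fair, the paper's own proof compresses this verification into a single sentence, but your proposed justification of it is not merely incomplete --- it rests on a multiplier-transfer that fails at the hole, so this step needs to be replaced rather than elaborated.
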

\begin{proof}
By a classical result of Heins (see \cite{Heins62} and \cite{Zakeri98}), we can construct $f_t$ with two real critical points $\pm r_t$ of multiplicities $\val(v_i)-2, i=1,2$, where $d_\D(0, r_t) = t$.
Post composing with an element $M_s \in \Isom(\D)$, we may also assume $f_t$ is real and has fixed points at $0$ and $\pm 1$.
It can be verified that $f_t$ realizes $(\mathcal{T}, p, d_\mathcal{T})$, and the rescaling limits at $v_1$ and at $v_2$ are parabolic unicritical real anti-Blaschke products.
\end{proof}

\begin{rmk}\label{rmk:ext}
We remark that the same construction also works for an extended pointed metric $(d+1)$-ended ribbon tree $(\mathcal{T}, p, d_\mathcal{T})$ with two core vertices $v_1, p$ where $p$ has valence $2$.
Indeed, we can simply take the family of unicritical real anti-Blaschke products in $\BP^-_d$ converging to the parabolic one.
\end{rmk}

\section{Boundedness and mutual interaction of deformation spaces}\label{sec:cad}

In the previous two sections, we have shown that degenerations in $\BP_d^-(K)$ are parametrized by $(d+1)$-ended ribbon trees.
Since the hyperbolic component $\mathcal{H}_\Gamma$ is essentially parametrized by a product of marked anti-Blaschke products (see \S \ref{subsec:pds}), these trees give combinatorial parametrizations of degenerations in the pared deformation space $\mathcal{H}_\Gamma(K)$.

The key objective of this section is to prove a combinatorial characterization for convergence in the pared deformation space $\mathcal{H}_\Gamma (K)$ (see Theorem \ref{thm:cparm}).
Both Theorem \ref{thm:ab} and Theorem \ref{thm:main} follow from this characterization.

\subsection{Pared deformation space}\label{subsec:pds}

Let $\mathcal{R}$ be a critically fixed anti-rational map of degree $d$. Let $c_1,\cdots, c_m$ be the distinct critical points of $\mathcal{R}$, and the local degree of $\mathcal{R}$ at $c_i$ be $d_i$ ($i=1,\cdots, m$). Since $\mathcal{R}$ has $(2d-2)$ critical points counted with multiplicity, we have that 
$$
\displaystyle\sum_{i=1}^m (d_i-1)=2d-2\ \implies\ \displaystyle\sum_{i=1}^m d_i=2d+m-2.
$$

Suppose that $U_i$ is the invariant Fatou component containing $c_i$ ($i=1,\cdots, m$). 
Then, $U_i$ is a simply connected domain such that $\mathcal{R}\vert_{U_i}$ is conformally conjugate to $\overline{z}^{d_i}\vert_\D$ \cite[Theorem~9.3]{Milnor06}. 
This defines internal rays in $U_i$, and $\mathcal{R}$ maps the internal ray at angle $\theta\in\R/\Z$ to the one at angle $-d_i\theta\in\R/\Z$. It follows that there are $(d_i+1)$ fixed internal rays in $U_i$. 
A straightforward adaptation of \cite[Theorem~18.10]{Milnor06} now implies that all these fixed internal rays land at repelling fixed points on $\partial U_i$.

We define the \emph{Tischler graph} $\mathscr{T}$ of $\mathcal{R}$ as the union of the closures of the fixed internal rays of $\mathcal{R}$. Since the fixed internal rays of $\mathcal{R}$ land pairwise at the repelling fixed points, we disregard the repelling fixed points of $\mathcal{R}$ from the vertex set $\mathcal{V}(\mathscr{T})$ of the Tischler graph. Thus by our convention, $\mathcal{V}(\mathscr{T})$ is the set of critical points of $\mathcal{R}$ (cf. \cite[\S 4]{LLM22}).

In \cite{LLM22}, it was proved that a plane graph $\mathscr{T}$ is the Tischler graph of a critically fixed anti-rational map if and only if its planar dual $\Gamma$ is simple and $2$-connected.

\subsection*{Hyperbolic components of critically fixed anti-rational maps}

Let $\Rat_d^-(\C)$ be the space of degree $d$ anti-rational maps, and 
$$
\mathcal{M}_d^- = \Rat_d^-(\C) / \PSL_2(\C).
$$
Let $\mathcal{R}_\Gamma \in \Rat_d^-(\C)$ be a critically fixed anti-rational map whose Tischler graph $\mathscr{T}$ is dual to $\Gamma$.
We denote the component of hyperbolic maps containing $[\mathcal{R}_\Gamma]$ by $\mathcal{H}_\Gamma \subset \mathcal{M}^-_d$.

By the anti-holomorphic version of \cite[Proposition~6.9]{McM88}, the Julia set dynamics of any $[R]\in \mathcal{H}_\Gamma$ is topologically conjugate to the Julia set dynamics of $\mathcal{R}_\Gamma$ such that the topological conjugacy extends to an orientation-preserving homeomorphism of $\widehat{\C}$.

In order to study degenerations in $\mathcal{H}_\Gamma$, we need to relate it to suitable spaces of marked anti-Blaschke products. This necessitates the following definition of marked hyperbolic components. 
Recall from the introduction that $\mathcal{H}_{\Gamma, Rat} \subset \Rat_d^-(\C)$ denotes the preimage of $\mathcal{H}_\Gamma\subset \mathcal{M}_d^-$ under the projection map $\Rat_d^-(\C) \longrightarrow \mathcal{M}_d^-$. We note that connectedness of $\PSL_2(\C)$ implies connectedness of $\mathcal{H}_{\Gamma, Rat}$.

Let $R\in \mathcal{H}_{\Gamma, Rat}$. We label the critical Fatou components as $U_1,\cdots, U_m$, where $R\vert_{U_i}$ has degree $d_i$, $i=1, \cdots, m$. We say that $q$ is a {\em boundary marking} for $R$, if it assigns to each critical Fatou component $U_i$ a fixed point $q(U_i)$ on the ideal boundary $\mathbb{S}^1=I(U_i)$.
In other words, $q$ is an $m$-tuple $(x_1,\cdots, x_m)$ where $x_i$ is a fixed point on $I(U_i)$.

We consider the space of pairs
$$
\mathcal{S}_\Gamma=\{(R,q): R\in \mathcal{H}_{\Gamma, Rat}, \text{and $q$ is a boundary marking for}\ R\},
$$
and let $\PSL_2(\C)$ act on $\mathcal{S}_\Gamma$ by
$$
(R, (x_1,\cdots, x_m)) \mapsto (M\circ R\circ M^{-1}, (\widetilde{M}(x_1),\cdots, \widetilde{M}(x_m))),\ M\in \PSL_2(\C),
$$
where $\widetilde{M}$ is the action on the ideal boundaries of $U_i$ induced by $M$.
The quotient of $\mathcal{S}_\Gamma$ under this action will be denoted by $\widetilde{\mathcal{S}}_\Gamma$.

We topologize $\mathcal{S}_\Gamma$ by declaring that a sequence converges in $\mathcal{S}_\Gamma$ if the anti-rational maps converge algebraically and the corresponding boundary markings converge to that of the limiting map.
The space $\widetilde{\mathcal{S}}_\Gamma$ is endowed with the quotient topology under the projection map $\mathcal{S}_\Gamma\longrightarrow \widetilde{\mathcal{S}}_\Gamma$.

Let $\widetilde{\mathcal{H}}_\Gamma$ be a component of $\widetilde{\mathcal{S}}_\Gamma$. Then a similar argument as in the proof of \cite[ Theorem~5.7]{Milnor12} shows that
$$
\widetilde{\mathcal{H}}_\Gamma \cong \BP^-_{d_1} \times \BP^-_{d_2} \times \cdots \times \BP^-_{d_m}.
$$
The \emph{marked hyperbolic component} $\widetilde{\mathcal{H}}_\Gamma$ is a branched cover of $\mathcal{H}_\Gamma$.

\begin{rmk}
The marked hyperbolic component $\widetilde{\mathcal{H}}_\Gamma$ contains a unique equivalence class represented by a marked critically fixed map. This class corresponds to the element $(\overline{z}^{d_1},\cdots,\overline{z}^{d_m})\in \BP^-_{d_1} \times \BP^-_{d_2} \times \cdots \times \BP^-_{d_m}$.
\end{rmk}

Let $K>0$, we define the {\em marked pared deformation space} as
$$
\widetilde{\mathcal{H}}_\Gamma(K) \cong \mathring{\BP}^-_{d_1}(K) \times \mathring{\BP}^-_{d_2}(K) \times \cdots \times \mathring{\BP}^-_{d_m}(K),
$$
(see Definition~\ref{defn:pdfb}) and the {\em pared deformation space} $\mathcal{H}_\Gamma(K)\subset \mathcal{M}^-_d$ as the projection of $\widetilde{\mathcal{H}}_\Gamma(K)$.
A sequence $[R_n] \in  \mathcal{H}_\Gamma$ (respectively, a family $[R_s]$) is called a {\em degeneration} if $[R_n]$ (respectively, $[R_s]$) escapes every compact set of $\mathcal{H}_\Gamma$.

\subsection{Enriched Tischler graph}
Let $\mathscr{T}$ be a Tischler graph of a critically fixed anti-rational map.
\begin{defn}\label{enrichment_def}
We say a graph $\mathscr{T}^{En}$ is an {\em enrichment} of $\mathscr{T}$ if $\mathscr{T}^{En}$ is obtained from $\mathscr{T}$ by replacing each vertex $v$ of valence $\val_{\mathscr{T}}(v)$ with a $\val_{\mathscr{T}}(v)$-ended ribbon tree.
Similar to the convention we adopted for Tischler graphs, we define the vertex set for an enrichment as the set of branch points in $\mathscr{T}^{En}$.
\end{defn}

Enrichments of a Tischler graph arise naturally as we study degenerations in the pared deformation space $\mathcal{H}_\Gamma(K)$.
Let $[R_n]$ be a sequence in $\mathcal{H}_\Gamma(K)$, which we may lift to a sequence in $\widetilde{\mathcal{H}}_\Gamma(K)$.
Then we get $m$ sequences of anti-Blaschke products $f_{1,n}, \cdots, f_{m,n}$.
After passing to a subsequence, we assume that $\mathcal{T}_{1,n},\cdots, \mathcal{T}_{m,n}$ are the quasi-fixed trees for these sequences of anti-Blaschke products.

Using the uniformization maps, we may regard $\mathcal{T}_{i,n}$ as a subset of the corresponding critical Fatou component $U_{i,n}$ of $R_n$ where the marked (ideal) boundary fixed point of $U_{i,n}$ corresponds to the marked endpoint of the tree $\mathcal{T}_{i,n}$. We let 
$$
\mathscr{T}^{En}_n = \bigcup_{i=1}^m \mathcal{T}_{i,n}.
$$
Recall that for each fixed $i$, the trees $\mathcal{T}_{i,n}$ are isomorphic as endpoint-marked plane graphs for all $n$. We denote the plane isomorphism class of $\mathcal{T}_{i,n}$ by $\mathcal{T}_i$, and the plane isomorphism class of $\mathscr{T}^{En}_n$ by $\mathscr{T}^{En}$. 

Then by construction, it is clear that $\mathscr{T}^{En}$ is an enrichment of $\mathscr{T}$.
We call $\mathscr{T}^{En}$ an {\em enriched Tischler graph} corresponding to the sequence $[R_n]$. 
Note that the isomorphism class of $\mathscr{T}^{En}$ does not depend on the lift of $[R_n]$, but may depend on the choice of subsequence. In what follows, we will tacitly assume that such a subsequence has already been chosen, and refer to $\mathscr{T}^{En}$ as \emph{the} enriched Tischler graph corresponding to $[R_n]$.

Similarly, if $[R_s]$ is a family in $\mathcal{H}_\Gamma(K)$, we can lift it to a family in $\widetilde{\mathcal{H}}_\Gamma(K)$, and get $m$ families of anti-Blaschke products $f_{1,s}, \cdots, f_{m,s}$.
Assume that each $f_{i,s}$ realizes a $(d_i+1)$-ended ribbon tree $\mathcal{T}_i$. 
Then we get a family of graphs 
$$
\mathscr{T}^{En}_s = \bigcup_{i=1}^m \mathcal{T}_{i,s}
$$
that is isomorphic as plane graphs to some graph $\mathscr{T}^{En}$.
We call this graph the {\em enriched Tischler graph} corresponding to the family $[R_s]$.

\begin{rmk}
We note that if $\mathcal{T}_{i,n}$ and $\mathcal{T}_{i,n}'$ are isomorphic as plane trees but not as (endpoint-)marked plane trees, then the above enrichment procedure applied to  $\mathcal{T}_{i,n}$ and $\mathcal{T}_{i,n}'$ may give rise to non-isomorphic enrichments (see Figure~\ref{fig:NSG}.
\end{rmk}

By Theorem \ref{thm:rdtr}, we have the following.

\begin{prop}\label{prop:enriched_tischler_realized}
Let $\mathscr{T}$ be the Tischler graph of a critically fixed anti-rational map $\mathcal{R}_\Gamma$.
Then for all large $K$, any enrichment $\mathscr{T}^{En}$ of $\mathscr{T}$ arises as the enriched Tischler graph for some sequence $[R_n] \in \mathcal{H}_\Gamma(K)$ (or some family $[R_s] \in \mathcal{H}_\Gamma(K)$).
\end{prop}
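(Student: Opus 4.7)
The plan is to exploit the product parametrization $\widetilde{\mathcal{H}}_\Gamma(K) \cong \mathring{\BP}^-_{d_1}(K) \times \cdots \times \mathring{\BP}^-_{d_m}(K)$ together with the realization Theorem \ref{thm:rdtr} applied componentwise. More precisely, write $v_1,\dots,v_m$ for the critical points of $\mathcal{R}_\Gamma$, so that $v_i$ is a vertex of $\mathscr{T}$ with $\val_{\mathscr{T}}(v_i)=d_i+1$ (the $d_i+1$ fixed internal rays landing on $\partial U_i$). By definition of enrichment, $\mathscr{T}^{En}$ is obtained from $\mathscr{T}$ by replacing each $v_i$ with some $(d_i+1)$-ended ribbon tree $\mathcal{T}_i$, where the ends of $\mathcal{T}_i$ are identified with the $d_i+1$ edges of $\mathscr{T}$ incident to $v_i$, and one of these ends inherits a distinguished marking from the boundary marking of $U_i$.

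Next, I would apply Theorem \ref{thm:rdtr} separately to each marked $(d_i+1)$-ended ribbon tree $\mathcal{T}_i$. This produces, for each $i$, a family $f_{i,s}\in\BP^-_{d_i}(K_i)$ (for some $K_i>0$) and embeddings $\phi_{i,s}:\mathcal{T}_i\longrightarrow\overline{\D}$ that are the quasi-fixed trees of the $f_{i,s}$ and that respect the marked end. Taking $K:=\max_i K_i$ and assembling, the $m$-tuple $(f_{1,s},\dots,f_{m,s})$ lives in $\mathring{\BP}^-_{d_1}(K)\times\cdots\times \mathring{\BP}^-_{d_m}(K)\cong \widetilde{\mathcal{H}}_\Gamma(K)$. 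Projecting to $\mathcal{M}^-_d$ yields a family $[R_s]\in\mathcal{H}_\Gamma(K)$. The analogous argument with discrete parameter $n$ instead of $s$ produces the corresponding sequence.

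It then remains to check that the enriched Tischler graph of this family is exactly $\mathscr{T}^{En}$. For each $i$ let $\Phi_{i,s}:\D\longrightarrow U_{i,s}$ be the uniformizing model conjugating $f_{i,s}$ to the dynamics of $R_s$ on the invariant Fatou component $U_{i,s}$ containing the critical point corresponding to $v_i$. Then $\Phi_{i,s}\circ\phi_{i,s}(\mathcal{T}_i)$ is, by construction, the component of the enriched Tischler graph inside $U_{i,s}$. The ``ends approximating'' property of Theorem \ref{thm:rdtr} ensures that each end of $\mathcal{T}_i$ is mapped to a repelling fixed point on $\partial U_{i,s}$ in a way compatible with the boundary marking, so these endpoints coincide with the landing points of the fixed internal rays of $\mathcal{R}_\Gamma$ (by structural stability across the hyperbolic component). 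Consequently, the union $\bigcup_i \Phi_{i,s}\circ\phi_{i,s}(\mathcal{T}_i)$ is a plane graph that coincides, as a plane graph, with $\mathscr{T}^{En}$.

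The main technical obstacle I anticipate is the last bookkeeping step: verifying that the ends of the various $\mathcal{T}_i$ assemble correctly across distinct critical Fatou components, i.e. that two ends from $\mathcal{T}_i$ and $\mathcal{T}_j$ that correspond to the same vertex of $\mathscr{T}$ (a common repelling fixed point) are in fact transported by $\Phi_{i,s}$ and $\Phi_{j,s}$ to the same point on the Julia set. This requires using the fact that throughout the hyperbolic component, the Julia set dynamics remains topologically conjugate to that of $\mathcal{R}_\Gamma$ via a conjugacy that extends continuously to $\widehat{\C}$, and that the boundary marking provides a consistent way to track landing points along the family. Once this matching of endpoints is in place, the union is automatically the enriched Tischler graph and the proposition follows.
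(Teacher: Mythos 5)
Your proposal is correct and follows essentially the same route as the paper, which derives the proposition directly from Theorem \ref{thm:rdtr} via the product parametrization $\widetilde{\mathcal{H}}_\Gamma(K)\cong \mathring{\BP}^-_{d_1}(K)\times\cdots\times\mathring{\BP}^-_{d_m}(K)$, realizing each $(d_i+1)$-ended ribbon tree componentwise. The final bookkeeping step you flag is already built into the paper's definition of the enriched Tischler graph (the trees are transported into the Fatou components by the marked uniformizing maps, and the landing pattern of fixed internal rays is preserved throughout $\mathcal{H}_\Gamma$ by the topological conjugacy on Julia sets), so your verification is exactly what makes the union a plane graph isomorphic to $\mathscr{T}^{En}$.
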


We now study the effect of enrichment on dual graphs. We need the following notions of pseudo-simple graphs and domination.

\begin{defn}
\noindent\begin{enumerate}\upshape
\item We say that a bigon in a plane graph is \emph{topologically trivial} if the two edges of the bigon are path-homotopic rel. the vertices of the graph.
A plane graph $\Gamma$ is called {\em pseudo-simple} if 
\begin{itemize}
\item $\Gamma$ has no self-loop, and
\item $\Gamma$ has no topologically trivial bigon.
\end{itemize}
\item Given two pseudo-simple plane graphs $\Gamma, \Gamma'$ with the same number of vertices, we say that $\Gamma'$ {\em dominates} $\Gamma$, denoted by $\Gamma' \geq \Gamma$, if there exists an embedding $i:\Gamma \xhookrightarrow{} \Gamma'$ as plane graphs. 
\end{enumerate}
\end{defn}

Dual graphs of Tischler graphs and their enrichments are related by the following lemma.

\begin{lem}\label{lem:detdt}
Let $\mathscr{T}$ be the Tischler graph of a critically fixed anti-rational map and let $\mathscr{T}^{En}$ be an enrichment of $\mathscr{T}$.
Let $\Gamma^{En}$ and $\Gamma$ be their planar dual graphs respectively.
Then 
\begin{enumerate}	
\item $\Gamma^{En}$ is pseudo-simple, and
\item $\Gamma^{En}$ dominates $\Gamma$.
\end{enumerate}
\end{lem}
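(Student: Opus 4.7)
My plan is to exploit the structural properties of $\mathscr{T}^{En}$ collected in the discussion preceding the lemma: the one-to-one correspondence between the faces of $\mathscr{T}^{En}$ and those of $\mathscr{T}$, the bijection between the crossing edges of $\mathscr{T}^{En}$ and the edges of $\mathscr{T}$, the fact that every edge of $\mathscr{T}^{En}$ bounds two distinct faces, and the key observation that two edges of $\mathscr{T}^{En}$ lying on the common boundary of two faces are never adjacent.

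For part (1), self-loops in $\Gamma^{En}$ are excluded immediately: a self-loop would be dual to a primal edge bounding the same face on both sides, contradicting the structural fact. To rule out topologically trivial bigons, I would argue as follows. A bigon in $\Gamma^{En}$ between vertices $F_A$ and $F_B$ corresponds to two primal edges $e_1, e_2$ of $\mathscr{T}^{En}$ lying on $\partial A \cap \partial B$. Topological triviality provides a disk $D \subset \widehat{\C}$ bounded by the bigon and containing no other dual vertex. This forces $D$ to lie in $\overline{A \cup B}$, so that $D \cap \mathscr{T}^{En}$ consists only of edges on $\partial A \cap \partial B$. A primal vertex $v \in \mathrm{int}(D)$ would then have all its incident edges lying on this common boundary; since $\val(v) \geq 3$, two such incident edges share $v$ and are therefore adjacent, contradicting the structural fact. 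In the absence of interior primal vertices, the $D$-side sub-arcs of $e_1$ and $e_2$ can only terminate at a common endpoint, forcing $e_1$ and $e_2$ themselves to be adjacent, which is again forbidden.

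For part (2), I would construct the embedding $i:\Gamma \hookrightarrow \Gamma^{En}$ by using the face-bijection between $\mathscr{T}$ and $\mathscr{T}^{En}$ to identify the vertex sets, and by mapping each edge of $\Gamma$ (dual to an edge of $\mathscr{T}$) to the edge of $\Gamma^{En}$ dual to the associated crossing edge of $\mathscr{T}^{En}$. That this map is induced by an orientation-preserving self-homeomorphism of $\widehat{\C}$ follows from the observation that enrichment is a local blow-up inside each invariant Fatou component of $\mathcal{R}_\Gamma$ that respects the cyclic order of primal edges at every vertex of $\mathscr{T}$; consequently the crossing-edge sub-dual of $\mathscr{T}^{En}$ is isotopic in $\widehat{\C}$ to $\Gamma$.

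The main obstacle, to the extent there is one, is the planar analysis inside the disk $D$ in part (1): once topological triviality of a bigon is reduced to adjacency of the underlying primal edges, the no-adjacency structural fact closes the argument, and part (2) amounts to routine bookkeeping via the face and edge correspondences.
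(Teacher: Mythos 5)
Your proposal is correct and follows essentially the same route as the paper, which states that the lemma ``follows immediately from the above discussion'' and relies on exactly the structural facts you invoke (the face and crossing-edge bijections, that every edge of $\mathscr{T}^{En}$ bounds two faces, and that two edges on the common boundary of two faces are never adjacent). Your write-up simply fleshes out the planar-duality details that the paper leaves implicit, and the argument closes correctly.
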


\begin{proof}
1) By our convention, each vertex of $\mathscr{T}^{En}$ has valence at least $3$. Moreover, since each face of $\mathscr{T}$ is a Jordan domain \cite[Lemma~4.5]{LLM22} and every vertex of $\mathscr{T}$ is blown up to a tree in $\mathscr{T}^{En}$, it follows that every face of $\mathscr{T}^{En}$ is also a Jordan domain. Hence, every edge of $\mathscr{T}^{En}$ is contained in the boundary of at least two faces and two edges lying on the common boundary of two faces of $\mathscr{T}^{En}$ cannot be adjacent. It follows that the dual graph $\Gamma^{En}$ contains no self-loop or topologically trivial bigon; i.e., $\Gamma^{En}$ is pseudo-simple.

2) Since each vertex of $\mathscr{T}$ gets split into finitely many vertices in $\mathscr{T}^{En}$, the vertices of $\mathscr{T}^{En}$ can be grouped into several clusters, where each cluster corresponds to a vertex of $\mathscr{T}$.

The edges of $\mathscr{T}^{En}$ can be divided into two categories:
we say that an edge of $\mathscr{T}^{En}$ is a {\em crossing} edge if it connects vertices in two different Fatou components; and a {\em non-crossing} edge otherwise.
Note that the crossing edges connect vertices of $\mathscr{T}^{En}$ lying in different clusters, and hence are in one-to-one correspondence with the edges of $\mathscr{T}$.
On the other hand, each non-crossing edge connects two (not necessarily distinct) vertices of $\mathscr{T}^{En}$ lying in the same cluster (see Figure~\ref{fig:NSG}). Thus, each non-crossing edge of $\mathscr{T}^{En}$ corresponds to a vertex of $\mathscr{T}$.

It is now easy to see that the faces of $\mathscr{T}^{En}$ are in one-to-one correspondence with the faces of $\mathscr{T}$, and hence the dual graphs $\Gamma, \Gamma^{En}$ have the same number of vertices.
Moreover, two faces $A, B$ of $\mathscr{T}^{En}$ meet along a common crossing edge on their boundaries if and only if the corresponding faces in $\mathscr{T}$ meet along the corresponding edge on their boundaries.
This gives rise to the desired embedding of $\Gamma$ into $\Gamma^{En}$ as plane graphs (see Figure~\ref{fig:NSG}).
\end{proof}
\begin{figure}[h!]
\captionsetup{width=1.2\linewidth}
\begin{tikzpicture}

\filldraw[red] (0,0) circle (3.2pt);
\filldraw[red] (4,0) circle (3.2pt);

\draw [-, PineGreen, line width=0.5pt, out=40,in=140]  (0,0) to (4,0); 
\draw [-, PineGreen, line width=0.5pt, out=320,in=230]  (0,0) to (4,0); 
\draw [-, PineGreen, line width=0.5pt, out=90,in=180]  (0,0) to (2,2); 
\draw [-, PineGreen, line width=0.5pt, out=0,in=90]  (2,2) to (4,0); 
\draw [-, PineGreen, line width=0.5pt, out=270,in=180]  (0,0) to (2,-2); 
\draw [-, PineGreen, line width=0.5pt, out=0,in=270]  (2,-2) to (4,0); 

\filldraw[black] (2,0) circle (2.5pt);
\filldraw[black] (2,1.28) circle (2.5pt);
\filldraw[black] (2,-1.28) circle (2.5pt);
\filldraw[black] (-1.5,0) circle (2.5pt);

\draw [-, black, line width=0.5pt]  (2,0) to (2,1.28); 
\draw [-, black, line width=0.5pt]  (2,0) to (2,-1.28); 
\draw [-, black, line width=0.5pt, out=120, in=25]  (2,1.28) to (-0.2,2.2); 
\draw [-, black, line width=0.5pt, out=205,in=90]  (-0.2,2.2) to (-1.5,0); 
\draw [-, black, line width=0.5pt, out=270,in=160]  (-1.5,0) to (0.2,-2); 
\draw [-, black, line width=0.5pt, out=-20,in=240]  (0.2,-2) to (2,-1.28); 

\draw [->, brown, line width=0.8pt]  (3,2.4) to (4,3.2); 

\filldraw[red] (6,3.5) circle (2.2pt);
\filldraw[red] (6,4.5) circle (2.2pt);
\filldraw[red] (10,3.5) circle (2.2pt);
\filldraw[red] (10,4.5) circle (2.2pt);

\draw [-, red, line width=0.8pt] (6,3.5) to (6,4.5);
\draw [-, red, line width=0.8pt] (6,3.5) to (6.7,2.5);
\draw [-, red, line width=0.8pt] (6,3.5) to (5.3,2.5);
\draw [-, red, line width=0.8pt] (6,4.5) to (6.7,5.4);
\draw [-, red, line width=0.8pt] (6,4.5) to (5.3,5.4);

\draw [-, red, line width=0.8pt] (10,3.5) to (10,4.5);
\draw [-, red, line width=0.8pt] (10,3.5) to (10.7,2.5);
\draw [-, red, line width=0.8pt] (10,3.5) to (9.3,2.5);
\draw [-, red, line width=0.8pt] (10,4.5) to (10.7,5.4);
\draw [-, red, line width=0.8pt] (10,4.5) to (9.3,5.4);

\draw [-, PineGreen, line width=0.5pt, out=40,in=140]  (6.7,5.4) to (9.3,5.4); 
\draw [-, PineGreen, line width=0.5pt, out=320,in=230]  (6.7,2.5) to (9.3,2.5); 
\draw [-, PineGreen, line width=0.5pt, out=230,in=180]  (5.3,2.5) to (8,1); 
\draw [-, PineGreen, line width=0.5pt, out=0,in=-60]  (8,1) to (10.7,2.5);
\draw [-, PineGreen, line width=0.5pt, out=135,in=180]  (5.3,5.4) to (8,7); 
\draw [-, PineGreen, line width=0.5pt, out=0,in=45]  (8,7) to (10.7,5.4);

\filldraw[black] (8,4) circle (2.5pt);
\filldraw[black] (8,6.3) circle (2.5pt);
\filldraw[black] (8,1.56) circle (2.5pt);
\filldraw[black] (4.5,4) circle (2.5pt);

\draw [-, black, line width=0.5pt]  (8,4) to (8,6.3); 
\draw [-, black, line width=0.5pt]  (8,4) to (8,1.56); 
\draw [-, black, line width=0.5pt, out=160, in=25]  (8,6.3) to (5.75,6.2); 
\draw [-, black, line width=0.5pt, out=205,in=90]  (5.75,6.2) to (4.5,4); 
\draw [-, black, line width=0.5pt, out=270,in=160]  (4.5,4) to (6.2,1.66); 
\draw [-, black, line width=0.5pt, out=-20,in=200]  (6.2,1.66) to (8,1.56); 

\draw [dashed, black, line width=0.5pt] (4.5,4) to (8,4);

\draw [dashed, black, line width=0.5pt, out=160, in=205]  (4.5,4) to (5,7.32);
\draw [dashed, black, line width=0.5pt, out=25, in=185]  (5,7.32) to (7.55,8);
\draw [dashed, black, line width=0.5pt, out=5, in=90] (7.55,8) to (11.7,5.6);
\draw [dashed, black, line width=0.5pt, out=-90, in=0] (11.7,5.6) to (8,4);

\draw [->, brown, line width=0.8pt]  (3,-2.4) to (4,-3.2); 

\filldraw[red] (5.4,-4) circle (2.2pt);
\filldraw[red] (6.6,-4) circle (2.2pt);
\filldraw[red] (10,-3.5) circle (2.2pt);
\filldraw[red] (10,-4.5) circle (2.2pt);

\draw [-, red, line width=0.8pt] (5.4,-4) to (6.6,-4);
\draw [-, red, line width=0.8pt] (5.4,-4) to (5,-3);
\draw [-, red, line width=0.8pt] (5.4,-4) to (5,-5);
\draw [-, red, line width=0.8pt] (6.6,-4) to (7,-3);
\draw [-, red, line width=0.8pt] (6.6,-4) to (7,-5);

\draw [-, red, line width=0.8pt] (10,-3.5) to (10,-4.5);
\draw [-, red, line width=0.8pt] (10,-3.5) to (10.7,-2.5);
\draw [-, red, line width=0.8pt] (10,-3.5) to (9.3,-2.5);
\draw [-, red, line width=0.8pt] (10,-4.5) to (10.7,-5.4);
\draw [-, red, line width=0.8pt] (10,-4.5) to (9.3,-5.4);

\draw [-, PineGreen, line width=0.5pt, out=-60,in=220]  (7,-5) to (9.3,-5.4); 
\draw [-, PineGreen, line width=0.5pt, out=60,in=135]  (7,-3) to (9.3,-2.5); 
\draw [-, PineGreen, line width=0.5pt, out=120,in=180]  (5,-3) to (8,-1); 
\draw [-, PineGreen, line width=0.5pt, out=0,in=60]  (8,-1) to (10.7,-2.5);
\draw [-, PineGreen, line width=0.5pt, out=240,in=180]  (5,-5) to (8,-7); 
\draw [-, PineGreen, line width=0.5pt, out=0,in=-45]  (8,-7) to (10.7,-5.4);

\filldraw[black] (8.4,-4) circle (2.5pt);
\filldraw[black] (8.4,-6.28) circle (2.5pt);
\filldraw[black] (8.4,-1.5) circle (2.5pt);
\filldraw[black] (4.5,-4) circle (2.5pt);

\draw [-, black, line width=0.5pt]  (8.4,-4) to (8.4,-6.28); 
\draw [-, black, line width=0.5pt]  (8.4,-4) to (8.4,-1.5); 
\draw [-, black, line width=0.5pt, out=190, in=-30]  (8.4,-6.28) to (5.8,-6); 
\draw [-, black, line width=0.5pt, out=150,in=270]  (5.8,-6) to (4.5,-4); 
\draw [-, black, line width=0.5pt, out=90,in=210]  (4.5,-4) to (5.8,-1.9); 
\draw [-, black, line width=0.5pt, out=30,in=175]  (5.8,-1.9) to (8.4,-1.5); 

\draw [dashed, black, line width=0.5pt, out= 200, in=90] (8.4,-1.5) to (6,-4);
\draw [dashed, black, line width=0.5pt, out= 270, in=160] (6,-4) to (8.4,-6.28);
\draw [dashed, black, line width=0.5pt, out=210, in=150]  (4.5,-4) to (5,-7.32);
\draw [dashed, black, line width=0.5pt, out=-30, in=180]  (5,-7.32) to (7.5,-8);
\draw [dashed, black, line width=0.5pt, out=0, in=270] (7.5,-8) to (11.7,-5.6);
\draw [dashed, black, line width=0.5pt, out=90, in=0] (11.7,-5.6) to (8.4,-4);
\end{tikzpicture}
\caption{A Tischler graph and two enrichments.}
 \label{fig:NSG}
\end{figure}
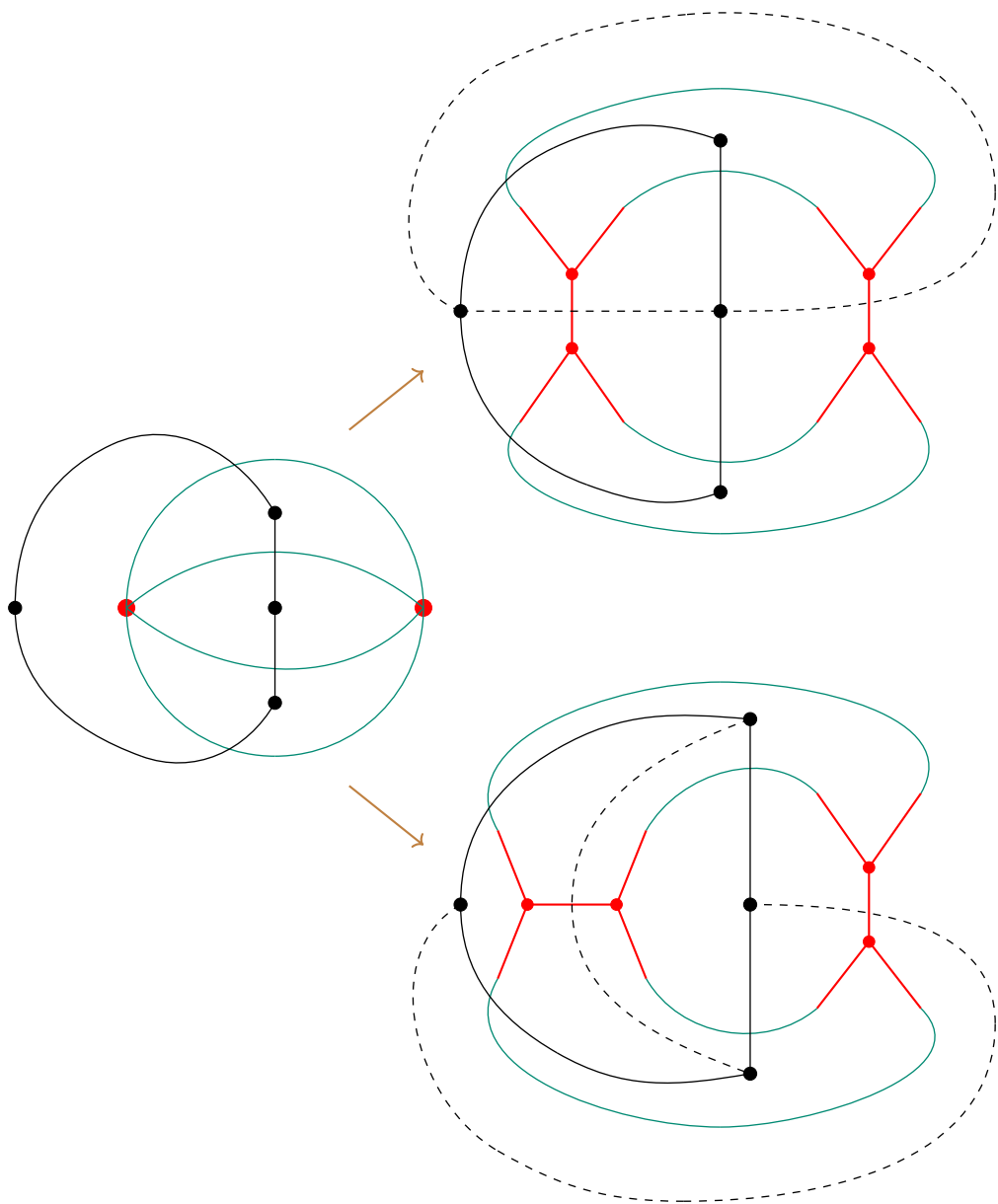
\noindent\textbf{Figure~\ref{fig:NSG}:} The Tischler graph $\mathscr{T}$ (in green) of $\overline{z}^3$ and its planar dual $\Gamma$ (in black) are shown on the left. The vertices of $\mathscr{T}$ are marked in red. Two different enrichments $\mathscr{T}^{En}$ of $\mathscr{T}$ (in green/red) and their planar duals $\Gamma^{En}$ (in black) are shown on the right. The red parts of $\mathscr{T}^{En}$ indicate the $4$-ended quasi-fixed trees that replace the vertices of $\mathscr{T}$. The vertex clusters of $\mathscr{T}^{En}$ correspond $1:1$ to these quasi-fixed trees. The green/red edges are the crossing edges and the red edges are non-crossing edges of $\mathscr{T}^{En}$. The dual graphs $\Gamma^{En}$ dominate $\Gamma$; they are obtained by adding the dashed edges to $\Gamma$. The top enrichment is not admissible since its planar dual contains a topologically non-trivial bigon given by the two dashed edges.

\begin{rmk}
If two faces $A, B$ of $\mathscr{T}^{En}$ share a common non-crossing edge on their boundaries, then the corresponding faces in $\mathscr{T}$ share the corresponding vertex on their boundaries. Thus the non-crossing edges of $\mathscr{T}^{En}$ yield the additional edges in $\Gamma^{En}$.
\end{rmk}

The converse of Lemma~\ref{lem:detdt} is also true and can be proved essentially by reversing the arguments. 

\begin{lem}\label{lem:rdetg}
Let $\mathscr{T}$ be the Tischler graph of a critically fixed anti-rational map, and $\Gamma'$ a pseudo-simple plane graph that dominates $\Gamma:=\mathscr{T}^{\vee}$. Then there exists an enrichment $\mathscr{T}^{En}$ of $\mathscr{T}$ whose dual graph is $\Gamma'$.

Moreover, different embeddings $i: \Gamma \xhookrightarrow{} \Gamma'$ (up to automorphisms of $\Gamma$ and $\Gamma'$) correspond to different enrichments (up to isomorphism of plane graphs).
\end{lem}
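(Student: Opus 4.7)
The plan is to invert the construction underlying Lemma~\ref{lem:detdt} by turning the extra edges of $\Gamma'$ inside each face of $\Gamma$ into chord diagrams whose dual trees replace the vertices of $\mathscr{T}$. Since $|V(\Gamma)| = |V(\Gamma')|$ and $i$ sends vertices to vertices, $i$ restricts to a bijection $V(\Gamma) \to V(\Gamma')$, so every edge in $E(\Gamma') \setminus i(E(\Gamma))$ (an \emph{extra edge}) already has both endpoints in $i(V(\Gamma))$. Planarity of $\Gamma'$ in $\widehat{\C}$ then forces the interior of each extra edge to lie in a unique open face of $i(\Gamma) \cong \Gamma$. Duality yields a canonical bijection between $V(\mathscr{T})$ and the faces of $\Gamma$: for $v \in V(\mathscr{T})$, write $F_v$ for the corresponding face, whose boundary cycle consists of $\val_\mathscr{T}(v)$ edges dual to the edges of $\mathscr{T}$ incident at $v$.

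Next, for each $v$, let $C_v$ denote the set of extra edges of $\Gamma'$ lying in $F_v$. Being pairwise non-crossing (as $\Gamma'$ is embedded), the arcs in $C_v$ together with $\partial F_v$ cut $F_v$ into closed sub-disks. Pseudo-simplicity of $\Gamma'$ plays its key role here: absence of self-loops ensures every chord connects two distinct vertices of $\partial F_v$ (ruling out monogonal sub-disks), and absence of topologically trivial bigons ensures every sub-disk has at least three boundary edges (ruling out bigonal sub-disks, whether bounded by two chords or by a chord together with a portion of $\partial F_v$). I then define the ribbon tree $T_v$ as the dual of this decomposition: one branch point per sub-disk (of valence equal to its number of boundary edges, hence $\geq 3$), one internal edge per chord in $C_v$, and one end per edge of $\partial F_v$, with the marked end corresponding to the distinguished boundary edge of $F_v$. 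A short Euler count gives $|V(T_v)| - |E(T_v)| = 1$, so $T_v$ is a tree, and its planar embedding inside $F_v \subset \widehat{\C}$ equips it with the required ribbon structure.

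I then form $\mathscr{T}^{En}$ by replacing each $v \in V(\mathscr{T})$ with $T_v$, identifying the $\val_\mathscr{T}(v)$ ends of $T_v$ with the $\val_\mathscr{T}(v)$ edges of $\mathscr{T}$ at $v$, compatibly with the ambient ribbon structures. By construction, the faces of $\mathscr{T}^{En}$ are those of $\mathscr{T}$ (hence in bijection with $V(\Gamma')$), the crossing edges of $\mathscr{T}^{En}$ dualize to $i(E(\Gamma))$, and the non-crossing edges of $T_v$ dualize precisely to the chords in $C_v$; therefore $(\mathscr{T}^{En})^\vee = \Gamma'$. For the moreover statement, two embeddings $i, i'$ related by automorphisms of $\Gamma$ and $\Gamma'$ yield the same chord data $\{C_v\}$ up to relabeling, hence isomorphic enrichments; conversely, any plane isomorphism $\mathscr{T}^{En}_i \cong \mathscr{T}^{En}_{i'}$ must send crossing edges to crossing edges (these form the distinguished underlying Tischler graph inside each enrichment), inducing an automorphism of $\Gamma'$ that pairs $i(\Gamma)$ with $i'(\Gamma)$ up to an automorphism of $\Gamma$.

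The main obstacle will be pinning down the precise dictionary between pseudo-simplicity of $\Gamma'$ and the structural constraints on the sub-disks of $F_v$: the two bullets of the pseudo-simple definition must be seen to correspond exactly to excluding valence-$1$ and valence-$2$ branch points in $T_v$, respectively, so that each $T_v$ is a legitimate $\val_\mathscr{T}(v)$-ended ribbon tree in the sense of the paper. Once this correspondence is nailed down, the rest of the argument is combinatorial bookkeeping using Euler's formula and the planarity of $\Gamma'$.
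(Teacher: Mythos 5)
Your proof is correct and follows essentially the same route as the paper's: the paper passes directly to the dual $(\Gamma')^{\vee}$ and asserts (leaving the verification as ``one can now easily check'') that pseudo-simplicity forces it to be a blow-up of $\mathscr{T}$ by ribbon trees, while you carry out exactly that verification from the primal side, showing that the extra edges of $\Gamma'$ cut each face of $\Gamma$ into sub-disks with at least three boundary edges, whose dual trees are the required $\val_{\mathscr{T}}(v)$-ended ribbon trees. You also supply an argument for the ``moreover'' clause, which the paper's proof omits entirely; the only soft spot there is the claim that an abstract plane-graph isomorphism of enrichments must send crossing edges to crossing edges, which is cleanest if one reads ``isomorphism of enrichments'' as an isomorphism respecting the blow-up structure (the reading the paper evidently intends).
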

\begin{proof}
Let $i: \Gamma\xhookrightarrow{} \Gamma'$ be an embedding.
The embedding $i$ shows that each face of $\Gamma$ is split into finitely many faces in $\Gamma'$, and hence the faces of $\Gamma'$ can be grouped into several clusters, where each cluster corresponds to a face of $\Gamma$. Dualizing this structure, one concludes that the vertices of $\mathscr{T}':=(\Gamma')^{\vee}$ can be grouped into several clusters, where each cluster results from splitting of a vertex of $\mathscr{T}$ (see Figure~\ref{fig:NSG}, also compare the proof of Lemma~\ref{lem:detdt}).

Furthermore, if an edge of $\Gamma'$ corresponds to an edge of $\Gamma$ via the embedding, then the corresponding dual edge in $\mathscr{T}'$ is a `crossing edge'; i.e, it connects two vertices in different clusters. Clearly, such dual edges bijectively correspond to the edges of $\mathscr{T}$ (the planar dual of $\Gamma$). On the other hand, the additional edges in $\Gamma'$ are responsible for splittings of faces of $\Gamma$, and hence the corresponding dual edges in $\mathscr{T}'$ are `non-crossing edges'; i.e, they connect two (not necessarily distinct) vertices in the same cluster. It follows that $\mathscr{T}'$ is obtained from $\mathscr{T}$ by blowing up each vertex $v\in V(\mathscr{T})$ to a $\val_{\mathscr{T}}(v)$-ended tree $\mathcal{T}_v$. As $\Gamma'$ is pseudo-simple, each face of $\Gamma'$ borders on at least three other faces, and hence every vertex in $\mathcal{T}_v$ has valence $\geq 3$. Therefore, $\mathscr{T}'$ is obtained from $\mathscr{T}$ by blowing up each vertex $v\in V(\mathscr{T})$ to a  ribbon tree, which proves that $\mathscr{T}'$ is an enrichment of $\mathscr{T}$.
\end{proof}

We shall now fix an embedding of $\mathscr{T}^{En}$ in $\widehat\C$.
A simple closed curve $\gamma$ in $\widehat\C\setminus \mathcal{V}(\mathscr{T}^{En})$ is said to be {\em essential} if it separates the vertex set of $\mathscr{T}^{En}$.
An essential closed curve $\gamma$ in $\widehat\C\setminus V(\mathscr{T}^{En})$ is said to {\em cut} an edge if the two end-points of the edge lie in two different components of $\widehat\C\setminus\gamma$.

We call an enriched Tischler graph $\mathscr{T}^{En}$ {\em admissible} if its planar dual is simple and $2$-connected (see Figure \ref{fig:NSG} for a non-admissible example). Since the planar dual of a Tischler graph is $2$-connected, Lemma~\ref{lem:detdt} implies that admissibility of $\mathscr{T}^{En}$ is equivalent to requiring that $(\mathscr{T}^{En})^{\vee}$ has no topologically non-trivial bigon.

The following two graph theoretical lemmas will be used later.

\begin{lem}\label{lem:3e}
If $\mathscr{T}^{En}$ is admissible, then any essential simple closed curve $\gamma\subset\widehat\C\setminus \mathcal{V}(\mathscr{T}^{En})$ cuts at least $3$ edges.
\end{lem}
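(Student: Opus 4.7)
The plan is to recast the claim as a statement about the edge-connectivity of $\mathscr{T}^{En}$ and then invoke the standard cycle--cut duality for plane graphs.

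First, I would translate the hypothesis into graph-theoretic language. Write $\widehat\C\setminus\gamma=D_1\sqcup D_2$ and $V_i:=\mathcal{V}(\mathscr{T}^{En})\cap D_i$; since $\gamma$ is essential, both $V_1$ and $V_2$ are non-empty. By the very definition of ``$\gamma$ cuts $e$'', the set $E_{12}$ of cut edges is precisely the edge cut of $\mathscr{T}^{En}$ associated with the bipartition $\mathcal{V}(\mathscr{T}^{En})=V_1\sqcup V_2$, so the goal becomes $|E_{12}|\geq 3$.

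Next, I would note that $\mathscr{T}^{En}$ is a connected plane graph. This follows directly from its construction as a ``blow-up'' of the connected Tischler graph $\mathscr{T}$: each vertex $v$ of $\mathscr{T}$ is replaced by a $\mathrm{val}_\mathscr{T}(v)$-ended ribbon tree, with the edges of $\mathscr{T}$ incident to $v$ attached to the ends of the inserted tree. Hence $E_{12}$ is a genuine edge cut of a connected plane graph.

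I would then invoke the cycle--cut duality for connected plane graphs: the edge-connectivity of $G$ equals the girth of the planar dual $G^\vee$ (loops and multi-edges counted as cycles of lengths $1$ and $2$ respectively, so that minimal edge cuts of $G$ correspond bijectively to simple cycles of $G^\vee$ via $e\mapsto e^\vee$). Applied to $G=\mathscr{T}^{En}$ with dual $\Gamma^{En}$, admissibility gives that $\Gamma^{En}$ is simple, so it contains no loops and no multi-edges; its girth is therefore at least $3$. Consequently $\mathscr{T}^{En}$ is $3$-edge-connected, and every edge cut of $\mathscr{T}^{En}$ --- in particular $E_{12}$ --- has size at least $3$, establishing the lemma.

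The main obstacle is really only a careful citation of the cycle--cut duality in the right form; no delicate geometric or analytic argument is needed. I would note in passing that the $2$-connectivity half of admissibility plays no role here; only simplicity of $\Gamma^{En}$ is used.
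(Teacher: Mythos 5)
Your proof is correct and follows essentially the same route as the paper's: the paper rules out a cut of size $2$ by observing that it corresponds to a double edge in the dual $\Gamma^{En}$, which is exactly the cycle--cut duality you invoke in its general form. If anything, your packaging is tighter, since the paper's preliminary claim that an essential curve cuts at least two edges (justified there by valence $\geq 3$) is really the loop/bridge case of the same duality, which your argument handles uniformly via simplicity of $\Gamma^{En}$.
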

\begin{proof}
Let $\gamma$ be an essential simple closed curve.
Since it is essential, and every vertex has valence at least $3$, $\gamma$ cuts at least $2$ edges.
If $\gamma$ cuts exactly $2$ edges, then it corresponds to a double edge in the dual graph (see Figure~\ref{fig:NSG}).
Thus $\mathscr{T}^{En}$ is not admissible.
\end{proof}

\begin{lem}\label{lem:pg}
Let $\mathscr{T}$ be a Tischler graph.
If the dual $\Gamma$ of $\mathscr{T}$ is a polyhedral graph, then any enrichment of $\mathscr{T}$ is admissible.

On the other hand, if the dual $\Gamma$ of $\mathscr{T}$ is not a polyhedral graph, then there exists an enrichment of $\mathscr{T}$ that is not admissible.
\end{lem}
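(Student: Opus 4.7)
The plan is to reformulate admissibility of $\mathscr{T}^{En}$ as a combinatorial statement about its dual graph $\Gamma^{En} := (\mathscr{T}^{En})^{\vee}$. By Lemma~\ref{lem:detdt}, $\Gamma^{En}$ contains the 2-connected graph $\Gamma$ as a subgraph and is pseudo-simple, so $\Gamma^{En}$ is itself 2-connected with no self-loops or topologically trivial bigons. Consequently, admissibility is equivalent to ruling out topologically non-trivial multi-edges in $\Gamma^{En}$. Dually, the enrichment corresponds to choosing, for each face $F$ of $\Gamma$ (a $k$-gon, where $k$ is the valence of the dual $\mathscr{T}$-vertex), a non-crossing chord dissection of $F$; the chords of the dissection give the new edges added to $\Gamma$ to form $\Gamma^{En}$.

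For the forward direction, I assume $\Gamma$ is polyhedral and suppose for contradiction that some enrichment produces a multi-edge between vertices $V_i, V_j$ in $\Gamma^{En}$. The two parallel edges arise either (i) as one existing $\Gamma$-edge $V_iV_j$ together with a chord inside some face $f$ (with $V_i, V_j$ non-adjacent on $\partial f$), or (ii) as two chords inside two distinct faces $f_1, f_2$ (with $V_i, V_j$ non-adjacent on each). In either case the two parallel edges form a simple closed curve $\zeta$ through $V_i$ and $V_j$, otherwise disjoint from $\Gamma$. The non-adjacency of $V_i, V_j$ on the relevant face boundaries guarantees that each of the two arcs of such a boundary from $V_i$ to $V_j$ has an interior $\Gamma$-vertex, and these interior vertices lie in opposite components of $\widehat\C \setminus \zeta$. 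Since no path in $\Gamma \setminus \{V_i, V_j\}$ can cross $\zeta$ (the chord lies in an empty face interior, and the edge $V_iV_j$ is excluded), such interior vertices on opposite sides of $\zeta$ lie in different components of $\Gamma \setminus \{V_i, V_j\}$. Thus $\{V_i, V_j\}$ is a 2-vertex cut of $\Gamma$, contradicting 3-connectedness.

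For the reverse direction, I assume $\Gamma$ has a 2-vertex cut $\{u, w\}$ with components $A, B$ of $\Gamma \setminus \{u, w\}$. By standard planar topology, I can find a simple closed curve $\beta$ passing through $u, w$, otherwise disjoint from $\Gamma$, and separating $A$ from $B$. Its two arcs from $u$ to $w$ lie in two faces $f_1, f_2$ of $\Gamma$ which must be distinct: otherwise $\beta$ would bound a subdisk of a single face containing no $\Gamma$-vertex and hence fail to separate $A$ from $B$. Furthermore, $u$ and $w$ cannot be adjacent on both $\partial f_1$ and $\partial f_2$, for then both boundaries would contain the edge $uw$, forcing $f_1, f_2$ to be the two faces incident to this edge, and $\beta$ together with $uw$ would bound a thin region free of $\Gamma$-vertices, again contradicting separation. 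I then choose $s \in \{1, 2\}$ so that $u, w$ are non-adjacent on $\partial f_s$, and enrich $\mathscr{T}$ by adding the chord $uw$ inside $f_s$ (equivalently, blowing up the dual $\mathscr{T}$-vertex by the corresponding ribbon tree); if $\Gamma$ has no edge $uw$, I also add a chord $uw$ inside the other face. The resulting $\Gamma^{En}$ has two parallel edges between $u$ and $w$, and the two sides of the associated bigon contain vertices of $A$ and $B$ respectively, making it topologically non-trivial; hence $\mathscr{T}^{En}$ is not admissible.

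The hard part is the planar-topology argument in the reverse direction: producing the separating curve $\beta$ with arcs in two distinct faces and excluding the scenario of double adjacency. Both claims rely on the fact that face interiors of a 2-connected plane graph contain no vertices, so any closed curve confined to the closure of a single face or hugging a single edge cannot separate graph vertices.
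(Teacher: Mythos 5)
Your proof is correct and follows essentially the same route as the paper's: reduce admissibility to the absence of topologically non-trivial bigons in the pseudo-simple dual $\Gamma^{En}\geq\Gamma$, show a bigon yields a Jordan curve meeting $\Gamma$ only at its two vertices with vertices of $\Gamma$ on both sides (hence a $2$-cut, contradicting $3$-connectedness), and conversely turn a $2$-cut $\{u,w\}$ into an enrichment whose dual acquires a parallel edge at $u,w$. The only difference is that you supply the planar-topology details (the separating curve $\beta$, distinctness of the two faces, exclusion of double adjacency) that the paper's terse proof leaves implicit.
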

\begin{proof}
Recall that a graph is polyhedral if and only if it is $3$-connected.
Let $\Gamma$ be polyhedral, and let $\Gamma'$ be a pseudo-simple graph that dominates $\Gamma$.
Suppose for contradiction that $\Gamma'$ has a topologically non-trivial bigon with two vertices $v, v'$. 
This bigon divides the graph $\Gamma'$ into two non-trivial pieces, so $\Gamma' - \{v, v'\}$ is disconnected.
This implies $\Gamma - \{v, v'\}$ is disconnected, which is a contradiction.
Thus $\Gamma'$ is simple and 2-connected (in fact, $\Gamma'$ is always 3-connected).

Conversely, if $\Gamma$ is not polyhedral, then there exist two vertices $v, v'$ so that $\Gamma - \{v, v'\}$ is disconnected.
One can then construct a pseudo-simple graph $\Gamma'$ with a topologically non-trivial bigon (with vertices $v, v'$) that dominates $\Gamma$.

The lemma then follows from Lemma \ref{lem:detdt} and Lemma \ref{lem:rdetg}.
\end{proof}

\subsection{A characterization for convergence}
Recall that a parabolic map $[R] \in \partial \mathcal{H}_{\Gamma}$ is called a {\em root} of $\mathcal{H}_{\Gamma}$ if its Julia dynamics is topologically conjugate to that of maps in $\mathcal{H}_{\Gamma}$. In this subsection, we will prove the following theorem. 

\begin{theorem}\label{thm:cparm}
	Let $[R_n]\in \mathcal{H}_\Gamma(K)$ be a degeneration with associated enriched Tischler graph $\mathscr{T}^{En}$.
	Then $[R_n]$ has a convergent subsequence in $\mathcal{M}_d^-$ if and only if $\mathscr{T}^{En}$ is admissible.
	
	Moreover, if $\mathscr{T}^{En}$ is admissible with planar dual $\Gamma^{En}$, then any limit point of $[R_n]$ is a root of $\mathcal{H}_{\Gamma^{En}}$.
\end{theorem}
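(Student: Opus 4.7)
The proof hinges on associating to the enriched Tischler graph $\mathscr{T}^{En}$ a candidate parabolic critically fixed anti-rational limit $R_\infty$, whose existence corresponds exactly to admissibility of $\mathscr{T}^{En}$. Each branch point $v$ of $\mathscr{T}^{En}$ should model a superattracting fixed critical point of $R_\infty$ with local degree $\val(v)-1$; each non-crossing edge, a fixed internal ray inside a critical Fatou component; and each pair of crossing edges that come from the same edge of the original $\mathscr{T}$, two fixed internal rays from two critical Fatou components landing at a common parabolic fixed point of $R_\infty$ arising from the collision of the two underlying repelling fixed points of $R_n$. Under this dictionary the limit $R_\infty$ exists as a degree $d$ parabolic critically fixed anti-rational map with combinatorics $\mathscr{T}^{En}$ if and only if $\Gamma^{En}:=(\mathscr{T}^{En})^{\vee}$ is simple and $2$-connected, by a parabolic analogue of the characterization \cite[Theorem~1.1]{LLM20}.

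For the direction $(\Leftarrow)$, I would lift $[R_n]$ to representatives $R_n \in \mathcal{H}_{\Gamma, Rat}(K)$ normalized by sending three labelled ends of $\mathscr{T}^{En}$ to $0, 1, \infty$, so that the algebraic compactness of $\overline{\Rat_d^-}=\Proj^{2d+1}$ yields, after passing to a subsequence, an algebraic limit $R_\infty$. Using the uniformizations of the critical Fatou components, the quasi-fixed subtrees $\mathcal{T}_{i,n}$ from Theorem~\ref{thm:qit} transport to subgraphs of $\widehat\C$, each branch point of which has exactly $\val(v)-2$ nearby critical points of $R_n$ (counted with multiplicity). Admissibility --- concretely, the absence of a topologically non-trivial bigon in $\Gamma^{En}$ --- is then what prevents two branch points of $\mathscr{T}^{En}_n$ from converging to the same point of $\widehat\C$, since such a collision would identify two distinct crossing edges at a common limit and yield a topologically non-trivial bigon in $\Gamma^{En}$. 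Hence the full critical multiplicity $2d-2$ is accounted for by the separated branch points, forcing $\deg \varphi_{R_\infty}=d$, and a direct inspection of the limit dynamics identifies $R_\infty$ as a critically fixed parabolic anti-rational map whose combinatorics match $\mathscr{T}^{En}$, i.e., a root of $\mathcal{H}_{\Gamma^{En}}$.

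For the direction $(\Rightarrow)$, I would argue contrapositively. Suppose $\mathscr{T}^{En}$ is non-admissible. By Lemma~\ref{lem:3e}, there is an essential simple closed curve $\gamma$ in $\widehat\C$ cutting exactly two edges of $\mathscr{T}^{En}$; these two edges must be crossing edges, since each $\mathcal{T}_i$ is a tree and hence contains no cycles. If a subsequence of $[R_n]$ converged to some $[R_\infty]\in\mathcal{M}_d^-$, the transport construction would either force a pair of branch points to collide in $\widehat\C$ (collapsing the bigon but dropping the critical multiplicity of $\varphi_{R_\infty}$ below $2d-2$, hence forcing $\deg\varphi_{R_\infty} < d$, a contradiction to $[R_\infty]\in\mathcal{M}_d^-$), or else realize $\mathscr{T}^{En}$ as the parabolic Tischler graph of $R_\infty$ whose dual $\Gamma^{En}$ carries a topologically non-trivial bigon, contradicting the parabolic analogue of \cite[Theorem~1.1]{LLM20}.

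The main obstacle I anticipate is making rigorous the translation between the hyperbolic control of Theorem~\ref{thm:qit}, which lives inside each uniformizing disk individually, and the spherical geometry on $\widehat\C$ needed to couple branch points across distinct Fatou components. Specifically, I need to rule out the possibility that while each $\mathcal{T}_{i,n}$ spreads out nicely inside its own Fatou component, branch points of different $\mathcal{T}_{i,n}$ conspire to converge to a common point of $\widehat\C$ in the spherical metric; this is precisely where admissibility --- a global plane-graph condition on $\mathscr{T}^{En}$ --- enters as the correct criterion. Establishing the parabolic analogue of \cite{LLM20}'s Tischler graph characterization is a secondary technical step that should follow along the same lines as the hyperbolic case, with parabolic basins admitting the same internal-ray decomposition and dual-graph structure.
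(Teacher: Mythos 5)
Your overall architecture (algebraic limits, quasi-fixed trees transported into the Fatou components, essential curves versus bigons in the dual) matches the paper's, but the central step is not established and the mechanism you propose for it is not the right one. The heart of the matter is showing that, under admissibility, the algebraic limit $\varphi_{R_\infty}$ has degree $d$ (the paper's Lemma~\ref{lem:degd}). Your argument --- that admissibility prevents branch points from colliding, and that ``the full critical multiplicity $2d-2$ is accounted for by the separated branch points, forcing $\deg\varphi_{R_\infty}=d$'' --- is a non-sequitur in both halves. First, $\Gamma^{En}$ is determined combinatorially from the quasi-fixed trees \emph{before} any limit is taken, so a geometric collision of branch points in $\widehat\C$ cannot ``yield a topologically non-trivial bigon in $\Gamma^{En}$''; admissibility is a hypothesis you must \emph{use} via the limiting dynamics, not a conclusion you can read off from a collision. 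Second, knowing where the critical points of $R_n$ accumulate does not by itself control the degree of the algebraic limit: the degree drops exactly when the limit acquires a hole, and a hole could a priori sit at the limit of a branch point (absorbing exactly the critical points you have ``accounted for''). The paper's actual argument is: a hole $a$ must be a fixed point of $\varphi_{R_\infty}$ lying on the boundary of a Carath\'eodory limit of Fatou components, the small circle $C_\epsilon$ around it is essential and hence cuts at least $3$ edges of $\mathscr{T}^{En}_n$ by admissibility (Lemma~\ref{lem:3e}), so at least $3$ quasi-invariant edges land at $a$; but an anti-holomorphic germ at a non-critical fixed point is reflection-like and admits at most $2$ invariant directions, while at a critical fixed point on the boundary the hyperbolic displacement $\int_{t^k}^t dy/y$ blows up. This local-dynamics dichotomy at the putative hole is the missing idea, and without it neither your forward nor your contrapositive direction closes.

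A secondary gap is the ``moreover'' part. You defer to ``a parabolic analogue of \cite[Theorem~1.1]{LLM20}'' and ``direct inspection,'' but identifying the limit as a root of $\mathcal{H}_{\Gamma^{En}}$ requires (a) proving that the Hausdorff limit of the trees is a plane graph isomorphic to $\mathscr{T}^{En}$ (the paper's Lemma~\ref{lem:centt}, which needs the fact that at most two invariant accesses land at a non-critical fixed point), (b) showing that distinct vertices of $\mathscr{T}^{En}$ give \emph{distinct} invariant Fatou components of the limit (a genuine issue for parabolic basins, resolved by counting invariant accesses at the parabolic point), and (c) the Cui--Tan hyperbolic--parabolic perturbation to pass from the parabolic limit to a nearby hyperbolic map and thence to its critically fixed center, which reduces everything to the already-proved non-parabolic classification rather than requiring a new parabolic Tischler-graph theory. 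The converse direction then falls out of (a)--(c) applied to a hypothetical limit, which is cleaner than your case split.
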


By Lemma \ref{lem:detdt}, the dual graph $\Gamma^{En}$ of an enrichment $\mathscr{T}^{En}$ of $\mathscr{T}=\Gamma^{\vee}$ dominates $\Gamma$. Thus we immediately have the following corollary.
\begin{cor}\label{cor:br}
Any map $[R]$ on the boundary $\partial \mathcal{H}_\Gamma(K)$ is a root of $\mathcal{H}_{\Gamma'}$ for some $\Gamma' \geq \Gamma$.
\end{cor}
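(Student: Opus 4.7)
The plan is to pass to the marked hyperbolic component $\widetilde{\mathcal{H}}_\Gamma(K)$, writing $[R_n]$ as a tuple $(f_{1,n},\ldots,f_{m,n})\in \prod_{i=1}^m \mathring{\BP}^-_{d_i}(K)$. After subsequence extraction, Theorem~\ref{thm:qit} furnishes each $f_{i,n}$ with a quasi-fixed tree $\mathcal{T}_i$ and embeddings $\phi_{i,n}$, which transport via the Riemann maps of the critical Fatou components to an embedded enriched Tischler graph $\mathscr{T}^{En}_n\subset\widehat{\C}$. The question of convergence of $[R_n]$ in $\mathcal{M}_d^-$ thereby reduces to whether the vertex configuration $\mathcal{V}(\mathscr{T}^{En}_n)\subset\widehat{\C}$ can be kept in a compact subset of the configuration space of distinct points in $\widehat{\C}$ by some M\"obius renormalization.

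For the necessity direction, assume that along a subsequence $[R_n]\to[R_\infty]$ and pick representatives $R_n\to R_\infty\in \Rat_d^-(\C)$. Since critical points and repelling fixed points depend analytically on the map, the vertex configuration of $\mathscr{T}^{En}_n$ converges to a set of pairwise distinct points in $\widehat{\C}$. If $\mathscr{T}^{En}$ were not admissible, Lemma~\ref{lem:3e} would provide an essential simple closed curve $\gamma\subset\widehat{\C}\setminus\mathcal{V}(\mathscr{T}^{En}_n)$ cutting at most two edges. By the geodesic-edges and degenerating-vertices clauses of Theorem~\ref{thm:qit}, the hyperbolic lengths of the edges cut by $\gamma$ (in the uniformizations of the relevant Fatou components) tend to infinity, forcing the two sides of $\gamma$ to pinch together in the sphere --- so two distinct vertices of $\mathscr{T}^{En}_n$ collide in $\widehat{\C}$, contradicting the previous sentence.

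For the sufficiency direction, M\"obius-normalize so that three chosen vertices of $\mathscr{T}^{En}_n$ are pinned to $\{0,1,\infty\}$. The key claim is that, with this normalization, the remaining vertices stay in a compact subset of $\widehat{\C}$ with pairwise separations bounded below: otherwise one extracts a subsequence and an essential curve in $\widehat{\C}\setminus\mathcal{V}(\mathscr{T}^{En}_n)$ encircling an asymptotically colliding sub-cluster while cutting at most two edges, contradicting Lemma~\ref{lem:3e}. Hence the normalized $R_n$ lie in a compact subset of $\Rat_d^-(\C)$ and, up to a further subsequence, converge algebraically to some $R_\infty\in\overline{\Rat_d^-}$. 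The critical-count identity $\sum_{v\in\mathcal{V}(\mathscr{T}^{En})}(\val(v)-2)=2d-2$ coming from Corollary~\ref{lem:vd} accounts for all critical points of $R_n$ as clusters around the vertices, so Lemma~\ref{lem:ch} forbids escape to a hole and $R_\infty\in\Rat_d^-(\C)$; Lemma~\ref{lem:ac} then upgrades algebraic convergence to compact convergence on $\widehat{\C}$. The edges of $\mathscr{T}^{En}_n$ converge to fixed arcs of $R_\infty$ in its Fatou set; endpoints of $\mathscr{T}^{En}$ belonging to distinct Fatou components of $R_n$ merge to parabolic fixed points of $R_\infty$; and the resulting invariant graph is the Tischler graph dual to $\Gamma^{En}$. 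Thus $R_\infty$ is a parabolic anti-rational map whose Julia dynamics is topologically conjugate to that of $\mathcal{R}_{\Gamma^{En}}$, i.e., a root of $\mathcal{H}_{\Gamma^{En}}$. I expect the hardest step to be this compactness claim: producing, from a putative vertex collision, an essential curve cutting at most two edges --- which I would handle by taking a small spherical neighborhood of the colliding sub-cluster and isotoping its boundary to minimize edge crossings, exploiting the geodesic-edges property of Theorem~\ref{thm:qit} to enumerate exactly which edges enter the cluster.
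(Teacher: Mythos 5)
Your proposal bypasses the route the paper takes --- Corollary~\ref{cor:br} is deduced in two lines from Theorem~\ref{thm:cparm} (stated immediately before it) together with Lemma~\ref{lem:detdt} --- and instead attempts to re-derive the content of Theorem~\ref{thm:cparm} from scratch. That would be fine if the re-derivation were sound, but the step you yourself flag as the hardest one does not work as proposed. Your compactness claim rests on the assertion that an asymptotically colliding sub-cluster of vertices of $\mathscr{T}^{En}_n$ can be encircled by an essential curve cutting at most two edges, contradicting Lemma~\ref{lem:3e}. There is no reason for this: a colliding cluster can be joined to the rest of the graph by arbitrarily many edges (already if all vertices but one were to collide, the separating curve would cut $\val(v)\geq 3$ edges at the surviving vertex $v$), so no contradiction with Lemma~\ref{lem:3e} arises from the combinatorics of collisions alone. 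The actual mechanism (Lemma~\ref{lem:degd}) is dynamical: one shows the algebraic limit has full degree by observing that a hole $a$ attracts critical points (Lemma~\ref{lem:ch}), so a small circle about $a$ is essential and cuts at least three edges; the quasi-fixed property then forces these edges to land at $a$ as invariant accesses of the limit map, and an anti-holomorphic map admits at most two invariant directions at a non-critical fixed point (and none of bounded displacement at a critical one). It is this ``at most $2$ versus at least $3$'' count --- not a count of edges leaving a colliding vertex cluster --- that uses admissibility.

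Your necessity direction has a parallel gap. The claim that ``critical points and repelling fixed points depend analytically on the map,'' hence that the vertex configuration converges to pairwise distinct points, is exactly what must be proved (it is Lemma~\ref{lem:centt}): critical points of $R_n$ can perfectly well collide in the limit, and repelling fixed points can merge into parabolic points. Likewise, ``hyperbolic length of an edge tends to infinity'' does not force spherical pinching --- every end of a quasi-fixed tree already has infinite hyperbolic length while keeping definite spherical size. Finally, even granting convergence, identifying the limit as a \emph{root} of $\mathcal{H}_{\Gamma^{En}}$ (topological conjugacy on Julia sets, not merely an invariant graph of the right plane isomorphism type) requires the perturbation argument via \cite{CT18} and the fact that distinct vertices yield distinct invariant Fatou components of the limit; your last sentences assert this rather than prove it. The quickest correct proof is simply to apply Theorem~\ref{thm:cparm} to a sequence $[R_n]\to[R]$ and Lemma~\ref{lem:detdt} to get $\Gamma^{En}\geq\Gamma$.
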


\subsection*{Admissible $\implies$ convergence}
We will first prove one direction of Theorem \ref{thm:cparm}.
Let $[R_n]$ be a degeneration with admissible enriched Tischler graph $\mathscr{T}^{En}$.
Let $v$ be a vertex in $\mathscr{T}^{En}$, and let $v_n$ be the corresponding vertex for $\mathscr{T}^{En}_n$.
Let $U_n$ be the Fatou component of $R_n$ that contains $v_n$.
We may choose a representative $R_n \in [R_n]$ so that 
\begin{itemize}
	\item $v_n = 0$,
	\item $\D \subseteq U_n \subseteq \C$, and
	\item $1 \notin U_n$, i.e., $1\in\partial U_n$.
\end{itemize}
Under this normalization, after passing to a subsequence, we may assume that $(U_n, v_n)$ converges in Carath\'eodory topology to a pointed disk $(U, 0)$ and $R_n$ converges to $R$ algebraically.
By passing to a subsequence, we also assume that $\mathscr{T}^{En}_n$ converges in Hausdorff topology to $\mathscr{T}^{En}_\infty$.
Note that for now $\mathscr{T}^{En}_\infty$ is only a compact set of $\hat\C$. 
We will show that it is in fact a graph in Lemma \ref{lem:centt}.

Since $v_n$ is $M$-quasi-fixed under $R_n$ in the hyperbolic metric of $U_n$, the restrictions $R_n\vert_{U_n}$ converge compactly to a non-constant map on $U$.
Therefore, the degree of $R$ is at least $1$. Recall that $d$ is the degree of each $R_n$.

\begin{lem}\label{lem:degd}
	 The limiting map $R$ has degree $d$.
\end{lem}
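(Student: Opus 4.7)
I would argue by contradiction: assume $\deg R = k < d$ and derive a violation of Lemma~\ref{lem:3e}. The hypothesis $k<d$ together with Lemma~\ref{lem:ac} and Lemma~\ref{lem:ch} produces a nonempty hole set $\mathcal{H}(R)$ and, for each $a\in\mathcal{H}(R)$, a sequence of critical points $c_n$ of $R_n$ with $c_n\to a$ in $\widehat{\C}$. Each $c_n$ lies in the critical cluster attached to a branch point $w_n$ of the quasi-fixed tree in its ambient Fatou component $U_{j,n}$; since clusters have bounded hyperbolic diameter in $U_{j,n}$, the branch point $w_n$ also converges to $a$ (hyperbolic balls of fixed radius in $U_{j,n}$ have Euclidean diameter tending to $0$ as their centers approach $\partial U_{j,n}$). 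The ultimate aim is to enclose the collapsed part of $\mathscr{T}^{En}_n$ in an essential simple closed curve cutting at most two edges of $\mathscr{T}^{En}_n$.

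Next I would propagate the collapse to identify the full collapsing subgraph. Every vertex of $\mathcal{T}_{j,n}$ lies in the hyperbolic convex hull of $C(R_n|_{U_{j,n}})\cup F(R_n|_{U_{j,n}})$, so Proposition~\ref{cor:ch} bounds its displacement and forces the portion of $\mathcal{T}_{j,n}$ near $w_n$ to collapse with $w_n$. Because adjacent Fatou components of $R_n$ share fixed internal rays landing pairwise at common repelling fixed points, once a repelling fixed point on $\partial U_{j,n}$ limits to $a$ the same collapse argument applies in the neighbouring component. Iterating produces a subgraph $\mathcal{S}\subset \mathscr{T}^{En}_n$ of all vertices and edges whose Euclidean limits lie at $a$; the normalization $v_n=0$, $\D\subset U_n$, $1\in\partial U_n$ keeps $v_n$ and the marked end at $1$ bounded away from $a$, so $\mathcal{S}$ is proper and the remaining vertices of $\mathscr{T}^{En}_n$ stay away from $a$.

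Finally I would surround $\mathcal{S}$ by a small Jordan curve $\gamma_n\subset\widehat{\C}\setminus\mathcal{V}(\mathscr{T}^{En}_n)$ around $a$. Then $\gamma_n$ is essential, and the edges of $\mathscr{T}^{En}_n$ it crosses are exactly the edges joining $\mathcal{S}$ to its complement. The main obstacle is the concluding combinatorial step: one must choose $\mathcal{S}$ (or a suitable sub-cluster) so that at most two edges escape, yielding the contradiction with Lemma~\ref{lem:3e}. A single shrinking Fatou component of degree $d_j\ge 2$ contributes $d_j+1\ge 3$ escaping edges, so the argument must genuinely exploit the joint collapse of several Fatou components and match the algebraic multiplicity of $a$ as a hole of $R$ with a Riemann--Hurwitz style count of outgoing edges from $\mathcal{S}$. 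The expected upshot is that any collapse with $\deg R<d$ forces a topologically non-trivial bigon in $\Gamma^{En}$, contradicting admissibility via Lemma~\ref{lem:3e}.
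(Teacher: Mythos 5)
Your setup is right---assume $\deg R<d$, pick a hole $a\in\mathcal{H}(R)$, use Lemma~\ref{lem:ch} to get critical points of $R_n$ accumulating at $a$, and surround the collapsing part of $\mathscr{T}^{En}_n$ by a small essential curve---but the concluding step you flag as ``the main obstacle'' is a genuine gap, and it cannot be closed by the combinatorial/Riemann--Hurwitz count you propose. There is no purely topological reason why the collapsing subgraph $\mathcal{S}$ should have at most two outgoing edges; as you yourself note, even a single collapsing Fatou component of degree $d_j$ already sends out $d_j+1\geq 3$ edges, and nothing in the combinatorics of the enrichment forces these to pair up or cancel. The bound on the number of escaping edges is dynamical, not combinatorial, and that is the ingredient missing from your argument.

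The paper runs the contradiction in the opposite direction. It takes $C_\epsilon=\partial B(a,\epsilon)$ with no holes or fixed points of $\varphi_R$ in $\overline{B(a,\epsilon)}\setminus\{a\}$, observes that $C_\epsilon$ is essential, and invokes Lemma~\ref{lem:3e} to conclude it cuts \emph{at least} three edges. The quasi-fixed property (via Lemmas~\ref{lem:ac} and~\ref{lem:hme}) then forces each cut edge to live in a Fatou component $V_n$ converging in Carath\'eodory topology, and forces every accumulation point of these edges on $\partial V$ to be fixed by $\varphi_R$; since the punctured ball contains no fixed points, all three edges must land at $a$, so $\varphi_R(a)=a$. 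The case where $a$ is a critical point of $\varphi_R$ is excluded by the estimate $\int_{t^k}^{t}\frac{dy}{y}\to\infty$, which shows points of the tree near a critical boundary fixed point would be displaced unboundedly in the hyperbolic metric of $V$. Finally, since $\varphi_R$ acts as an anti-holomorphic reflection near the non-critical fixed point $a$, it has at most two invariant directions there, so one of the three edges is mapped off itself by a definite amount, contradicting the $M$-quasi-fixed bound. To repair your write-up you would need to replace the ``at most two escaping edges'' claim with this local-dynamics argument (at most two invariant accesses at a non-critical anti-holomorphic fixed point), and add the separate exclusion of the case where the hole is a critical point of $\varphi_R$.
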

\begin{proof}
	Suppose that this is not true.
	Let $a \in \mathcal{H}(R)$ be a hole.
	Let $B(a, \epsilon)$ be the $\epsilon$ neighborhood of $a$, and $C_\epsilon := \partial B(a, \epsilon)$.
	By shrinking $\epsilon$, we may assume that $\overline{B(a, \epsilon)}\setminus\{a\}$ contains neither holes nor fixed points of $\varphi_R$.
	Since $a$ is a hole and $\varphi_R$ has degree at least $1$, by Lemma \ref{lem:ch}, there is a sequence of critical points of $R_n$ converging to $a$. 
	Hence, the circle $C_\epsilon$ is an essential closed curve for $\mathscr{T}^{En}_n$ for large $n$.
	Thus, by Lemma \ref{lem:3e}, $C_\epsilon$ cuts at least $3$ edges of $\mathscr{T}^{En}_n$.

	Let $x_n\in C_\epsilon \cap \mathscr{T}^{En}_n$ be a point of intersection of the curve $C_\epsilon$ with a cutting edge.
	After passing to a subsequence, we may assume that $x_n \to x$ and $x_n \in V_{n}$ for some Fatou component $V_n$ of $R_n$.
	By Lemma \ref{lem:ac}, we have that $R_n(x_n)$ converges to $\varphi_R(x)$.
	Since $\overline{B(a, \epsilon)}\setminus\{a\}$ contains no fixed point of $\varphi_R$, we conclude that the spherical distances $d_{\mathbb{S}^2}(x_n,R_n(x_n))$ stay uniformly bounded away from $0$.
	On the other hand, we know that $d_{V_{n}}(x_n, R_n(x_n)) \leq M$ by Theorem \ref{thm:qit}.
	By Lemma \ref{lem:hme}, we conclude that the spherical distance $d_{\mathbb{S}^2}(x_n, \partial V_{n})$ is uniformly bounded away from $0$.
	Thus, after possibly passing to a subsequence, $(V_{n}, x_n)$ converges in Carath\'eodory topology to some pointed disk $(V, x)$.

	Let $\mathcal{T}_V: = \mathscr{T}^{En}_\infty \cap V$.
	Then if $y\in \mathcal{T}_V$, there exists a sequence $y_n \to y$ with $y_n \in \mathscr{T}^{En}_n$.
	Since $d_{V_n}(y_n, R_n(y_n)) \leq M$, we conclude that $d_V(y, \varphi_R(y)) \leq M$.
	This means that any limit point $y$ of $\mathcal{T}_V$ on $\partial V$ is fixed by $\varphi_R$.
	Since $C_\epsilon$ cuts the edge containing $x_n$, and there are no fixed points of $\varphi_R$ in $\overline{B(a, \epsilon)}\setminus \{a\}$, we conclude that $a$ is a limit point of $\mathcal{T}_V$ on $\partial V$, and $\varphi_R(a) = a$.

	If $a$ is a critical point of $\varphi_R$, then near $a$, the map $\varphi_R$ behaves like $\overline{z}^k$ for some $k\geq 2$.
	Since $a\in \partial V$, using Lemma \ref{lem:hme} and the fact
	$$
		\int_{t^k}^t \frac{1}{y} dy = \log \frac{1}{t^{k-1}} \to\infty
	$$
	as $t \to 0$,
	we conclude that there exists $x\in \mathcal{T}_V$ with $d_V(x, \varphi_R(x)) > M$, which is a contradiction.

	Thus, $a$ is not a critical point of $\varphi_R$.
	Since $C_\epsilon$ cuts at least $3$ edges of $\mathscr{T}^{En}_n$, we have three distinct edges $e_1, e_2,$ and $e_3$ of $\mathcal{T}_{V^1}$, $\mathcal{T}_{V^2}$, and $\mathcal{T}_{V^3}$ with $a \in e_1 \cap e_2 \cap e_3$.
	Note that the union of any two edges divides a neighborhood $N_a$ of $a$ into two components.
	Since $\varphi_R$ behaves like a reflection near $a$, there can be at most two invariant directions to $a$, and hence, there exists an edge, say $e_1$, which is mapped to the component of $N_a\setminus \overline{e_2 \cup e_3}$ not containing $e_1$.
	This means that there exists $x\in e_1$ with $d_{V_1}(x, \varphi_R(x))>M$, which is a contradiction.
\end{proof}

\subsection*{Convergence of Enriched Tischler graph}
The following lemma is used to understand the dynamics of the limiting map $[R]$. 
It states roughly that the plane isomorphism class of the sequence of plane graphs $\mathscr{T}^{En}_n$ is preserved under Hausdorff convergence.
\begin{lem}\label{lem:centt}
Let $[R_n]\in \mathcal{H}_\Gamma(K)$ be a degeneration converging to $[R]$.
Let $\mathscr{T}^{En}_\infty$ be an accumulation point of $\mathscr{T}^{En}_n$ in the Hausdorff topology.
	Then $\mathscr{T}^{En}_\infty$ is a graph, and it is isomorphic as a plane graph to the enriched Tischler graph $\mathscr{T}^{En}$ (of $\mathscr{T}=\Gamma^{\vee}$) corresponding to the sequence $[R_n]$.
\end{lem}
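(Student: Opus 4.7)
The plan is to decompose each $\mathscr{T}^{En}_n$ as a union of its constituent quasi-fixed trees $\mathcal{T}_{i,n}$ sitting inside the critical Fatou components $U_{i,n}$ of $R_n$, analyze each piece using the Carath\'eodory limit of the Fatou components together with the quasi-fixed tree machinery of Theorem \ref{thm:qit}, and finally check that the pieces re-glue in the limit according to the combinatorics of $\mathscr{T}^{En}$.

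First, I would lift $[R_n]$ to a sequence $R_n\in\Rat_d^-(\C)$ converging algebraically to $R$, which has degree $d$ by Lemma \ref{lem:degd}; in particular $R$ has no holes, so along a subsequence each critical Fatou component $U_{i,n}$ Carath\'eodory-converges to a critical Fatou component $U_i$ of $R$. Take Riemann maps $\psi_{i,n}:(\D,0)\to(U_{i,n},c_{i,n})$ sending $0$ to the fixed critical point; the conjugates $f_{i,n}=\psi_{i,n}^{-1}\circ R_n\circ\psi_{i,n}$ lie in $\mathring{\BP}^-_{d_i}(K)$, and by construction $\mathcal{T}_{i,n}=\psi_{i,n}(\phi_{i,n}(\mathcal{T}_i))$ where $(\mathcal{T}_i,\phi_{i,n})$ is the quasi-fixed tree supplied by Theorem \ref{thm:qit}. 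Carath\'eodory convergence yields $\psi_{i,n}\to\psi_i$ compactly on $\D$.

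Next I would track vertices and edges separately. For a branch point $v\in\beta(\mathcal{T}_i)$, Corollary \ref{lem:vd} and the proof of Lemma \ref{lem:rdt} produce a rescaling limit of $f_{i,n}$ centered at $\phi_{i,n}(v)$ with exactly $\val(v)-2$ critical points and $\val(v)$ fixed points on $\mathbb{S}^1$; pushing this through $\psi_{i,n}$ and using compact convergence of the Riemann maps, the local Hausdorff limit of $\mathcal{T}_{i,n}$ near $\psi_{i,n}(\phi_{i,n}(v))$ is a geodesic star of valence $\val(v)$ whose cyclic ordering agrees with the ribbon structure at $v$. For an endpoint $x\in\epsilon(\mathcal{T}_i)$, the image $\psi_{i,n}(\phi_{i,n}(x))\in\partial U_{i,n}$ is a repelling fixed point of $R_n$ whose multiplier is bounded by the pared condition, hence subconverges to a repelling or parabolic fixed point of $R$ on $\partial U_i$. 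Each edge of $\mathcal{T}_{i,n}$ is a hyperbolic geodesic in $U_{i,n}$, and Carath\'eodory convergence of $U_{i,n}$ straightens such geodesics, so the edges Hausdorff-converge to hyperbolic geodesics of $U_i$ joining the limiting vertices. Combining these points, $\mathscr{T}^{En}_\infty$ is a finite graph.

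The plane-graph isomorphism with $\mathscr{T}^{En}$ is then checked at the level of the gluing across components: two endpoints from $\mathcal{T}_{i,n}$ and $\mathcal{T}_{j,n}$ are identified in $\mathscr{T}^{En}_n$ precisely when the associated internal rays of $U_{i,n}$ and $U_{j,n}$ land at a common repelling fixed point of $R_n$, and this combinatorial identification is stable under the algebraic limit since $R$ still has full degree $d$. The main obstacle is ruling out spurious identifications in the Hausdorff limit: two distinct branch points $v,w\in\beta(\mathcal{T}_i)$ limiting to the same interior point of $U_i$, or vertices from different components limiting to a common point without a corresponding gluing in $\mathscr{T}^{En}$. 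The first possibility is excluded by $d_{\D}(\phi_{i,n}(v),\phi_{i,n}(w))\to\infty$ from Theorem \ref{thm:qit}, which transports via $\psi_{i,n}\to\psi_i$ to divergence of hyperbolic distance in $U_i$ on compact subsets. The second requires the more delicate observation that any limit point shared by the closures of two distinct Fatou components must be a fixed point of $R$ whose combinatorial type — including which boundary rays and which adjacent Fatou components it is shared with — is already forced by the pared multiplier bound together with the combinatorics recorded in $\mathscr{T}^{En}_n$ for large $n$.
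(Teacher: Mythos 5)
Your overall shape (decompose $\mathscr{T}^{En}_n$ into the trees $\mathcal{T}_{i,n}$, pass to Carath\'eodory limits, then check the gluing) matches the paper's, but there is a genuine gap in how you renormalize. You take one Riemann map $\psi_{i,n}:(\D,0)\to(U_{i,n},c_{i,n})$ per critical Fatou component, based at the critical point, and then claim that ``pushing the rescaling limit at $\phi_{i,n}(v)$ through $\psi_{i,n}$, using compact convergence of the Riemann maps'' identifies the local Hausdorff limit near $\psi_{i,n}(\phi_{i,n}(v))$. This fails for every branch point $v$ with $d_{\D}(0,\phi_{i,n}(v))\to\infty$: the rescaling limit at $v$ lives in coordinates $M_{v,n}$ with $M_{v,n}(0)=\phi_{i,n}(v)$, so transferring it to the sphere requires controlling $\psi_{i,n}\circ M_{v,n}$, and compact convergence of $\psi_{i,n}$ on $\D$ says nothing about $\psi_{i,n}$ near points escaping to $\mathbb{S}^1$. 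The missing step --- and the first step of the paper's proof --- is to show that the spherical distance from $v_n=\psi_{i,n}(\phi_{i,n}(v))$ to $\partial U_{i,n}$ is bounded below (otherwise the quasi-fixed property and the nearby critical points would force $v_n$ to converge to a fixed \emph{critical} point of $R$ on its Julia set, impossible since a fixed critical point is superattracting), and then to take a separate pointed Carath\'eodory limit $(U_{i,n},v_n)\to(U_\infty^v,v_\infty)$ for \emph{each} vertex $v$ of $\mathscr{T}^{En}$, not one per critical Fatou component of $R_n$. Relatedly, your picture of the limit is off: an edge of $\mathcal{T}_{i,n}$ joining two branch points does not converge to a geodesic of a single Fatou component $U_i$ of $R$; its limit crosses $\mathcal{J}(R)$ at a fixed point, because a single critical Fatou component of $R_n$ splits into several disjoint invariant disks $U_\infty^v$ (contained in distinct Fatou components of $R$) in the limit.

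The final gluing step is also only gestured at. Saying that the identification of endpoints ``is stable under the algebraic limit since $R$ still has full degree $d$,'' and that spurious identifications are ``forced by the pared multiplier bound together with the combinatorics,'' does not rule out distinct fixed points of $R_n$ colliding or extra accumulation on $\mathcal{J}(R)$. The paper's actual argument is: every limit point of each local tree $\mathcal{T}_v$ on $\partial U_\infty^v$ is a fixed point of $R$ (by the quasi-fixed property plus blow-up of the hyperbolic metric at the boundary), $R$ has only finitely many fixed points, and at most two invariant accesses can land at a non-critical fixed point of an anti-rational map; together with Hausdorff convergence this pins down $\mathscr{T}^{En}_\infty=\bigcup_v\overline{\mathcal{T}_v}$ and the plane-graph isomorphism. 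You would need to supply these arguments to close the proof.
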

\begin{proof}
	Let $v$ be a vertex of $\mathscr{T}^{En}$, and $v_n$ be the corresponding vertex for $\mathscr{T}^{En}_n$.
	Let $U_n$ be the Fatou component of $R_n$ containing $v_n$.
	
	We claim that the spherical distance between $v_n$ and $\partial U_n$ is bounded below.
	Indeed, otherwise, since $v_n$ is $M$-quasi-fixed under $R_n$ in the hyperbolic metric of $U_n$, the sequence $v_n$ converges to a fixed point $v\in \mathcal{J}(R)$ of $R$.
	Since there are critical points of $R_n$ within uniformly bounded distance from $v_n$ in the hyperbolic metric of $U_n$, the fixed point $v$ must be a critical point of $R$ on its Julia set.
	This is a contradiction.
	
	Therefore, after passing to a subsequence, $(U_n, v_n)$ converges in Carath\'eodory topology to a pointed disk $(U_\infty^v, v_\infty)$, and $R$ maps $U_\infty^v$ to itself.
	Since the edges of $\mathscr{T}^{En}_n \cap U_n$ are hyperbolic geodesics in $U_n$, 
	$$
	\mathcal{T}_v :=\mathscr{T}^{En}_\infty\cap U_\infty^v
	$$ 
	is a graph. 
	Since the distance between two different vertices in $\mathscr{T}^{En}_n \cap U_n$ tends to infinity, $\mathcal{T}_v$ consists of a single branch point $v_\infty$ along with $\val_{\mathscr{T}^{En}}(v)$ half-open edges attached to it. It also follows that for $v\neq w$, we have $U_\infty^v\cap U_\infty^w=\emptyset.$
	
	Let $x$ be a limit point of $\mathcal{T}_v$ on $\partial U_\infty^v$.
	Then $x = \lim x_n$ with $x_n \in \mathscr{T}^{En}_n\cap U_n$.
	Since $d_{U_n}(x_n, R_n(x_n)) \leq M$, we conclude that $x$ is a fixed point of $R$.
Since $R$ has finitely many fixed points, $\overline{\mathcal{T}_v}$ is a graph. Moreover, for $w\neq v\in V(\mathscr{T}^{En})$, the closures $\overline{\mathcal{T}_{v}}$ and $\overline{\mathcal{T}_{w}}$ may intersect only at fixed points of $R$, proving that $\cup_{v\in V(\mathscr{T}^{En})} \overline{\mathcal{T}_v}$ is also a graph.

The quasi fixed property of $\mathscr{T}^{En}_n$ (in each invariant Fatou component) also implies that $\mathscr{T}^{En}_\infty$ intersects $\mathcal{J}(R)$ only at (non-critical) fixed points of $R$. The fact that at most two invariant accesses can land at each non-critical fixed point of an anti-rational map combined with Hausdorff convergence of $\mathscr{T}^{En}_n$ to $\mathscr{T}^{En}_\infty$ permits us to conclude that 
$$
\mathscr{T}^{En}_\infty = \bigcup_{v\in V(\mathscr{T}^{En})} \overline{\mathcal{T}_v}.
$$
It also follows that $\mathscr{T}^{En}_\infty$ has no vertex of valence $1$, and that the branch points of $\mathscr{T}^{En}_\infty$ correspond bijectively to the vertices of $\mathscr{T}^{En}$. Declaring the set of branch points of $\mathscr{T}^{En}_\infty$ to be its vertex set, one now easily sees that $\mathscr{T}^{En}_\infty$ is a graph isomorphic to $\mathscr{T}^{En}$ as plane graphs.
\end{proof}

\begin{proof}[Proof of Theorem \ref{thm:cparm}]
Suppose first that $\mathscr{T}^{En}$ is admissible. By Lemma \ref{lem:degd}, $[R_n] \to [R] \in \mathcal{M}_d^-$.

We will prove the moreover part that $[R]$ is a root associated to some enriched Tischler graph.
Since each critical point of $R$ is contained in $U_\infty^v$ for some $v\in \mathcal{V}(\mathscr{T}^{En})$, and each $U_\infty^v$ is contained in some invariant Fatou component $\Omega_v$ of $R$, we conclude that each critical point of $R$ lies in an invariant Fatou component.
	
	We claim that $\Omega_v \neq \Omega_w$ if $v\neq w$.
	This is clear if $\Omega_v$ is an attracting basin, as in this case $\Omega_v = U_\infty^v$.
	Now assume that $\Omega_v=\Omega_w$ is parabolic. We denote the parabolic fixed point by $a \in \partial \Omega_v$. Then $a\in \mathscr{T}^{En}_\infty$ lies in the interior of an edge with exactly one invariant access from $\Omega_v$ and one from outside. On the other hand, since $U_\infty^v\cap U_\infty^w=\emptyset$, there are edges in $\mathcal{T}_v$ and $\mathcal{T}_w$ landing at $a$ giving rise to two invariant accesses to $a$ from $\Omega_v=\Omega_w$. This contradiction proves the claim.
	
	A modification of \cite[Theorem 1.4]{CT18} in the anti-holomorphic setting provides us with a perturbation $[R']$ of $[R]$ so that $[R']$ is hyperbolic and the dynamics on the Julia sets are conjugate.

	Let $[\check{\mathcal{R}}]$ be the postcritically finite map in the hyperbolic component containing $[R']$. Since each critical point of $R$ lies in an invariant Fatou component, it follows that $[\check{\mathcal{R}}]$ is a critically fixed anti-rational map.

By Lemma \ref{lem:centt} and the fact that $\Omega_v \neq \Omega_w$ for $v\neq w$, we have that the Tischler graph of $[\check{\mathcal{R}}]$ is $\mathscr{T}^{En}$; i.e., $[\check{\mathcal{R}}]$ is the center of the hyperbolic component $\mathcal{H}_{\Gamma^{En}}$.
Thus $[R] \in \partial{\mathcal{H}_{\Gamma^{En}}}$, and the Julia dynamics of $[R]$ is conjugate to the Julia dynamics of $[\mathcal{R}_{\Gamma^{En}}]$, i.e., $[R]$ is a root of $\mathcal{H}_{\Gamma^{En}}$.

Conversely, suppose that $\mathscr{T}^{En}$ is not admissible, and suppose for contradiction that $[R_n]$ converges to $[R] \in \mathcal{M}_d^-$.
Then the conclusion of Lemma \ref{lem:centt} holds. 
The above argument then shows that $[R]$ is a root of a hyperbolic component $\mathcal{H}_{\Gamma^{En}}$ whose center is a critically fixed map with Tischler graph $\mathscr{T}^{En}$. But this would force the planar dual of $\mathscr{T}^{En}$ to be simple, which is a contradiction.
\end{proof}

\begin{rmk}
Although the dynamics of a root $[R]$ of $\mathcal{H}_{\Gamma^{En}}$ on its Julia set is topologically conjugate to that of $\mathcal{R}_{\Gamma^{En}}$, the dynamics of $[R]$ on its fixed Fatou components is not recorded by $\Gamma^{En}$. 
For example, the attracting basins of parabolic fixed points are not recorded.
This can be encoded by an additional \emph{arrow structure} on the graph $\Gamma^{En}$ as described in Appendix~\ref{sec:ve}.
\end{rmk}

\subsection{A characterization of boundedness of pared deformation spaces}
\begin{proof}[Proof of Theorem \ref{thm:ab}]
Let $\mathscr{T}$ be the dual of $\Gamma$.
Suppose first that $\Gamma$ is polyhedral.
By Lemma \ref{lem:pg}, any enrichment of $\mathscr{T}$ is admissible.
Thus, by Theorem \ref{thm:cparm}, $\mathcal{H}_\Gamma(K)$ is bounded for any $K$.

Now suppose that $\Gamma$ is not polyhedral.
By Lemma \ref{lem:pg}, there exists an non-admissible enrichment $\mathscr{T}^{En}$ of $\mathscr{T}$.
By Proposition~\ref{prop:enriched_tischler_realized}, we can construct a sequence in $\mathcal{H}_\Gamma(K)$ for all large $K$ with enriched Tischler graph $\mathscr{T}^{En}$.
Thus, by Theorem \ref{thm:cparm}, $\mathcal{H}_\Gamma(K)$ is unbounded for all large $K$.
\end{proof}

\subsection{Interactions between pared deformation spaces}
\begin{proof}[Proof of Theorem \ref{thm:main}]
Suppose $\Gamma' > \Gamma$. By Lemma \ref{lem:rdetg} and Proposition~\ref{prop:enriched_tischler_realized}, we can construct a sequence in $\mathcal{H}_\Gamma(K)$ whose enriched Tischler graph $\mathscr{T}^{En}$ is dual to $\Gamma'$.
Thus, by Theorem \ref{thm:cparm}, $\mathcal{H}_\Gamma(K)$ parabolic bifurcates to $\mathcal{H}_{\Gamma'}(K)$.

Conversely, suppose $\mathcal{H}_\Gamma(K)$ parabolic bifurcates to $\mathcal{H}_{\Gamma'}(K)$.
By Corollary \ref{cor:br}, any map on $\partial \mathcal{H}_\Gamma(K)$ is a root of $\mathcal{H}_{\Gamma''}$ for some $\Gamma'' \geq \Gamma$. Moreover, such a hyperbolic component is unique by Thurston rigidity.
Thus $\Gamma' > \Gamma$.
\end{proof}

\subsection{From pared deformation space to hyperbolic component}\label{subsec:dsthc}
Let us conclude this section with a brief discussion of analogues of the above results for the whole hyperbolic components of critically fixed anti-rational maps.
We first note that
\begin{prop}\label{prop:ub}
Let $\mathcal{H}_\Gamma$ be a hyperbolic component containing a critically fixed anti-rational map $\mathcal{R}_\Gamma$.
Then $\mathcal{H}_\Gamma$ is not bounded in $\mathcal{M}_d^-$.
\end{prop}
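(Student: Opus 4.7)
I will exhibit a sequence $[R_n] \in \mathcal{H}_\Gamma$ whose largest fixed-point multiplier modulus diverges. Since the fixed points of an anti-rational map vary continuously as roots of a polynomial whose coefficients depend continuously on its class in $\mathcal{M}_d^-$, the maximum modulus of fixed-point multipliers defines a continuous function $\mu : \mathcal{M}_d^- \to [0, \infty)$. Thus $\mu$ is bounded on every relatively compact subset, and any sequence with $\mu([R_n]) \to \infty$ cannot admit a convergent subsequence in $\mathcal{M}_d^-$.

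The sequence is produced via the product decomposition of the marked hyperbolic component from \S\ref{subsec:pds}, $\widetilde{\mathcal{H}}_\Gamma \cong \BP^-_{d_1} \times \cdots \times \BP^-_{d_m}$, where $d_i \geq 2$ is the local degree at the $i$-th critical point of $\mathcal{R}_\Gamma$. Choose
\[
f_n(z) = \bar z \cdot \frac{\bar z - a_n}{1 - \overline{a_n}\bar z} \cdot \prod_{j=2}^{d_1 - 1} \frac{\bar z - a_j}{1 - \overline{a_j}\bar z} \in \BP^-_{d_1}, \qquad |a_n| \to 1,
\]
with the remaining parameters $a_j \in \D$ held fixed, and set $f_{i,n} = \bar z^{d_i}$ for $i \geq 2$; let $R_n \in \mathcal{H}_{\Gamma, Rat}$ be a corresponding representative. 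By Lemma~\ref{lem:ac}, $f_n$ converges algebraically to a proper anti-holomorphic self-map $\varphi_f$ of $\D$ of degree $d_1 - 1$, with unique hole at the boundary point $z_\infty$ determined by $\lim a_n$, and the convergence is uniform on compact subsets of $\hat\C \setminus \{z_\infty\}$. The boundary dynamics of $f_n$ on $\partial \D$ is quasisymmetrically conjugate to $t \mapsto -d_1 t$ on $\R/\Z$ with $d_1 + 1$ fixed points, while that of $\varphi_f$ gives only $d_1$. Hence exactly one repelling fixed point $p_n \in \partial \D$ of $f_n$ must approach the hole $z_\infty$, and a rescaling argument of the type used in the proof of Proposition~\ref{prop:mc} forces $|f_n'(p_n)| \to \infty$. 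The uniformization $\phi_n : \D \to U_{1,n}$ of the first invariant critical Fatou component intertwines $f_n$ with $R_n|_{U_{1,n}}$ and extends conformally to a neighborhood of $p_n$ via the local linearization; thus $\phi_n(p_n)$ is a repelling fixed point of $R_n$ with $|R_n'(\phi_n(p_n))| = |f_n'(p_n)| \to \infty$. In particular $\mu([R_n]) \to \infty$, and the conclusion follows.

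The principal obstacle is the multiplier blow-up step: one must argue rigorously that $|f_n'(p_n)| \to \infty$, not merely that $p_n$ approaches the hole. This can be accomplished either by the boundary fixed-point count together with the rescaling analysis outlined above, or equivalently by applying Lemma~\ref{lem:ch} to produce a critical point of $f_n$ converging to the hole with unbounded hyperbolic displacement, and then invoking the contrapositive of Proposition~\ref{prop:mc} to conclude that some fixed-point multiplier is unbounded.
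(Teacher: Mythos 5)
Your overall strategy (degenerate one anti-Blaschke factor, blow up a repelling fixed-point multiplier, and note that multipliers are bounded on compact subsets of $\mathcal{M}_d^-$) is different from the paper's proof, which instead performs a pinching deformation along a simple loop in the Tischler graph, following Makienko and Tan. Your route could be made to work, but as written it has a genuine gap at exactly the step you flag: the claim that $|f_n'(p_n)|\to\infty$. Neither of your two proposed justifications closes it. Lemma~\ref{lem:ch} produces a critical point $c_n$ converging to the hole, but says nothing about the hyperbolic displacement $d_{\D}(c_n,f_n(c_n))$, and that displacement can remain bounded; likewise, the fixed-point count only shows that some repelling fixed point approaches the hole, not that its multiplier diverges. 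Indeed, the paper's own Theorem~\ref{thm:rdtr} (realization of quasi-fixed trees) exhibits sequences in $\BP^-_{d_1}(K)$ — i.e.\ with \emph{all} fixed-point multipliers uniformly bounded — in which a zero and a critical cluster escape to $\partial\D$. Whether your specific family is of this bounded type depends on data you leave unspecified: if $\zeta=\lim a_n/|a_n|$ happens to be a fixed point of the algebraic limit $\varphi_f(z)=-\overline{a_\infty}\,\bar z\prod_{j\ge 2}\frac{\bar z-a_j}{1-\overline{a_j}\bar z}$ on $\mathbb{S}^1$ (a closed condition on the fixed $a_j$ and on the escape direction, achievable once $d_1\ge 4$), an explicit computation shows the two fixed points of $f_n$ created near the hole sit at angular distance $\asymp\sqrt{1-|a_n|}$ from $a_n$ and have multipliers $1+\frac{1-|a_n|^2}{|p_n-a_n|^2}=O(1)$. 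In that case $f_n$ stays in $\mathring{\BP}^-_{d_1}(K)$, the sequence $[R_n]$ lies in the pared space $\mathcal{H}_\Gamma(K)$, and — when $\Gamma$ is polyhedral — Theorem~\ref{thm:cparm} forces $[R_n]$ to \emph{converge} in $\mathcal{M}_d^-$, so your sequence fails to witness unboundedness.

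The gap is repairable: taking all the remaining zeros at the origin, $f_n(z)=\bar z^{\,d_1-1}\frac{\bar z-a_n}{1-\overline{a_n}\bar z}$ with $a_n\in(0,1)$, $a_n\to1$, one checks directly that $z=1$ is a fixed point for every $n$ with multiplier $d_1-1+\frac{1+a_n}{1-a_n}\to\infty$; with that explicit anchor the rest of your argument (transfer of the multiplier through the uniformizing coordinate, and boundedness of fixed-point multipliers along any convergent sequence of representatives in $\Rat_d^-(\C)$) goes through. A secondary, fixable imprecision: fixed points of an anti-rational map are not roots of a polynomial in $z$ (the equation $\overline{r(z)}=z$ is only real-algebraic and the number of fixed points is not locally constant), so $\mu$ is not obviously a continuous function on $\mathcal{M}_d^-$; what you actually need — that multipliers of fixed points stay bounded along a sequence admitting convergent representatives in $\Rat_d^-(\C)$ — is true and follows from convergence of $R_n$ and of the fixed points after passing to a subsequence.
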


\begin{figure}[h!]
\begin{tikzpicture}[thick]

  \node at (4.5,4) [circle,fill=Black,inner sep=3pt] {};
  \node at (6.4,7) [circle,fill=Black,inner sep=3pt] {};
     \node at (6.4,1) [circle,fill=Black,inner sep=3pt] {};
  \node at (1,4) [circle,fill=black,inner sep=3pt] {};

  \node at (4,4.8) [circle,fill=red,inner sep=3pt] {};
  \node at (4,3) [circle,fill=red,inner sep=3pt] {};
    \node at (5.5,4) [circle,fill=red,inner sep=3pt] {};

   \draw[-,line width=1pt] (1,4)->(4.5,4);
   \draw[-,line width=1pt] (4.5,4)->(6.4,7);
   \draw[-,line width=1pt] (6.4,1)->(6.4,7);
   \draw[-,line width=1pt] (4.5,4)->(6.4,1);
   \draw[-,line width=1pt] (1,4)->(6.4,1);
   \draw[-,line width=1pt] (1,4)->(6.4,7);

   \draw[-,line width=3pt,blue] (4,4.653)->(4,3.148);
   \draw[-,line width=3pt,blue] (4.14,3.04)->(5.4,3.9);
   \draw[-,line width=3pt,blue] (5.4,4.1)->(4.12,4.72);
   
   \draw[-,line width=1pt,blue] (5.64,4)->(8,4);
  \draw[-,line width=1pt,blue] (3.92,2.88)->(2.5,0.6);
  \draw[-,line width=1pt,blue] (3.92,4.92)->(2.5,6.74);

   \end{tikzpicture}
 \caption{The contact graph $\Gamma$ is drawn in black. The Tischler graph contains a loop with bold blue edges. The restriction of this loop to each Fatou component is either empty or a union of two internal rays.}
 \label{tischler_loop_fig}
   \end{figure}
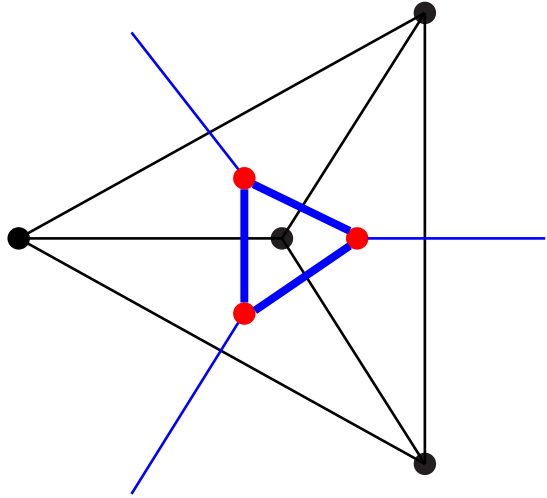
   
\begin{proof}
Let $\gamma$ be a simple loop in the Tischler graph.
If $\gamma$ intersects a Fatou component $U$, then $\gamma\cap U$ is the union of two internal rays connecting the super attracting fixed point $c \in U$ to two repelling fixed point $q_1, q_2 \in \partial U$ (see Figure~\ref{tischler_loop_fig}).
We can perturb the map $\mathcal{R}_\Gamma$ slightly so that all the superattracting fixed points $c$ become attracting, and we still have invariant rays connecting $c$ and $q_1, q_2$ (see \cite[Proposition~3.1]{Makienko00} for the holomorphic setting).
Then a standard pinching degeneration along the union of all these rays gives a diverging sequence in $\mathcal{H}_\Gamma$ (see \cite[Theorem~A(b)]{Tan02} for the holomorphic setting).
\end{proof}

This unboundedness result is one of the motivations to introduce the pared deformation space as the most naive translation of Thurston's compactness result is false: the hyperbolic component $\mathcal{H}_\Gamma$ of a critically fixed anti-rational map is never bounded.
This is not very surprising as the repelling fixed points correspond to cusps in the group setting, and we do not change the local geometry of the cusps when we deform the corresponding kissing reflection groups.

Since the pared deformation space $\mathcal{H}_\Gamma(K)$ is a subset of $\mathcal{H}_\Gamma$, Theorem \ref{thm:main} immediately implies that if $\Gamma' > \Gamma$, then $\mathcal{H}_\Gamma$ parabolic bifurcates to $\mathcal{H}_{\Gamma'}$. 
While the geometric control in $\mathcal{H}_\Gamma(K)$ allows us to show that the boundaries of pared deformation spaces are tame, the geometry of the boundary $\partial \mathcal{H}_\Gamma$ can be quite complicated in general.
We do not know if the closures of two hyperbolic components $\overline{\mathcal{H}_\Gamma}$ and $\overline{\mathcal{H}_{\Gamma'}}$ can intersect in exotic ways.

\section{Markov partitions and monodromy representations}\label{sec:mr}
In this section, we will construct a monodromy representation and prove Theorem \ref{thm:mono}.

Let $\mathcal{R}_\Gamma$ be a critically fixed anti-rational map with Tischler graph $\mathscr{T}$. Furthermore, let $F_1,\cdots, F_{d+1}$ be the faces of $\mathscr{T}$. Then by \cite[Corollary~4.7]{LLM22},  we have
$$
\mathcal{R}_\Gamma(\overline{F_i}) = \bigcup_{j\neq i} \overline{F_j}.
$$
Hence, one obtains a Markov partition for the action of $\mathcal{R}_\Gamma$ on $\widehat\C$, with transition matrix
$$
M = \begin{bmatrix}
    0 & 1 & 1 & \dots  & 1 \\
    1 & 0 & 1 & \dots  & 1 \\
    \vdots & \vdots & \vdots & \ddots & \vdots \\
    1 & 1 & 1 & \dots  & 0
\end{bmatrix}.
$$

Let $\mathcal{J}(\mathcal{R}_\Gamma)$ be the Julia set of $\mathcal{R}_\Gamma$.
Then the closure of the faces of $\mathscr{T}$ induce a Markov partition on 
$$
\mathcal{J}(\mathcal{R}_\Gamma) = \bigcup_{i=1}^{d+1} \mathcal{K}_i(\mathcal{R}_\Gamma)
$$ 
with the same transition matrix, where $\mathcal{K}_i(\mathcal{R}_\Gamma)= \mathcal{J}(\mathcal{R}_\Gamma)\cap\overline{F_i}$.

For $K >0$, let $\mathcal{H}_{\Gamma, Rat}(K) \subseteq \mathcal{H}_{\Gamma, Rat} \subset \Rat_d^-(\C)$ be the lifts of $\mathcal{H}_\Gamma(K) \subseteq \mathcal{H}_\Gamma \subset \mathcal{M}_d^-$. Define
$$
\mathcal{X}_d(K) := \displaystyle\bigcup_{\Gamma} \overline{\mathcal{H}_{\Gamma, Rat}(K)} \subset \Rat_d^-(\C).
$$

By Corollary \ref{cor:br}, the dynamics on the Julia set of $R \in \mathcal{X}_d(K)$ is topologically conjugate to a critically fixed anti-rational map $\mathcal{R}_\Gamma$. 
Thus, each map $R$ is equipped with a Markov partition 
	$$
	\mathcal{J}(R) = \bigcup_{i=1}^{d+1} \mathcal{K}_i(R)
	$$ 
on its Julia set.

Note that by Theorem \ref{thm:cparm} periodic points of period $\geq 3$ in $\mathcal{X}_d(K)$ never collide.
Thus they move continuously in $\mathcal{X}_d(K)$.
Let $\gamma$ be a path in $\mathcal{X}_d(K)$ and let $x$ be a periodic point of $\gamma(0)$ with period $\geq 3$.
We use $x(t) \in \widehat{\C}$ to denote the corresponding periodic point of $\gamma(t)$ by tracing along the path.

\subsection*{Monodromy representation of $\pi_1(\mathcal{X}_d(K))$}
In the following, we show that there exists an induced monodromy representation
$$ 
\rho: \pi_1(\mathcal{X}_d(K))\longrightarrow \MCG(S_{0, d+1}).
$$
We choose the base point as $\mathcal{R}_0(z) = \bar z^d \in \mathcal{X}_d(K)$.

Let ${\bf x} = (x_1, \cdots, x_{d+1})$ and ${\bf y} = (y_1,\cdots, y_{d+1})$ be two ordered subset of $\mathcal{J}(\mathcal{R}_0)=\mathbb{S}^1$ oriented counterclockwise.
Let
$$
\Psi_{{\bf x} \to {\bf y}}: (\widehat{\C}, x_1, \cdots, x_{d+1}) \longrightarrow (\widehat{\C}, y_1, \cdots, y_{d+1})
$$
be a homeomorphism with $\Psi_{{\bf x} \to {\bf y}}(\mathbb{S}^1) =\mathbb{S}^1$.
Note that any two such homeomorphisms are isotopic.
We denote this unique isotopy class by $[\Psi_{{\bf x} \to {\bf y}}]$.

Let $x_1(0), \cdots, x_{d+1}(0)$ be periodic points of period $\geq 3$ of $\mathcal{R}_0$ such that $x_{i}(0) \in \mathcal{K}_i(0):=\mathcal{K}_i(\mathcal{R}_0)$.
Note that there are many choices here, but the monodromy representation will not depend on such choices.

Let $\gamma \subset \mathcal{X}_d(K)$ be a closed curve based at $\mathcal{R}_0$. 
Then we can trace the continuous motions of $x_1(t), \cdots, x_{d+1}(t)$.
These continuations define an isotopy class of maps $[\widehat{\Psi}_{\gamma}]$ from $(\widehat{\C}, x_1(0), \cdots, x_{d+1}(0))$ to $(\widehat{\C}, x_1(1), \cdots, x_{d+1}(1))$.

We remark that the set of points $\{x_1(0), \cdots, x_{d+1}(0)\}$ may be different from the set $\{x_1(1), \cdots, x_{d+1}(1)\}$, and hence $[\widehat{\Psi}_{\gamma}]$ is in general not an element of the mapping class group of $S_{0,d+1}$.
To remedy this, we first note that $x_i(t) \neq x_j(t)$ belong to different Markov pieces of the Julia set of $\gamma(t)$ for all $t$.
We can thus continuously label the Markov pieces by $\mathcal{K}_i(t)$.

Let $s: \{1,\cdots, d+1\} \longrightarrow \{1,\cdots, d+1\}$ be the permutation so that $\mathcal{K}_{s(i)}(1) = \mathcal{K}_{i}(0)$.
Then $x_{s(i)}(1) \in \mathcal{K}_i(0)$.
Let ${\bf x(0)} = (x_1(0), \cdots, x_{d+1}(0))$ and ${\bf x(1)} = (x_{s(1)}(1), \cdots, x_{s(d+1)}(1))$.
We define 
$$
\rho([\gamma]) = [\Psi_{\gamma}] = [\Psi_{{\bf x(1)} \to {\bf x(0)}} \circ \widehat{\Psi}_{\gamma}] \in \MCG(\widehat{\C}, x_1(0), \cdots, x_{d+1}(0)).
$$
It is clear that the homotopy class $[\widehat{\Psi}_{\gamma}]$ does not depend on the choice of representative for $[\gamma]$, we thus have a well-defined monodromy map
$$
\rho: \pi_1(\mathcal{X}_d(K))\longrightarrow \MCG(\widehat{\C}, x_1(0), \cdots, x_{d+1}(0)) \cong \MCG(S_{0, d+1}).
$$
It is also easy to verify that this monodromy map gives a group homomorphism, which we call the {\em monodromy representation}.
We are now ready to prove Theorem \ref{thm:mono}:

\begin{proof}[Proof of Theorem \ref{thm:mono}]
Let $\mathcal{R}_0(z) = \bar{z}^d$ be the base point for $\mathcal{X}_d(K)$.
Let $\mathcal{J}(\mathcal{R}_0)=\mathbb{S}^1= \bigcup_{i=1}^{d+1} \mathcal{K}_i(0)$ be the Markov partition for $\mathcal{R}_0$, and $x_i \in \mathcal{K}_i(0)$ be a periodic point with period $\geq 3$.

\begin{figure}[ht]
  \centering
  \resizebox{0.8\linewidth}{!}{
    \def\svgwidth{\columnwidth}
    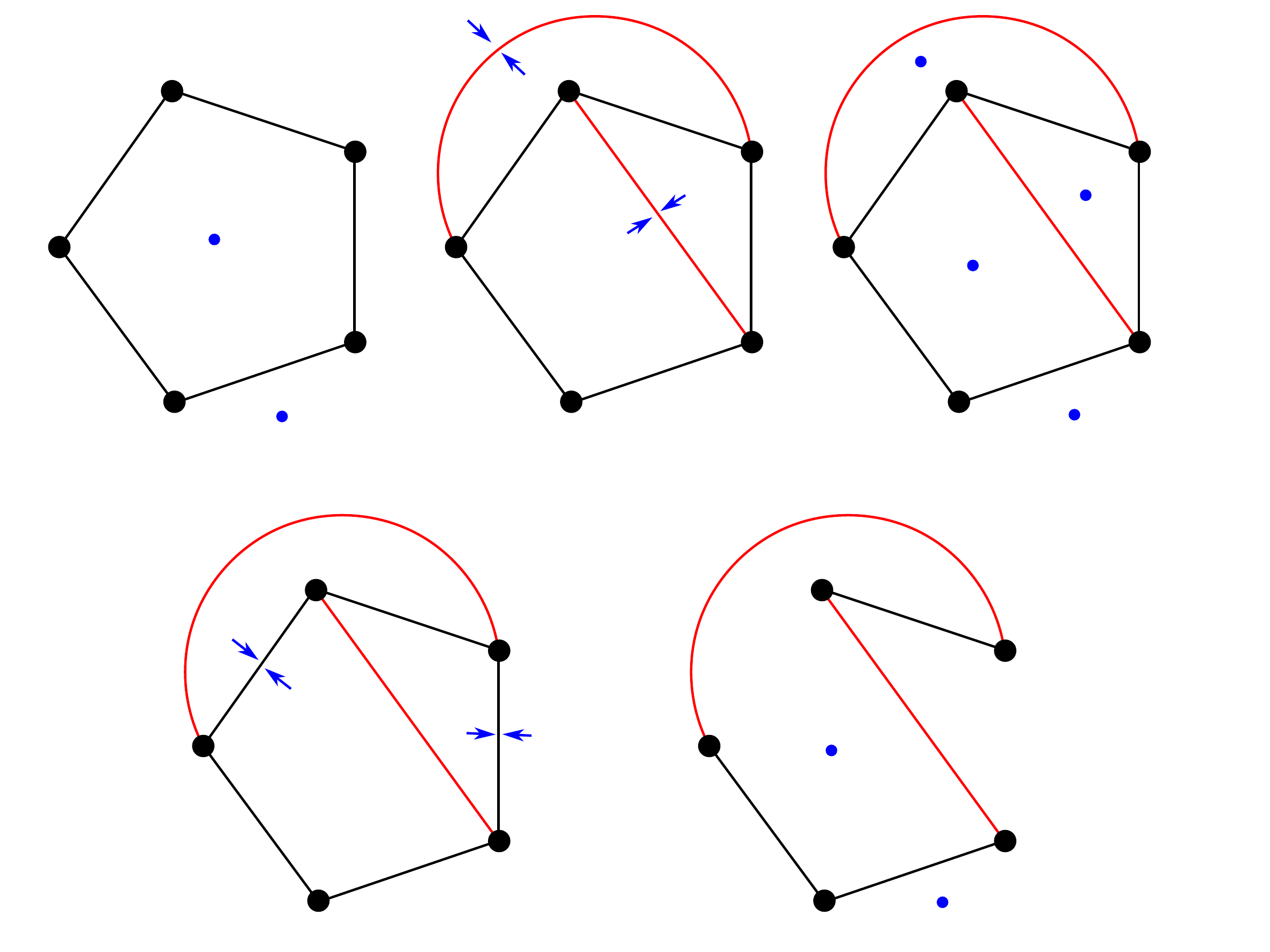

  }
  \caption{A generator of $\MCG(S_{0, d+1})$.}
  \label{fig:Gen}
\end{figure}

Let $\sigma_i \subset \widehat{\C}\setminus \{x_1,\cdots, x_{d+1}\}$ be a simple closed curve around $x_i, x_{i+1}$, and let $T_i$ be the half Dehn twist along $\sigma_i$.
It suffices to show the $T_i$ is in the image of the monodromy representation.

To construct the closed curve $\gamma_i \subset \mathcal{X}_d(K)$ giving $T_i$, we consider a path illustrated in Figure \ref{fig:Gen}.

The first graph represents the dual graph of the Tischler graph for $\gamma(0) = \mathcal{R}_0(z) = \bar z^d$.

The second graph represents a parabolic anti-rational map $\gamma(1/4) \in \mathcal{X}_d(K)$, with two parabolic fixed points with attracting direction illustrated by the arrows (see Appendix~\ref{sec:ve} for a detailed discussion of arrowed graphs).
We assume that the dynamics on the invariant Fatou components of $\gamma(1/4)$ are conjugate to parabolic unicritical real anti-Blaschke products.
Note that under such conditions, $\gamma(1/4)$ is rigid: it is unique up to conformal conjugacy.

The path $\gamma: [0,1/4) \longrightarrow \mathcal{H}_{0, Rat}(K)$, where $\mathcal{H}_{0, Rat}(K)$ is the component containing $\mathcal{R}_0$, is constructed by Proposition \ref{prop:rabl}, where the rescaling limits are parabolic unicritical real anti-Blaschke products.
The rigidity of $\gamma(1/4)$ allows us to extend $\gamma$ continuously to $1/4$.

The third graph represents a critically fixed anti-rational map $\gamma(1/2)$, and the fourth graph represents a parabolic anti-rational map $\gamma(3/4) \in \mathcal{X}_d(K)$.
The critically fixed anti-rational map for the last graph is $\gamma(1)(z) = \mathcal{R}_0(z) = \bar z^d$.
The path from $\gamma(1/4)$ to $\gamma(1/2)$ and the path from $\gamma(1/2)$ to $\gamma(3/4)$ that connect a root to a center of a hyperbolic component can be constructed by applying Theorem \ref{thm:cvre}, or by a standard {\em simple pinching} construction (cf. \cite{CT18}). 
The path from $\gamma(3/4)$ to $\gamma(1)$ is constructed similarly as the path from $\gamma(0)$ to $\gamma(1/4)$.
After performing a rotation if necessary, we might assume that Markov partition $\mathcal{K}_3(0) = \mathcal{K}_3(1)$.

From the diagram, it follows that $\rho([\gamma_1]) = T_1$ is the half Dehn twist around $x_1, x_2$.
A similar construction gives $\gamma_i$ with $\rho([\gamma_1])  = T_i$ for any $i$, so $\rho$ is surjective.
\end{proof}

\appendix

\section{Laminations, automorphisms and accesses}\label{sec:pb}
In this appendix, we show how the Julia sets of critically fixed anti-rational maps are related via laminations (Proposition \ref{lem:pbtc}).

Laminations appear naturally as dual objects of enrichments of Tischler graphs. We study the action of the automorphism groups of critically fixed anti-rational maps on the set of laminations.
This allows us to count the number of inequivalent semiconjugacies between Julia sets (Proposition \ref{prop:scn}), and the number of accesses from one pared deformation space to another (Proposition \ref{prop:an}).
 
We remark that degenerations of kissing reflection groups are achieved by pinching suitable multi-curves on the conformal boundaries.
By lifting, the multi-curves give a lamination on each component of the domain of discontinuity, and the limit set of the limiting kissing reflection group is obtained by simultaneously pinching these laminations on the domain of discontinuity.
We shall see that a similar phenomenon also holds in the anti-rational map setting.

\subsection{Lamination}
A  {\em lamination} $\mathcal{L}$ is a family of disjoint hyperbolic geodesics in $\D$ together with the two endpoints in $\mathbb{S}^1\cong \R/\Z$, whose union is closed.
The hyperbolic geodesics are called the {\em leaves} of the lamination.

A lamination $\mathcal{L}$ generates an equivalence relation $\sim_\mathcal{L}$ on $\mathbb{S}^1$ given by $a\sim_{\mathcal{L}} b$ if $a,b \in \mathbb{S}^1$ are connected by a finite chain of leaves in $\mathcal{L}$.

Let $d\geq 2$, and $m_{- d}: \mathbb{S}^1\longrightarrow \mathbb{S}^1$ be the map $m_{- d}(t) = - d\cdot t$.
A lamination $\mathcal{L}$ is said to be $m_{- d}-$invariant if it satisfies the following two conditions.
\begin{itemize}
\item If there is a leaf with endpoints $x$ and $y$, then either $m_{- d}(x) = m_{- d}(y)$ or there is a leaf with end-points $m_{- d}(x)$ and $m_{- d}(y)$;
\item If there is a leaf with endpoints $x$ and $y$, then there is a set of $d$ disjoint leaves with one endpoint in $m_{- d}^{-1}(x)$ and the other endpoint in $m_{- d}^{-1}(y)$.
\end{itemize}
Note that there are exactly $d+1$ fixed points $0, \frac{1}{d+1}, \cdots, \frac{d}{d+1} \in \mathbb{S}^1$ of $m_{-d}$.
These $d+1$ fixed points partition the circle into $d+1$ intervals, which form a Markov partition for $m_{-d}$ with associated transition matrix
$$
M = \begin{bmatrix}
    0 & 1 & 1 & \dots  & 1 \\
    1 & 0 & 1 & \dots  & 1 \\
    \vdots & \vdots & \vdots & \ddots & \vdots \\
    1 & 1 & 1 & \dots  & 0
\end{bmatrix}.
$$
Thus the $2$-cycles of $m_{-d}$ are in bijective correspondence with pairs of non-adjacent Markov pieces.

Given a $2$-cycle $\mathcal{C} = \{x,y\}$, we denote the closure (in $\D\cup\mathbb{S}^1$) of the hyperbolic geodesic of $\D$ connecting $x,y$ by  $l_{\mathcal{C}}$. We call a collection of $2$-cycles $\mathcal{C}_1,\cdots, \mathcal{C}_k$ {\em simple} if the corresponding (compactified) geodesics $l_{\mathcal{C}_1},\cdots, l_{\mathcal{C}_k}$ are pairwise disjoint.

The following easy lemma can be proved by induction on the backward orbits of the endpoints of the geodesics.
\begin{lem}\label{lem:g2c}
Let $\mathcal{C}_1,\cdots, \mathcal{C}_k$ be a simple collection of $2$-cycles of $m_{-d}$.
Then there exists a unique $m_{-d}-$invariant lamination $\mathcal{L}$ so that
\begin{itemize}
\item every leaf $l$ of $\mathcal{L}$ is eventually mapped to $l_{\mathcal{C}_i}$ for some $i$; and
\item no leaves other than $l_{\mathcal{C}_i}$ separate the fixed points of $m_{-d}$.
\end{itemize}
\end{lem}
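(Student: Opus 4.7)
The plan is to construct $\mathcal{L}$ by iterating the backward-image operation starting from $\mathcal{L}_0 := \{l_{\mathcal{C}_1}, \ldots, l_{\mathcal{C}_k}\}$, and to show that the two conditions uniquely determine the choice of preimage leaves at each stage. The phrasing of condition (2) implicitly requires each $l_{\mathcal{C}_i}$ to be a leaf of $\mathcal{L}$ (since it identifies them as the separating leaves), so I would include them in $\mathcal{L}_0$ from the outset; combined with $m_{-d}$-invariance and condition (1), every other leaf of $\mathcal{L}$ must be an iterated preimage of some $l_{\mathcal{C}_i}$, so the content of the lemma reduces to showing that these iterated preimages admit a unique coherent realization as a disjoint family of geodesics.

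The heart of the proof will be a single-step preimage claim. Given a leaf $\ell\in\mathcal{L}$ with endpoints $\{a, b\}$, the set $m_{-d}^{-1}(\{a, b\})$ consists of $2d$ points alternating around $\mathbb{S}^1$ between preimages of $a$ and preimages of $b$, cutting $\mathbb{S}^1$ into $2d$ short arcs. Exactly $d$ of these short arcs are $m_{-d}$-preimages of the arc $A\subset\mathbb{S}^1\setminus\{a, b\}$, and the other $d$ are preimages of the complementary arc $B$. The only non-crossing matchings of the $2d$ points in which every preimage leaf's short arc is a single one of these arcs are the two ``consecutive'' families spanned by the $A$-preimages or the $B$-preimages, and I would argue that condition (2) selects the unique one whose short arcs are all disjoint from the fixed points of $m_{-d}$. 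When $\ell = l_{\mathcal{C}_i}$ is itself a $2$-cycle leaf, the pair $\{a, b\}$ already lies in $m_{-d}^{-1}(\{a, b\})$, and the valid family becomes $\ell$ itself (which is one of its own $d$ preimages) together with the consecutive pairing of the remaining $2d-2$ preimage points inside the two half-disks cut out by $\ell$.

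I would then iterate this construction to produce an increasing sequence $\mathcal{L}_0\subseteq\mathcal{L}_1\subseteq\cdots$; pairwise disjointness is preserved at each stage because preimages of disjoint boundary arcs on $\mathbb{S}^1$ remain disjoint under the covering $m_{-d}$. The desired lamination will be $\mathcal{L}:=\overline{\bigcup_n\mathcal{L}_n}$, with $m_{-d}$-invariance and both conditions immediate from the construction. For uniqueness, any competing lamination $\mathcal{L}'$ must by the forced single-step choice contain every $\mathcal{L}_n$ and hence $\mathcal{L}$; conversely, any additional leaf of $\mathcal{L}'$ would either violate condition (1) or inject a new separating leaf against condition (2). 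The main obstacle will be the single-step uniqueness claim, especially for $2$-cycle leaves: the naive geometric preimage $m_{-d}^{-1}(\ell)\subset\D$ may consist of $d$ geodesics all passing through the critical point $0$ of $m_{-d}$ when $\ell$ is antipodal, a configuration incompatible with disjointness that must be refined by the combinatorial choice; establishing that the selection always works reduces to a length comparison between preimage arcs of length $|A|/d$ or $|B|/d$ and the $1/(d+1)$-spacing of the fixed points of $m_{-d}$.
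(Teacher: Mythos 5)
Your plan is the proof the paper has in mind: the paper offers no argument beyond the remark that the lemma ``can be proved by induction on the backward orbits of the endpoints of the geodesics,'' and your pullback construction with a forced single-step choice is exactly that induction. The combinatorics you describe (the $2d$ alternating preimage points, the two consecutive matchings, the modified family for a $2$-cycle leaf) are all correct.

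Two places where your stated justification would not carry the weight, though neither is fatal to the approach. First, the ``length comparison between preimage arcs of length $|A|/d$ or $|B|/d$ and the $1/(d+1)$-spacing of the fixed points'' cannot establish that the selected family works: an arc shorter than $1/(d+1)$ can still contain a fixed point, so length alone decides nothing. The correct mechanism is equivariance (or, equivalently, the Markov partition by the $d+1$ fixed points): if a short arc $S$ maps homeomorphically onto $A$ and $\zeta\in S$ is fixed, then $\zeta=m_{-d}(\zeta)\in A$; hence when $A$ is the fixed-point-free side of $\ell$ (which holds inductively for every leaf other than the $l_{\mathcal{C}_i}$), all $d$ arcs of the $A$-family avoid the fixed points, while every fixed point lies in $m_{-d}^{-1}(B)$, so some arc of the $B$-family contains one. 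This also disposes of your worry about ``$d$ geodesics through the critical point $0$'': preimage leaves are defined combinatorially as chords joining preimages of the endpoints, not as geometric preimages of the geodesic under a disk map, so that configuration never arises. Second, ``preimages of disjoint arcs remain disjoint'' does not by itself give unlinkedness of new leaves with the old ones across levels; the clean statement is that every non-$l_{\mathcal{C}_i}$ leaf has both endpoints in a single Markov interval $I_j$, that $m_{-d}|_{I_j}$ is a homeomorphism onto $\mathbb{S}^1\setminus\overline{I_j}$ and so pulls back unlinked configurations to unlinked ones inside $I_j$, and that a new leaf in $I_p$ could link $l_{\mathcal{C}_i}=[x,y]$ with $x\in I_p$ only if its image linked $l_{\mathcal{C}_i}$ or had $y$ as an endpoint, which the induction excludes. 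With these substitutions your argument, including the uniqueness step, goes through.
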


We call the lamination $\mathcal{L}$ in Lemma \ref{lem:g2c} the lamination generated by the $2$-cycles $\mathcal{C}_1,\cdots, \mathcal{C}_k$.

\subsection*{Lamination as dual of $d+1-$ended ribbon trees}
Let $\mathcal{L}$ be a lamination generated by the $2$-cycles $\mathcal{C}_1,\cdots, \mathcal{C}_k$.
The leaves $l_{\mathcal{C}_1}, \cdots, l_{\mathcal{C}_k}$ cut the closed disk $\overline{\D}$ into finitely many regions.
We can construct the dual tree $\mathcal{T}$ by introducing a vertex for each complementary component of 
$$
\overline{\D} \setminus \bigcup_{i=1}^k l_{\mathcal{C}_i},
$$
connecting two vertices if the corresponding regions share a common leaf as boundary, and attaching a ray to a vertex for each fixed point of $m_{-d}$ on the boundary of the corresponding region (see Figure~\ref{dual_tree_fig}).

\begin{figure}[ht]
 \begin{tikzpicture}[thick]
\node[anchor=south west,inner sep=0] at (0,0) {\includegraphics[width=.46\linewidth]{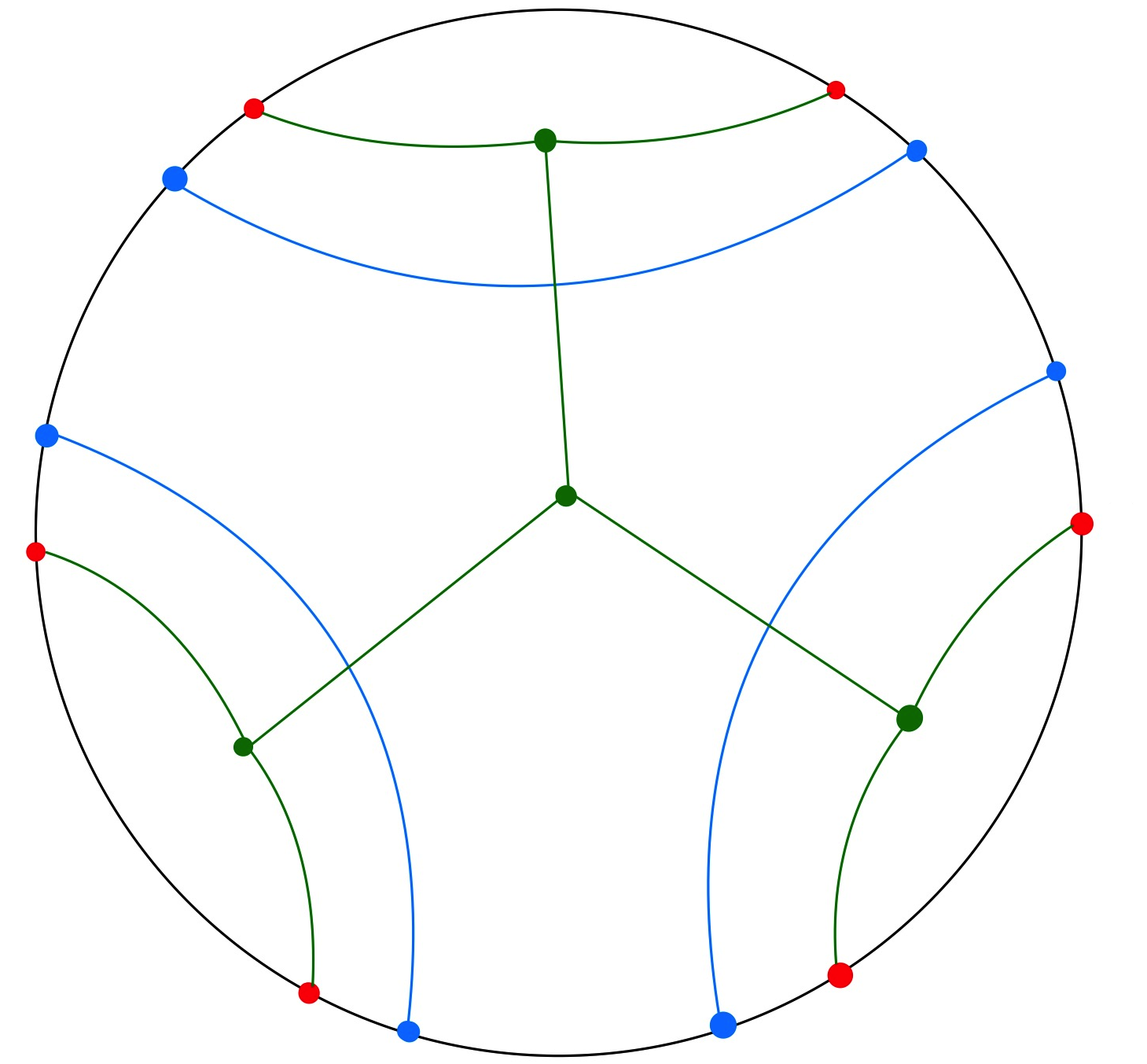}};
\node[anchor=south west,inner sep=0] at (6.9,0) {\includegraphics[width=.46\linewidth]{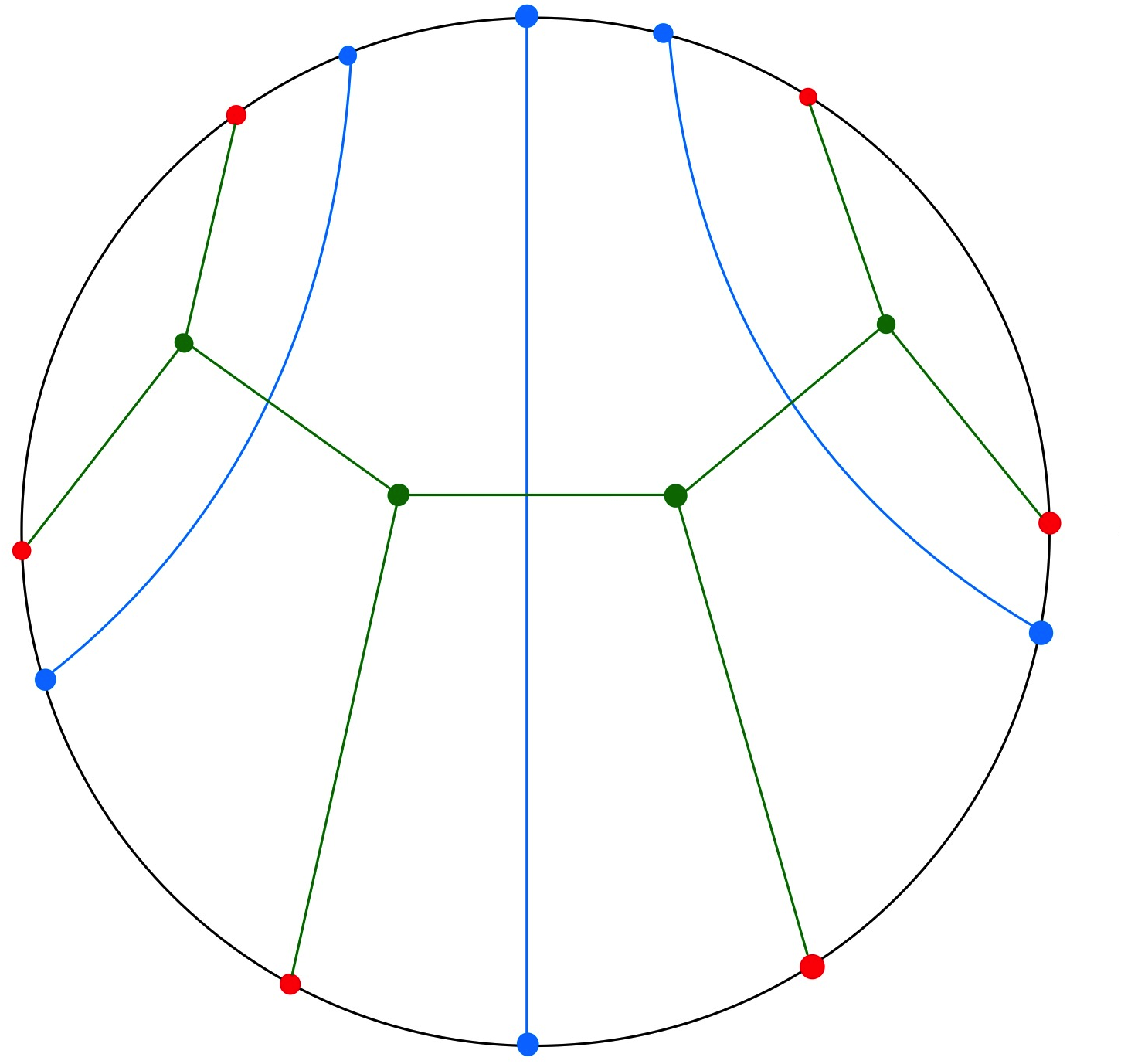}};
\node at (5.8,2.8) {$0$};
\node at (12.7,2.8) {$0$};
\end{tikzpicture}
\caption{The leaves of the laminations (generated by $2$-cycles of $m_{-5}$) are drawn in blue, the fixed points (of $m_{-5}$) are marked in red, and the corresponding dual trees are drawn in green.}
\label{dual_tree_fig}
\end{figure}

The following lemma can be verified directly from the definitions.
\begin{lem}\label{lem:ert}
Let $\mathcal{L}$ be a lamination generated by $2$-cycles.
The dual tree $\mathcal{T}$ of $\mathcal{L}$ is a $d+1-$ended ribbon tree.

Conversely, every $d+1-$ended ribbon tree arises as the dual tree for some lamination generated by $2$-cycles.
\end{lem}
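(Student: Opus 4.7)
My plan is to prove both directions by exploiting the bijection between $2$-cycles of $m_{-d}$ and pairs of non-adjacent Markov pieces stated in the paragraph preceding the lemma.

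For the forward direction, let $\mathcal{L}$ be generated by a simple collection $\mathcal{C}_1, \ldots, \mathcal{C}_k$ of $2$-cycles. Since the closed geodesics $l_{\mathcal{C}_1}, \ldots, l_{\mathcal{C}_k}$ are pairwise disjoint chords of $\overline{\D}$, they decompose $\overline{\D}$ into $k+1$ simply connected regions. These give $k+1$ region vertices of $\mathcal{T}$; adjacencies via shared leaves together with the $d+1$ pendant rays (one per fixed point of $m_{-d}$) make $\mathcal{T}$ into a tree with $d+1$ ends. The planar embedding in $\overline{\D}$ supplies a ribbon structure, and the marked end is fixed by declaring $0$ to be the distinguished fixed point. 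The substantive task is to verify that every region vertex has valence at least three. I would case on the number $j$ of boundary leaves of a region $R$. The case $j \geq 3$ is automatic. If $j = 1$, the endpoints of the unique boundary leaf lie in non-adjacent Markov pieces, so each arc of $\mathbb{S}^1$ bounded by these endpoints contains at least two fixed points, giving valence $\geq 1 + 2 = 3$. If $j = 2$, I would argue by contradiction: if neither of the two boundary arcs of $R$ contains a fixed point, then both boundary leaves have endpoints in the same unordered pair of Markov pieces; uniqueness of the $2$-cycle per non-adjacent pair (by the bijection) forces the two leaves to coincide, contradicting $j = 2$. Hence valence is at least $2 + 1 = 3$, and no valence-$2$ vertices arise, so $\mathcal{T}$ is a genuine $(d+1)$-ended ribbon tree.

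For the converse, start with a $(d+1)$-ended ribbon tree $\mathcal{T}$. Place the $d+1$ fixed points of $m_{-d}$ on $\mathbb{S}^1$ in bijection with $\epsilon(\mathcal{T})$, respecting cyclic order (from the ribbon structure) and sending the marked end to $0$. For each internal edge $E$ of $\mathcal{T}$ (i.e., an edge joining two branch points), removing $E$ partitions the ends into two cyclically consecutive subsets $S_1, S_2$ of fixed points. The valence-at-least-three condition at both endpoints of $E$ forces $|S_1|, |S_2| \geq 2$, so the two Markov pieces containing the proposed chord endpoints are non-adjacent. Assign $E$ the unique $2$-cycle $\mathcal{C}_E$ with one point in each of these Markov pieces, and let $\mathcal{L}$ be the lamination generated by $\{\mathcal{C}_E\}$. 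The chords are pairwise disjoint because distinct edges of a tree induce laminar (non-interleaving) partitions of the ends, which translate into non-crossing chords in $\overline{\D}$. Finally, a direct verification that regions correspond to branch points and rays correspond to ends identifies the dual tree of $\mathcal{L}$ with $\mathcal{T}$ as marked ribbon trees.

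The main obstacle is the $j = 2$ case in the forward valence argument, which crucially uses uniqueness of $2$-cycles per non-adjacent pair of Markov pieces; the rest is standard chord-diagram/tree duality.
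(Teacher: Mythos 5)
Your proof is correct, and since the paper dismisses this lemma with ``can be verified directly from the definitions,'' your argument is precisely the intended direct verification: the two substantive points --- that non-adjacency of the Markov pieces containing the endpoints of each $l_{\mathcal{C}_i}$, together with uniqueness of the $2$-cycle for each non-adjacent pair, forces every region vertex to have valence at least $3$; and conversely that valence at least $3$ at both endpoints of an internal edge forces $|S_1|,|S_2|\geq 2$ and hence non-adjacency of the corresponding Markov pieces --- are exactly right, and the nestedness of edge-induced end-partitions does give pairwise disjoint chords (distinct edges yield distinct partitions because a valence-$\geq 3$ tree has no two edges with the same leaf-partition, and distinct $2$-cycles are disjoint as subsets of $\mathbb{S}^1$). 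The only omission is the trivial case of a region with no boundary leaves (the star tree, valence $d+1\geq 3$), which does not affect the argument.
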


\begin{rmk}
We regard the dual tree $\mathcal{T}$ of $\mathcal{L}$ as a marked $d+1-$ended ribbon tree by declaring the fixed point $0\in\mathbb{S}^1$ of $m_{-d}$ as the marked endpoint of $\mathcal{T}$.
\end{rmk}

\subsection*{Quotient of Julia set by lamination}
Invariant laminations naturally give quotient maps of Julia sets.
Such a construction works for general (anti-)rational maps.
For simplicity, we will restrict ourselves to the current setting.

Let $\mathcal{R}_\Gamma$ be a critically fixed anti-rational map with a boundary marking $q$ (see Subsection~\ref{subsec:pds}).
Let $U_1,\cdots, U_m$ be an enumeration of the critical Fatou components.
Then the induced dynamics on the ideal boundary $\mathbb{S}^1 = I(U_i)$ is conjugate to $m_{-d_i}$, where $d_i$ is the degree of the map $f: U_i \longrightarrow U_i$.

Let $\mathcal{L}_i$ be an $m_{-d_i}-$invariant lamination generated by a simple collection of $2$-cycles $\mathcal{C}_{i,1},\cdots, \mathcal{C}_{i, k_i}$ of $m_{-d_i}$.
Such a collection of laminations $\mathcal{L}_1,\cdots, \mathcal{L}_m$ defines an equivalence relation $\sim$ on the Julia set $\mathcal{J}(\mathcal{R}_\Gamma)$ as follows.

Let $x, y\in \partial U_i$.
We define $x\sim y$ if there exists $\tilde x \in \phi_i^{-1}(x)$ and $\tilde y \in \phi_i^{-1}(y)$ with $\tilde x \sim_{\mathcal{L}_i} \tilde y$, where $\phi_i: \mathbb{S}^1 \longrightarrow \partial U_i$ is the unique semiconjugacy compatible with the boundary marking $q$ (more precisely, $\phi_i$ maps $0\in\R/\Z$ to the marked fixed point on the ideal boundary of $U_i$).

Next, if $U$ is a strictly pre-fixed Fatou component with pre-period $l$, then $f^{\circ l}: \partial U \longrightarrow \partial U_i$ is a homeomorphism for some $i$.
We use this identification to pull back the equivalence relation to $\partial U$.

Since the equivalence relation induced by $\mathcal{L}_i$ in each Fatou component is closed, and the diameters of the Fatou components of $\mathcal{R}_\Gamma$ shrink to $0$ as the pre-period $l \to\infty$, it is easy to check that the equivalence relation $\sim$ is closed.
We say that such an equivalence relation $\sim$ is {\em generated by pinching $2$-cycles}.

Let $\Gamma' \geq \Gamma$.
Let $i: \Gamma \longrightarrow \Gamma'$ be an embedding.
By Lemma \ref{lem:rdetg}, the embedding $i$ makes the dual graph $\mathscr{T}'$ of $\Gamma'$ an enrichment of $\mathscr{T}$.
Thus, on each invariant Fatou component of $\mathcal{R}_\Gamma$, $\mathscr{T}'$ gives a lamination generated by $2$-cycles.
Let $\sim_{i}$ denote the equivalence relation generated by these laminations on invariant Fatou components of $\mathcal{R}_\Gamma$.

We remark that there may be many different embeddings $i: \Gamma \longrightarrow \Gamma'$, giving rise to different equivalence relations. 
This phenomenon is related to the automorphism group of $\Gamma$ and $\Gamma'$ and is discussed in Appendices \ref{sec:aa} and~\ref{app:eqm}.

The following proposition, which can be verified directly, relates the topology of the Julia sets.
\begin{prop}\label{lem:pbtc}
Let $\Gamma' \geq \Gamma$ and let $i: \Gamma \longrightarrow \Gamma'$ be an embedding. Then the Julia sets of the critically fixed anti-rational maps $\mathcal{R}_{\Gamma}$ and $\mathcal{R}_{\Gamma'}$ satisfy
$$
\mathcal{J}(\mathcal{R}_{\Gamma'}) = \mathcal{J}(\mathcal{R}_\Gamma)/\sim_{i}.
$$
Moreover, the quotient map $\Psi_i: \mathcal{J}(\mathcal{R}_{\Gamma'}) \longrightarrow  \mathcal{J}(\mathcal{R}_\Gamma)$ is a semi-conjugacy.
\end{prop}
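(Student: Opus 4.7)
The plan is to derive the quotient description from a controlled degeneration. Since $\Gamma'$ is simple and $2$-connected, by Lemma~\ref{lem:rdetg} the embedding $i: \Gamma \hookrightarrow \Gamma'$ realizes $\mathscr{T}' := (\Gamma')^{\vee}$ as an (admissible) enrichment of $\mathscr{T} := \Gamma^{\vee}$. By Proposition~\ref{prop:enriched_tischler_realized}, there is a family $R_t \in \mathcal{H}_{\Gamma, Rat}(K)$ (for sufficiently large $K$) whose enriched Tischler graph is $\mathscr{T}'$, and by Theorem~\ref{thm:cparm}, after passing to a subsequence, $R_t$ converges to some $R \in \partial \mathcal{H}_{\Gamma, Rat}(K)$ that is a root of $\mathcal{H}_{\Gamma'}$. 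In particular, $(\mathcal{J}(R), R)$ is topologically conjugate to $(\mathcal{J}(\mathcal{R}_{\Gamma'}), \mathcal{R}_{\Gamma'})$.

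To construct the semi-conjugacy, I would use that $R_t$ stays in the single hyperbolic component $\mathcal{H}_{\Gamma, Rat}$ throughout the degeneration, so there is a quasiconformal motion $h_t: \mathcal{J}(\mathcal{R}_\Gamma) \to \mathcal{J}(R_t)$ continuous in $t$ and conjugating $\mathcal{R}_\Gamma$ to $R_t$. Taking the pointwise limit
\[
\pi(z) := \lim_t h_t(z)
\]
yields a continuous surjection $\pi: \mathcal{J}(\mathcal{R}_\Gamma) \to \mathcal{J}(R)$ satisfying $\pi \circ \mathcal{R}_\Gamma = R \circ \pi$, where surjectivity uses the Carath\'eodory convergence of the Fatou components from the proof of Lemma~\ref{lem:centt}. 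Composing $\pi$ with the topological conjugacy $\mathcal{J}(R) \cong \mathcal{J}(\mathcal{R}_{\Gamma'})$ gives the sought semi-conjugacy between the two Julia sets.

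It remains to identify the fiber relation of $\pi$ with $\sim_i$. By Lemma~\ref{lem:centt}, the Hausdorff limit of the enriched Tischler graphs $\mathscr{T}^{En}_t$ is a plane graph isomorphic to $\mathscr{T}'$. In each invariant Fatou component $U_i$ of $\mathcal{R}_\Gamma$, the non-crossing edges of $\mathscr{T}'$ that lie in the blown-up tree $\mathcal{T}_i$ collapse in the Hausdorff limit to pairs of fixed points of $R$ on $\partial U_i$, and $\pi$ identifies exactly these pairs. Pulling them back via the ideal boundary uniformization $\phi_i: \mathbb{S}^1 \to \partial U_i$, one recovers precisely the $2$-cycles of $m_{-d_i}$ dual to those edges (Lemma~\ref{lem:ert}), i.e., the generators of $\mathcal{L}_i$. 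Because the fiber relation of $\pi$ is closed and $\mathcal{R}_\Gamma$-invariant, pulling the identifications back under iterates of $\mathcal{R}_\Gamma$ recovers all of $\sim_i$ on pre-periodic Fatou components as well.

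The main obstacle is to show that the fiber relation is \emph{exactly} $\sim_i$, with no extra identifications. This amounts to the assertion, extracted from the proof of Theorem~\ref{thm:cparm}, that $\mathscr{T}^{En}_\infty$ is a genuine plane graph isomorphic to $\mathscr{T}'$, so that distinct branch points correspond to distinct Fatou components of $R$ and do not further collide. Combined with the fact that $\pi$ commutes with the dynamics and that $\sim_i$ is already closed, this should pin down the fiber relation uniquely and complete the identification $\mathcal{J}(\mathcal{R}_{\Gamma'}) = \mathcal{J}(\mathcal{R}_\Gamma)/\sim_i$.
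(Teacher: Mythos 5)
The paper offers no proof of this proposition (it is stated as something that ``can be verified directly''), so there is nothing to match your argument against line by line; the intended direct verification is combinatorial/topological, in the spirit of the pinching picture for kissing reflection groups described at the start of Appendix~\ref{sec:pb} (equivalently: check that $\sim_i$ is a closed, invariant equivalence relation whose quotient dynamical system has Tischler graph dual to $\Gamma'$, and invoke rigidity of critically fixed maps). Your route through parameter space --- realize $\mathscr{T}'$ as an admissible enrichment via Lemma~\ref{lem:rdetg}, produce a degenerating family by Proposition~\ref{prop:enriched_tischler_realized}, and land on a root of $\mathcal{H}_{\Gamma'}$ by Theorem~\ref{thm:cparm} --- is a legitimate alternative strategy, and steps 1--3 are correctly assembled from the paper's results.

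The genuine gap is the sentence ``Taking the pointwise limit $\pi(z):=\lim_t h_t(z)$ yields a continuous surjection \dots satisfying $\pi\circ\mathcal{R}_\Gamma = R\circ\pi$.'' This is precisely where all the analytic content of the proposition sits, and it is asserted rather than proved. The conjugacies $h_t$ are only canonical up to isotopy, so one must first fix a coherent choice (say, via the Markov coding by faces of the Tischler graph); one must then show that this family is equicontinuous and converges, which is delicate exactly at the points being pinched --- the paper itself flags this issue in Section~\ref{sec:mr}, where it only claims that the Markov pieces move continuously \emph{away from} the grand orbits of fixed points and $2$-cycles. Without a uniform shrinking estimate for the level-$n$ Markov pieces of $R_t$ (uniform in both $n$ and $t$, including near the parabolic points of the limit), neither the existence of the limit, nor its continuity, nor the claim that its fibers are \emph{exactly} the $\sim_i$-classes (rather than something larger) follows; Lemma~\ref{lem:centt} controls only the Hausdorff limit of the finite graph $\mathscr{T}^{En}_t$, not the fibers of a limiting map on the whole Julia set. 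This is the standard hard step in pinching-deformation arguments (cf.\ Cui--Tan), and it needs to be supplied or cited for the proposal to be complete.
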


\begin{rmk}
For the principal hyperbolic component of $\mathcal{M}_d^-$ (i.e., the one containing $[\overline{z}^d]$), degenerations in the corresponding pared deformation space are parametrized by a pair of $m_{-d}$-invariant laminations generated by $2$-cycles. The corresponding enriched Tischler graphs are admissible if and only if the associated pair of laminations are \emph{non-parallel}; i.e., they share no common leaf under the natural identification of the two copies of the circle. In this case, a simple extension of Proposition~\ref{lem:pbtc} shows that an accumulation point of such a degeneration is a geometric mating of a pair of parabolic anti-polynomials (cf. \cite[Theorem~1.3]{LLM22}). 
\end{rmk}

\subsection{Automorphisms}\label{sec:aa}
Let $f$ be an anti-rational map.
The automorphism group $\Aut(f)$ is the subgroup of $\PSL_2(\C)$ consisting of M{\"o}bius maps that commute with $f$.
Let $\QC(f)$ denote the group of all quasiconformal maps $\phi: \widehat{\C} \longrightarrow \widehat{\C}$ that commute with $f$. 
The {\em modular group} of $f$, denoted by $\Mod(f)$, is the group of isotopy classes of such $\phi$: in other words, it is the quotient of $\QC(f)$ by the path component of the identity.
Note that there is a natural inclusion $\Aut(f) \subseteq \Mod(f)$.

For an $f-$invariant closed subset $K_f\subset\widehat{\C}$, we define $\QC(K_f,f)$ as the group of all quasiconformal homeomorphisms $\phi: \widehat{\C} \to \widehat{\C}$ that commute with $f$ on $K_f$. The {\em modular group of $f$ on $K_f$}, denoted by $\Mod(K_f, f)$ is the group of isotopy classes of such $\phi$ (see \cite{McM88} for details).

Let $\Gamma$ be a plane graph. We define $\Aut(\Gamma)$ as the group of planar automorphisms of the graph $\Gamma$.
The first isomorphism in the following proposition is an anti-holomorphic counterpart of \cite[Proposition~6.10]{McM88}, while the second isomorphism is a consequence of Thurston rigidity of postcritically finite maps.
\begin{prop}\label{prop:maa}
Let $\mathcal{R}_{\Gamma}$ be a critical fixed anti-rational map. Then 
$$
\Mod(\mathcal{J}(\mathcal{R}_\Gamma), \mathcal{R}_\Gamma) \cong \Aut(\mathcal{R}_\Gamma) \cong \Aut(\Gamma).
$$
\end{prop}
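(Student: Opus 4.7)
My plan is to derive both isomorphisms from a commutative triangle of three maps: the tautological inclusion $\iota: \Aut(\mathcal{R}_\Gamma) \hookrightarrow \Mod(\mathcal{J}(\mathcal{R}_\Gamma),\mathcal{R}_\Gamma)$, a combinatorial map $\delta: \Aut(\mathcal{R}_\Gamma) \to \Aut(\Gamma)$, and a factorization $\overline{\delta}: \Mod(\mathcal{J}(\mathcal{R}_\Gamma),\mathcal{R}_\Gamma) \to \Aut(\Gamma)$ through $\iota$. The map $\delta$ is defined by observing that any Möbius $\sigma$ commuting with $\mathcal{R}_\Gamma$ permutes critical points, invariant Fatou components, and fixed internal rays, hence restricts to a plane graph automorphism of $\mathscr{T}$ and, by planar duality, yields $\delta(\sigma)\in\Aut(\Gamma)$; the map $\overline{\delta}$ is defined analogously for QC homeomorphisms commuting with $\mathcal{R}_\Gamma$ on $\mathcal{J}(\mathcal{R}_\Gamma)$ (which still permute Fatou components compatibly with the dynamics). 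Injectivity of $\delta$ is immediate: if $\sigma$ induces the identity on $\mathscr{T}$ as a plane graph, it fixes every critical point and preserves every fixed internal ray setwise, hence fixes the repelling landing points on $\mathcal{J}(\mathcal{R}_\Gamma)$; these together yield at least three distinct fixed points on $\widehat{\mathbb{C}}$, forcing $\sigma=\mathrm{id}$.

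For surjectivity of $\iota$, I would follow McMullen's argument \cite[Proposition~6.10]{McM88}: take a QC representative $\phi$ of a class in $\Mod(\mathcal{J}(\mathcal{R}_\Gamma),\mathcal{R}_\Gamma)$ and modify it by an isotopy supported in the Fatou set so that the new map $\tilde{\phi}$ commutes with $\mathcal{R}_\Gamma$ on all of $\widehat{\mathbb{C}}$. On each invariant critical Fatou component $U_i$ the Böttcher coordinate conjugates $\mathcal{R}_\Gamma|_{U_i}$ to $\overline{z}^{d_i}|_{\D}$, and the boundary map induced by $\phi: U_i \to U_{\pi(i)}$ commutes with $\overline{z}^{d_i}$ on $\mathbb{S}^1$, so it is a rotation by a $(d_i+1)$-st root of unity. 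I would homotope $\phi|_{U_i}$ rel.\ $\partial U_i$ to the Böttcher extension of this rotation and propagate equivariantly over all preimages of $U_i$ to obtain $\tilde{\phi}$. Thurston rigidity of the postcritically finite hyperbolic anti-rational map $\mathcal{R}_\Gamma$ (absence of nontrivial $\mathcal{R}_\Gamma$-invariant Beltrami coefficients) then forces $\tilde{\phi}$ to be Möbius, giving the desired element of $\Aut(\mathcal{R}_\Gamma)$.

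Surjectivity of $\delta$ is the main obstacle, and I would reduce it to the first isomorphism. Given $\alpha\in\Aut(\Gamma)$, the plan is to build a QC homeomorphism $h$ of $\widehat{\mathbb{C}}$ that commutes with $\mathcal{R}_\Gamma$ and satisfies $\overline{\delta}([h])=\alpha$. Lift $\alpha$ to a plane graph automorphism $\tilde{\alpha}$ of $\mathscr{T}$ by duality; $\tilde{\alpha}$ prescribes a permutation $\pi$ of the invariant critical Fatou components $\{U_i\}$ together with a rotation of each $\partial \D$ (via Böttcher coordinates) realizing the action on the corresponding fixed internal rays. Define $h$ first on each $U_i$ as the Böttcher-conjugated rotation composed with a conformal identification to $U_{\pi(i)}$, then extend to all Fatou components by imposing the equivariance $h\circ\mathcal{R}_\Gamma=\mathcal{R}_\Gamma\circ h$ on successive preimages, and finally to $\mathcal{J}(\mathcal{R}_\Gamma)$ by continuity using expansion of $\mathcal{R}_\Gamma$ on the Julia set. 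Then $[h]\in\Mod(\mathcal{J}(\mathcal{R}_\Gamma),\mathcal{R}_\Gamma)$ with $\overline{\delta}([h])=\alpha$, so the first isomorphism yields a Möbius $\sigma\in\Aut(\mathcal{R}_\Gamma)$ with $\delta(\sigma)=\alpha$. The most delicate point — and the step where hyperbolicity of $\mathcal{R}_\Gamma$ is essential — is checking that the Fatou-component pieces glue to a globally quasiconformal map, which follows from uniform bounds on the Böttcher interpolation combined with expansion on $\mathcal{J}(\mathcal{R}_\Gamma)$.
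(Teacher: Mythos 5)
Your proposal is correct in substance, but note that the paper itself gives essentially no argument: it dispatches the first isomorphism by citing the anti-holomorphic analogue of \cite[Proposition~6.10]{McM88} and the second by invoking Thurston rigidity. Your treatment of $\Mod(\mathcal{J}(\mathcal{R}_\Gamma),\mathcal{R}_\Gamma)\cong\Aut(\mathcal{R}_\Gamma)$ is exactly McMullen's cited argument (straighten a commuting quasiconformal class on the Fatou set using the B{\"o}ttcher rotations, then use the absence of invariant line fields for a postcritically finite map to force the straightened map to be M{\"o}bius), and your injectivity argument for $\delta$ is the standard one. Where you genuinely diverge is surjectivity of $\delta$: the paper's intended route is to realize $\alpha\in\Aut(\Gamma)$ by an orientation-preserving homeomorphism of $\widehat{\C}$ preserving $\mathscr{T}$ and then apply Thurston rigidity (the uniqueness half of the realization theorem of \cite{Geyer}, \cite{LLM20}) to upgrade this topological self-equivalence to a M{\"o}bius map commuting with $\mathcal{R}_\Gamma$; this stays at the level of combinatorial equivalence and never requires an exactly equivariant quasiconformal map. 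Your route instead manufactures such a map $h$ by hand and feeds it to the first isomorphism. That works, but it forces you through the one genuinely delicate step you flag at the end: a homeomorphism of $\widehat{\C}$ that is uniformly quasiconformal on the Fatou set need not be globally quasiconformal unless the Julia set is removable, so you must either justify removability (hyperbolicity gives porosity and measure zero, which suffices here) or replace ``extend by continuity'' with the standard dynamical compactness argument: define $h$ equivariantly near the invariant Fatou components, pull back by the dynamics (which does not increase dilatation), and pass to a locally uniform limit of $K$-quasiconformal maps. Either patch is routine, but the Thurston-rigidity route avoids the issue entirely, which is presumably why the paper takes it.
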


\subsection{Enumeration of the quotient maps}\label{app:eqm}
Let $\mathcal{H}_\Gamma$ be a hyperbolic component, and $c_i$ be the distinct critical points of the critically fixed map $\mathcal{R}_\Gamma\in\mathcal{H}_\Gamma$ with local degree $d_i$ at $c_i$, $i\in\{1,\cdots, m\}$.

Recall that 
$$
\widetilde{\mathcal{H}}_\Gamma \cong \BP^-_{d_1} \times \BP^-_{d_2} \times \cdots \times \BP^-_{d_m}
$$ 
is the lift of $\mathcal{H}_\Gamma$ consisting of elements of $\mathcal{H}_\Gamma$ equipped with boundary markings.
As an extension of terminology, we say that an $m$-tuple of laminations $\mathcal{L}:= (\mathcal{L}_1,\cdots, \mathcal{L}_m)$ is a {\em lamination} for $\widetilde{\mathcal{H}}_\Gamma$ if each $\mathcal{L}_i$ is an $m_{-d_i}$-invariant lamination generated by $2$-cycles.
By Lemma~\ref{lem:ert}, each lamination $\mathcal{L}_i$ is dual to some $(d_i+1)$-ended ribbon tree.

We call $\mathcal{L}$ an {\em admissible} lamination for $\widetilde{\mathcal{H}}_\Gamma$ if the corresponding enrichment of $\mathscr{T}$ (obtained by blowing up the vertex $c_i$ of the Tischler graph $\mathscr{T}$ of $\mathcal{R}_\Gamma$ to the marked $(d_i+1)$-ended ribbon tree respecting markings) is admissible.
We will denote the set of admissible laminations (for $\widetilde{\mathcal{H}}_\Gamma$) by
$\widetilde{\mathcal{AL}}(\Gamma)$.


The group $\Aut(\mathcal{R}_\Gamma)$ acts on $\widetilde{\mathcal{H}}_\Gamma$ in the following way. Fix a representative $(\mathcal{R}_\Gamma, (x_1^0,\cdots, x_m^0))$ of the `center' of $\widetilde{\mathcal{H}}_\Gamma$. Then, for a representative $(R, (x_1,\cdots, x_m))$ of any element in $\widetilde{\mathcal{H}}_\Gamma$, there exists a unique orientation preserving homeomorphism $\kappa$ of the sphere that conjugates $\mathcal{R}_\Gamma\vert_{\mathcal{J}(\mathcal{R}_\Gamma)}$ to $R\vert_{\mathcal{J}(R)}$ preserving the marking. For $M\in\Aut(\mathcal{R}_\Gamma)$, we define its action on $\widetilde{\mathcal{H}}_\Gamma$ as
$$
M\cdot [R, (x_1,\cdots, x_m)]= [R, (\kappa(\widetilde{M}(x_1^0)),\cdots, \kappa(\widetilde{M}(x_m^0)))],
$$
where $\widetilde{M}$ is the action on the ideal boundaries of the fixed Fatou components of $R$ induced by $M$.
(It is trivial to check that the action does not depend on the choice of the representative $(R, (x_1,\cdots, x_m))$.) We note that this action is not free as each element of $\Aut(\mathcal{R}_\Gamma)$ fixes the `center' of $\widetilde{\mathcal{H}}_\Gamma$. Identifying $\Aut(\Gamma)$ with $\Aut(\mathcal{R}_\Gamma)$ (using Proposition \ref{prop:maa}), we have that

\begin{lem}\label{lem:muh}
The hyperbolic component $\mathcal{H}_\Gamma = \widetilde{\mathcal{H}}_\Gamma / \Aut(\Gamma)$.
\end{lem}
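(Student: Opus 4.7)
The plan is to show that the forgetful projection $\pi: \widetilde{\mathcal{H}}_\Gamma \longrightarrow \mathcal{H}_\Gamma$, $[R,q] \mapsto [R]$, is surjective with fibers equal to the $\Aut(\Gamma)$-orbits. Surjectivity is immediate: for any $R \in \mathcal{H}_{\Gamma, Rat}$ each critical Fatou component $U_i$ has $d_i+1 \ge 3$ fixed points on its ideal boundary, so boundary markings always exist. That $\Aut(\Gamma)$ preserves fibers is also clear from the definition: for $M \in \Aut(\mathcal{R}_\Gamma)\cong\Aut(\Gamma)$, the element $M\cdot [R,q]$ has the same underlying rational map $R$ (hence the same class in $\mathcal{H}_\Gamma$), so the action descends through $\pi$.

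The substantive step is fiber-transitivity. Let $[R,q]$ and $[R',q']$ lie in the same fiber over $[R]\in \mathcal{H}_\Gamma$. After replacing the second representative by a M\"obius conjugate, we may arrange $R' = R$. Write $q_0 := (x_1^0, \ldots, x_m^0)$ for the canonical marking at the center $\mathcal{R}_\Gamma$. Choose orientation-preserving homeomorphisms $\kappa, \kappa'$ of $\widehat{\C}$ conjugating $\mathcal{R}_\Gamma\vert_{\mathcal{J}(\mathcal{R}_\Gamma)}$ to $R\vert_{\mathcal{J}(R)}$ and sending $q_0$ to $q$ and $q'$ respectively. The composite $\kappa^{-1}\circ\kappa'$ then commutes with $\mathcal{R}_\Gamma$ on its Julia set, hence represents a class in $\Mod(\mathcal{J}(\mathcal{R}_\Gamma), \mathcal{R}_\Gamma)$.

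Invoking Proposition \ref{prop:maa}, this class is represented by a unique $M \in \Aut(\mathcal{R}_\Gamma)\cong\Aut(\Gamma)$, and the isomorphism identifies $M$ with $\kappa^{-1}\circ\kappa'$ on the Julia set; in particular $M(q_0) = (\kappa^{-1}\circ\kappa')(q_0) = \kappa^{-1}(q')$, whence $\kappa(M(q_0)) = q'$. By the definition of the $\Aut(\Gamma)$-action this gives $M\cdot [R,q] = [R,q']$, proving transitivity and hence the identification $\mathcal{H}_\Gamma = \widetilde{\mathcal{H}}_\Gamma/\Aut(\Gamma)$. The genuinely non-trivial step is the appeal to Proposition \ref{prop:maa} (Thurston rigidity), which is what ensures the topological self-conjugacy $\kappa^{-1}\circ\kappa'$ can be straightened to a M\"obius automorphism acting identically on the finite point set $q_0\subset \mathcal{J}(\mathcal{R}_\Gamma)$; without this matching at the level of the marking itself, the combinatorial identification between fibers and $\Aut(\Gamma)$-orbits would not go through.
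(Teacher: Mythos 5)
Your argument is correct and is essentially the (unwritten) proof the paper intends: Lemma \ref{lem:muh} is presented as an immediate consequence of the definition of the $\Aut(\Gamma)$-action together with Proposition \ref{prop:maa}, and your fiber-transitivity step via the class of $\kappa^{-1}\circ\kappa'$ in $\Mod(\mathcal{J}(\mathcal{R}_\Gamma),\mathcal{R}_\Gamma)$ is exactly how that consequence is extracted. Two points are worth making explicit: surjectivity of $\widetilde{\mathcal{H}}_\Gamma\to\mathcal{H}_\Gamma$ should be deduced from the fact that this map is the restriction to a single component of the covering $\widetilde{\mathcal{S}}_\Gamma\to\mathcal{H}_\Gamma$ over a connected base (mere existence of boundary markings only shows that $\widetilde{\mathcal{S}}_\Gamma$ surjects), and the conjugacies $\kappa,\kappa'$ should be taken quasiconformal (maps in a common hyperbolic component are quasiconformally conjugate) so that $\kappa^{-1}\circ\kappa'$ genuinely represents an element of $\Mod(\mathcal{J}(\mathcal{R}_\Gamma),\mathcal{R}_\Gamma)$, whose pointwise action on the Julia set is an isotopy invariant by density of repelling periodic points.
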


The automorphism group $\Aut(\Gamma)$ also acts naturally on $\widetilde{\mathcal{AL}}(\Gamma)$. Indeed, given $\mathcal{L}= (\mathcal{L}_1,\cdots, \mathcal{L}_m)$, think of $\mathcal{L}_i$ as a lamination sitting inside the Fatou component $U_i\ni c_i$ of $\mathcal{R}_\Gamma$ such that the marked endpoint of the dual tree corresponds to the marked fixed point on the (ideal) boundary of $U_i$. Any $M\in\Aut(\mathcal{R}_\Gamma)$ permutes the invariant Fatou components of $\mathcal{R}_\Gamma$. Thus, if $c_s=M(c_r)$, the image of $\mathcal{L}_r$ under $M$ is a lamination in $U_s$, but the $M$-image of the marked endpoint of the corresponding dual tree is not necessarily the marked fixed point on the (ideal) boundary of $U_s$. Hence, the images of various $\mathcal{L}_i$ under $M$ yield a new $m$-tuple of laminations, denoted by $M\cdot\mathcal{L}$, whose $s-$th coordinate is a rotate of the lamination $\mathcal{L}_r$. It is easy to see that this $m$-tuple of laminations is an admissible lamination for $\widetilde{\mathcal{H}}_\Gamma$.
We denote the quotient by 
$$
\mathcal{AL}(\Gamma) := \widetilde{\mathcal{AL}}(\Gamma)/\Aut(\Gamma).
$$

The automorphism groups $\Aut(\Gamma)$ and $\Aut(\Gamma')$ act naturally on the set of embeddings $i: \Gamma \longrightarrow \Gamma'$ by pre-composition and post-composition.
Let 
$$
N(\Gamma \xhookrightarrow{} \Gamma') := |\Aut(\Gamma) \backslash (\Gamma \xhookrightarrow{} \Gamma') /\Aut(\Gamma')|
$$
be the number of embeddings of $\Gamma$ into $\Gamma'$ up to the action of $\Aut(\Gamma) \times \Aut(\Gamma')$. 
Since each embedding realizes the Julia set $\mathcal{J}(\mathcal{R}_{\Gamma'})$ as a quotient of $\mathcal{J}(\mathcal{R}_{\Gamma})$ (see Proposition \ref{lem:pbtc}), using Proposition \ref{prop:maa}, we have

\begin{prop}\label{prop:scn}
Let $\Gamma' \geq \Gamma$. Then there are $N(\Gamma \xhookrightarrow{} \Gamma')$ inequivalent semiconjugacies between $\mathcal{J}(\mathcal{R}_{\Gamma})$ and $\mathcal{J}(\mathcal{R}_{\Gamma'})$.
\end{prop}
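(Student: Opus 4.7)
The plan is to construct a bijection between equivalence classes of semiconjugacies $\Psi\colon \mathcal{J}(\mathcal{R}_{\Gamma'}) \to \mathcal{J}(\mathcal{R}_\Gamma)$ and the double coset set $\Aut(\Gamma)\backslash(\Gamma \xhookrightarrow{} \Gamma')/\Aut(\Gamma')$, where two semiconjugacies $\Psi, \Psi'$ are declared equivalent when $\Psi' = \alpha \circ \Psi \circ \beta$ for some $\alpha \in \Aut(\mathcal{R}_\Gamma)$, $\beta \in \Aut(\mathcal{R}_{\Gamma'})$. The forward map sends an embedding $i$ to the class of the semiconjugacy $\Psi_i$ built as follows: by Lemma \ref{lem:rdetg}, $i$ determines an enrichment $\mathscr{T}^{En}_i$ of the Tischler graph $\mathscr{T} = \Gamma^{\vee}$ whose dual is $\Gamma'$; restricted to each invariant Fatou component $U_j$ of $\mathcal{R}_\Gamma$, this enrichment produces an $m_{-d_j}$-invariant lamination generated by $2$-cycles (cf. Lemma \ref{lem:g2c}); and the resulting equivalence relation $\sim_i$ yields $\Psi_i$ through Proposition \ref{lem:pbtc}.

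Next I would verify that $i \mapsto [\Psi_i]$ descends to double cosets. Post-composition by $\alpha' \in \Aut(\Gamma')$ permutes invariant Fatou components of $\mathcal{R}_{\Gamma'}$, and by Proposition \ref{prop:maa} this permutation is realized by some $\beta \in \Aut(\mathcal{R}_{\Gamma'})$ with $\Psi_{\alpha' \circ i} = \Psi_i \circ \beta$. Analogously, pre-composition by $\alpha \in \Aut(\Gamma)$ permutes invariant Fatou components of $\mathcal{R}_\Gamma$ via some $M \in \Aut(\mathcal{R}_\Gamma)$ and relabels the tuple of laminations accordingly, giving $\Psi_{i \circ \alpha} = M \circ \Psi_i$. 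For injectivity, if $[\Psi_i] = [\Psi_{i'}]$, then after correcting by the appropriate automorphisms on both sides the fibers of the two semiconjugacies agree on $\mathcal{J}(\mathcal{R}_\Gamma)$; hence so do the induced equivalence relations, and the uniqueness clause of Lemma \ref{lem:rdetg} places $i$ and $i'$ in the same $\Aut(\Gamma) \times \Aut(\Gamma')$-orbit.

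The remaining step is surjectivity: every semiconjugacy $\Psi\colon \mathcal{J}(\mathcal{R}_{\Gamma'}) \to \mathcal{J}(\mathcal{R}_\Gamma)$ arises as some $\Psi_i$. The fibers of $\Psi$ give a closed $\mathcal{R}_\Gamma$-invariant equivalence relation which must respect the Markov partition coming from the faces of $\mathscr{T}$ (Section \ref{sec:mr}). Non-trivial identifications can only occur among boundary points of Fatou components, and after pulling back to $\mathbb{S}^1 = I(U_j)$ via the boundary uniformizations, they assemble into a tuple of $m_{-d_j}$-invariant laminations. Invariance under $\mathcal{R}_\Gamma$ together with the requirement that the quotient be the Julia set of a critically fixed anti-rational map (so the enriched graph is admissible, cf. Theorem \ref{thm:cparm}) force these laminations to be generated by $2$-cycles, which via Lemma \ref{lem:rdetg} produces the desired embedding $\Gamma \xhookrightarrow{} \Gamma'$.

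The main obstacle is surjectivity: ruling out exotic identifications that are not generated by pinching $2$-cycles of $m_{-d_j}$. Here one leans on Thurston rigidity of the postcritically finite map $\mathcal{R}_{\Gamma'}$, together with the combinatorial constraint that the image be a Julia set of the prescribed critically fixed form, to pin down the admissible fiber structure. Once this step is in place, the bijection above yields exactly $N(\Gamma \xhookrightarrow{} \Gamma')$ inequivalent semiconjugacies.
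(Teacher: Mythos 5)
Your construction of the map $i \mapsto [\Psi_i]$ via Lemma \ref{lem:rdetg} and Proposition \ref{lem:pbtc}, together with the automorphism bookkeeping through Proposition \ref{prop:maa}, is exactly the route the paper takes; the paper in fact offers only this citation-level justification and does not spell out injectivity or surjectivity. Your injectivity argument via the uniqueness clause of Lemma \ref{lem:rdetg} is sound, and your surjectivity discussion --- which you correctly flag as the main obstacle --- goes beyond what the paper provides but remains a sketch (one still has to rule out semiconjugacies whose fibers do not arise from laminations generated by $2$-cycles); since the paper asserts the exact count without addressing this either, your proposal matches and modestly extends the paper's argument.
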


Here two semiconjugacies $\Psi_1$ and $\Psi_2$ are said to be equivalent if they differ by pre and post-composing with automorphisms of $\mathcal{J}(\mathcal{R}_{\Gamma})$ and $\mathcal{J}(\mathcal{R}_{\Gamma'})$.

\subsection{Accesses to $\partial \mathcal{H}_\Gamma(K)$}
The cardinality $N(\Gamma \xhookrightarrow{} \Gamma')$ of the double quotient space is also related to the number of accesses from $\mathcal{H}_\Gamma(K)$ to $\mathcal{H}_{\Gamma'}(K)$.

Let $[R] \in \partial {\mathcal{H}}_\Gamma(K)$. Abusing notation, we choose a lift of $[R]$ in $\partial \widetilde{\mathcal{H}}_\Gamma(K)$ and also denote it by $[R] \in \partial \widetilde{\mathcal{H}}_\Gamma(K)$.
Let $[R_n] \in \widetilde{\mathcal{H}}_\Gamma(K)$ converge to $[R]$.
After passing to a subsequence, let $\mathscr{T}^{En}$ be the enriched Tischler graph for $[R_n]$.
By Theorem \ref{thm:cparm}, $\mathscr{T}^{En}$ is admissible.
Let $\mathcal{L} \in \widetilde{\mathcal{AL}}(\Gamma)$ be the corresponding admissible lamination.
We call it a lamination for $[R]$.

Therefore, we can associate an equivalence class of admissible laminations $[\mathcal{L}] \in \mathcal{AL}(\Gamma)$ for any map $[R] \in \partial \mathcal{H}_\Gamma(K)$.
We remark that by choosing different lifts of $[R]$ in $\partial \widetilde{\mathcal{H}}_\Gamma(K)$, one may obtain multiple laminations associated to a map $[R]  \in \partial \mathcal{H}_\Gamma(K)$, in which case, a {\em self-bump} appears on $\partial \mathcal{H}_\Gamma(K)$.

This motivates the following definition.

\begin{defn}\label{defn:mp}
Let $\mathcal{U}$ be a connected component of $\overline{\mathcal{H}_\Gamma(K)} \cap \overline{\mathcal{H}_{\Gamma'}(K)}$.
We say that the equivalence class of admissible laminations $[\mathcal{L}] \in \mathcal{AL}(\Gamma)$ gives an {\em access} to $\mathcal{U}$ if $[\mathcal{L}]$ is associated to some $[R] \in \mathcal{U}$.
We define the {\em multiplicity} of $\mathcal{U}$ as the number of accesses to $\mathcal{U}$.
\end{defn}

By Proposition~\ref{prop:enriched_tischler_realized} and Theorem \ref{thm:cparm}, we have that
\begin{prop}\label{prop:an}
For all large $K$, there are at least $N(\Gamma \xhookrightarrow{} \Gamma')$ components in $\overline{\mathcal{H}_\Gamma(K)} \cap \overline{\mathcal{H}_{\Gamma'}(K)}$ counting multiplicity.
\end{prop}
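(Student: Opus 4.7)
The plan is to exhibit $N(\Gamma\xhookrightarrow{}\Gamma')$ distinct accesses lying in $\overline{\mathcal{H}_\Gamma(K)}\cap\overline{\mathcal{H}_{\Gamma'}(K)}$; by Definition \ref{defn:mp}, each access contributes $1$ to the total count of components weighted by multiplicity, so this suffices. First I would establish a bijection between $\Aut(\Gamma)\backslash(\Gamma\xhookrightarrow{}\Gamma')/\Aut(\Gamma')$ and the subset of $\mathcal{AL}(\Gamma)$ whose representatives are admissible laminations with associated enrichment $\mathscr{T}^{En}$ of $\mathscr{T}$ having planar dual $\Gamma'$. Lemma \ref{lem:rdetg} already identifies the left-hand side with the set of isomorphism classes of such enrichments. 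Each enrichment, combined with a boundary marking of $\mathcal{R}_\Gamma$, canonically yields an element of $\widetilde{\mathcal{AL}}(\Gamma)$ (one $m_{-d_i}$-invariant lamination per critical Fatou component, dual to the corresponding blown-up ribbon tree), and the $\Aut(\Gamma)$-action on $\widetilde{\mathcal{AL}}(\Gamma)$, identified with $\Aut(\mathcal{R}_\Gamma)$ via Proposition \ref{prop:maa}, exactly implements the pre-composition action of $\Aut(\Gamma)$ on embeddings.

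Second, for each such equivalence class $[\mathcal{L}]$, Proposition \ref{prop:enriched_tischler_realized} produces, for every sufficiently large $K$, a sequence $[R_n]\in\mathcal{H}_\Gamma(K)$ whose enriched Tischler graph realizes $[\mathcal{L}]$. Since $\mathscr{T}^{En}$ is admissible, Theorem \ref{thm:cparm} yields a subsequential limit $[R_\infty]\in\partial\mathcal{H}_\Gamma(K)$ which is a root of $\mathcal{H}_{\Gamma'}$. To see that $[R_\infty]\in\overline{\mathcal{H}_{\Gamma'}(K)}$ (and not merely $\overline{\mathcal{H}_{\Gamma'}}$), I would invoke the anti-holomorphic perturbation argument used in the proof of Theorem \ref{thm:cparm} to push $[R_\infty]$ into $\mathcal{H}_{\Gamma'}$: each repelling fixed point of the perturbed map either persists from a repelling fixed point of $[R_\infty]$ (so its uniformizing multiplier stays close to its value at $[R_\infty]$) or unfolds from a parabolic fixed point of $[R_\infty]$ (so its uniformizing multiplier is close to $0$). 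Hence the perturbation lies in $\mathcal{H}_{\Gamma'}(K)$ for $K$ large enough, which places $[R_\infty]$ in some component $\mathcal{U}$ of $\overline{\mathcal{H}_\Gamma(K)}\cap\overline{\mathcal{H}_{\Gamma'}(K)}$ and makes $[\mathcal{L}]$ an access to $\mathcal{U}$.

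The main point of care is the bijection in the first step: checking that the $\Aut(\mathcal{R}_\Gamma)$-action on $\widetilde{\mathcal{AL}}(\Gamma)$, which permutes the critical Fatou components and rotates the boundary marking inside each of them via the topological conjugacy extending the center-to-center identification, matches the pre-composition $\Aut(\Gamma)$-action on embeddings $i\colon\Gamma\xhookrightarrow{}\Gamma'$. Once this bookkeeping is in place, the realization provided by Proposition \ref{prop:enriched_tischler_realized} and the admissibility criterion of Theorem \ref{thm:cparm} combine to produce the full $N(\Gamma\xhookrightarrow{}\Gamma')$ accesses, concluding the argument.
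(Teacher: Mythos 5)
Your proposal is correct and follows essentially the same route as the paper, which simply cites Proposition \ref{prop:enriched_tischler_realized} and Theorem \ref{thm:cparm}: the double cosets correspond via Lemma \ref{lem:rdetg} to distinct admissible enrichments (hence distinct classes in $\mathcal{AL}(\Gamma)$), each is realized by a degeneration in $\mathcal{H}_\Gamma(K)$, and the limit is a root of $\mathcal{H}_{\Gamma'}$. Your extra care in checking that the limit lies in $\overline{\mathcal{H}_{\Gamma'}(K)}$ rather than merely $\overline{\mathcal{H}_{\Gamma'}}$, via the multiplier behaviour under the hyperbolic-parabolic perturbation, fills in a detail the paper leaves implicit but does not change the argument.
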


We conjecture that the number of components in $\overline{\mathcal{H}_\Gamma(K)} \cap \overline{\mathcal{H}_{\Gamma'}(K)}$ (counted with multiplicities) is exactly $N(\Gamma \xhookrightarrow{} \Gamma')$.
In Appendix \ref{sec:ve}, we shall see how a component of $\overline{\mathcal{H}_\Gamma(K)} \cap \overline{\mathcal{H}_{\Gamma'}(K)}$ with multiplicity $\geq 2$ is related to the self-bump and shared mating phenomena.
We shall also discuss examples where $\overline{\mathcal{H}_\Gamma(K)} \cap \overline{\mathcal{H}_{\Gamma'}(K)}$ is disconnected.

\section{Shared matings, self-bumps and disconnected roots}\label{sec:ve}

In this appendix, we will briefly discuss some examples of how different pared deformation spaces interact.

\subsection{Self-bumps and shared matings}\label{subsec:sbsm}
In this subsection, we will discuss some techniques to construct components of $\overline{\mathcal{H}_\Gamma(K)} \cap \overline{\mathcal{H}_{\Gamma'}(K)}$ with multiplicity at least $2$.
We shall also explain how this is related to the shared mating phenomenon using examples.

\subsection*{An example of self-bump}
Let $\Gamma$ and $\Gamma'$ be the left and right graph in Figure \ref{fig:SB1a}.
The graph $\Gamma$ has two different embeddings into $\Gamma'$.
The images of the embeddings are displayed in black on the right.
Note that both $\Aut(\Gamma)$ and $\Aut(\Gamma')$ are trivial, so the two embeddings are in different orbits of $\Aut(\Gamma) \times \Aut(\Gamma')$.

\begin{figure}[ht]
  \centering
  \resizebox{1\linewidth}{!}{
    \def\svgwidth{\columnwidth}
    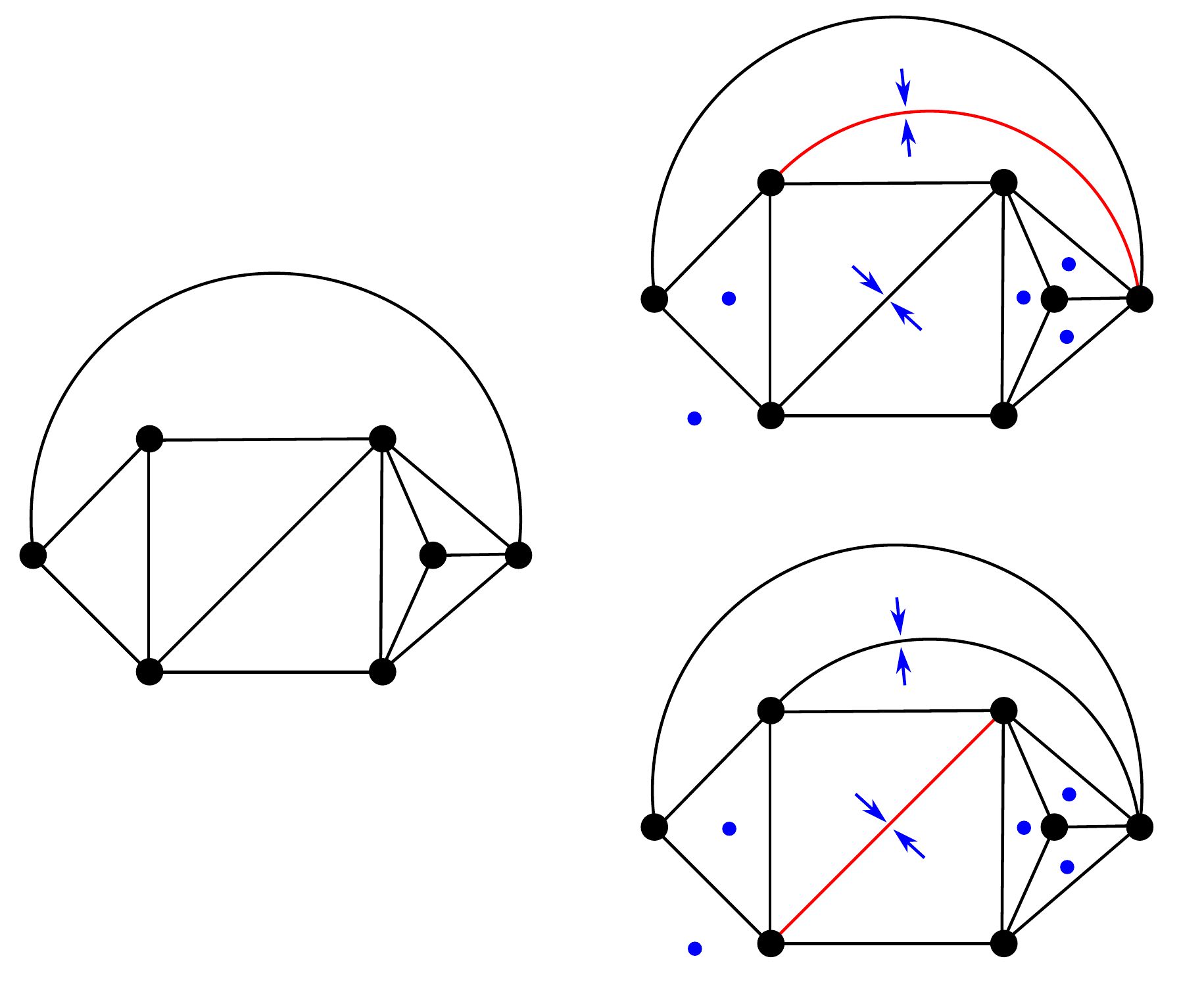

  }
  \caption{The graph $\Gamma$ on the left has two different embeddings into the graph $\Gamma'$ on the right. The images of the embeddings are depicted in black on the right, where the indices of vertices are preserved under the embeddings.}
  \label{fig:SB1a}
\end{figure}

\begin{figure}[ht]
  \centering
  \resizebox{1\linewidth}{!}{
    \def\svgwidth{\columnwidth}
    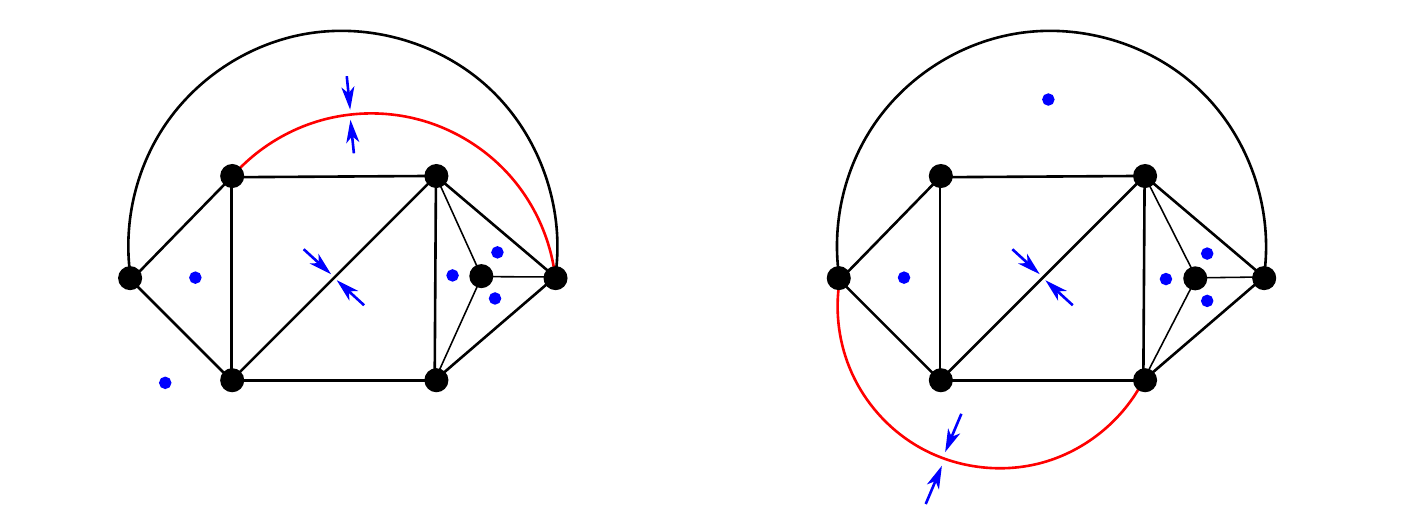

  }
  \caption{The corresponding degeneration diagrams associated to the two embeddings.}
  \label{fig:SB1b}
\end{figure}

We define an {\em arrow structure} on $\Gamma'$ by associating to each face either a dot, or an arrow towards a boundary edge. We call the graph $\Gamma'$ together with an arrow structure an {\em arrowed graph}.

The arrow structure is used to indicate the dynamics of a root $[R] \in \partial \mathcal{H}_{\Gamma'}$.
Note that a face $F'$ of the graph $\Gamma'$ corresponds to a fixed Fatou component $\Omega_{F'}$, and an edge on the boundary of $F'$ corresponds to a fixed point on $\partial \Omega_{F'}$.
For each face $F'$ with $k$ edges, a dot indicates the dynamics of $R$ on $\Omega_{F'}$ is conjugate to $\bar z^k$.
An arrow towards a boundary edge indicates that $\Omega_{F'}$ is a parabolic Fatou component for $R$ with the corresponding fixed point on $\partial \Omega_{F'}$ being the parabolic fixed point.

The two embeddings of $\Gamma$ into the arrowed graph $\Gamma'$ provide two different accesses to $[R]$, as indicated in Figure \ref{fig:SB1b}.
We will call the graphs as in Figure \ref{fig:SB1b} {\em degeneration diagrams}.

We remark that the different perspectives between the arrowed graph and the degeneration diagram (see Figure \ref{fig:SB1a} 
vs Figure \ref{fig:SB1b}, or Figure \ref{fig:SB2}) is that in the former, $\Gamma'$ is fixed, and the graph $\Gamma$ is represented by different subgraphs of $\Gamma'$; while in the degeneration diagram, the original graph $\Gamma$ is fixed, and $\Gamma'$ is constructed from $\Gamma$ by adding new edges.

Note that the faces of $\Gamma$ are divided into {\em chambers}, as indicated by the {\em red boundaries}, in the degeneration diagram.

Let $C, C_0$ be two distinct chambers of a face $F$ of $\Gamma$.
Then there exists a unique boundary edge of $C$ corresponding to the direction of $C_0$.
We say an arrow in $C$ points towards $C_0$ if the arrow points towards this boundary edge.
Similarly, let $E$ be an edge of a chamber in $F$.
Then there exists a unique boundary edge of $C$ corresponding to the direction of $E$.
We say an arrow in $C$ points towards $E$ if the arrow points towards this boundary edge.
See Figure \ref{fig:SB1b} or Figure \ref{fig:SB2} for illustrations.

\begin{defn}\label{def_asc}
An arrow structure on a degeneration diagram is said to be {\em compatible} if for each face $F$ of $\Gamma$, either all the arrows point towards a unique chamber in $F$ containing the dot, or all the arrows point towards a unique edge on the boundary of a chamber in $F$.
\end{defn}

Let $F$ be a face of $\Gamma$ with $e$ sides.
A degeneration diagram partitions the face into finitely many chambers.
The associated dual graph is an $(e+1)$-ended tree and the arrow structure gives either a special core vertex or an edge.
Thus we can associate either a pointed $(e+1)$-ended tree, or an extended pointed $(e+1)$-ended tree for each face $F$ of $\Gamma$. Theorems~\ref{thm:cvr} and~\ref{thm:cvre} and the Schwarz lemma now imply that a prescribed dynamical behavior on the fixed Fatou components (given by an arrow structure on $\Gamma'$) is realized in the dynamical plane of an accumulation point of an access determined by a degeneration diagram if and only if the arrow structure is compatible with the degeneration diagram in the sense of Definition~\ref{def_asc}.

Let $[R_0] \in \partial \mathcal{H}_{\Gamma'}$ be the parabolic map corresponding to the right graph of Figure \ref{fig:SB1a}, so that the dynamics on (each of) the invariant parabolic Fatou components is conjugate to a {\em real} Blaschke product.
This real assumption is required to uniquely specify the map, as unicritical parabolic Blaschke products are {\em not} rigid in the anti-rational map setting (cf. \cite[Theorem~3.2]{MNS17}).

In our example, each face is divided into at most 2 chambers.
Thus, using Proposition \ref{prop:rabl} and Remark \ref{rmk:ext} we can construct two families $[R_{i,t}] \in \mathcal{H}_\Gamma(K)$, $i=1,2$, realizing the accesses determined by the two degeneration diagrams with $[R_{i,t}] \to [R_0]$.
Thus, the component $U$ in $\overline{\mathcal{H}_\Gamma(K)} \cap \overline{\mathcal{H}_{\Gamma'}(K)}$ containing $[R_0]$ has multiplicity $\geq 2$.
One can in fact verify that (cf. \cite[Theorem 1.4]{L21a})
\begin{prop}\label{prop:sb4}
For all large $K$, the pared deformation space $\mathcal{H}_\Gamma(K)$ has a self-bump at $[R_0]$, i.e. for all sufficiently small neighborhood $U \subset \mathcal{M}_d^-$ containing $[R_0]$, the intersection $U \cap \mathcal{H}_\Gamma(K)$ is disconnected.
\end{prop}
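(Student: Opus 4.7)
The plan is to exhibit two families $[R_{1,t}], [R_{2,t}] \in \mathcal{H}_\Gamma(K)$ with $[R_{i,t}] \to [R_0]$, one for each of the two inequivalent embeddings $\Gamma \hookrightarrow \Gamma'$ in Figure \ref{fig:SB1a}, and then to separate them locally near $[R_0]$ using a combinatorial invariant that is constant on path components of $U \cap \mathcal{H}_\Gamma(K)$ but distinguishes the two approaches.

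For the construction, observe that in each of the two degeneration diagrams of Figure \ref{fig:SB1b} every face of $\Gamma$ is split into at most two chambers, so the extended pointed metric $(d_i+1)$-ended ribbon tree attached to each critical Fatou component has at most three core vertices. Proposition \ref{prop:rabl} (together with Remark \ref{rmk:ext} for faces with a single valence-$2$ core vertex) then supplies continuous one-parameter families of anti-Blaschke products on each critical Fatou component whose rescaling limits at the non-central core vertices are parabolic unicritical real anti-Blaschke products. Assembling them through $\widetilde{\mathcal{H}}_\Gamma(K) \cong \prod_i \mathring{\BP}^-_{d_i}(K)$ produces $[R_{i,t}] \in \mathcal{H}_\Gamma(K)$. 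By Theorem \ref{thm:cparm} every accumulation point of $[R_{i,t}]$ has Julia dynamics conjugate to $\mathcal{R}_{\Gamma'}$ with real-symmetric uniformization on the invariant parabolic Fatou components, and the rigidity of such a map (cf.\ \cite[Theorem 3.2]{MNS17}) forces $[R_{i,t}] \to [R_0]$ for both $i = 1, 2$.

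For the separation, shrink $U$ so that every parabolic fixed point of $[R_0]$ persists as a pair of close-by simple fixed points of any $[R] \in U$, and so that the invariant Fatou components of $[R]$ vary holomorphically over $U \cap \mathcal{H}_\Gamma$. For $[R] \in U \cap \mathcal{H}_\Gamma(K)$, these emerging fixed points are repelling with multiplier bounded by $K$, hence simple, so their analytic continuations along any path in $U \cap \mathcal{H}_\Gamma(K)$ are single-valued; each such emerging fixed point lies on the boundary of a uniquely determined tuple of invariant Fatou components of $[R]$. The resulting labelled-cluster datum $\iota([R])$ is locally constant on $U \cap \mathcal{H}_\Gamma(K)$, and by Proposition \ref{prop:scn} combined with the triviality of $\Aut(\Gamma) \times \Aut(\Gamma')$, the value $\iota([R_{i,t}])$ uniquely recovers the $i$-th embedding. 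Since the two embeddings are inequivalent, any hypothetical path in $U \cap \mathcal{H}_\Gamma(K)$ joining $[R_{1,t}]$ to $[R_{2,t}]$ would contradict the local constancy of $\iota$, yielding the self-bump.

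The main obstacle is to establish the local constancy of $\iota$ rigorously. This requires (i) a canonical analytic continuation of the emerging fixed points, which uses the bounded-multiplier condition in $\mathcal{H}_\Gamma(K)$ to forbid collisions with other fixed points; (ii) a canonical labelling of the Fatou components of $[R]$ by the faces of $\Gamma$ via holomorphic motion over the $J$-stable hyperbolic component $\mathcal{H}_\Gamma$; and (iii) continuous matching of these two labellings as $[R]$ varies, which rests on the continuous motion of Markov partition pieces developed in Section \ref{sec:mr}. The delicate point is verifying that the combinatorial datum read off at $[R_{i,t}]$ near $[R_0]$ matches the one determined intrinsically by the approach, which ultimately reduces to a quantitative version of the ends-approximating property in Theorem \ref{thm:qit}.
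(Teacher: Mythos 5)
Your construction of the two approaching families is essentially the paper's: in both degeneration diagrams each face of $\Gamma$ splits into at most two chambers, so Proposition \ref{prop:rabl} and Remark \ref{rmk:ext} produce, coordinate-wise in $\widetilde{\mathcal{H}}_\Gamma(K) \cong \prod_i \mathring{\BP}^-_{d_i}(K)$, two families realizing the two inequivalent accesses, and the rigidity of the real-symmetric parabolic model pins down the common limit $[R_0]$. (Minor point: \cite[Theorem~3.2]{MNS17} is invoked in the paper for the \emph{failure} of rigidity of general unicritical parabolic anti-Blaschke products, which is why the real-symmetry normalization is imposed; your citation reverses the logic, though the substance is the same.) The paper itself does not spell out the separation step, deferring it to \cite[Theorem 1.4]{L21a}, so that is where your proposal has to stand on its own.

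The separation mechanism you propose has a genuine flaw. You build the invariant $\iota$ by assuming that each parabolic fixed point of $[R_0]$ ``persists as a pair of close-by simple fixed points of any $[R]\in U$,'' and that for $[R]\in U\cap\mathcal{H}_\Gamma(K)$ these are repelling with multiplier at most $K$. Neither assertion holds. The fixed-point count of an anti-rational map is not locally constant: every $[R]\in\mathcal{H}_\Gamma$ has exactly $d+2m-1$ fixed points ($m=|V(\mathscr{T})|$), while a root of $\mathcal{H}_{\Gamma'}$ has $d+2m'-1>d+2m-1$ counted with multiplicity, because $\Gamma'>\Gamma$ forces $m'>m$. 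The excess is created precisely at the parabolic points and in the newly born invariant Fatou components: a parabolic fixed point of $[R_0]$ sits at a \emph{non-crossing} edge of the enriched Tischler graph, i.e.\ it is a limit of points in the \emph{interior} of a pinching Fatou component of $R_{i,t}$ (approached by critical clusters of the Blaschke factors), not the continuation of a pair of repelling fixed points of $R_{i,t}$. In particular, for $[R]\in U\cap\mathcal{H}_\Gamma(K)$ the only fixed point that can lie near such a parabolic point is the single attracting fixed point of the pinching component, so there is no ``pair'' to trace. Moreover the bound $K$ in Definition \ref{defn:pdfb} constrains only the multipliers of the \emph{marked} repelling fixed points of the uniformizing anti-Blaschke models (the landing points of the fixed internal rays of the $\Gamma$-structure); it says nothing about any other fixed points, so the non-collision argument you need for single-valued continuation is unavailable. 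Since $\iota$ is undefined on $U\cap\mathcal{H}_\Gamma(K)$, the claimed local constancy — which you yourself flag as the main obstacle — cannot be established along these lines. The separating datum has to be read off not from fixed points near the parabolic, but from the combinatorics of the near-degeneration itself: for $[R]$ close to $[R_0]$ the critical clusters in each uniformizing model are far out in the disk and their position relative to the $d_i+1$ marked boundary fixed points determines which admissible lamination (equivalently, which embedding $\Gamma\hookrightarrow\Gamma'$) is being pinched; this combinatorial type is what is locally constant on $U\cap\mathcal{H}_\Gamma(K)$ and distinguishes the two accesses, as in \cite[Theorem 1.4]{L21a}.
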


\subsection*{A shared parabolic mating example.}
We will now discuss connections between multiple accesses and shared mating phenomenon.

To construct a parabolic map that can be unmated in two different ways, we consider the arrowed graph $\Gamma'$ in Figure \ref{fig:SB2}.
The graph $\Gamma'$ contains two different Hamiltonian cycles that are marked black in Figure \ref{fig:SB2}. It is easy to verify that $\Aut(\Gamma')$ is trivial. Thus, in particular, the two Hamiltonian cycles in Figure \ref{fig:SB2} are in different orbits of $\Aut(\Gamma')$.
Note $\Gamma$ has rotation symmetries, but such a symmetry always preserves the image of $\Gamma$ in $\Gamma'$.
Therefore the corresponding two embeddings of the hexagonal graph $\Gamma$ in $\Gamma'$ are in different orbits of $\Aut(\Gamma) \times \Aut(\Gamma')$.

\begin{figure}[ht]
  \centering
  \resizebox{1\linewidth}{!}{
    \def\svgwidth{\columnwidth}
    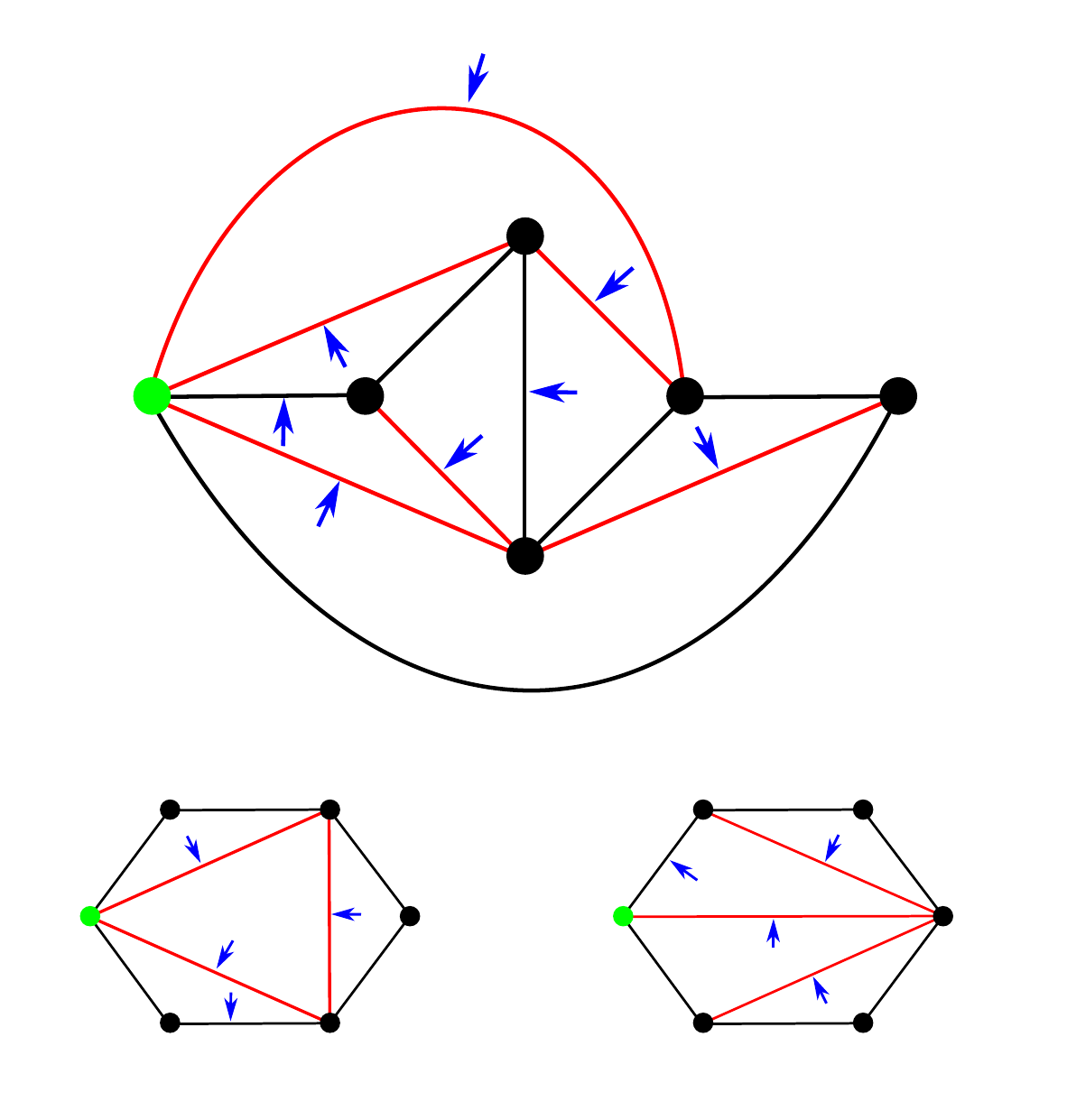

    \def\svgwidth{\columnwidth}
    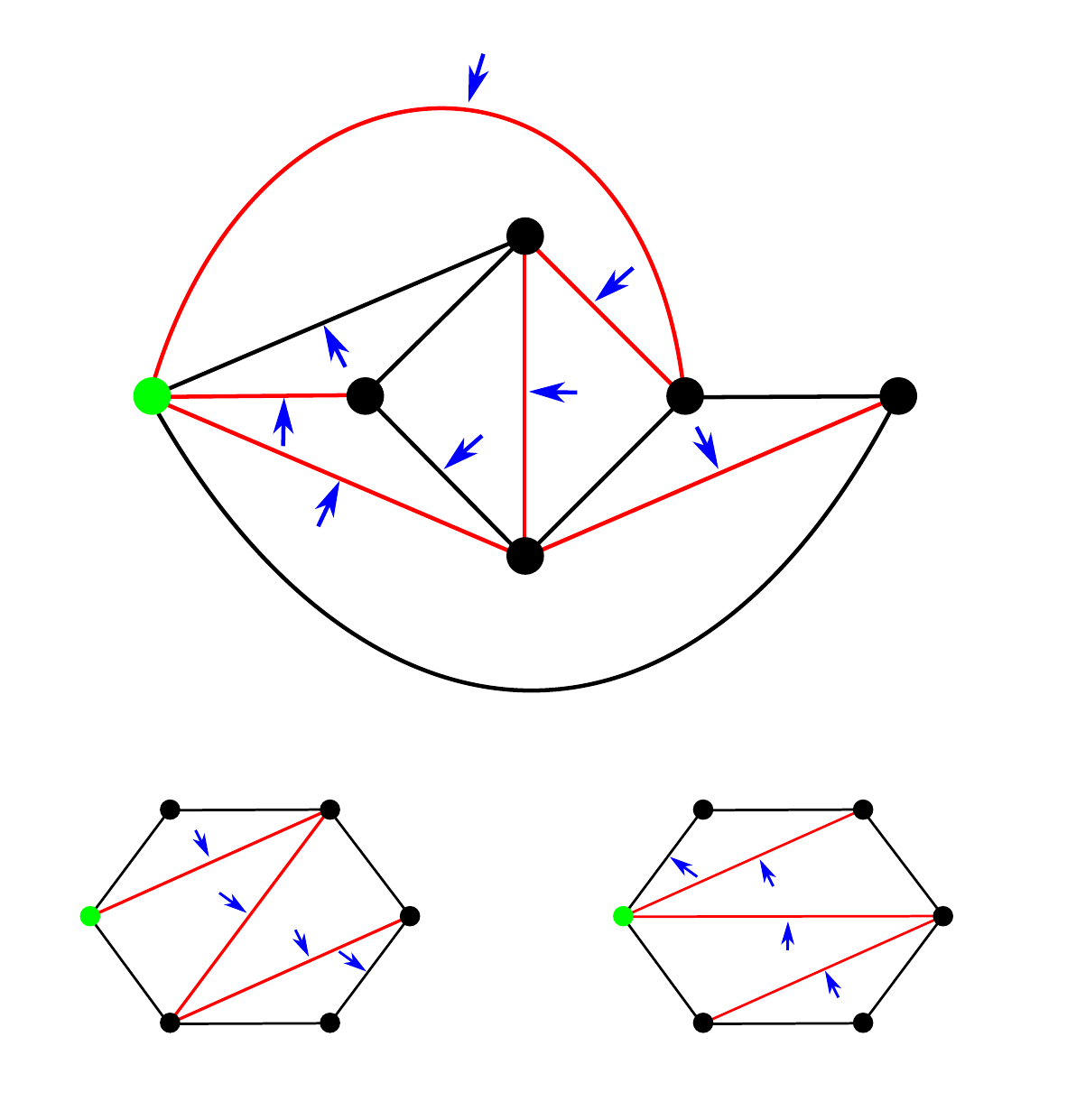

  }
  \caption{The top two graphs exhibit a graph $\Gamma'$ with distinct Hamiltonian cycles drawn using black edges. Each Hamiltonian cycle gives a corresponding decomposition of the parabolic rational map as a mating of parabolic polynomials, exhibited below.}
  \label{fig:SB2}
\end{figure}

The Hamiltonian cycle divides the graph into two sub-graphs, with degeneration diagrams as in the lower part of Figure \ref{fig:SB2}.
Note that the corresponding laminations are different, and the arrow structure on $\Gamma'$ is compatible with both degeneration diagrams.

Theorem~\ref{thm:cvre} and Theorem \ref{thm:cparm} allow us to construct two sequences $[R_{i,n}] \in \mathcal{H}_{\Gamma}(K),\, i=1,2$, realizing the accesses determined by the two degeneration diagrams such that the sequences converge to parabolic maps $[R_i],\, i=1,2$. Note that the parabolic maps $[R_1], [R_2]$ correspond to the same arrowed graph, but we cannot guarantee that $[R_1]=[R_2]$ as unicritical parabolic Blaschke products in the anti-rational map setting are {\em not} rigid. In fact, we do not know how to prove that the two accesses have a common accumulation point.
However, we conjecture that the degeneration diagrams in Figure \ref{fig:SB2} should also produce a self-bump.

On the other hand, the parabolic map $R_1$ is the shared mating of two different pairs of parabolic anti-polynomials where the unmatings are given by the two distinct Hamiltonian cycles in $\Gamma'$.

\subsection{Multiple components in $\overline{\mathcal{H}_\Gamma(K)} \cap \overline{\mathcal{H}_{\Gamma'}(K)}$}
In this subsection, we explain a technique to construct examples where $\overline{\mathcal{H}_\Gamma(K)} \cap \overline{\mathcal{H}_{\Gamma'}(K)}$ is disconnected.

Let $\Gamma'$ be the graph with two different Hamiltonian cycles as in Figure \ref{fig:NSB}.
One can easily verify that
\begin{itemize}
\item $\Aut(\Gamma')$ is trivial, so the corresponding two embeddings are in different orbits; and
\item there are exactly two Hamiltonian cycles in $\Gamma'$ indicated as black in Figure \ref{fig:NSB}.
\end{itemize}

But the difference between this example and the one considered in Section \ref{subsec:sbsm} is that in this example, there is no arrow structure on $\Gamma'$ compatible with both degeneration diagrams.
More precisely, we can not assign an arrow or a dot on each face of $\Gamma'$ so that the arrow structure satisfies the requirements of Definition~\ref{def_asc} for {\em both} degeneration diagrams.
Therefore, the set of accumulation points of the accesses determined by the two degeneration diagrams in Figure \ref{fig:NSB} lie in two distinct components of the root locus $\overline{\mathcal{H}_\Gamma(K)} \cap \overline{\mathcal{H}_{\Gamma'}(K)}$.
So the $\overline{\mathcal{H}_\Gamma(K)} \cap \overline{\mathcal{H}_{\Gamma'}(K)}$ contains at least $2$ components.

This example is related to the shared mating phenomenon.
Although the existence of two different Hamiltonian cycles shows that the critically fixed anti-rational map $[\mathcal{R}_{\Gamma'}]$ is a shared mating of two different pairs of critically fixed anti-polynomials (cf. \cite{LLM22}), no parabolic map $[R] \in \partial \mathcal{H}_{\Gamma'}(K)$ can be unmated in two different ways.

\begin{figure}[ht]
  \centering
  \resizebox{0.8\linewidth}{!}{
    \def\svgwidth{\columnwidth}
    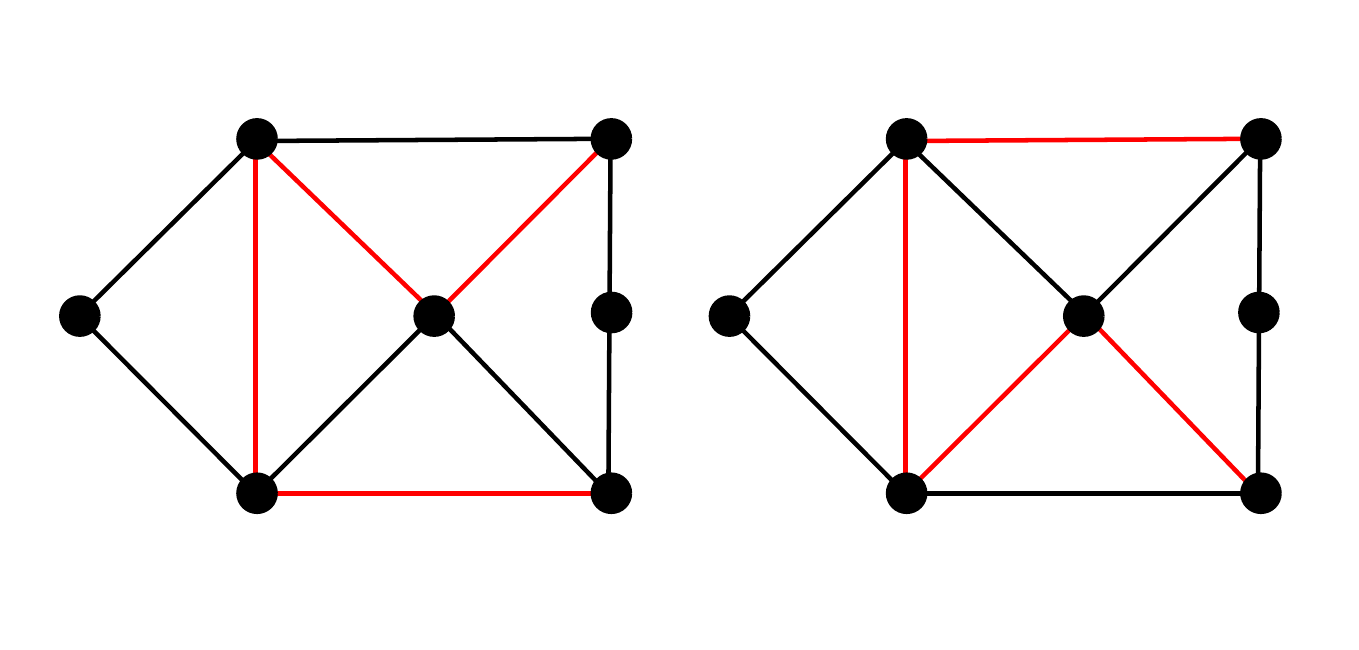

  }
  \caption{An example where no arrow structure on $\Gamma'$ is compatible with two different degeneration diagrams.}
  \label{fig:NSB}
\end{figure}

\end{document}